\theoremstyle{plain}
\newtheorem{theorem}{Theorem}[section]
\newtheorem{lemma}[theorem]{Lemma}
\newtheorem*{observation*}{Observation}
\newtheorem{corollary}[theorem]{Corollary}
\newtheorem{proposition}[theorem]{Proposition}
\newtheorem*{claim*}{Claim}
\newtheorem*{subclaim*}{Subclaim}
\theoremstyle{definition}
\newtheorem{definition}[theorem]{Definition}
\newcommand{\betrag}[1]{\vert{#1}\vert}
\newcommand{\crit}[1]{{{\rm{crit}}\left({#1}\right)}}
\newcommand{\cof}[1]{{{\rm{cof}}(#1)}}
\newcommand{\otp}[1]{{{\rm{otp}}\left(#1\right)}}
\newcommand{\ran}[1]{{{\rm{ran}}(#1)}}
\newcommand{\supp}[1]{{{\rm{supp}}(#1)}}
\newcommand{\tc}[1]{{\rm{tc}}({#1})}
\newcommand{\POT}[1]{{\mathcal{P}}({#1})}
\newcommand{\map}[3]{{#1}:{#2}\longrightarrow{#3}}
\newcommand{\Map}[5]{{#1}:{#2}\longrightarrow{#3};~{#4}\longmapsto{#5}}
\newcommand{\Set}[2]{\{{#1}~\vert~{#2}\}}
\newcommand{\seq}[2]{\langle{#1}~\vert~{#2}\rangle}
\newcommand{\goedel}[2]{{\prec}{#1},{#2}{\succ}}
\newcommand{\anf}[1]{{\text{``}\hspace{0.3ex}{#1}\hspace{0.3ex}\text{''}}}
\newcommand{\HH}[1]{{\rm{H}}(#1)}
\newcommand{\Add}[2]{{\rm{Add}}({#1},{#2})}
\newcommand{\SR}[2]{{\rm{SR}}_{{#1}}({#2})}
\newcommand{\SRm}[2]{\rm{SR}^-_{{#1}}({#2})}
\newcommand{\id}{{\rm{id}}}
\newcommand{\Lim}{{\rm{Lim}}}
\newcommand{\On}{{\rm{Ord}}}
\newcommand{\LL}{{\rm{L}}}
\newcommand{\ZFC}{{\rm{ZFC}}}
\newcommand{\CCCC}{{\mathsf{C}}}
\newcommand{\VV}{{\rm{V}}}
\newcommand{\calC}{\mathcal{C}}
\newcommand{\calK}{\mathcal{K}}
\newcommand{\calL}{\mathcal{L}}
\newcommand{\calV}{\mathcal{V}}
\newcommand{\calW}{\mathcal{W}}
\title[Structural reflection, shrewd cardinals and the continuum]{Structural reflection, shrewd cardinals and the size of the continuum}
\author{Philipp L\"ucke}
\address{Institut de Matem\`{a}tica, Universitat de Barcelona.  Gran via de les Corts Catalanes 585, 08007 Barcelona, Spain.}
\email{philipp.luecke@ub.edu}
\subjclass[2020]{03E55, 03C55, 03E47}
\keywords{Structural reflection, large cardinals, $\Sigma_2$-definability, elementary embeddings, shrewd cardinals, weakly compact embedding property}
\thanks{The author would like to thank Joan Bagaria for several helpful discussions on the topic of this work and various comments on earlier versions of this paper. 
 In addition, the author is thankful to the anonymous referee for the careful reading of the manuscript and several helpful comments.  
 This project has received funding from the European Union’s Horizon 2020 research and innovation programme under the Marie Sk{\l}odowska-Curie grant agreement No 842082 (Project \emph{SAIFIA: Strong Axioms of Infinity -- Frameworks, Interactions and Applications}).}
\begin{document}

\begin{abstract}
 Motivated by results of Bagaria, Magidor and V\"a\"an\"anen, we study characterizations of large cardinal properties through reflection principles for classes of structures. 
  More specifically, we aim to characterize notions from the lower end of the large cardinal hierarchy through the principle $\mathrm{SR}^-$ introduced by Bagaria and V\"a\"an\"anen. 
  Our results isolate a narrow interval in the large cardinal hierarchy that is bounded from below by total indescribability and from above by subtleness, and contains all large cardinals that can be characterized through the validity of the principle $\mathrm{SR}^-$ for all classes of structures defined by formulas in a fixed level of the L\'{e}vy hierarchy. 
 Moreover, it turns out that no property that can be characterized through this  principle can provably imply strong inaccessibility. 
 The proofs of these results  rely heavily on the notion of  \emph{shrewd cardinals}, introduced by Rathjen in a proof-theoretic context, and  embedding characterizations of these cardinals that resembles Magidor's classical characterization of supercompactness. 
 In addition, we show that several important weak large cardinal properties, like weak inaccessibility, weak Mahloness or  weak $\Pi^1_n$-indescribability, can be canonically characterized through localized versions of the principle $\mathrm{SR}^-$. 
 Finally, the techniques developed in the proofs of these characterizations also allow us to show that Hamkin's \emph{weakly compact embedding property} is equivalent to L\'{e}vy's notion of weak $\Pi^1_1$-indescribability. 
\end{abstract}

\maketitle




\section{Introduction}\label{section:Intro}

The work presented in this paper is motivated by results of Bagaria, Magidor, V\"a\"an\"anen and others that establish deep connections between  extensions of the \emph{Downward L\"owenheim--Skolem Theorem}, large cardinal axioms and set-theoretic reflection principles. 
  Our results will focus on the characterization of large cardinal notions\footnote{Throughout this paper, we will use the term \emph{large cardinal notion} to refer to properties of cardinals that imply \emph{weak} inaccessibility.}  through reflection properties for classes of structures. 
 To motivate our work, we start by discussing results that provide such  characterizations for supercompact cardinals.

First, recall that classical work of Magidor in \cite{MR0295904} yields a characterization of supercompactness through model-theoretic reflection by showing that second-order logic has a \emph{L\"owenheim--Skolem--Tarski cardinal} (see {\cite[Definition 6.1]{zbMATH06618251}) if and only if there exists a supercompact cardinal. 
Moreover, Magidor's results show that if these equivalent statements hold true, then the least L\"owenheim--Skolem--Tarski cardinal for second-order logic is equal to the least supercompact cardinal.  
 Next, in order to connect supercompactness and reflection principles for second-order logic to set-theoretic reflection properties, we make use of the following principle of \emph{structural reflection}, formulated by Bagaria:

 \begin{definition}[Bagaria, \cite{zbMATH06029795}]
 Given an infinite cardinal $\kappa$ and a class $\calC$ of structures\footnote{In the following, the term \emph{structure}  refers to structures for countable first-order languages. It should be noted that all results cited and proven below remain true if we restrict ourselves to finite languages.} of the same type, 
  we let $\SR{\calC}{\kappa}$ denote the statement that 
  for every structure $A$ in $\calC$, there exists an elementary embedding of a structure in $\calC$ of cardinality less than $\kappa$ into $A$. 
\end{definition}

 In order to precisely formulate the relevant results from \cite{zbMATH06029795}, \cite{zbMATH06424459} and \cite{zbMATH06618251}, we first need to  discuss certain subclasses of $\Sigma_2$-formulas defined through standard  refinements of  the \emph{L{\'e}vy hierarchy} of formulas. 
 Given a first-order language $\calL$ that extends the language $\calL_\in$ of set theory, an $\calL$-formula is a \emph{$\Sigma_0$-formula} if it is contained in the smallest class of $\calL$-formulas that contains all atomic $\calL$-formulas and is closed under negations, conjunctions and bounded existential quantification. 
 Moreover, given $n<\omega$,  an $\calL$-formula is a \emph{$\Pi_n$-formula} if it is the negation of a $\Sigma_n$-formula, and it is a \emph{$\Sigma_{n+1}$-formula} if it is of the form $\exists x_0,\ldots,x_{m-1} ~\varphi$ for some $\Pi_n$-formula $\varphi$. 
 Finally, given a class $R$, $n<\omega$ and a set $z$, a class $P$ is \emph{$\Sigma_n(R)$-definable} (respectively, \emph{$\Pi_n(R)$-definable}) \emph{in the parameter $z$} if there is a $\Sigma_n$-formula (respectively, a $\Pi_n$-formula) $\varphi(v_0,v_1)$ in the language $\calL_{\dot{A}}$ that expands $\calL_\in$ by a unary relation symbol $\dot{A}$ such that $P$ consists of all sets $x$ with the property that $\varphi(x,z)$ holds in $\langle\VV,\in,R\rangle$. 
  As usual, we call $\Sigma_n(\emptyset)$-definable classes (i.e. classes defined by a $\Sigma_n$-formula in $\calL_\in$) \emph{$\Sigma_n$-definable} and $\Pi_n(\emptyset)$-definable classes \emph{$\Pi_n$-definable}. 
 The closure properties of the class of all $\calL_\in$-formulas that are $\ZFC$-provably equivalent to $\Sigma_{n+1}$-formulas then ensure that if $R$ is a class that is $\Pi_1$-definable in the parameter $z$, then every class that is $\Sigma_n(R)$-definable in the parameter $z$ is also $\Sigma_{n+1}$-definable in this parameter.

Using V\"a\"an\"anen's notion of \emph{symbiosis} between model- and set-theoretic reflection principles introduced in \cite{MR567682}, Bagaria and V\"a\"an\"anen connected reflection principles for second-order logic with the principle $\mathrm{SR}$ by showing that a cardinal $\kappa$ is a  L\"owenheim--Skolem--Tarski cardinal for second-order logic if and only if $\SR{\calC}{\kappa}$ holds  for every class $\calC$ of structures that is $\Sigma_1(PwSet)$-definable without parameters (see {\cite[Theorem 5.5]{zbMATH06618251}} and {\cite[Lemma 7.1]{zbMATH06618251}}), where $PwSet$ denotes the $\Pi_1$-definable class of all pairs of the form $\langle x,\POT{x}\rangle$. 
 In combination with the results from \cite{MR0295904} discussed above, this equivalence provides a characterization of  the first supercompact cardinal  through the validity of the principle $\SR{\calC}{\kappa}$ for all $\Sigma_1(PwSet)$-definable classes of structures. 
 This connection between supercompactness and structural reflection for $\Sigma_2$-definable classes was studied in depth by Bagaria and his collaborators in \cite{zbMATH06029795} and \cite{zbMATH06424459}.  
  %
 In the following, define $\calV$ to be the class of all $\calL_\in$-structures\footnote{Throughout this paper, we will often identify a class $M$ with the $\calL_\in$-structure $\langle M,\in\rangle$ to simplify our formulations.} of the form $\VV_\alpha$ for some ordinal $\alpha$. 
 It is easy to see that the class $\calV$ is $\Sigma_1(PwSet)$-definable without parameters. 
 Following \cite{zbMATH06029795} and \cite{zbMATH06424459}, Magidor's characterization of supercompactness in \cite{MR0295904} can now 
 be rephrased in the following way:

 \begin{theorem}\label{theorem:CharSuperCom} 
  The following statements are equivalent for every  cardinal $\kappa$: 
     \begin{enumerate}
      \item $\kappa$ is the least supercompact cardinal. 
      
      \item $\kappa$ is the least cardinal with the property that $\SR{\calV}{\kappa}$ holds. 
      
           \item $\kappa$ is the least cardinal with the property that $\SR{\calC}{\kappa}$ holds for every class of structures of the same type that is definable by a $\Sigma_2$-formula with parameters in $\HH{\kappa}$.\footnote{It is easy to see  that  the parameters class $\HH{\kappa}$ is maximal in this setting. Fix $z\notin\HH{\kappa}$ and let $\calL_\emptyset$ denote the trivial first-order language. Then the class $\calC$ of all $\calL$-structures of cardinality $\betrag{\tc{z}}$ is definable by a $\Sigma_1$-formula with parameter $z$, and the principle $\SRm{\calC}{\kappa}$ obviously fails.} 
     \end{enumerate}
 \end{theorem}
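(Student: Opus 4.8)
The plan is to reduce the three-way equivalence to two numerical identities: that the least supercompact cardinal $\kappa_0$ is simultaneously the least cardinal satisfying the principle in~(ii) and the least cardinal satisfying the one in~(iii). Once both identities are in place, each of (i), (ii), (iii) becomes equivalent to ``$\kappa=\kappa_0$'' and the theorem follows. The starting point is a sandwich obtained from the cited results. Since $\calV$ is $\Sigma_1(PwSet)$-definable without parameters, the symbiosis theorem of Bagaria and V\"a\"an\"anen shows that every L\"owenheim--Skolem--Tarski cardinal for second-order logic satisfies $\SR\calV\kappa$; together with Magidor's identification of the least such cardinal with $\kappa_0$, this gives that the least cardinal witnessing~(ii) is at most $\kappa_0$. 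Conversely, every parameter-free $\Sigma_1(PwSet)$-definable class is $\Sigma_2$-definable (by the closure remark applied to the $\Pi_1$-definable class $R=PwSet$) and $\emptyset\in\HH\kappa$, so any cardinal witnessing~(iii) reflects all $\Sigma_1(PwSet)$-classes, is hence a L\"owenheim--Skolem--Tarski cardinal, and is therefore at least $\kappa_0$. Thus the least cardinal of~(ii) is $\le\kappa_0\le$ the least cardinal of~(iii), and it remains to close the sandwich at both ends.

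For the upper end I would show directly that $\kappa_0$ satisfies the principle in~(iii). Fix a class $\calC$ defined by a $\Sigma_2$-formula $\psi(\cdot,z)$ with $z\in\HH{\kappa_0}$; since $\kappa_0$ is strongly inaccessible we have $\HH{\kappa_0}=V_{\kappa_0}$, so $z$ has rank below $\kappa_0$. Given $A\in\calC$, I would choose a $\Sigma_2$-correct stage $\alpha\in C^{(2)}$ with $A,z\in V_\alpha$ and apply Magidor's embedding characterization of supercompactness to obtain $\bar\alpha<\kappa_0$ and an elementary $j\colon V_{\bar\alpha}\to V_\alpha$ with $\crit j>\rank{z}$, with $j(\crit j)=\kappa_0$, and with $A\in\ran j$; by standard strengthenings of Magidor's characterization one can further demand $\bar\alpha\in C^{(2)}$, threading the club $C^{(2)}$ through the supercompactness embedding. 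Then $j(z)=z$, so writing $\bar A=j^{-1}(A)$ the relation $V\models\psi(A,z)$ passes to $V_\alpha\models\psi(A,z)$ (as $\alpha\in C^{(2)}$), then to $V_{\bar\alpha}\models\psi(\bar A,z)$ (elementarity of $j$), and finally to $V\models\psi(\bar A,z)$ (as $\bar\alpha\in C^{(2)}$); hence $\bar A\in\calC$. Since $\betrag{\bar A}\le\betrag{V_{\bar\alpha}}<\kappa_0$ and $j\restriction\bar A$ is an elementary embedding of $\bar A$ into $A$, this verifies $\SR\calC{\kappa_0}$ and closes the upper end.

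For the lower end I would show that the least cardinal $\kappa$ satisfying $\SR\calV\kappa$ is itself supercompact, which I expect to be the main obstacle. Applied to the structures $\langle V_\alpha,\in\rangle$ for $\alpha\in C^{(2)}$, the principle $\SR\calV\kappa$ delivers elementary embeddings $j_\alpha\colon V_{\beta_\alpha}\to V_\alpha$ with $\beta_\alpha<\kappa$, and since $\ran{j_\alpha}\prec V_\alpha\prec_{\Sigma_2}V$ these have $\Sigma_2$-correct domains. From such an embedding with critical point $\delta$ one manufactures, in the usual way, ultrafilters on $P_\delta\mu$ of the form $\Set{X}{j_\alpha[\mu]\in j_\alpha(X)}$, which are fine and normal provided $\mu<j_\alpha(\delta)$. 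The difficulty is that the bare principle supplies no control over $\delta$, over its image $j_\alpha(\delta)$, or over which parameters lie in $\ran{j_\alpha}$, whereas Magidor's condition demands $j_\alpha(\crit{j_\alpha})\ge\kappa$ together with the capture of prescribed points. Here the minimality of $\kappa$ is essential: using that $\SR\calV\delta$ fails for every $\delta<\kappa$, I would argue that the critical points of the reflecting embeddings are forced upward so that $j_\alpha(\crit{j_\alpha})$ reaches $\kappa$ and a single critical point recurs cofinally, thereby recovering Magidor's embedding condition and, with it, a fine normal ultrafilter on $P_\kappa\lambda$ for every $\lambda$. This makes $\kappa$ supercompact, so $\kappa\ge\kappa_0$, closing the lower end.

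Combining the two ends yields that the least cardinal of~(ii), the value $\kappa_0$, and the least cardinal of~(iii) all coincide, so that each of (i), (ii) and (iii) is equivalent to $\kappa=\kappa_0$, and the three stated equivalences follow.
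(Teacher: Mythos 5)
This theorem is not proved in the paper at all: it is stated as a rephrasing of Magidor's characterization of supercompactness, with the content delegated to {\cite{MR0295904}}, {\cite{zbMATH06029795}} and {\cite{zbMATH06618251}} (see also {\cite[Theorem 22.10]{MR1994835}}). So your proposal must be judged against those classical arguments. Your overall architecture is the right one, and your upper end (the least supercompact $\kappa_0$ satisfies the principle in (iii)) is essentially the standard Magidor--Bagaria argument, with two caveats. First, the $C^{(2)}$-threading is unnecessary: once a $\Sigma_2$-formula is put in the normal form $\exists\theta\,(\VV_\theta\models\cdots)$, its truth is upward absolute from \emph{any} $\VV_{\bar{\alpha}}$ to $\VV$, so no correctness demand on $\bar{\alpha}$ is needed. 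Second, the ``standard strengthenings of Magidor's characterization'' you invoke (capturing $A$ in the range, pushing the critical point above $\rank{z}$, and especially $\bar{\alpha}\in C^{(2)}$) are not citations but things you would have to prove; the clean route is to produce the reflected embedding inside the target model $M$ of a $\lambda$-supercompactness embedding $\map{j}{\VV}{M}$, rather than to use Magidor's $\VV_\beta\to\VV_\alpha$ characterization as a black box, and the claim that $M$ computes $C^{(2)}$ correctly at $\alpha$ is exactly the kind of step that does not come for free.

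The genuine gap is the lower end, and it is not a detail: it is the hard direction of Magidor's theorem, namely that $\SR{\calV}{\kappa}$ implies the existence of a supercompact cardinal $\leq\kappa$. The mechanism you propose -- pigeonhole so that ``a single critical point recurs cofinally'', with ``minimality forcing the critical points upward'' -- does not work as stated. If a critical point $\delta$ and a domain $\VV_\beta$ recur for a proper class of $\alpha$, the derived ultrafilters on $\POTI{\mu}{\delta}$ exist only for $\mu<\min(\beta,j_\alpha(\delta))\leq\beta<\kappa$, because \emph{every} embedding supplied by $\SR{\calV}{\kappa}$ has domain of height below $\kappa$; so recurrence yields at best that $\delta$ is ${<}\beta$-supercompact, never full supercompactness. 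Transferring ``$\delta$ is supercompact in $\VV_\beta$'' along $j_\alpha$ (the move that drives the proof of {\cite[Theorem 22.10]{MR1994835}}) only shows that $\VV_\alpha$ believes $j_\alpha(\delta)$ is supercompact, and $j_\alpha(\delta)$ can escape above $\kappa$ as $\alpha$ grows: since $\kappa\notin\HH{\kappa}$ and $\calV$ carries no constants, nothing in the bare principle lets you pin $j_\alpha(\crit{j_\alpha})$ to $\kappa$, which is precisely why Magidor's hypothesis $j(\crit{j})=\kappa$ cannot be ``recovered'' by a soft argument. Minimality of $\kappa$ is also the wrong lever: in the known proofs the implication ``$\SR{\calV}{\kappa}$ implies there is a supercompact ${\leq}\,\kappa$'' is established for an \emph{arbitrary} $\kappa$ satisfying the principle, and minimality enters only in the trivial final step (least reflection point $\leq$ least supercompact, because supercompactness implies reflection). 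Finally, you silently assume each reflecting embedding has a critical point, whereas $\SR{\calV}{\kappa}$ also permits inclusions $\VV_\beta\prec\VV_\alpha$. As written, the entire mathematical content of the equivalence of (i) and (ii) is compressed into the unargued sentence that the critical points ``are forced upward'', so the proposal does not prove the theorem.
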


 It is  natural to ask if other large cardinal notions can be characterized in similar ways, i.e. given some large cardinal property, is there a class of formulas such that the least cardinal with the given property provably coincides with the least reflection point for all classes of structures defined by formulas from this class. 
 %
  %
  Note that the validity of such characterization can be seen as a strong justification for the naturalness of large cardinal axioms (see \cite{BagariaRefl}). 
 Results contained in \cite{zbMATH06029795} and \cite{zbMATH06424459} already yield such characterizations for $\Sigma_{n+2}$-definable classes of structures and so-called \emph{$C^{(n)}$-extendible cardinals} (see {\cite[Definition 3.2]{zbMATH06029795}}). 
 These results also provide a characterization of \emph{Vop{\v e}nka's Principle} in terms of structural reflection. 
 In addition, results in \cite{BagariaRefl} provide a characterization of the existence of $X^\#$ for a set $X$ of ordinals through structural reflection and results in \cite{BagariaWilsonRefl} yield such characterization for large cardinal notions in the region between strong cardinals and \anf{\emph{$\On$ is Woodin}}. 
 Finally, the results of \cite{MR3598793} give characterizations of remarkable cardinals  and other virtual large cardinals through principles of \emph{generic structural reflection}.

The work presented in this paper is devoted  to the characterization of objects from the lower part of the large cardinal hierarchy through principles of structural reflection. 
The starting point of this work is the following restriction of the principle $\mathrm{SR}$, isolated by Bagaria and V\"a\"an\"anen:

\begin{definition}[Bagaria--V\"a\"an\"anen, \cite{zbMATH06618251}]
 Given an infinite cardinal $\kappa$ and a class $\calC$ of structures of the same type, 
  we let $\SRm{\calC}{\kappa}$ denote the statement that 
  for every structure $A$ in $\calC$ of cardinality $\kappa$, there exists an elementary embedding of a structure in $\calC$ of cardinality less than $\kappa$  into $A$. 
\end{definition}

Our results will show that there exists a narrow interval in the large cardinal hierarchy that contains all large cardinal notions for which there exists a natural number\footnote{Note that {\cite[Theorem 4.2]{zbMATH06029795}} shows that $\SR{\calC}{\kappa}$ holds for every uncountable cardinal $\kappa$ and every class $\calC$ of structures that is definable by a $\Sigma_1$-formula with parameters in $\HH{\kappa}$.} $n>1$ such that the given property can be characterized (as in Theorem \ref{theorem:CharSuperCom}) through the validity of the principle $\mathrm{SR}^-$ for all $\Sigma_n$-definable classes of structures.  
 This interval is bounded from below by total indescribability and from above by subtleness. 
 Moreover, this analysis will show that there is essentially only one large cardinal notion that can be characterized through canonical non-trivial $\Pi_1$-predicates\footnote{More precisely, through $\Pi_1$-predicates $R$ with the property that the class $Cd$ of all cardinals is $\Sigma_1(R)$-definable. The class $Cd$, the class $Rg$ of all regular cardinals, the class $PwSet$ and the class obtained by a universal $\Pi_1$-formula in $\calL_\in$ are all examples of such predicates.} $R$ and the principle $\mathrm{SR}^-$ for $\Sigma_1(R)$-definable classes of structures. 
  This unique large cardinal notion turns out to be closely related to the following property of large cardinals, introduced by Rathjen in a proof-theoretic context:

  \begin{definition}[Rathjen, \cite{zbMATH02168085}]\label{definition:ShrewdCardinals}
   A cardinal $\kappa$ is \emph{shrewd} if for every $\calL_\in$-formula $\Phi(v_0,v_1)$, every ordinal $\alpha$ and every subset $A$ of $\VV_\kappa$ such that $\Phi(A,\kappa)$ holds in $\VV_{\kappa+\alpha}$, there exist ordinals $\bar{\kappa}$ and $\bar{\alpha}$ below $\kappa$ such that $\Phi(A\cap\VV_{\bar{\kappa}},\bar{\kappa})$ holds in $\VV_{\bar{\kappa}+\bar{\alpha}}$. 
\end{definition}

 The defining property of shrewd cardinals directly implies that all of these cardinals are totally indescribable. 
   Moreover,  Rathjen showed that, given a subtle cardinal $\delta$, the set of cardinals $\kappa<\delta$ that are shrewd in $\VV_\delta$ is stationary in $\delta$ (see {\cite[Lemma 2.7]{zbMATH02168085}}). 
The following result now connects shrewdness to the consistency strength of principles the principle $\mathrm{SR}^-$ for $\Sigma_2$-definable classes:

\begin{theorem}\label{theorem:ConsStrengthRefl}
   The following statements are equiconsistent over the theory $\ZFC$:   
   \begin{enumerate}
    \item There exists a shrewd cardinal. 
    
    \item There exists a cardinal $\kappa$ with the property that $\SRm{\calC}{\kappa}$ holds for every class $\calC$ of structures of the same type that is definable by a $\Sigma_1(Cd)$-formula without parameters. 
    
    \item There exists a cardinal $\kappa$ with the property that $\SRm{\calC}{\kappa}$ holds for every class $\calC$ of  structures of the same type that is definable by a $\Sigma_2$-formula with parameters in $\HH{\kappa}$. 
   \end{enumerate}
 \end{theorem}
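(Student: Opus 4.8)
The plan is to run the cycle $(1)\Rightarrow(3)\Rightarrow(2)\Rightarrow\mathrm{Con}((1))$, where the first two steps are outright implications over $\ZFC$ and only the last is a genuine consistency-strength argument. The engine in both nontrivial steps is an embedding characterization of shrewdness modelled on Magidor's characterization of supercompactness, which I would establish first: for a shrewd cardinal $\kappa$, every ordinal $\alpha$ and every $A\subseteq\VV_\kappa$ there are $\bar\kappa,\bar\alpha<\kappa$ and an elementary embedding $j\colon\langle\VV_{\bar\kappa+\bar\alpha},\in,A\cap\VV_{\bar\kappa}\rangle\longrightarrow\langle\VV_{\kappa+\alpha},\in,A\rangle$ with $\crit{j}=\bar\kappa$ and $j(\bar\kappa)=\kappa$. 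This upgrade is needed because Definition~\ref{definition:ShrewdCardinals} only supplies the reflected satisfaction $\VV_{\bar\kappa+\bar\alpha}\models\Phi(A\cap\VV_{\bar\kappa},\bar\kappa)$, whereas the principle $\mathrm{SR}^-$ asks for an actual embedding of structures.

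For $(1)\Rightarrow(3)$, fix a shrewd $\kappa$, a class $\calC$ defined by a $\Sigma_2$-formula with a parameter $p\in\HH{\kappa}$, and some $A\in\calC$ with $\betrag{A}=\kappa$. Since $\kappa$ is totally indescribable, and hence strongly inaccessible, we have $\HH{\kappa}=\VV_\kappa$, so $p\in\VV_\kappa$ and the universe of $A$ may be taken inside $\VV_\kappa$; I would code $A$ together with $p$ by a single set $\hat A\subseteq\VV_\kappa$. Using that a true $\Sigma_2$-statement is witnessed in some $\VV_\gamma$, pick $\alpha$ with $\VV_{\kappa+\alpha}\models\varphi(\hat A)$, where $\varphi$ expresses that $\hat A$ codes a member of $\calC$ with its parameter. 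To control the parameter I would first force the critical point above $\rank{p}$: including $\rank{p}$ as a distinguished element of $\hat A$ and letting $\varphi$ assert that this element is present makes reflection return $\bar\kappa>\rank{p}$, so that $j$ fixes $p$ and $\bar p=p$. The restriction of $j$ to the coded universe is then an elementary embedding of the reflected structure $B$ into $A$, while elementarity of $j$ and $\VV_{\bar\kappa+\bar\alpha}\models\varphi(\hat A\cap\VV_{\bar\kappa})$ give $B\in\calC$ with the same parameter $p$ and $\betrag{B}<\kappa$; this is exactly $\SRm{\calC}{\kappa}$.

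The step $(3)\Rightarrow(2)$ is immediate from the closure property recorded in the introduction: as $Cd$ is $\Pi_1$-definable, every $\Sigma_1(Cd)$-definable class without parameters is $\Sigma_2$-definable, hence in particular $\Sigma_2$-definable with parameters in $\HH{\kappa}$. For the reverse step, fix $\kappa$ witnessing $(2)$. Here one cannot assume $\kappa$ to be strongly inaccessible—the analysis of $\mathrm{SR}^-$ shows such cardinals need not be—so $\VV_\kappa$ has no claim to cardinality $\kappa$, and I would instead pass to the inner model $\LL$: since $\kappa$ is at least weakly inaccessible and cardinals and regularity are absolute to $\LL$, it is weakly inaccessible in $\LL$, and the $\GCH$ of $\LL$ upgrades this to strong inaccessibility there. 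The aim is to show $\LL\models$ ``there is a shrewd cardinal'', which by the usual reflection argument yields $\mathrm{Con}(\ZFC+(1))$ and closes the cycle. To that end, for each $\calL_\in$-formula $\Phi$ I would use the class $\calC_\Phi$ of all structures isomorphic to one of the form $\langle\VV_\gamma,\in,B\rangle$ with $\gamma\in Cd$, $B\subseteq\VV_\gamma$ and $\VV_{\gamma+\beta}\models\Phi(B,\gamma)$ for some ordinal $\beta$, where marking $\gamma$ through the predicate $Cd$ is what keeps the class inside the $\Sigma_1(Cd)$-fragment. Applying $\SRm{\calC_\Phi}{\kappa}$ to a suitable structure of cardinality $\kappa$ returns an elementary embedding $e\colon\langle\VV_{\bar\gamma},\in,\bar B\rangle\longrightarrow\langle\VV_\kappa,\in,A\rangle$ from a member of $\calC_\Phi$ of smaller cardinality, and reading off $\crit{e}$ uniformly over all $\Phi$ should identify a cardinal that reflects every $\Phi$ in the manner of Definition~\ref{definition:ShrewdCardinals}.

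I expect this last step to be the main obstacle, for three linked reasons. First there is the definability bookkeeping: one must verify honestly that the clauses ``$x=\VV_\gamma$ for a cardinal $\gamma$'' and ``$\VV_{\gamma+\beta}\models\Phi(B,\gamma)$'' package into a $\Sigma_1(Cd)$-condition, which is precisely where the canonical predicate $Cd$ becomes indispensable. Second, the reflecting embedding $e$ need not be an inclusion: if $\crit{e}=\bar\kappa<\bar\gamma$, then the pulled-back predicate $\bar B$ agrees with $A$ only below $\bar\kappa$, and one must argue that $\bar\kappa$ rather than $\bar\gamma$ is the genuine reflection point and that the height $\bar\gamma+\bar\beta$ can be kept below $\kappa$. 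Third, the reflection and these embeddings must be carried down to $\LL$. It is exactly the gap between the abstract elementary embedding furnished by $\mathrm{SR}^-$ and the literal rank-initial-segment data demanded by shrewdness, together with the possible failure of strong inaccessibility of the witnessing cardinal, that blocks a direct proof of $(2)\Rightarrow(1)$ and confines the theorem to equiconsistency.
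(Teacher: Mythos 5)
Your proof collapses at its self-declared engine. The embedding characterization you propose --- an elementary embedding $\map{j}{\langle\VV_{\bar{\kappa}+\bar{\alpha}},\in,A\cap\VV_{\bar{\kappa}}\rangle}{\langle\VV_{\kappa+\alpha},\in,A\rangle}$ with \emph{transitive} rank-initial-segment domain, $\crit{j}=\bar{\kappa}$ and $j(\bar{\kappa})=\kappa$ --- is false for shrewd cardinals. Take $A=\emptyset$ and let $\alpha$ range over all ordinals: since shrewd cardinals are inaccessible, $\bar{\kappa}+\bar{\alpha}<\kappa$, so your lemma asserts that every $\VV_\beta$ with $\beta>\kappa$ is the target of an elementary embedding from some $\VV_\gamma$ with $\gamma<\kappa$ sending its critical point to $\kappa$. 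By Magidor's theorem this is exactly supercompactness of $\kappa$. But shrewdness is compatible with $\VV=\LL$ (Rathjen's result puts shrewd cardinals below subtle ones, and subtlety holds in $\LL$-like models), whereas a supercompact --- indeed any measurable --- cardinal implies $\VV\neq\LL$. The paper's Lemma \ref{lemma:ShrewdEmbeddings} is careful on precisely this point: the domain of the embedding is an elementary submodel $X$ of $\HH{\bar{\theta}}$ with $\bar{\kappa}+1\subseteq X$ that is in general \emph{not} transitive (see the footnote there), and this non-transitivity is exactly what separates shrewdness from Magidor-type strength. Your derivation of (1)$\Rightarrow$(3) therefore rests on a false premise; the implication itself is true, but is proved by weakening shrewdness to weak shrewdness (Corollary \ref{corollary:ShrewdWeaklyShrewd}) and applying Lemma \ref{lemma:ShrewdReflection}, where the induced embedding on a structure of size $\bar{\kappa}$ exists because $\bar{\kappa}\subseteq X$, not because the domain is transitive.

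The step $\mathrm{Con}(2)\Rightarrow\mathrm{Con}(1)$ has gaps you flag but cannot repair as stated. First, your classes $\calC_\Phi$ are not $\Sigma_1(Cd)$-definable, and this is not mere bookkeeping: $\Sigma_1(Cd)$-formulas are upward absolute between transitive models computing the cardinal predicate compatibly, so any $\Sigma_1(Cd)$-definition of the clauses $\anf{x=\VV_\gamma}$ and $\anf{\VV_{\gamma+\beta}\models\Phi}$ would survive cardinal-preserving forcing that adds reals, which changes $\VV_\gamma$ --- so no such definition is available in $\ZFC$; the predicate $Cd$, unlike $PwSet$, simply cannot track power sets. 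The paper's fix is to abandon the $\VV$-hierarchy entirely and build its $\Sigma_1(Cd)$-class $\calK$ from elementary submodels of constructible levels $\LL_\vartheta$, exploiting that $\anf{x=\LL_\vartheta}$ \emph{is} $\Sigma_1$. Second, you have no mechanism to force $j\restriction\bar{\kappa}=\id_{\bar{\kappa}}$: the paper obtains this from Kunen's theorem (a nontrivial $\map{j\restriction\LL_{\bar{\kappa}}}{\LL_{\bar{\kappa}}}{\LL_\kappa}$ with small critical point yields $0^\#$), after splitting into cases according to whether $0^\#$ exists, the $0^\#$ case being handled separately through weakly remarkable cardinals. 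Third, your target $\LL\models\anf{\textit{there is a shrewd cardinal}}$ is stronger than what assumption (2) yields: the argument only produces a \emph{weakly} shrewd cardinal in $\LL$ (via Proposition \ref{proposition:DownwardsToL}), and a weakly shrewd cardinal need not be shrewd even in $\LL$; one must then run the hyper-shrewdness machinery (Lemmas \ref{lemma:SRnonShrewdFix} and \ref{lemma:ConsHyperShrewd}, packaged in Corollary \ref{corollary:EquiInacc}) to find an $\LL$-inaccessible $\varepsilon$ such that $\kappa$ is shrewd in $\LL_\varepsilon$, i.e.\ a set model of $\ZFC+(1)$. Only your step (3)$\Rightarrow$(2) is unproblematic; without the other ingredients the cycle does not close.
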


 This result shows that  for all large cardinal properties whose consistency strength is strictly smaller than the existence of a shrewd cardinal, there is no  characterization of these notions  through canonical non-trivial $\Pi_1$-predicates $R$ and  the principle $\mathrm{SR}^-$ for $\Sigma_1(R)$-definable classes of structures. 
 %
 Moreover, it shows that the connection between the principle $\textrm{SR}^-$ and the \emph{strict L\"owenheim--Skolem--Tarski property} (see {\cite[Definition 8.2]{zbMATH06618251}}) as well as the characterizations of weak inaccessibility, weak Mahloness and weak compactness stated in  {\cite[Theorem 8.3]{zbMATH06618251}} need to be reformulated,\footnote{The problematic part in the adaption of the proof of {\cite[Theorem 5.5]{zbMATH06618251}} to a proof of these statements is the fact that the cardinality of all $\calL_\in$-models witnessing that some structure of cardinality $\kappa$ is contained in a $\Sigma_1(R)$-definable class can be strictly greater than the given cardinal $\kappa$ and therefore it is not possible to apply the strict L\"owenheim--Skolem--Tarski property at $\kappa$ to these $\calL_\in$-models. An example of such a class of structures is the $\Sigma_1(PwSet)$-definable class $\calW$ introduced below.} because, by the above theorem, all of these statements would imply that the consistency of the existence of a shrewd cardinal  is strictly weaker than the consistency strength of the existence of a total indescribable cardinal. 

The proof of Theorem \ref{theorem:ConsStrengthRefl} is based on the following weakening of Definition \ref{definition:ShrewdCardinals}:

\begin{definition}
  An infinite cardinal $\kappa$ is \emph{weakly shrewd} if for every $\calL_\in$-formula $\Phi(v_0,v_1)$, every cardinal $\theta>\kappa$ and every subset $A$ of $\kappa$ with the property that $\Phi(A,\kappa)$ holds in  $\HH{\theta}$, there exist cardinals $\bar{\kappa}<\bar{\theta}$ with the property that $\bar{\kappa}<\kappa$ and $\Phi(A\cap\bar{\kappa},\bar{\kappa})$ holds in $\HH{\bar{\theta}}$. 
\end{definition}

 The notion of weak shrewdness turns out to be closely connected to principles of structural reflection. 
 Our results will allow us to show that if $\kappa$ is a weakly shrewd cardinal, then $\SRm{\calC}{\kappa}$ holds for every class $\calC$ of structures that is $\Sigma_2$-definable with parameters in $\HH{\kappa}$ (see Lemma \ref{lemma:ShrewdReflection}). 
 Moreover, the next theorem shows that this large cardinal property can be canonically characterized through the principle $\mathrm{SR}^-$ for $\Sigma_1(PwSet)$-definable classes of structures.
  In the following, let $\calL_{\dot{c}}$ denote the first-order language that extends the language $\calL_\in$ of set theory by a constant symbol $\dot{c}$ and let $\calW$ denote the class of all $\calL_{\dot{c}}$-structures $\langle X,\in,\kappa\rangle$ with the property that there exists a cardinal $\theta$ such that $\kappa$ is an infinite cardinal smaller than $\theta$ and $X$ is an elementary submodel of $\HH{\theta}$ of cardinality $\kappa$ with $\kappa+1\subseteq X$.  
  It is easy to see that the class $\calW$ is definable by a $\Sigma_1(PwSet)$-formula without parameters. 

 \begin{theorem}\label{theorem:SRandSRCARD}
  The following statements are equivalent for every  cardinal $\kappa$:  
  \begin{enumerate}
   \item $\kappa$ is the least weakly shrewd cardinal. 

   \item $\kappa$ is the least cardinal with the property that $\SRm{\calW}{\kappa}$ holds. 
   
   \item $\kappa$ is the least cardinal with the property that $\SRm{\calC}{\kappa}$ holds for every class $\calC$ of structures of the same type that is definable by a $\Sigma_2$-formula with parameters in $\HH{\kappa}$. 
  \end{enumerate}
  %
 \end{theorem}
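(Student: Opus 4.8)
The plan is to show that the three displayed statements single out the same cardinal by proving a cycle of inequalities between their least witnesses, which I will write as $\kappa_1$ (the least weakly shrewd cardinal), $\kappa_2$ (the least cardinal with $\SRm{\calW}{\kappa_2}$) and $\kappa_3$ (the least cardinal for which $\SRm{\calC}{\kappa_3}$ holds for every $\Sigma_2$-definable class $\calC$ with parameters in $\HH{\kappa_3}$). Two of the three required inequalities are immediate. First, Lemma \ref{lemma:ShrewdReflection} shows that every weakly shrewd cardinal satisfies the reflection property appearing in (iii); applying this to $\kappa_1$ yields $\kappa_3 \leq \kappa_1$. Second, since $\calW$ is $\Sigma_1(PwSet)$-definable and $PwSet$ is $\Pi_1$-definable, the closure property recalled in the introduction shows that $\calW$ is $\Sigma_2$-definable without parameters, so $\calW$ is one of the classes quantified over in (iii); thus property (iii) at $\kappa_3$ gives $\SRm{\calW}{\kappa_3}$, whence $\kappa_2 \leq \kappa_3$.

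The heart of the matter is the remaining inequality $\kappa_1 \leq \kappa_2$, i.e. that $\SRm{\calW}{\kappa}$ forces $\kappa$ to be weakly shrewd. Given an $\calL_\in$-formula $\Phi$, a cardinal $\theta > \kappa$ and a set $A \subseteq \kappa$ with $\HH{\theta} \models \Phi(A,\kappa)$, I would fix an elementary submodel $X \prec \HH{\theta}$ with $A \in X$, $\kappa+1 \subseteq X$ and $\betrag{X} = \kappa$, so that $\langle X,\in,\kappa\rangle$ is a member of $\calW$ of cardinality $\kappa$. Applying $\SRm{\calW}{\kappa}$ produces an elementary embedding $\map{j}{\langle Y,\in,\bar{\kappa}\rangle}{\langle X,\in,\kappa\rangle}$ with $\langle Y,\in,\bar{\kappa}\rangle \in \calW$ of cardinality $\bar{\kappa} < \kappa$, witnessed by some $\bar{\theta} > \bar{\kappa}$ with $Y \prec \HH{\bar{\theta}}$ and $\bar{\kappa}+1 \subseteq Y$, and satisfying $j(\bar{\kappa}) = \kappa$. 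Passing to the transitive collapses $\map{\pi_X}{X}{M}$ and $\map{\pi_Y}{Y}{N}$, which fix $\kappa+1$ and $\bar{\kappa}+1$ pointwise, the map $h = \pi_X \circ j \circ \pi_Y^{-1}$ is an elementary embedding of transitive structures with $h(\bar{\kappa}) = \kappa$; moreover $\pi_X(A) = A$ and $\pi_X(\kappa) = \kappa$, so $M \models \Phi(A,\kappa)$, while $\pi_Y^{-1}$ identifies $N$ with $Y \prec \HH{\bar{\theta}}$ and fixes $\bar\kappa+1$.

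To finish I would argue that $h^{-1}(A) = A \cap \bar{\kappa}$: granting this, elementarity of $h$ and $M \models \Phi(A,\kappa)$ give $N \models \Phi(A\cap\bar{\kappa},\bar{\kappa})$, and transporting along the inverse collapse of $N$ (which fixes $\bar\kappa$ and the elements of $A\cap\bar\kappa$) yields $\HH{\bar{\theta}} \models \Phi(A\cap\bar{\kappa},\bar{\kappa})$, exactly the reflection demanded by weak shrewdness. This identity has two ingredients: that $A$ lies in the range of $h$, and, more delicately, that $\crit{h} = \bar{\kappa}$, so that $h$ fixes every ordinal below $\bar{\kappa}$ and therefore $h^{-1}(A)$ and $A$ agree there. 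The main obstacle I anticipate is precisely the control of this critical point: an embedding merely supplied by $\SRm{\calW}{\kappa}$ may a priori move ordinals below $\bar{\kappa}$, and the countable language of the structures in $\calW$ offers no direct means of pinning them down. I would resolve this by routing the construction through the Magidor-style embedding characterization of weak shrewdness developed earlier in the paper, which is designed to deliver embeddings with critical point $\bar{\kappa}$ carrying $\bar{\kappa}$ to $\kappa$ and containing the prescribed parameter in their range; the defining clauses of $\calW$ — in particular that the constant $\dot{c}$ is interpreted by a cardinal with $\dot{c}+1$ contained in the universe — are what allow the embeddings coming from $\SRm{\calW}{\kappa}$ to be matched to that characterization. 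Combining the three inequalities then gives $\kappa_1 = \kappa_2 = \kappa_3$, which is the asserted equivalence.
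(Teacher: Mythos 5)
Your global strategy (the cycle $\kappa_3\leq\kappa_1$, $\kappa_2\leq\kappa_3$, $\kappa_1\leq\kappa_2$) coincides with the paper's, and your first two inequalities are correct: Lemma \ref{lemma:ShrewdReflection} gives $\kappa_3\leq\kappa_1$, and the $\Sigma_2$-definability of $\calW$ gives $\kappa_2\leq\kappa_3$. The gap is in the third inequality, and it sits exactly at the point you flag: nothing in your argument justifies $\crit{h}=\bar{\kappa}$, nor that $A\in\ran{h}$. Your proposed fix is circular. Lemma \ref{lemma:WShrewdCharEmb} takes embeddings satisfying $j\restriction\bar{\kappa}=\id_{\bar{\kappa}}$ as its \emph{hypothesis}; it provides no mechanism for upgrading an arbitrary embedding delivered by $\SRm{\calW}{\kappa}$ to one with this property, and the defining clauses of $\calW$ only guarantee $\bar{\kappa}+1\subseteq Y$ and $j(\bar{\kappa})=\kappa$, not that $j$ fixes the ordinals below $\bar{\kappa}$. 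Worse, your argument is aimed at the unqualified claim that $\SRm{\calW}{\kappa}$ forces weak shrewdness of $\kappa$, which the paper never proves and has good reason not to: it proves later (in the section on structural reflection) that if $\SRm{\calW}{\kappa}$ holds at a cardinal that is \emph{not} weakly shrewd, then $0^\#$ exists, so the only available routes to your claim are either assuming $\neg 0^\#$ (via embeddings between levels of $\LL$) or using the minimality of $\kappa$ --- and your argument uses neither.

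The missing idea is precisely how minimality of $\kappa_2$ pins down the critical point, and it forces a different choice of target structure. The paper applies $\SRm{\calW}{\kappa}$ not to a submodel of the $\HH{\theta}$ from the shrewdness test, but to $\langle Y,\in,\kappa\rangle$ with $Y\prec\HH{\vartheta}$ for a $\vartheta$ such that $\HH{\vartheta}$ is sufficiently elementary in $\VV$; this ensures that the failure of $\SRm{\calW}{\nu}$ for every $\nu<\kappa$ (which is exactly what minimality of $\kappa_2$ supplies) is witnessed inside $\HH{\vartheta}$. If $\mu<\bar{\kappa}$ were the least ordinal moved by $j$, then $\mu$ is a cardinal and $j(\mu)<\kappa$, so elementarity yields a cardinal $\theta\in X$ and $Z\in X$ such that $\langle j(Z),\in,j(\mu)\rangle$ is a member of $\calW$ of cardinality $j(\mu)$ into which no smaller member of $\calW$ elementarily embeds; but $\langle Z,\in,\mu\rangle$ is a member of $\calW$ of cardinality $\mu<j(\mu)$, $Z\subseteq X$ (since $\mu\subseteq X$ and $X$ contains a bijection between $\mu$ and $Z$), and $j\restriction Z$ embeds it elementarily into $\langle j(Z),\in,j(\mu)\rangle$ --- a contradiction. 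Once $j\restriction\bar{\kappa}=\id_{\bar{\kappa}}$ is secured, the paper concludes by citing Lemma \ref{lemma:WShrewdCharEmb}(ii)$\Rightarrow$(i), whose proof also disposes of your other worry: it gets a counterexample set into the range of the embedding by a minimal-counterexample argument (the least bad $\theta$ is definable from $\kappa$), rather than demanding, as your direct computation $h^{-1}(A)=A\cap\bar{\kappa}$ does, that the prescribed $A$ itself lie in $\ran{h}$ --- something $\SRm{\calW}{\kappa}$ alone cannot provide.
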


 In combination with  Theorem \ref{theorem:ConsStrengthRefl}, this result directly yields the following equiconsistency:

\begin{corollary}
    The following statements are equiconsistent over the theory $\ZFC$:   
   \begin{enumerate}
    \item There exists a shrewd cardinal. 

    \item There exists a weakly shrewd cardinal. \qed 
   \end{enumerate}
\end{corollary}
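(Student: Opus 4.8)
The plan is to obtain this equiconsistency as a purely formal combination of the two preceding theorems, exploiting the fact that Theorem~\ref{theorem:SRandSRCARD} yields an \emph{outright} $\ZFC$-provable equivalence between the existence of a weakly shrewd cardinal and the reflection statement appearing as clause~(iii) of Theorem~\ref{theorem:ConsStrengthRefl}, and not merely an equiconsistency. Write $\psi_1$ for the statement \anf{there exists a weakly shrewd cardinal} and $\psi_3$ for the statement \anf{there exists a cardinal $\kappa$ such that $\SRm{\calC}{\kappa}$ holds for every class $\calC$ of structures of the same type that is definable by a $\Sigma_2$-formula with parameters in $\HH{\kappa}$}.

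First I would argue, reasoning inside an arbitrary model of $\ZFC$, that $\psi_1$ and $\psi_3$ are equivalent. Indeed, if $\psi_1$ holds, then since the cardinals are well-ordered there is a \emph{least} weakly shrewd cardinal $\kappa$, and the implication (i)$\Rightarrow$(iii) of Theorem~\ref{theorem:SRandSRCARD} shows that this $\kappa$ witnesses $\psi_3$. Conversely, if $\psi_3$ holds, then there is a least cardinal $\kappa$ with the reflection property in clause~(iii), and the implication (iii)$\Rightarrow$(i) of Theorem~\ref{theorem:SRandSRCARD} identifies this $\kappa$ as the least weakly shrewd cardinal, so $\psi_1$ holds. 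Hence $\ZFC \vdash \psi_1 \leftrightarrow \psi_3$, and in particular $\psi_1$ and $\psi_3$ have exactly the same models and are trivially equiconsistent.

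Next I would invoke Theorem~\ref{theorem:ConsStrengthRefl}, whose clause~(i) asserts the existence of a shrewd cardinal and whose clause~(iii) is precisely $\psi_3$; the theorem gives that these two statements are equiconsistent over $\ZFC$. Chaining this with the previous paragraph through the transitivity of equiconsistency yields that the existence of a shrewd cardinal is equiconsistent with $\psi_3$, which in turn is equiconsistent with $\psi_1$, the existence of a weakly shrewd cardinal. This is exactly the assertion of the corollary. I do not expect any genuine obstacle here, since the argument merely assembles two already-established results. The only point requiring care is the passage from the \anf{least cardinal} formulation of Theorem~\ref{theorem:SRandSRCARD} to the \anf{existence} formulation needed in the corollary: one must observe that the existence of a cardinal with a given property entails the existence of a least such cardinal, so that the equivalences between the \emph{least} witnesses established in Theorem~\ref{theorem:SRandSRCARD} upgrade to equivalences between the corresponding existential statements. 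With this observation in place, the corollary follows immediately.
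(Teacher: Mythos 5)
Your proposal is correct and matches the paper's own (implicit) proof: the corollary is stated with a \qed precisely because it follows by combining Theorem~\ref{theorem:ConsStrengthRefl} with Theorem~\ref{theorem:SRandSRCARD}, which is exactly your chain $\mathrm{Con}(\text{shrewd}) \Leftrightarrow \mathrm{Con}(\psi_3) \Leftrightarrow \mathrm{Con}(\psi_1)$. Your explicit observation that the \anf{least cardinal} equivalences of Theorem~\ref{theorem:SRandSRCARD} upgrade to a $\ZFC$-provable equivalence of the existential statements is the only detail the paper leaves unsaid, and you handle it correctly.
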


 In addition, the third statement listed in Theorem \ref{theorem:SRandSRCARD} shows that large cardinal properties of higher consistency strength than  shrewdness cannot be characterized through  the principle $\mathrm{SR}^-$ for $\Sigma_2$-definable classes of structures. 
 Together with our earlier observations, this shows that weak shrewdness is basically the only large cardinal notions that can be characterized with the help of canonical  $\Pi_1$-predicates $R$ and the principle $\mathrm{SR}^-$ for $\Sigma_1(R)$-definable classes of structures.

 The above results directly motivate several follow-up questions. 
 First, it is natural to ask which large cardinal properties stronger than weak shrewdness can be characterized through the principle $\mathrm{SR}^-$ for classes of structures defined by more complex formulas. 
 Second, these results suggest to study the interactions between principles of structural reflection and the behavior of the continuum function.  
 In particular, it is interesting to ask whether any large cardinal property that entails strong inaccessibility can be characterized through the principle $\mathrm{SR}^-$. 
 Finally, it is also natural to ask whether large cardinal notions weaker than shrewdness can be characterized through further restrictions of the principle $\mathrm{SR}^-$.

 The answers to the first two questions turn out to be closely related to the existence of weakly shrewd cardinals that are not shrewd. 
 The following result positions the consistency strength of the existence of weakly shrewd cardinals that are, for various reasons, not shrewd  in the large cardinal hierarchy:

\begin{theorem}\label{theorem:WShrewNonShrewGCH}
 \begin{enumerate} 
  \item If $\kappa$ is a weakly shrewd cardinal that is not shrewd, then there exists an ordinal $\varepsilon>\kappa$ with the property that $\varepsilon$ is inaccessible in $\LL$ and $\kappa$ is a shrewd cardinal in $\LL_\varepsilon$. 
  
  \item The least subtle cardinal is a stationary limit of inaccessible weakly shrewd cardinals that are not shrewd. 
  
  \item The following statements are equiconsistent over $\ZFC$:   
   \begin{enumerate}
    \item There exists an inaccessible weakly shrewd cardinal that is not shrewd. 
  
     \item There exists a weakly shrewd cardinal that is not inaccessible. 
  
    \item There exists a weakly shrewd cardinal smaller than $2^{\aleph_0}$. 
   \end{enumerate}
  \end{enumerate}
\end{theorem}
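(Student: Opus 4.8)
The three parts share one engine. Since shrewd cardinals are totally indescribable, hence inaccessible, any weakly shrewd cardinal that fails to be inaccessible is automatically not shrewd; in particular the implication $(c)\Rightarrow(b)$ in part~(3) is immediate, because a cardinal below $2^{\aleph_0}$ cannot be a strong limit and so is not inaccessible, and the same cardinal then witnesses~(b). The plan is therefore to prove part~(1) first and to read off parts~(2) and~(3) from it together with Rathjen's stationarity lemma and two forcing arguments. I expect the transfer of weak shrewdness and of the failure of shrewdness to the constructible universe, carried out in part~(1), to be the main obstacle of the whole theorem.

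For part~(1) I would begin from the embedding characterization of weak shrewdness: for every cardinal $\theta>\kappa$ there are cardinals $\bar\kappa<\kappa$ and $\bar\theta$ and an elementary embedding $\map{j}{\HH{\bar\theta}}{\HH{\theta}}$ with $\crit{j}=\bar\kappa$ and $j(\bar\kappa)=\kappa$. Restricting $j$ to the constructible sets and using that $\LL\cap\HH{\theta}=\LL_\theta$ for every cardinal $\theta$ (a transitive model of $\VV=\LL$ of ordinal height $\theta$ must be $\LL_\theta$), I obtain elementary embeddings $\map{j}{\LL_{\bar\theta}}{\LL_\theta}$ with $\crit{j}=\bar\kappa$ and $j(\bar\kappa)=\kappa$ for cofinally many $\theta$. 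Since the critical point of such an embedding is inaccessible in its domain, elementarity yields that $\kappa$ is inaccessible in $\LL_\theta$ for all large $\theta$, hence inaccessible in $\LL$. I would then transfer the failure of shrewdness into $\LL$, coding constructible witnesses as subsets of $\kappa$ through an $\LL$-bijection $\LL_\kappa\leftrightarrow\kappa$, let $\eta$ be the least $\LL$-height at which shrewdness of $\kappa$ breaks, and set $\varepsilon=\kappa+\eta$. Minimality of $\eta$ gives that $\kappa$ reflects every constructible witness of height below $\eta$, so $\kappa$ is shrewd in $\LL_\varepsilon$; and a condensation argument shows $\varepsilon$ must be inaccessible in $\LL$, since a collapse or cofinal map inside $\LL_{\kappa+\eta}$ would push a witness strictly below $\eta$. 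Verifying that the failure of shrewdness transfers to $\LL$ and that the least failure height is $\LL$-inaccessible is exactly where condensation and the $\LL$-embeddings do the real work.

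For part~(2), let $\delta$ be the least subtle cardinal. The failure of shrewdness for every $\kappa<\delta$ is clean and follows from minimality: if $\kappa<\delta$ were shrewd, then for a height $\alpha$ with $\kappa+\alpha>\delta+2$ the statement ``there is a subtle cardinal above $\kappa$'' holds in $\VV_{\kappa+\alpha}$, witnessed by $\delta$ whose subtlety is absolute to $\VV_{\delta+2}$; shrewdness reflects this to ordinals $\bar\kappa,\bar\alpha<\kappa$, producing a genuinely subtle cardinal below $\kappa<\delta$ and contradicting minimality. Hence no cardinal below $\delta$ is shrewd. On the positive side, Rathjen's lemma gives a stationary set of $\kappa<\delta$ that are shrewd in $\VV_\delta$, and each such $\kappa$ is totally indescribable, hence inaccessible. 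To see these $\kappa$ are weakly shrewd I would use the characterization through $\SRm{\calW}{\kappa}$ from Theorem~\ref{theorem:SRandSRCARD}: given $\langle X,\in,\kappa\rangle\in\calW$ of cardinality $\kappa$, with $X\prec\HH{\theta}$, its transitive collapse fixes $\kappa$, has size $\kappa$, and therefore has rank below $\kappa^+<\delta$, so it lies in $\VV_\delta$; applying shrewdness in $\VV_\delta$ to the existence of such a collapsed structure reduces $\kappa$ to some $\bar\kappa<\kappa$ and produces a structure in $\calW$ of cardinality $\bar\kappa$ that elementarily embeds into $\langle X,\in,\kappa\rangle$. The point requiring care is handling structures arising from arbitrarily large $\theta$, which is precisely what the collapse into $\VV_\delta$ accomplishes. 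Together with the first step this gives stationarily many inaccessible, weakly shrewd, non-shrewd cardinals below $\delta$.

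For part~(3) it remains to close the cycle $\mathrm{Con}(a)\Rightarrow\mathrm{Con}(c)\Rightarrow\mathrm{Con}(b)\Rightarrow\mathrm{Con}(a)$, the middle implication being the already-noted $(c)\Rightarrow(b)$. For $(a)\Rightarrow\mathrm{Con}(c)$ I would force with $\Add{\omega}{\kappa^+}$ over a model of~(a); this ccc forcing preserves cardinals and cofinalities and, by a standard preservation argument reducing reflection in the extension to reflection over the ground model via the weakly homogeneous, definable forcing relation, preserves the weak shrewdness of $\kappa$, while $2^{\aleph_0}\geq\kappa^+>\kappa$ destroys the strong-limit property, giving~(c). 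For $(b)\Rightarrow\mathrm{Con}(a)$ I would apply part~(1) to the weakly shrewd non-inaccessible cardinal to obtain an $\LL$-inaccessible $\varepsilon$ with $\LL_\varepsilon\models\ZFC+$``$\kappa$ is shrewd'', and then force over this model to add a non-reflecting witness to non-shrewdness at a high rank, using a sufficiently closed forcing so as to preserve inaccessibility while keeping weak shrewdness intact (the $\HH{\bar\theta}$-reflections survive because they tolerate large $\bar\theta$). The principal obstacles in~(3) are the two preservation lemmas for weak shrewdness and the design of a forcing that destroys shrewdness but preserves inaccessibility and weak shrewdness; both rest on the same $\VV$-hierarchy-versus-$\HH$-hierarchy distinction that separates shrewdness from weak shrewdness in the first place.
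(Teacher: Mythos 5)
Your proof of part (1) rests on a misquotation of the embedding characterization of weak shrewdness, and the error is fatal rather than cosmetic. Lemma \ref{lemma:WShrewdCharEmb} does not provide elementary embeddings $\map{j}{\HH{\bar{\theta}}}{\HH{\theta}}$ with $\crit{j}=\bar{\kappa}$; its domain is only an elementary submodel $X$ of $\HH{\bar{\theta}}$ with $\bar{\kappa}+1\subseteq X$, which is in general neither transitive nor all of $\HH{\bar{\theta}}$. If full-domain embeddings existed for all large $\theta$, then elementarity would force $\bar{\theta}>2^{\bar{\kappa}}$ (once $\theta>2^\kappa$), so $\Set{A\in\POT{\bar{\kappa}}}{\bar{\kappa}\in j(A)}$ would be a $\bar{\kappa}$-complete ultrafilter on $\POT{\bar{\kappa}}$ and $\bar{\kappa}$ would be \emph{measurable}; likewise your restriction step $\map{j\restriction\LL_{\bar{\theta}}}{\LL_{\bar{\theta}}}{\LL_\theta}$ would yield $0^\#$ by {\cite[Theorem 18.27]{MR1940513}}. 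Since weakly shrewd cardinals are equiconsistent with shrewd ones (Corollary \ref{corollary:EquiInacc}) and exist under $\VV=\LL$, no such embeddings can exist, so everything downstream collapses. Your construction of $\varepsilon$ has a second gap: it presupposes that shrewdness of $\kappa$ fails in $\LL$, but nothing in the hypotheses guarantees this (non-shrewdness in $\VV$ can be created by forcing over an $\LL$ in which $\kappa$ is fully shrewd, which is exactly the situation of Lemma \ref{lemma:MakeDefShrewd}); when $\kappa$ is shrewd in $\LL$, your ``least failure height'' does not exist, yet part (1) still demands an $\LL$-inaccessible above $\kappa$ that your argument never produces. The missing engine is $\delta$-hyper-shrewdness: Lemmas \ref{lemma:SRnotShrewd} and \ref{lemma:SRnonShrewdFix} extract from non-shrewdness a cardinal $\delta>\kappa$ with $\{\delta\}$ $\Sigma_2$-definable from parameters in $\HH{\kappa}$ together with embeddings \emph{fixing} $\delta$; Proposition \ref{proposition:DownwardsToL} transfers these to $\LL$ by restricting to small submodels coded by bijections; and Lemma \ref{lemma:ConsHyperShrewd} uses the fixed point $\delta$ both to find an $\LL$-inaccessible in $(\kappa,\delta)$ and to prove shrewdness in $\LL_\varepsilon$.

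The same missing idea breaks parts (2) and (3). In part (2), the implication ``shrewd in $\VV_\delta$ implies weakly shrewd'' is false, not merely unproved: for $\theta\geq\delta$ the structure $\langle X,\in,\kappa\rangle\in\calW$ is not in $\VV_\delta$, and its transitive collapse is no longer an elementary submodel of any genuine $\HH{\vartheta}$, so the statement you want to reflect inside $\VV_\delta$ is not available there; reflection in $\VV_\delta$ only ever yields transitive sets that \emph{believe} the relevant formula, never genuine $\HH{\bar{\theta}}$'s in which it holds. Indeed, the least cardinal that is shrewd in $\VV_\delta$ (with $\delta$ the least subtle) has a $\Sigma_2$-definable singleton, and this contradicts weak shrewdness outright by Lemma \ref{lemma:ShrewdReflection}, applied to the $\Sigma_2$-definable class of all structures of exactly that cardinality. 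The paper avoids Rathjen's lemma entirely and instead proves stationarity of inaccessible $\delta$-hyper-shrewd cardinals directly from subtlety (Lemma \ref{lemma:SubtleHyperShrewd}), coding elementary submodels of genuine $\HH{\theta_\kappa}$'s as subsets of $\kappa$ so that the coherence $d_\alpha=d_\beta\cap\alpha$ manufactures an embedding into a genuine $\HH{\theta_\beta}$. In part (3), homogeneity of $\Add{\omega}{\kappa^+}$ alone cannot maintain the coherence between the name for $A$ and the name for $A\cap\bar{\kappa}$ when reflecting into the ground model, and in your $(b)\Rightarrow(a)$ sketch, reflecting a forcing statement through shrewdness replaces the forcing by a smaller one of the form $\Add{\nu^+}{1}$ with $\nu<\kappa$, for which the actual extension contains \emph{no} generic, so the reflected statement can never be realized where it is needed. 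The paper's Lemmas \ref{lemma:AddGenExtHyperShrew} and \ref{lemma:MakeDefShrewd} solve both problems by lifting hyper-shrewdness embeddings: because $j(\delta)=\delta$, the very same forcing appears on both sides, and a support-permutation automorphism (for $\Add{\omega}{\delta}$), respectively weak homogeneity plus $\HH{\cdot}$-absoluteness (for $\Add{\delta^+}{1}$), carries the generic across. Your two correct observations --- that $(c)\Rightarrow(b)$ is immediate, and that no cardinal below the least subtle cardinal is shrewd --- are the only parts of the proposal that survive.
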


The techniques developed in the proofs of the above results will also allow us to show that the existence of a weakly shrewd cardinal does not imply the existence of a cardinal $\kappa$ with the property that $\SRm{\calC}{\kappa}$ holds for every class $\calC$ of structures that is definable by a $\Sigma_3$-formula without parameters (see Corollary \ref{corollary:ShrewdNoMoreRefl} below).  
 In contrast, the following technical result shows that the existence of a  weakly shrewd cardinal that is not shrewd directly implies the existence of reflection points for classes of structures of higher complexities. 
  Moreover, it will also allow us to show that the existence of reflection points for classes of structures of arbitrary complexities has strictly weaker consistency strength than the  existence of a  weakly shrewd cardinal that is not shrewd.

\begin{theorem}\label{theorem:HigherReflection}
 Let $\kappa$ be weakly shrewd cardinal that is not shrewd. 
 \begin{enumerate}
  \item There is a cardinal $\delta>\kappa$ with the property that the set $\{\delta\}$ is definable by a $\Sigma_2$-formula with parameters in $\HH{\kappa}$.  
  
  \item Given $0<n<\omega$ and $\alpha<\kappa$, if $\delta>\kappa$ is a cardinal with the property that the set $\{\delta\}$ is definable by a $\Sigma_2$-formula with parameters in $\HH{\kappa}$, then there exists a cardinal $\alpha<\rho<\delta$ such that $\SRm{\calC}{\rho}$ holds for every class $\calC$ of structures of the same type that is definable by a $\Sigma_n$-formula with parameters in $\HH{\rho}$. 
  
  \item Assume that $0^\#$ does not exist and $\kappa$ is inaccessible. 
  If $\delta>\kappa$ is a cardinal with the property that the set $\{\delta\}$ is definable by a $\Sigma_2$-formula with parameters in $\HH{\kappa}$, 
  then there exists an inaccessible cardinal $\kappa<\varepsilon<\delta$ with the property that, in $\VV_\varepsilon$, the principle $\SRm{\calC}{\kappa}$ holds for every class $\calC$ that is defined by a formula using parameters from $\HH{\kappa}$.\footnote{Note that this conclusion is a statement about the structure $\langle\VV_\varepsilon,\in,\kappa\rangle$ that holds in $\VV$ and is formulated with the help of a formalized satisfaction relation (see, for example, {\cite[Section I.9]{MR750828}}). In particular, this statement also applies to possible classes in $\VV_\varepsilon$ that are defined through formulas with non-standard G\"odel numbers.} 
 \end{enumerate}
\end{theorem}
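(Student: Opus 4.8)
The plan is to prove all three parts around a single height $\delta$ marking the failure of shrewdness, using throughout the embedding reformulation of weak shrewdness: if $\kappa$ is weakly shrewd, then for every cardinal $\theta>\kappa$ and every $A\subseteq\kappa$ there are $\bar\kappa<\kappa$, a cardinal $\bar\theta$ and an elementary embedding $\map{j}{\langle\HH{\bar\theta},\in,A\cap\bar\kappa\rangle}{\langle\HH{\theta},\in,A\rangle}$ with $\crit{j}=\bar\kappa$ and $j(\bar\kappa)=\kappa$, while shrewdness is the strengthening permitting $\bar\theta<\kappa$. Non-shrewdness thus supplies a configuration that reflects only to targets $\bar\theta\geq\kappa$, and this obstruction is what produces $\delta$.

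For (1) I would first transfer a witness to non-shrewdness into the $\HH{\cdot}$-setting and, crucially, use weak shrewdness itself to replace the possibly large witnessing subset of $\VV_\kappa$ by hereditarily small data: reflecting the assertion ``this configuration admits no reflection below $\kappa$'' yields, at the least cardinal where such an obstruction appears, a defining parameter of hereditary cardinality below $\kappa$, hence a parameter in $\HH{\kappa}$. I then let $\delta$ be the least cardinal $\theta>\kappa$ at which the resulting unreflectable configuration is realized in $\HH{\theta}$. Since ``$y=\HH{\theta}$'' is $\Pi_1$ and the satisfaction relation of a set is $\Delta_1$ in that set, membership of $\theta$ in the class of such cardinals is $\Delta_2$, and prefixing the minimality clause keeps $\{\delta\}$ on the $\Sigma_2$ level with a parameter in $\HH{\kappa}$; that $\delta>\kappa$ is immediate, as any witnessing structure must contain $\kappa$ as an element.

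For (2), fix such a $\delta$ together with $n$ and $\alpha<\kappa$. The point is that below the definable ceiling $\delta$ the complexity of class-membership collapses: for a class $\calC$ that is $\Sigma_n$-definable with parameters in $\HH{\rho}$, the relativised class $\{x~\vert~\VV_\delta\models(x\in\calC)\}$ is $\Sigma_2$-definable with parameters in $\HH{\kappa}$, using that $\{\delta\}$ is $\Sigma_2$ and that satisfaction in the set $\VV_\delta$ is $\Delta_1$. Note that a structure of size less than $\kappa$ may be taken with underlying set an ordinal below $\kappa$, so it already lies in $\VV_\delta$. I would then run the reflection argument of Lemma \ref{lemma:ShrewdReflection} for this $\Sigma_2$-class at a critical point $\rho=\bar\kappa$ of a weak-shrewdness embedding $j$ chosen so that $j$ maps the copy $\bar\delta$ of $\delta$ computed from $\bar\kappa$ exactly onto $\delta$; elementarity of $j$ then makes the $\VV_{\bar\delta}$-relativisation faithful and simultaneously supplies, for any structure of size $\rho$ in $\calC$, an elementary embedding of a strictly smaller structure in $\calC$ into it, so that $\SRm{\calC}{\rho}$ holds. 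A threshold clause in the reflected formula forces $\alpha<\rho<\kappa<\delta$.

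For (3), assume in addition that $0^\#$ does not exist and that $\kappa$ is inaccessible. By Theorem \ref{theorem:WShrewNonShrewGCH}(1) there is an ordinal above $\kappa$ that is inaccessible in $\LL$ and below which $\kappa$ is shrewd in the sense of $\LL$; combining this with Jensen covering in the absence of $0^\#$, I would locate a genuinely inaccessible $\kappa<\varepsilon<\delta$ lying below the height at which shrewdness of $\kappa$ first fails, so that $\kappa$ is shrewd \emph{inside} $\VV_\varepsilon$. For such $\varepsilon$ the localised analogue of the arguments behind Theorem \ref{theorem:CharSuperCom} and Theorem \ref{theorem:SRandSRCARD} applies: given a class $\calC$ defined with parameters in $\HH{\kappa}$ and a structure of size $\kappa$ in $\calC\cap\VV_\varepsilon$, the sentence asserting membership holds in some $\VV_\gamma$ with $\gamma<\varepsilon$, shrewdness-inside-$\VV_\varepsilon$ reflects it below $\kappa$, and the resulting smaller structure together with the reflection embedding witnesses $\SRm{\calC}{\kappa}$ in $\VV_\varepsilon$. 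I expect the main obstacles to be twofold: in (1), producing the unreflectable witness with a \emph{hereditarily small} defining parameter while preserving the fact that it cannot be reflected below $\kappa$ (naive existential quantification over the witnessing subset destroys the counterexample); and in (2), guaranteeing that relativisation to $\VV_\delta$ computes $\calC$-membership correctly for exactly the structures that occur, which is precisely what the choice of $j$ with $j(\bar\delta)=\delta$ is designed to secure.
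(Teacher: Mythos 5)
Your proposal does not follow the paper's route, and in parts (2) and (3) the direct strategies you propose run into provable obstructions; this is precisely why the paper's arguments are indirect. For (2): the theorem concerns classes defined by $\Sigma_n$-truth in $\VV$, and your relativised class $\Set{x}{\VV_\delta\models x\in\calC}$ is a genuinely different class when $n\geq 3$, since a cardinal $\delta$ singled out by a $\Sigma_2$-property need not be $\Sigma_n$-correct (it is typically not even $\Sigma_2$-correct, e.g.\ when $\delta$ is the least inaccessible). Elementarity of $j$ relates satisfaction in $\VV_{\bar\delta}$ to satisfaction in $\VV_{j(\bar\delta)}$; it says nothing about agreement between $\VV_\delta$ and $\VV$, so no choice of $j$ with $j(\bar\delta)=\delta$ can make the relativisation faithful. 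Moreover, your conclusion that the reflection point can be taken to be the critical point, so that $\rho<\kappa$, is provably false in general: by Corollary \ref{corollary:0-ShrewdRefl}, under $\VV=\LL$ every cardinal $\rho$ satisfying $\SRm{\calC}{\rho}$ for all $\Sigma_2$-definable classes with parameters in $\HH{\rho}$ (and hence for all $\Sigma_n$-definable ones, $n\geq 2$) is weakly shrewd; since it is consistent (relative to a subtle cardinal, by Theorem \ref{theorem:WShrewNonShrewGCH} and Corollary \ref{corollary:Downwards}) that $\VV=\LL$ holds and the \emph{least} weakly shrewd cardinal $\kappa$ is not shrewd, in that model no $\rho<\kappa$ can work. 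The paper instead argues by contradiction: assuming failure at every $\rho\in(\alpha,\delta)$, it lets $\varepsilon$ be the least strong limit above $\delta$ such that $\HH{\varepsilon}$ believes this failure; then $\{\varepsilon\}$ is itself $\Sigma_2$-definable from parameters in $\HH{\kappa}$, Lemma \ref{lemma:SRnonShrewdFix} yields a hyper-shrewdness embedding fixing $\varepsilon$, and the contradiction is derived entirely inside $\HH{\varepsilon}$, where class membership is absolute; the reflection point $\rho$ is never exhibited, and Lemma \ref{lemma:ShrewdReflection} is not invoked at all.

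For (3), two steps fail. First, Jensen covering does not upgrade $\LL$-inaccessibles to $\VV$-inaccessibles, so Theorem \ref{theorem:WShrewNonShrewGCH} plus covering cannot ``locate a genuinely inaccessible $\varepsilon\in(\kappa,\delta)$''; the paper instead takes $\varepsilon$ to be the \emph{least fixed point} of a hyper-shrewdness embedding $j$ in $X\cap[\bar{\kappa},\delta]$ and proves that this particular ordinal is inaccessible (regularity because the $<_\LL$-least cofinal function would produce a smaller fixed point, using covering; strong limitness from the inaccessibility of $\kappa$ and minimality). Second, the implication ``$\kappa$ is shrewd in $\VV_\varepsilon$, hence the localised reflection arguments give $\SRm{\calC}{\kappa}$ in $\VV_\varepsilon$ for \emph{all} definable classes'' is false: if $\kappa$ is the least shrewd cardinal of $\VV_\varepsilon$, then $\{\kappa\}$ is definable in $\VV_\varepsilon$ without parameters, and structural reflection fails there for the definable class of all structures of size $\kappa$ in the trivial language (the same phenomenon as in Corollary \ref{corollary:ShrewdNoMoreRefl}); this also shows one cannot take $\varepsilon$ to be, say, the least inaccessible above $\kappa$. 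What makes the paper's argument work is not internal shrewdness but the \emph{external} embedding $j$: because $\varepsilon$, the parameter $z$ and the G\"odel code $a$ are all fixed by $j$, every $\VV_\varepsilon$-definable class equals its own $j$-image, so a pulled-back counterexample structure yields an elementary embedding lying in $\VV_\varepsilon$ --- a property of $\varepsilon$ that cannot be formulated or obtained inside $\VV_\varepsilon$, and which also handles non-standard formula codes. Finally, in (1) you leave open exactly the point you flag yourself: the obstruction cardinal witnessing non-shrewdness is naturally defined from $\kappa$, which is not an element of $\HH{\kappa}$, while reflecting the obstruction below $\kappa$ loses the guarantee $\delta>\kappa$. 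The paper closes this circle through Lemma \ref{lemma:SRnotShrewd}: weakly shrewd plus not shrewd implies $\kappa$ is not $\Sigma_2$-reflecting (the genuinely involved step, combining minimality of the obstruction, $\Sigma_2$-reflection and $\Sigma_1$-absoluteness), after which the failure of $\VV_\kappa\prec_{\Sigma_2}\VV$ directly hands over a $\Sigma_2$-formula with a parameter in $\VV_\kappa=\HH{\kappa}$ whose least witnessing level lies above $\kappa$.
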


Note that the set $\{2^{\aleph_0}\}$ is always definable by a $\Sigma_2$-formula without parameters. 
 In particular, the second part of the above theorem tells us that the existence of a weakly shrewd cardinal smaller than  the cardinality of the continuum implies the existence of various local reflection points below $2^{\aleph_0}$. 
  By Theorem \ref{theorem:WShrewNonShrewGCH}, the existence of such cardinals is consistent relative to the existence of a subtle cardinal.

Theorem \ref{theorem:HigherReflection} now allows us to show that $\ZFC$ is consistent with the existence of cardinals with maximal local structural reflection properties. 
 In the light of the results of {\cite[Section 4]{zbMATH06029795}}, the existence of such cardinals can be seen as a localized version of \emph{Vop{\v e}nka's Principle}. 
 Our results show that the consistency strength of this local principle is surprisingly small. 
Moreover, they show that such reflection points can consistently exist below the cardinality of the continuum. 
 As above, we let $\calL_{\dot{c}}$ denote the first-order language extending the language $\calL_\in$ by a constant symbol $\dot{c}$. Given $0<n<\omega$, we let $\mathsf{SR}^-_n$ denote the $\calL_{\dot{c}}$-sentence stating that $\dot{c}$ is an infinite cardinal and  $\SRm{\calC}{\dot{c}}$ holds for every class $\calC$ of structures of the same type that is definable by a $\Sigma_n$-formula in $\calL_\in$ with parameters in $\HH{\dot{c}}$. 
 %

\begin{theorem}\label{theorem:ConsHyper}
 \begin{enumerate}
  \item The   $\calL_\in$-theory $$\ZFC ~ + ~ \anf{\textit{There exists a weakly shrewd cardinal that is not  shrewd}}$$ proves the existence of a transitive model of the $\calL_{\dot{c}}$-theory\footnote{More precisely, given some canonical formalization of this $\calL_{\dot{c}}$-theory, the above $\calL_\in$-theory proves the existence of a transitive set $M$ such that for some $\nu\in M$, every formalized axiom holds in the structure $\langle M,\in,\nu\rangle$ with respect to some formalized satisfaction relation.} $$\ZFC ~ + ~ \Set{\mathsf{SR}^-_n}{0<n<\omega}.$$  
  
  \item The following theories are equiconsistent: 
   \begin{enumerate}
    \item $\ZFC+\anf{\textit{There exists a weakly shrewd cardinal that is not shrewd}}$. 
    
    \item $\ZFC+\Set{\mathsf{SR}^-_n}{0<n<\omega}+\anf{\hspace{2.0pt}\dot{c}<2^{\aleph_0}}$. 
   \end{enumerate}
 \end{enumerate}
\end{theorem}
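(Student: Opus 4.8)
The plan is to prove the two parts together, deriving part (2) from part (1) together with a forcing argument, and to organize everything according to whether $0^\#$ exists.

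\textbf{Part (1).} The goal is to produce, inside $\LL$, a cardinal $\kappa$ that is shrewd in $\LL_\varepsilon$ for some ordinal $\varepsilon$ that is inaccessible in $\LL$; the desired transitive model will then be $\langle\LL_\varepsilon,\in,\kappa\rangle$. If $0^\#$ does not exist, I would apply Theorem \ref{theorem:WShrewNonShrewGCH}(1) to the given weakly shrewd non-shrewd cardinal to obtain such an $\varepsilon$ directly. If $0^\#$ exists, then the Silver indiscernibles are subtle in $\LL$, so $\LL$ contains subtle cardinals; since $\LL$ is a model of $V=\LL$ in which $0^\#$ does not exist, Theorem \ref{theorem:WShrewNonShrewGCH}(2) computed inside $\LL$ yields an inaccessible weakly shrewd non-shrewd cardinal there, and a further internal application of Theorem \ref{theorem:WShrewNonShrewGCH}(1) again produces a cardinal $\kappa$ shrewd in some $\LL_\varepsilon$ with $\varepsilon$ inaccessible in $\LL$. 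In either case $\LL_\varepsilon\models\ZFC$, and an adaptation of the proof of Lemma \ref{lemma:ShrewdReflection} — now invoking the full strength of shrewdness, whose reflection over the $\VV_{\kappa+\alpha}$-hierarchy is not restricted to $\Sigma_2$-formulas — shows that $\LL_\varepsilon\models\SRm{\calC}{\kappa}$ for every class $\calC$ definable with parameters in $\HH{\kappa}$ by a formula of arbitrary complexity. Hence $\langle\LL_\varepsilon,\in,\kappa\rangle$ models $\ZFC+\Set{\mathsf{SR}^-_n}{0<n<\omega}$.

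\textbf{Part (2).} The implication from (b) to the consistency of (a) is immediate and does not need forcing: in any model of (b), the axiom $\mathsf{SR}^-_2$ makes $\dot c$ satisfy $\SRm{\calC}{\dot c}$ for every $\Sigma_2$-definable class, so by Theorem \ref{theorem:SRandSRCARD} the least cardinal $\nu\leq\dot c$ with this property is weakly shrewd, while $\nu\leq\dot c<2^{\aleph_0}$ prevents $\nu$ from being strongly inaccessible and hence from being shrewd. Thus the $\calL_\in$-reduct of any model of (b) witnesses (a).

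For the converse I would start from the model $\langle\LL_\varepsilon,\in,\kappa\rangle$ of part (1), fix a regular cardinal $\kappa<\lambda<\varepsilon$ of $\LL_\varepsilon$, and force with $\Add{\omega}{\lambda}^{\LL_\varepsilon}$. As $\LL_\varepsilon$ is a set, a generic $G$ exists in $\VV$, and $\LL_\varepsilon[G]\models\ZFC+2^{\aleph_0}=\lambda$, so $\kappa=\dot c<2^{\aleph_0}$ holds there. The main obstacle is to show that $\langle\LL_\varepsilon[G],\in,\kappa\rangle$ still satisfies every $\mathsf{SR}^-_n$. Given a $\Sigma_n$-definable class $\calC$ with parameter $p\in\HH{\kappa}^{\LL_\varepsilon[G]}$ and a structure $A\in\calC$ of cardinality $\kappa$ in the extension, I would pass to nice $\Add{\omega}{\lambda}$-names $\dot A$ and $\dot p$ in $\LL_\varepsilon$ and reflect, using the shrewdness of $\kappa$ in $\LL_\varepsilon$, the forcing statement asserting that some condition in $G$ forces $\dot A$ into $\calC$; the enabling facts are that $\Add{\omega}{\ell}$ is uniformly definable from $\ell$, that the chain condition keeps names for subsets of $\kappa$ hereditarily small, and that $\Add{\omega}{\bar\lambda}$ is a complete suborder of $\Add{\omega}{\lambda}$ with $G\restriction\bar\lambda$ generic. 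Reflecting the formula expressing \anf{there is $\ell>v_1$ and a nice $\Add{\omega}{\ell}$-name forced into $\calC$} sends $\kappa$ to some $\bar\kappa<\kappa$ together with data $\bar\lambda,\dot{\bar A},\dot{\bar p}$, producing a structure $\bar A=\dot{\bar A}[G\restriction\bar\lambda]$ of cardinality $\bar\kappa<\kappa$ and an elementary embedding $\bar A\to A$ inside $\LL_\varepsilon[G]$. The genuinely delicate point, which I expect to require the full machinery developed for Theorem \ref{theorem:HigherReflection}, is the treatment of parameters $p$ that are new reals depending on Cohen coordinates above $\kappa$: one must reflect $\dot p$ simultaneously and argue that the resulting $\bar A$ lies in $\calC$ for the original parameter, the membership being recovered from the embedding and the absoluteness of the $\Sigma_n$-definition rather than from reflecting the parameter away. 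Once this preservation is established, $\langle\LL_\varepsilon[G],\in,\kappa\rangle$ is a transitive model of $\ZFC+\Set{\mathsf{SR}^-_n}{0<n<\omega}+\anf{\dot{c}<2^{\aleph_0}}$, which completes the remaining consistency implication.
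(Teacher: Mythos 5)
Your part (1) breaks at the claimed ``adaptation of Lemma \ref{lemma:ShrewdReflection}''. The restriction to $\Sigma_2$-definable classes in that lemma is essential, not an artifact: shrewdness (even full shrewdness) reflects a statement into some $\VV_{\bar{\kappa}+\bar{\alpha}}$, equivalently into an $\HH{\bar{\theta}}$ with $\bar{\theta}<\kappa$, and for a $\Sigma_n$-formula with $n\geq 3$ there is no way to verify that the reflected structure really lies in $\calC$, since $\HH{\bar{\theta}}$ need not be $\Sigma_n$-correct. In fact the implication you invoke is provably false for the very model you build: Lemma \ref{lemma:ConsHyperShrewd}, which underlies Theorem \ref{theorem:WShrewNonShrewGCH}(1), takes $\varepsilon$ to be the \emph{least} $\LL$-inaccessible above $\kappa$, so in $\LL_\varepsilon$ the cardinal $\kappa$ is the largest inaccessible cardinal and $\{\kappa\}$ is definable there without parameters. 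Consequently the class of all structures for the empty language whose cardinality equals the largest inaccessible cardinal is a parameter-free definable class of structures of the same type, all of cardinality $\kappa$, and $\SRm{\calC}{\kappa}$ fails for it in $\LL_\varepsilon$ --- exactly the obstruction exploited in Proposition \ref{proposition:MinIntervalDef} and Corollary \ref{corollary:ShrewdNoMoreRefl}. So $\langle\LL_\varepsilon,\in,\kappa\rangle$ does \emph{not} model all $\mathsf{SR}^-_n$. This is why the paper does not stop at Theorem \ref{theorem:WShrewNonShrewGCH}(1): it first forces with $\Add{\delta^+}{1}^\LL$ (Lemma \ref{lemma:MakeDefShrewd}) to arrange the hypotheses of Theorem \ref{theorem:HigherReflection}(iii), and there $\varepsilon$ is produced as a fixed point $j(\varepsilon)=\varepsilon$ of a hyper-shrewdness embedding; it is this external fixed-point property, not shrewdness of $\kappa$ inside $\LL_\varepsilon$, that makes every $\VV_\varepsilon$-definable class $j$-invariant and rules out definable counterexamples such as the one above.

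In part (2), your direction from (b) to the consistency of (a) is correct and is essentially the paper's argument. The converse direction has two further problems beyond its reliance on part (1). First, ``as $\LL_\varepsilon$ is a set, a generic $G$ exists in $\VV$'' is false: generics over uncountable transitive set models need not exist in $\VV$; for instance, the hypotheses of the theorem are compatible with $\VV=\LL$ (by Theorem \ref{theorem:WShrewNonShrewGCH}(2), since subtle cardinals relativize to $\LL$), and then every real lies in $\LL_{\omega_1}\subseteq\LL_\varepsilon$, so no filter on $\Add{\omega}{\lambda}^{\LL_\varepsilon}$ is $\LL_\varepsilon$-generic. Second, the preservation of all $\mathsf{SR}^-_n$ under $\Add{\omega}{\lambda}$, which you yourself flag as open, is precisely where the same correctness obstruction recurs: shrewdness of $\kappa$ in the ground model cannot reflect the high-complexity forcing statements you describe, for the reason given above. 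The paper avoids both issues by never constructing a transitive model of the full theory together with $\anf{\dot{c}<2^{\aleph_0}}$: it passes (via Theorem \ref{theorem:WShrewNonShrewGCH}(iii)) to a model with a weakly shrewd $\kappa<2^{\aleph_0}$, applies Theorem \ref{theorem:HigherReflection}(ii) with $\delta=2^{\aleph_0}$ to get, for each $n$ separately, a reflection point $\rho<2^{\aleph_0}$ satisfying $\mathsf{SR}^-_n$, and then concludes with the Compactness Theorem, accepting a possibly ill-founded model of the full theory.
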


This result answers the first two questions formulated above. 
The first part  of the above theorem shows that no large cardinal notion with  consistency strength greater than or equal to the existence of a weakly shrewd cardinal that is not shrewd can be characterized through the principle $\mathrm{SR}^-$.  
Moreover, the second part of the corollary shows that no large cardinal property that entails strong inaccessibility can be characterized through the principle $\mathrm{SR}^-$. 
 In particular, this shows that the statement of {\cite[Theorem 3.5]{zbMATH06618251}} 
  needs extra assumptions.\footnote{For example, the 
  argument presented in \cite{zbMATH06618251} 
  works for all cardinals $\kappa$ satisfying $\kappa=\kappa^{{<}\kappa}$.}


In order to answer the third of the above questions, we now turn to the characterizations of large cardinal notions weaker than shrewdness through principles of structural reflection. 
 In the light of the above results, 
 %
 we  introduce further restricted forms of $\Sigma_2$-definability 
 that will enable us to characterize several classical weak large cardinal notions through principles of structural reflection. 
  To motivate the upcoming definition, first observe that for every $0<n<\omega$ and every $\Sigma_n(R)$-definable class $\calC$ of structures, the class of all  isomorphic copies of elements of $\calC$ is again $\Sigma_n(R)$-definable from the same parameters. 
  Next, note that $\Sigma_1$-absoluteness implies that a class $Q$ is definable by a $\Sigma_1$-formula with parameter $z$ if and only if there is a $\Sigma_1$-formula $\varphi(v_0,v_1)$ with the property that for every infinite cardinal $\delta$ with $z\in\HH{\delta^+}$, $$\HH{\delta^+}\cap Q ~ = ~ \Set{x\in\HH{\delta^+}}{\HH{\delta^+}\models\varphi(x,z)}$$ holds. 
 In contrast, let $T$ denote the class of all triples $\langle\delta,x,a\rangle$ with the property that $\delta$ is an infinite cardinal, $x$ is an element of $\HH{\delta^+}$ and $a$ is an element of the set $\mathsf{Fml}$ of formalized $\calL_\in$-formulas with the property that $\mathsf{Sat}(\HH{\delta^+},x,a)$ holds, where $\mathsf{Sat}$ denotes the canonical formalized satisfaction relation for $\calL_\in$-formulas.\footnote{Note that the classes $\mathsf{Fml}$ and $\mathsf{Sat}$ are both defined by $\Sigma_1$-formulas. Moreover, by using codes for negated formulas, it is easy to see that the complement of $\mathsf{Sat}$ is also definable by a $\Sigma_1$-formula.}  
 Then it is easy to see that the class $T$ is definable by a $\Sigma_2$-formula without parameters and \emph{Tarski's Undefinability of Truth Theorem} implies that for every infinite cardinal $\delta$, the intersection $\HH{\delta^+}\cap T$ is not definable in $\HH{\delta^+}$. 
These  observations motivate the restricted form of $\Sigma_2$-definability introduced in the definition below that provides us with a notion of complexity that lies strictly in-between $\Sigma_1$- and $\Sigma_2$-definability (see Proposition \ref{proposition:SpecialCAseSigmanR} below).

\begin{definition}
 Let $R$ be a class and let $n>0$ be a natural number. 
 \begin{enumerate}
  \item Given a set  $z$, a class $S$ is \emph{uniformly locally $\Sigma_n(R)$-definable in the parameter  $z$} if there is a $\Sigma_n(R)$-formula $\varphi(v_0,v_1)$ with the property that $$\HH{\kappa^+}\cap S ~ = ~ \Set{x\in\HH{\kappa^+}}{\langle\HH{\kappa^+},\in,R\rangle\models\varphi(x,z)}$$ holds for every infinite cardinal $\kappa$ with $z\in\HH{\kappa}$. 
  
  \item Given a class $Z$, a class $\calC$ of structures of the same type is a \emph{local $\Sigma_n(R)$-class over $Z$} if the following statements hold: 
   \begin{enumerate}
    \item $\calC$ is closed under isomorphic copies. 
    
    \item $\calC$ is uniformly locally $\Sigma_n(R)$-definable in a parameter contained in $Z$. 
   \end{enumerate}
 \end{enumerate}
\end{definition}

  It can easily be shown that no new large cardinal characterizations can be obtained through canonical $\Pi^1_1$-classes $R$ and the principle $\mathrm{SR}$ for local $\Sigma_1(R)$-classes. 
 First, note that the class $\bar{\calV}$  of all $\calL_\in$-structures that are isomorphic to an element of the class $\calV$ defined above is a local $\Sigma_1(PwSet)$-class over $\emptyset$. 
  This shows that a cardinal $\kappa$ is the least supercompact cardinal if and only if it is the least cardinal with the property that $\SR{\calC}{\kappa}$ holds for every local $\Sigma_1(PwSet)$-class over $\emptyset$. 
  Moreover, if $\VV=\LL$ holds, then the fact that $\HH{\delta^+}=\LL_{\delta^+}$ holds for every infinite cardinal $\delta$ implies that the class $PwSet$ is $\Sigma_1(Cd)$-definable and hence the class $\bar{\calV}$ is definable in the same way. 
  This shows that no $\Pi^1_1$-class $R$ with the property that the class $Cd$ is $\Sigma_1(R)$-definable can be used to characterize large cardinal notions compatible with the assumption $\VV=\LL$ through the principle $\SR{\calC}{\kappa}$ for local $\Sigma_1(R)$-classes.

  In contrast, the next result shows how weak inaccessibility, weak Mahloness and weak $\Pi^1_n$-indescribability, introduced by L{\'e}vy in \cite{MR0281606}, can all be characterized through the validity of the principle $\mathrm{SR}^-$ for certain local $\Sigma_n(R)$-classes. 
   Recall that, given natural numbers $m$ and $n$, a cardinal $\kappa$ is \emph{weakly $\Pi^m_n$-indescribable} if for all relations $A_0,\ldots,A_{m-1}$ on the set $\kappa$ and all $\Pi^m_n$-sentences\footnote{See {\cite[p. 295]{MR1940513}}.} $\Phi$ in $\calL_\in$ that hold in the structure $\langle\kappa,\in,A_0,\ldots,A_{m-1}\rangle$, there exists an ordinal $\lambda<\kappa$ such that $\Phi$ holds in the corresponding substructure $\langle\lambda,\in,A_0,\ldots,A_{m-1}\rangle$ with domain $\lambda$ (see  \cite[Definition 1.(b)]{MR0281606}). 
   Note that a cardinal $\kappa$ is $\Pi^m_n$-indescribable if and only if it is weakly $\Pi^m_n$-indescribable and strongly inaccessible.

\begin{theorem}\label{theorem:CharacterizationSmallLargeCardinals}
 \begin{enumerate}
  \item The following statements are equivalent for every  cardinal $\kappa$: 
   \begin{enumerate}
    \item $\kappa$ is the least weakly inaccessible cardinal. 
  
    \item $\kappa$ is the least cardinal with the property that $\SRm{\calC}{\kappa}$ holds for every local $\Sigma_1(Cd)$-class $\calC$ over $\emptyset$. 
    
        \item $\kappa$ is the least cardinal with the property that $\SRm{\calC}{\kappa}$ holds for every local $\Sigma_1(Cd)$-class $\calC$ over $\HH{\kappa}$. 
    \end{enumerate}

  \item The following statements are equivalent for every  cardinal $\kappa$: 
 \begin{enumerate}
  \item $\kappa$ is the least weakly Mahlo cardinal. 
  
  \item $\kappa$ is the least cardinal with the property that $\SRm{\calC}{\kappa}$ holds for every local $\Sigma_1(Rg)$-class $\calC$ over $\emptyset$. 
  
    \item $\kappa$ is the least cardinal with the property that $\SRm{\calC}{\kappa}$ holds for every local $\Sigma_1(Rg)$-class $\calC$ over $\HH{\kappa}$.
   \end{enumerate}

  \item The following statements are equivalent for every  cardinal $\kappa$ and every $0<n<\omega$: 
 \begin{enumerate}
  \item $\kappa$ is the least weakly $\Pi^1_n$-indescribable cardinal.  
  
    \item $\kappa$ is the least cardinal with the property that $\SRm{\calC}{\kappa}$ holds for every local $\Sigma_{n+1}$-class over $\emptyset$.  
        
   \item $\kappa$ is the least cardinal with the property that $\SRm{\calC}{\kappa}$ holds for every local $\Sigma_{n+1}$-class over $\HH{\kappa}$.  
 \end{enumerate}
 \end{enumerate}
\end{theorem}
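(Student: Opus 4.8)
The plan is to prove each of the three equivalences by the same cyclic strategy, handling the predicates $R=Cd$, $R=Rg$ and the parameter-free case $R=\emptyset$ (with $\Sigma_{n+1}$ in place of $\Sigma_1(R)$) in parallel, and then to read off the ``least cardinal'' statements at the end. In every part I would establish $(c)\Rightarrow(b)\Rightarrow(a)\Rightarrow(c)$. The implication $(c)\Rightarrow(b)$ is immediate, since a local $\Sigma_m(R)$-class over $\emptyset$ is in particular a local $\Sigma_m(R)$-class over $\HH{\kappa}$. To derive the stated equivalences of ``least'' cardinals, I would note that $(a)\Rightarrow(c)$ makes the least cardinal with the given large cardinal property satisfy $(c)$, hence also $(b)$, so the least cardinal satisfying $(b)$ is at most the least one with the large cardinal property; while $(b)\Rightarrow(a)$ shows that the least cardinal satisfying $(b)$ already has the large cardinal property, so it is at least the least such. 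Together with $(c)\Rightarrow(b)$ this pins the three ``least'' cardinals to a common value.

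For $(a)\Rightarrow(c)$ I would argue by a Skolem hull followed by a transitive collapse, exactly as in the proof of Lemma \ref{lemma:ShrewdReflection} but localised to $\HH{\kappa^+}$. Given $A\in\calC$ of cardinality $\kappa$, where $\calC$ is a local $\Sigma_m(R)$-class over $\HH{\kappa}$ witnessed by a $\Sigma_m(R)$-formula $\varphi(v_0,v_1)$ and a parameter $z\in\HH{\kappa}$, I would (after identifying the domain of $A$ with $\kappa$) build a continuous $\subseteq$-chain of elementary submodels of $\langle\HH{\kappa^+},\in,R\rangle$ containing $A$, $z$ and $\tc{\{z\}}$ and closed under the cardinal-successor function, whose union $M$ has cardinality below $\kappa$ and satisfies $M\cap\kappa=\delta\in\On$. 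The large cardinal hypothesis is used precisely to control $\delta$ and to make the transitive collapse $\pi\colon M\to N$ carry the distinguished predicate correctly: weak inaccessibility (regular limit cardinal) forces $\delta$ to be a cardinal and hence $R^N=Cd\cap N$; weak Mahloness additionally forces $\delta$ to be regular and $R^N=Rg\cap N$; and weak $\Pi^1_n$-indescribability guarantees that the collapsed relational data still reflect the relevant $\Pi^1_n$-statement. Since $\pi(A)$ is the induced elementary substructure $A\restriction\delta$ of $A$ and $\varphi$ is $\Sigma_m(R)$, upward $\Sigma_1$-absoluteness together with the agreement of the predicate on $N$ yields $\langle\HH{\delta^+},\in,R\rangle\models\varphi(A\restriction\delta,z)$, so $A\restriction\delta\in\calC$ and its inclusion into $A$ is the required elementary embedding of a member of $\calC$ of cardinality below $\kappa$.

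For $(b)\Rightarrow(a)$ I would, in each part, isolate a single canonical local class whose reflection is equivalent to the corresponding combinatorial reflection property, and then invoke the classical characterisations (weakly inaccessible $=$ regular limit cardinal, weakly Mahlo $=$ the regular cardinals are stationary, weakly $\Pi^1_n$-indescribable $=$ $\Pi^1_n$-sentences reflect to proper initial segments). For weak inaccessibility I would use the local $\Sigma_1(Cd)$-class of structures isomorphic to $\langle X,\in,Cd\cap X,\nu\rangle$ with $\nu$ a cardinal, $\nu+1\subseteq X$, $\lvert X\rvert=\nu$ and $X$ internally elementary in the ambient $\HH{\nu^+}$ (formalised through the canonical satisfaction relation, as in the footnote to Theorem \ref{theorem:HigherReflection}), following the template of the class $\calW$ in Theorem \ref{theorem:SRandSRCARD} but with $\theta=\nu^+$, which is exactly the locality that scales weak shrewdness down to weak inaccessibility. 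Applying $\SRm{\calC}{\kappa}$ to a structure built from a suitably chosen subset of $\kappa$ returns a smaller cardinal $\bar\kappa<\kappa$ whose $Cd$-predicate reflects that of $\kappa$, and analysing the returned elementary embedding forces the cardinals to be unbounded below $\kappa$ and $\kappa$ to be regular. The Mahlo case is identical with $Rg$ in place of $Cd$, so that the reflected predicate is the regular cardinals and stationarity is recovered. For weak $\Pi^1_n$-indescribability I would instead exploit the standard translation between second-order logic over $\kappa$ and $\Sigma_{n+1}$-definability over $\HH{\kappa^+}$ (cf. Proposition \ref{proposition:SpecialCAseSigmanR}): given relations $\vec A$ on $\kappa$ and a $\Pi^1_n$-sentence $\Phi$ holding in $\langle\kappa,\in,\vec A\rangle$, the class of its models is a local $\Sigma_{n+1}$-class, and the embedding supplied by $\mathrm{SR}^-$ produces a smaller model $\langle\lambda,\in,\vec A\restriction\lambda\rangle$, reflecting $\Phi$ below $\kappa$.

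The main obstacle I expect lies in $(b)\Rightarrow(a)$: designing the canonical classes so that the mere existence of an elementary embedding from a smaller member genuinely encodes the defining combinatorial property, rather than only reflecting the property to some smaller cardinal of the same kind (for instance, reflecting ``singular'' to a smaller singular cardinal, which would be harmless). The delicate point is that structural reflection returns an \emph{abstract} elementary embedding $j\colon B\to A$, not an initial-segment substructure, so one must recover control over $j[B]\cap\kappa$ and over the collapse point before the predicate $R$ can be read off; this is where the internal-elementarity clause of the witnessing classes and the analysis of the returned embedding do the real work. A secondary difficulty, flagged in the paper's discussion of the class $\calW$, is that witnesses to $\Sigma_m(R)$-membership can have cardinality exceeding $\kappa$; the restriction to \emph{uniformly locally} definable classes, which evaluates membership inside $\HH{\kappa^+}$ rather than inside arbitrarily large $\HH{\theta}$, is exactly what keeps the argument at the level of these weak large cardinal notions and must be invoked throughout both directions.
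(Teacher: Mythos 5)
Your global bookkeeping (the cyclic implications plus the comparison of the three ``least'' cardinals) is sound and agrees with the paper's Corollary \ref{corollary:CharaCLassesShrews}, and your Skolem-hull argument for (a)$\Rightarrow$(c) does work in parts (i) and (ii), where the defining formulas are $\Sigma_1(R)$; it is essentially the paper's Lemma \ref{lemma:ReflCardReg}. The serious problems lie elsewhere. First, your witness classes for (b)$\Rightarrow$(a) are illegitimate: you require the members $\langle X,\in,Cd\cap X,\nu\rangle$ to be fully elementary in $\HH{\nu^+}$, ``formalised through the canonical satisfaction relation'', but this clause is not uniformly locally $\Sigma_1(Cd)$-definable. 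Membership must be evaluated inside $\HH{\lambda^+}$ for structures of cardinality $\lambda$, and at the one level that matters for $\SRm{\calC}{\kappa}$, namely $\nu=\lambda=\kappa$, the predicate $\anf{X\prec\HH{\kappa^+}}$ cannot be defined over $\HH{\kappa^+}$: any $X\prec\HH{\kappa^+}$ with $\kappa\subseteq X$ is automatically transitive (every element of $\HH{\kappa^+}$ admits a surjection from $\kappa$ onto its transitive closure, and $X$ contains such a surjection pointwise), so a definition of your class over $\HH{\kappa^+}$ would yield a definition of satisfaction in $\HH{\kappa^+}$ over $\HH{\kappa^+}$ itself, contradicting Tarski's theorem; the formalized $\mathsf{Sat}$ does not help, since inside $\HH{\kappa^+}$ the structure $\HH{\kappa^+}$ is not a set. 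This is exactly the trap that the paper's Lemma \ref{lemma:EmbFromRefl} is built to avoid: its class consists of \emph{transitive} sets $N$ required to be only $\Sigma_{n-1}$-elementary in $\langle\HH{\delta^+},\in,R\rangle$, a condition that is vacuous ($\Sigma_0$) in parts (i) and (ii) and expressible by a universal $\Sigma_{n-1}$-formula in general. Moreover, the fact that the returned embedding has critical point exactly $\bar{\kappa}$ is not obtained by ``analysing'' it, but from the minimality of $\kappa$: by Corollary \ref{corollary:LeastReflectionPoint}, a smaller critical point would, via Lemma \ref{lemma:ReflUniformLocalFromEmb}, produce a reflection point strictly below $\kappa$.

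Second, part (iii) is broken in both directions. In (a)$\Rightarrow$(c) you transfer $\varphi(A\restriction\delta,z)$ from the collapsed hull $N$ to $\HH{\delta^+}$ by ``upward $\Sigma_1$-absoluteness''; for local $\Sigma_{n+1}$-classes with $n\geq 1$ this is false --- you need $N\prec_{\Sigma_n}\HH{\delta^+}$, and a Skolem hull plus collapse only gives transitivity. Extracting such $\Sigma_n$-elementary transitive hulls from weak $\Pi^1_n$-indescribability is the real content of the paper's Lemma \ref{lemma:WeakIndWeakShrewd}: one codes $\HH{\delta^+}$ by well-founded extensional relations on $\delta$, translates $\Pi_n$-statements over $\HH{\delta^+}$ into $\Pi^1_n$-statements over $\delta$, and applies L\'evy's reflection theorem; nothing in your sketch substitutes for this machinery. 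Conversely, in (b)$\Rightarrow$(a) you apply $\mathrm{SR}^-$ to the class of models of a $\Pi^1_n$-sentence $\Phi$; this returns an abstract elementary embedding $\map{j}{\langle\mu,\in,\vec{B}\rangle}{\langle\kappa,\in,\vec{A}\rangle}$ from \emph{some} smaller model of $\Phi$, whereas weak $\Pi^1_n$-indescribability requires $\Phi$ to hold in an initial segment $\langle\lambda,\in,\vec{A}\cap\lambda\rangle$; since $\vec{B}$ need not be $\vec{A}\restriction\mu$ and $j$ need not be an inclusion, this is precisely the ``delicate point'' you flag without resolving it. The paper never reflects $\Phi$ directly: it uses its canonical class (plus the minimality argument) to produce a transitive $N\prec_{\Sigma_n}\HH{\crit{j}^+}$ with $j(\crit{j})=\kappa$, pulls $\vec{A}$ back to $\vec{A}\cap\crit{j}\in N$, and then pushes the truth of $\Phi$ up from $N$ to $\HH{\crit{j}^+}$ by the same coding, thereby landing on a genuine initial segment.
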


 The techniques developed in the proof of the above result will also allow us to show that a large cardinal property isolated by Hamkins is in fact equivalent to L{\'e}vy's notion of weak $\Pi^1_1$-indescribability. 
 Hamkins defined a cardinal $\kappa$ to have the \emph{weakly compact embedding property} if for every transitive set $M$ of cardinality $\kappa$ with $\kappa\in M$, there is a transitive set $N$ and an elementary embedding $\map{j}{M}{N}$ with $\crit{j}=\kappa$ (see \cite{HamkinsTalk}). 
 He then showed that this property implies both weak Mahloness and the tree property. Moreover, he showed that if $\kappa$ is weakly compact and $G$ is $\Add{\omega}{\kappa^+}$-generic over $\VV$,  then $\kappa$ has the weakly compact embedding property in $\VV[G]$. 
  In the proof of Theorem \ref{theorem:CharacterizationSmallLargeCardinals}, we will show that weak $\Pi^1_n$-indescribability can be characterized through the existence of certain elementary embedding and this equivalence also allows us to conclude that weak $\Pi^1_1$-indescribability coincides with the weakly compact embedding property. 
  These observations will also show that the results of {\cite[Section 4]{SECFLC}} only work under the additional assumption that the given cardinal is strongly inaccessible.

 Finally, in unpublished work, Cody, Cox, Hamkins and Johnstone showed that various cardinal invariants of the continuum do not possess the weakly compact embedding property (see \cite{HamkinsTalk}). 
   We will extend these results by showing that various definable cardinals cannot be reflection points of certain classes of structures.  
   For examples, our methods will allow us to show that, although there can consistently exist weakly shrewd cardinals below the \emph{dominating number $\mathfrak{d}$}, the cardinal $\mathfrak{d}$ is neither weakly shrewd nor the successor of a weakly shrewd cardinal.


\section{Shrewd cardinals}

In this section, we derive some consequences of shrewdness that will be used in the proof of Theorem \ref{theorem:ConsStrengthRefl}. 
 The starting point of this analysis is the following embedding characterization for shrewd cardinals that resembles Magidor's classical characterization of supercompactness (see {\cite{MR0295904}} and also {\cite[Theorem 22.10]{MR1994835}}):

\begin{lemma}\label{lemma:ShrewdEmbeddings}
 The following statements are equivalent for every  cardinal $\kappa$: 
 \begin{enumerate}
  \item $\kappa$ is a shrewd cardinal.  
  
  \item For all sufficiently large cardinals $\theta>\kappa$, there exist 
  cardinals $\bar{\kappa}<\bar{\theta}<\kappa$, 
  an elementary submodel $X$\footnote{Note that, in general, the elementary submodel $X$ will not be transitive.} of $\HH{\bar{\theta}}$   %
  and an elementary embedding $\map{j}{X}{\HH{\theta}}$ such that  $\bar{\kappa}+1\subseteq X$,    $j\restriction\bar{\kappa}=\id_{\bar{\kappa}}$ and $j(\bar{\kappa})=\kappa$. 
  
    \item For all cardinals $\theta>\kappa$ and all $z\in\HH{\theta}$, there exist 
  cardinals $\bar{\kappa}<\bar{\theta}<\kappa$, 
  an elementary submodel $X$ of $\HH{\bar{\theta}}$   %
  and an elementary embedding $\map{j}{X}{\HH{\theta}}$ such that  $\bar{\kappa}+1\subseteq X$,    $j\restriction\bar{\kappa}=\id_{\bar{\kappa}}$,  $j(\bar{\kappa})=\kappa$ and $z\in\ran{j}$. 
 \end{enumerate}
\end{lemma}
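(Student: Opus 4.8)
The plan is to prove the cycle $(1)\Rightarrow(3)\Rightarrow(2)\Rightarrow(1)$. Since shrewdness implies total indescribability and hence strong inaccessibility, I may assume throughout that $\kappa$ is inaccessible, so that $\HH\kappa=\VV_\kappa$, $\betrag{\VV_\kappa}=\kappa$ and $\kappa^{<\omega}=\kappa$. The implication $(3)\Rightarrow(2)$ is then a trivial weakening: given $(3)$, for any $\theta>\kappa$ apply it with $z=\emptyset$ and discard the clause $z\in\ran j$. The heart of the matter is $(1)\Rightarrow(3)$, where shrewd reflection must manufacture a genuine elementary submodel of a genuine $\HH{\bar\theta}$ together with the embedding. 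Fix $\theta>\kappa$ and $z\in\HH\theta$, fix a well-ordering $<^*$ of $\HH\theta$, and choose $\alpha$ large enough that $\HH\theta,<^*,z\in\VV_{\kappa+\alpha}$ and $\VV_{\kappa+\alpha}$ can define the satisfaction relation of $\mathfrak A=\langle\HH\theta,\in,<^*,\kappa,z\rangle$. The key move is to feed the \emph{parameterized} elementary diagram of $\mathfrak A$ to shrewdness: let $A$ be the subset of $\VV_\kappa$ coding $\{\langle\ulcorner\varphi\urcorner,s\rangle : s\in\kappa^{<\omega},\ \mathfrak A\models\varphi(s,\kappa,z)\}$, where the finite tuples $s$ of ordinals below $\kappa$ are the parameters and $\kappa,z$ are named by distinguished constants. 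Because $\kappa$ is inaccessible, $A\subseteq\VV_\kappa$; and, crucially, for $\bar\kappa<\kappa$ the trace $A\cap\VV_{\bar\kappa}$ is exactly the part of the diagram using parameters below $\bar\kappa$.

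I apply shrewdness to the formula $\Phi(A,\kappa)$ asserting ``$A$ is the parameterized theory of a structure $\langle\HH\lambda,\in,<^*,\kappa,z'\rangle$ for some cardinal $\lambda>\kappa$, some linear order $<^*$ satisfying the least-element scheme, and some $z'\in\HH\lambda$''; this holds in $\VV_{\kappa+\alpha}$ as witnessed by $\lambda=\theta$, so there are $\bar\kappa,\bar\alpha<\kappa$ with $\VV_{\bar\kappa+\bar\alpha}\models\Phi(A\cap\VV_{\bar\kappa},\bar\kappa)$. The payoff is that the witnessing cardinal $\bar\theta$ lies below $\bar\kappa+\bar\alpha<\kappa$, so the $\HH{\bar\theta}$ computed inside $\VV_{\bar\kappa+\bar\alpha}$ is the genuine $\HH{\bar\theta}$, carrying a genuine well-order $\bar{<}^*$ and an element $\bar z$ such that $\bar{\mathfrak A}=\langle\HH{\bar\theta},\in,\bar{<}^*,\bar\kappa,\bar z\rangle$ has the same parameterized theory below $\bar\kappa$ as $\mathfrak A$ (this is precisely what $A\cap\VV_{\bar\kappa}$ records). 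I then let $X$ be the Skolem hull in $\bar{\mathfrak A}$ of $\bar\kappa\cup\{\bar\kappa,\bar z\}$ and define $\map{j}{X}{\HH\theta}$ by sending the value in $\bar{\mathfrak A}$ of a Skolem term $t(s,\bar\kappa,\bar z)$ with $s\in\bar\kappa^{<\omega}$ to the value of the same term $t(s,\kappa,z)$ in $\mathfrak A$. Matching of the two parameterized theories below $\bar\kappa$ makes $j$ well-defined and elementary; the least-element scheme (inherited through the trace) makes the hull an elementary submodel, so $X\prec\HH{\bar\theta}$ and $\bar\kappa<\bar\theta<\kappa$. Reading off the constants yields $j\restriction\bar\kappa=\id_{\bar\kappa}$, $j(\bar\kappa)=\kappa$, $z=j(\bar z)\in\ran j$ and $\bar\kappa+1\subseteq X$, which is $(3)$.

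For $(2)\Rightarrow(1)$ I argue by pulling statements back along the embedding. Given $\Phi$, an ordinal $\alpha$ and $A\subseteq\VV_\kappa$ with $\VV_{\kappa+\alpha}\models\Phi(A,\kappa)$, choose a sufficiently large cardinal $\theta>\kappa$ with $\VV_{\kappa+\alpha},A\in\HH\theta$ and apply $(2)$ to obtain $\bar\kappa<\bar\theta<\kappa$, $X\prec\HH{\bar\theta}$ and $\map{j}{X}{\HH\theta}$ with $j\restriction\bar\kappa=\id_{\bar\kappa}$ and $j(\bar\kappa)=\kappa$. Since $\HH\theta\models{}$``$\exists\delta\,\exists C\subseteq\VV_\kappa\ \VV_{\kappa+\delta}\models\Phi(C,\kappa)$'' and the only parameter here is $\kappa=j(\bar\kappa)$, elementarity of $j$ together with $X\prec\HH{\bar\theta}$ yields some $\bar\alpha<\kappa$ reflecting this statement at $\bar\kappa$. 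The point that requires care is that the definition of shrewdness demands the reflecting parameter be \emph{exactly} $A\cap\VV_{\bar\kappa}$; here one uses that $\crit j=\bar\kappa$ forces $j$ to fix $\VV_{\bar\kappa}\cap X$ pointwise, so the trace of $A$ below $\bar\kappa$ is recovered correctly once $A$ itself is captured, which I arrange by coding $A$ into the instance to which the embedding is applied.

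The step I expect to be the main obstacle is the construction in $(1)\Rightarrow(3)$. The naive route of taking $Y\prec\HH\theta$ with $Y\cap\kappa\in\kappa$ and collapsing produces only a transitive set that believes ``$\VV=\HH{\bar\theta}$'' but is \emph{not} an elementary submodel of the real $\HH{\bar\theta}$, which is exactly why the lemma must permit $X$ to be non-transitive. Overcoming this is precisely where shrewdness is indispensable: reflecting the height $\alpha$ down to $\bar\alpha<\kappa$ keeps $\HH{\bar\theta}$ absolute between $\VV_{\bar\kappa+\bar\alpha}$ and $\VV$ (as $\bar\theta<\bar\kappa+\bar\alpha$), so the reflected diagram is realized in a \emph{genuine} $\HH{\bar\theta}$, while the term-by-term map into the \emph{fixed} large model $\HH\theta$ supplies the embedding. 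The bookkeeping ensuring that $A\cap\VV_{\bar\kappa}$ is exactly the parameterized theory below $\bar\kappa$, and that the least-element scheme survives restriction to that trace, is the most delicate part of the argument.
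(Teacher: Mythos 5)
Your implication $(1)\Rightarrow(3)$ is essentially the paper's own argument: code a parameterized elementary diagram of (a submodel of) $\HH{\theta}$ into a subset $A$ of $\VV_\kappa$, apply shrewdness to a formula $\Phi$ asserting that $A$ is such a diagram, and rebuild the embedding from the reflected diagram. (The paper routes the diagram through a bijection $b\colon\kappa\to Y$ with $Y\prec\HH{\theta}$ rather than through Skolem terms, and, importantly, builds extra clauses into $\Phi$ --- that $\delta$ is inaccessible and that there are unboundedly many strong limit cardinals --- precisely so that cardinals, sets of the form $\HH{\bar\theta}$, and satisfaction are computed correctly inside $\VV_{\bar\kappa+\bar\alpha}$; your remark that ``$\bar\theta<\bar\kappa+\bar\alpha$'' by itself secures this absoluteness is where such clauses are silently needed.)

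The genuine gap is in $(2)\Rightarrow(1)$. First, you announce that you may assume $\kappa$ is inaccessible because shrewd cardinals are inaccessible; but in this direction shrewdness is the conclusion, not the hypothesis, so inaccessibility is not available. This matters because your key claim --- that $\crit{j}=\bar\kappa$ ``forces $j$ to fix $\VV_{\bar\kappa}\cap X$ pointwise'' --- is false for a general cardinal $\bar\kappa$: if, say, $\omega_1\le\bar\kappa\le 2^{\aleph_0}$ and $f\in X$ injectively enumerates a set $x$ of $\bar\kappa$-many reals, then $x\in\VV_{\bar\kappa}\cap X$ (its rank is about $\omega+2$), yet $j(x)=\ran{j(f)}\supsetneq x$, because $j(f)$ is injective on $j(\bar\kappa)=\kappa>\bar\kappa$ even though every individual real in $x$ is fixed. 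Fixing $\VV_{\bar\kappa}\cap X$ pointwise requires knowing that $\kappa$ (equivalently $\bar\kappa$) is a strong limit, and this is exactly what the paper proves as a separate Claim from statement (2) before using it. Second, and more fundamentally, statement (2) gives no control whatsoever over $\ran{j}$ --- that is precisely the extra strength of (3) over (2) --- so your final step, ``which I arrange by coding $A$ into the instance to which the embedding is applied,'' is not available: nothing lets you make the given $A$ (or any set with the same trace below $\bar\kappa$) land in $\ran{j}$. The paper circumvents this by arguing by contradiction: if $\kappa$ is not shrewd, then the statement ``$\kappa$ is not shrewd for $\Phi$'' has $\kappa$ as its only parameter, so it pulls back through $j$ and yields \emph{new} witnesses $\alpha,A\in X$; applying $j$ to these, using $j(A)\cap\VV_{\bar\kappa}=A$ (legitimate once strong-limitness is established), and appealing to elementarity at $\VV_{\bar\kappa+\alpha}\in X$ gives the contradiction. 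Without this replacement of the original witness by a pulled-back one, your argument does not close.
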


\begin{proof}
 First, assume that (i) holds. 
  Fix an $\calL_\in$-formula $\Phi(v_0,v_1)$ with the property that $\Phi(A,\delta)$ expresses, in a canonical way, that the conjunction of the following statements holds true: 
  \begin{enumerate}
     \item[(a)] There exist unboundedly many strong limit cardinals. 

   
   \item[(b)] $\delta$ is an inaccessible cardinal. 
   
   \item[(c)] There is a cardinal $\theta>\delta$, a subset $X$ of $\HH{\theta}$ and a bijection $\map{b}{\delta}{X}$ such that the following statements hold: 
    \begin{itemize}    
     \item $\delta+1\subseteq X$, $b(0)=\delta$ and $b(\omega\cdot(1+\alpha))=\alpha$ for all $\alpha<\delta$. 
    
     
     \item The class $\HH{\theta}$ is a set and, given 
     $\alpha_0,\ldots,\alpha_{n-1}<\delta$ and an element $a$ of $\mathsf{Fml}$  that codes a formula with $n$ free variables,  
     we have 
      \begin{equation*}
       \begin{split}
         \langle a,\alpha_0,\ldots,\alpha_{n-1}\rangle\in A ~ & \Longleftrightarrow ~ \mathsf{Sat}(X,\langle b(\alpha_0),\ldots,b(\alpha_{n-1})\rangle,a) \\
          & \Longleftrightarrow ~ \mathsf{Sat}(\HH{\theta},\langle b(\alpha_0),\ldots,b(\alpha_{n-1})\rangle,a). 
         \end{split}
        \end{equation*}
    \end{itemize}
  \end{enumerate}
 
 Fix a cardinal $\theta>\kappa$, $z\in\HH{\theta}$ and a strong limit cardinal $\lambda>\theta$ with the property that $\VV_\lambda$ is sufficiently elementary in $\VV$. 
 Pick an elementary submodel $Y$ of $\HH{\theta}$ of cardinality $\kappa$ with $\kappa\cup\{\kappa,z\}\subseteq Y$ and a bijection $\map{b}{\kappa}{Y}$ satisfying $b(0)=\kappa$, $b(1)=\langle z,\kappa\rangle$ and $b(\omega\cdot(1+\alpha))=\alpha$ for all $\alpha<\kappa$. 
 Define $A$ to be the 
 set of all tuples $\langle a,\alpha_0,\ldots,\alpha_{n-1}\rangle$ with the property that $\alpha_0,\ldots,\alpha_{n-1}<\kappa$,  $a\in\mathsf{Fml}$ codes a formula with $n$ free variables and $\mathsf{Sat}(Y,\langle b(\alpha_0),\ldots,b(\alpha_{n-1})\rangle,a)$ holds.

 Then $\kappa+\lambda=\lambda$ and $\Phi(A,\kappa)$ holds in $\VV_\lambda$. In this situation, the shrewdness of $\kappa$ yields ordinals $\bar{\kappa},\bar{\lambda}<\kappa$ with the property that $\Phi(A\cap\VV_{\bar{\kappa}},\bar{\kappa})$ holds in $\VV_{\bar{\kappa}+\bar{\lambda}}$. 
 By the definition of the formula $\Phi$, we know that $\bar{\kappa}$ is an inaccessible cardinal, $\bar{\lambda}$ is a strong limit cardinal and hence $\bar{\kappa}+\bar{\lambda}=\bar{\lambda}<\kappa$. 
 Moreover, since statements of the form $\anf{x=\HH{\delta}}$ are  absolute between $\VV_{\bar{\lambda}}$ and $\VV$, and the formulas defining the classes $\mathsf{Fml}$ and $\mathsf{Sat}$ are upwards absolute from $\VV_{\bar{\lambda}}$ to $\VV$,  there exists a cardinal $\bar{\kappa}<\bar{\theta}<\bar{\lambda}$, a subset $X$ of $\HH{\bar{\theta}}$ and a bijection $\map{\bar{b}}{\bar{\kappa}}{X}$ such that the following statements hold: 
 \begin{itemize}
  
  \item $\bar{\kappa}+1\subseteq X$, $\bar{b}(0)=\bar{\kappa}$ and $\bar{b}(\omega\cdot(1+\alpha))=\alpha$ for all $\alpha<\bar{\kappa}$. 
  
  
  \item Given an $\calL_\in$-formula $\varphi(v_0,\ldots,v_{n-1})$ and $\alpha_0,\ldots,\alpha_{n-1}<\bar{\kappa}$, we have 
   \begin{equation*}
     \begin{split}
       & \HH{\theta}\models\varphi(b(\alpha_0),\ldots,b(\alpha_{n-1})) ~ \Longleftrightarrow ~ Y\models\varphi(b(\alpha_0),\ldots,b(\alpha_{n-1})) \\
      \Longleftrightarrow ~ & \HH{\bar{\theta}}\models\varphi(\bar{b}(\alpha_0),\ldots,\bar{b}(\alpha_{n-1})) ~ \Longleftrightarrow  ~  X\models\varphi(\bar{b}(\alpha_0),\ldots,\bar{b}(\alpha_{n-1})). 
   \end{split}
  \end{equation*}
 \end{itemize}
 
 This shows that $X$ is an elementary submodel of $\HH{\bar{\theta}}$ with $\bar{\kappa}+1\subseteq X$ and, if we define $$\map{j=b\circ\bar{b}^{{-}1}}{X}{\HH{\theta}},$$ then $j$ is an elementary embedding with $j\restriction\bar{\kappa}=\id_{\bar{\kappa}}$, $j(\bar{\kappa})=\kappa$ and $\langle z,\kappa\rangle\in\ran{j}$. Since we then also have $z\in\ran{j}$, we can conclude that  (iii) holds in this case.

  Now, assume that (ii) holds and assume, towards a contradiction, that there is an $\calL_\in$-formula $\Phi(v_0,v_1)$, an ordinal $\alpha$ and a subset $A$ of $\VV_\kappa$ witnessesing that $\kappa$ is not a shrewd cardinal.  
  Pick a sufficiently large strong limit cardinal $\theta>\kappa+\alpha$ with the property that $\HH{\theta}$ is sufficiently elementary in $\VV$. 
  By our assumption, we can find cardinals $\bar{\kappa}<\bar{\theta}<\kappa$ and  an elementary embedding $\map{j}{X}{\HH{\theta}}$ such that  $\bar{\kappa}+1\subseteq X\prec\HH{\bar{\theta}}$, $j\restriction\bar{\kappa}=\id_{\bar{\kappa}}$ and $j(\bar{\kappa})=\kappa$. 
  
  \begin{claim*}
   $\kappa$ is a strong limit cardinal. 
  \end{claim*}
  
  \begin{proof}[Proof of the Claim]
   Fix an ordinal $\mu<\bar{\kappa}$. Since we know that $\mu\in X$, $j(\mu)=\mu$ and $\HH{\theta}\models\anf{\textit{$\POT{\mu}$ exists}}$, elementarity yields an ordinal $\nu$ in $X$ with $X\models\anf{2^\mu=\nu}$. 
   But then $\HH{\bar{\theta}}\models\anf{2^\mu=\nu}$ and hence $\nu$ is a cardinal in $\VV$ with $2^\mu=\nu<\bar{\theta}<\kappa$.   
   In this situation, elementarity implies that $X\models\anf{2^\mu<\bar{\kappa}}$. Hence, we know that $\bar{\kappa}$ is a strong limit cardinal in $X$ and this allows us to conclude that $\kappa$ is a strong limit cardinal. 
 \end{proof}

  By elementarity, we can now find an ordinal $\alpha$ in $X$ and a subset $A$ of $\VV_{\bar{\kappa}}$ in $X$ with the property that the formula $\Phi(v_0,v_1)$, the ordinal $j(\alpha)$ and the subset $j(A)$ of $\VV_\kappa$ witness that $\kappa$ is not a shrewd cardinal. 
  Since the above claim shows that $\VV_{\bar{\kappa}}\subseteq X$ and $j\restriction\VV_{\bar{\kappa}}=\id_{\VV_{\bar{\kappa}}}$, we know that $j(A)\cap\VV_{\bar{\kappa}}=A$. 
  In particular, it follows that $\Phi(j(A),\kappa)$ holds in $\VV_{\kappa+j(\alpha)}$ and $\Phi(A,\bar{\kappa})$ does not hold in $\VV_{\bar{\kappa}+\alpha}$. 
  Since $\VV_{\bar{\kappa}+\alpha}$ is an element of $X$ with $j(\VV_{\bar{\kappa}+\alpha})=\VV_{\kappa+j(\alpha)}$, we can now use elementarity to derive a contradiction. 
 %
  %
  %
 %
 %
 %
\end{proof}

The above equivalence  allows us to easily deduce several consequences of shrewdness.

\begin{corollary}
 Shrewd cardinals are totally indescribable stationary limits of totally indescribable cardinals. 
\end{corollary}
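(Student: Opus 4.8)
The plan is to separate the statement into its two parts, the second being the substantive one. That a shrewd cardinal $\kappa$ is totally indescribable is the observation recorded immediately after Definition \ref{definition:ShrewdCardinals}: fixing $m,n<\omega$, a relation $A\subseteq\VV_\kappa$ and a $\Pi^m_n$-sentence $\Phi$ that holds in $\langle\VV_\kappa,\in,A\rangle$, one applies shrewdness to an $\calL_\in$-formula $\Psi(A,\kappa)$ asserting that $\Phi$ is satisfied under the $m$-th order satisfaction relation over $\VV_\kappa$ (available in $\VV_{\kappa+m}$), after conjoining a clause that forces the reflected height to be large enough to interpret $m$-th order logic; the resulting $\bar\kappa<\kappa$ then witnesses $\Pi^m_n$-indescribability. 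In particular $\kappa$ is inaccessible, hence regular, so it remains to prove that the set $I$ of totally indescribable cardinals below $\kappa$ is \emph{stationary} in $\kappa$; unboundedness, and thus the fact that $\kappa$ is a limit of such cardinals, will follow.

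For the stationarity I would argue through part (iii) of Lemma \ref{lemma:ShrewdEmbeddings}. Let $\mathsf{TI}(v)$ denote the canonical $\calL_\in$-formula asserting that $\VV_{v+\omega}$ exists and that, relative to the formalized higher-order satisfaction relations over $\VV_v$ that live inside $\VV_{v+\omega}$, the ordinal $v$ is $\Pi^m_n$-indescribable for all $m,n<\omega$; by construction $\mathsf{TI}$ computes total indescribability correctly in any $\HH{\mu}$ with $\VV_{v+\omega}\in\HH{\mu}$. Now fix an arbitrary club $C\subseteq\kappa$. I choose a sufficiently large cardinal $\theta>\kappa$ so that $\HH{\theta}\models\mathsf{TI}(\kappa)$, and apply Lemma \ref{lemma:ShrewdEmbeddings}(iii) with the parameter $z=C$ to obtain cardinals $\bar\kappa<\bar\theta<\kappa$, an elementary submodel $X\prec\HH{\bar\theta}$ and an elementary embedding $\map{j}{X}{\HH{\theta}}$ with $\bar\kappa+1\subseteq X$, $j\restriction\bar\kappa=\id_{\bar\kappa}$, $j(\bar\kappa)=\kappa$ and $C\in\ran{j}$.

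Two things then have to be read off from this configuration. First, since $\HH{\theta}\models\mathsf{TI}(j(\bar\kappa))$, elementarity of $j$ yields $X\models\mathsf{TI}(\bar\kappa)$, and hence $\HH{\bar\theta}\models\mathsf{TI}(\bar\kappa)$ because $X\prec\HH{\bar\theta}$; pulling the first-order assertion of the existence of $\VV_{\kappa+\omega}$ back through $j$ shows that $\VV_{\bar\kappa+\omega}\in\HH{\bar\theta}$, so $\HH{\bar\theta}$ evaluates $\mathsf{TI}(\bar\kappa)$ correctly and $\bar\kappa$ is genuinely totally indescribable. Second, let $\bar C\in X$ with $j(\bar C)=C$. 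Elementarity shows that $\bar C$ is a club in $\bar\kappa$, and for every $\gamma<\bar\kappa$ the identity $j(\gamma)=\gamma$ gives $\gamma\in\bar C\Leftrightarrow\gamma\in C$, so $\bar C=C\cap\bar\kappa$ is unbounded in $\bar\kappa$. Thus $\bar\kappa$ is a limit point of the closed set $C$, whence $\bar\kappa\in C$. So $C$ contains a totally indescribable cardinal, and as $C$ was arbitrary, $I$ is stationary.

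The verifications of elementarity and of closure are routine; the step I expect to be the main obstacle is the bookkeeping that guarantees $\mathsf{TI}$ is computed correctly at both ends of the embedding. Concretely, one must confirm that total indescribability is captured by a single $\calL_\in$-formula whose truth value in $\HH{\theta}$ and in $\HH{\bar\theta}$ agrees with its truth value in $\VV$, which reduces to checking that all relevant higher-order satisfaction relations and all reflection witnesses lie inside these models. This is exactly what the presence of $\VV_{\kappa+\omega}$ (and, after reflection, of $\VV_{\bar\kappa+\omega}$) secures, and it is the reason for building the existence of $\VV_{v+\omega}$ explicitly into $\mathsf{TI}$.
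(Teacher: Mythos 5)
Your proof is correct and follows essentially the same route as the paper: both apply Lemma \ref{lemma:ShrewdEmbeddings}(iii) with the club $C$ as parameter, pull $C$ back through $j$ to see that $\bar{\kappa}$ is a limit point of $C$ (hence $\bar{\kappa}\in C$), and transfer total indescribability from $\kappa$ to $\bar{\kappa}$ by elementarity together with absoluteness between the relevant $\HH{\theta}$, $\HH{\bar{\theta}}$ and $\VV$. The only cosmetic difference is that the paper secures this absoluteness by choosing $\theta>\beth_\omega(\kappa)$ outright, whereas you fold the requirement (existence of $\VV_{v+\omega}$) into the formula $\mathsf{TI}$ itself; these amount to the same bookkeeping.
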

 
\begin{proof}
 By definition, all shrewd cardinals are totally indescribable. 
 Now, let $\kappa$ be a shrewd cardinal and let $C$ be a closed unbounded subset of $\kappa$. 
  Pick a cardinal $\theta>\beth_\omega(\kappa)$ and use Lemma \ref{lemma:ShrewdEmbeddings} to find cardinals $\bar{\kappa}<\bar{\theta}<\kappa$ and an elementary embedding $\map{j}{X}{\HH{\theta}}$ with $\bar{\kappa}+1\subseteq X\prec\HH{\bar{\theta}}$, $j\restriction\bar{\kappa}=\id_{\bar{\kappa}}$,  $j(\bar{\kappa})=\kappa$ and $C\in\ran{j}$. 
 Then $\bar{\kappa}\in C$ and elementarity implies that $\bar{\theta}>\beth_\omega(\bar{\kappa})$. 
 But this setup ensures that the statement \anf{\emph{$\kappa$ is totally indescribable}} is absolute between $\HH{\theta}$ and $\VV$, and the statement \anf{\emph{$\bar{\kappa}$ is totally indescribable}} is absolute between $\HH{\bar{\theta}}$ and $\VV$. 
  In particular, we can use elementarity to conclude that $\bar{\kappa}\in C$ is totally indescribable.  
 \end{proof}

The next consequence of Lemma \ref{lemma:ShrewdEmbeddings} will be crucial for our characterization of weakly shrewd cardinals that are not shrewd in the next section. 
  This result should be compared with the corresponding statements for supercompact and remarkable cardinals (see {\cite[Proposition 22.3]{MR1994835}} and {\cite[Theorem 1.3]{zbMATH07149985}}). 
Remember that, given a  natural number $n>0$, a cardinal $\kappa$ is \emph{$\Sigma_n$-reflecting} if it is inaccessible and $\VV_\kappa\prec_{\Sigma_n}\VV$ holds.

\begin{corollary}\label{corollary:Refl}
 Shrewd cardinals are $\Sigma_2$-reflecting. 
\end{corollary}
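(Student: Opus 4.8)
The plan is to verify the two defining clauses of $\Sigma_2$-reflecting separately. Since shrewd cardinals are totally indescribable by the previous corollary, and totally indescribable cardinals are inaccessible, the given cardinal $\kappa$ is inaccessible and hence $\VV_\kappa=\HH{\kappa}$. It therefore remains to show $\VV_\kappa\prec_{\Sigma_2}\VV$, and I will establish the two directions of this elementarity by different means: classical $\Sigma_1$-absoluteness for the upward direction, and the embedding characterization of Lemma \ref{lemma:ShrewdEmbeddings} for the harder downward direction. Throughout I write a $\Sigma_2$-formula as $\psi(v)\equiv\exists x\,\chi(x,v)$ with $\chi$ a $\Pi_1$-formula, and I use the standard fact (L\'evy absoluteness) that $\HH{\mu}\prec_{\Sigma_1}\VV$ for every uncountable cardinal $\mu$; note that this makes both $\Sigma_1$- and $\Pi_1$-formulas absolute between such $\HH{\mu}$ and $\VV$.

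For the upward direction, suppose $a\in\VV_\kappa$ and $\VV_\kappa\models\psi(a)$, say with witness $b\in\VV_\kappa$, so that $\VV_\kappa\models\chi(b,a)$. Since $\VV_\kappa=\HH{\kappa}\prec_{\Sigma_1}\VV$ and $\chi$ is $\Pi_1$, the fact $\chi(b,a)$ transfers upward to $\VV$, giving $\VV\models\psi(a)$.

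The main work is the downward direction: assuming $\VV\models\psi(a)$ for $a\in\VV_\kappa$, I want a witness of rank below $\kappa$. Fix $b$ with $\VV\models\chi(b,a)$ and a cardinal $\theta>\kappa$ with $b\in\HH{\theta}$; as $\chi$ is $\Pi_1$ and downward absolute to the transitive set $\HH{\theta}$, I obtain $\HH{\theta}\models\exists x\,\chi(x,a)$. I then apply Lemma \ref{lemma:ShrewdEmbeddings}(iii) to this $\theta$ with $z=a$, producing cardinals $\bar\kappa<\bar\theta<\kappa$, an elementary submodel $X\prec\HH{\bar\theta}$ and an elementary embedding $\map{j}{X}{\HH{\theta}}$ with $\crit{j}=\bar\kappa$, $j(\bar\kappa)=\kappa$ and $a\in\ran{j}$. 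Pulling $\HH{\theta}\models\exists x\,\chi(x,a)$ back along $j$ and using $X\prec\HH{\bar\theta}$ will yield a witness $\bar b\in\HH{\bar\theta}$; since $\bar\theta$ is an uncountable cardinal, $\HH{\bar\theta}\prec_{\Sigma_1}\VV$ transfers the $\Pi_1$-fact $\chi(\bar b,a)$ upward to $\VV$, and $\bar b\in\HH{\bar\theta}\subseteq\VV_\kappa$ is then the desired witness, so that $\VV_\kappa\models\psi(a)$ by downward absoluteness of $\chi$.

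The step I expect to require the most care is the claim that the parameter $a$ is automatically fixed by $j$, so that the pullback genuinely concerns $a$ rather than some shifted parameter. Writing $a=j(\bar a)$, elementarity gives $\text{rank}(a)=j(\text{rank}(\bar a))$; since $j$ is order preserving on ordinals with $j(\bar\kappa)=\kappa$, every ordinal of $X$ that is $\geq\bar\kappa$ is sent to an ordinal $\geq\kappa$, so $\text{rank}(a)<\kappa$ forces $\text{rank}(\bar a)<\bar\kappa$. Because the proof of Lemma \ref{lemma:ShrewdEmbeddings} shows that $\VV_{\bar\kappa}\subseteq X$ and $j\restriction\VV_{\bar\kappa}=\id_{\VV_{\bar\kappa}}$, this yields $\bar a=a\in\VV_{\bar\kappa}$, and hence the pullback of $\HH{\theta}\models\exists x\,\chi(x,a)$ along $j$ really gives $X\models\exists x\,\chi(x,a)$ and then $\HH{\bar\theta}\models\exists x\,\chi(x,a)$. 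The only remaining bookkeeping is to ensure that $\bar\theta$ is a cardinal to which L\'evy absoluteness applies and that the rank computation survives the passage through the possibly non-transitive model $X$; both follow from the elementarity of $j$ and the closure properties of $X$ established in Lemma \ref{lemma:ShrewdEmbeddings}.
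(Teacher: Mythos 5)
Your proposal is correct and follows essentially the same route as the paper: both use the embedding characterization of Lemma \ref{lemma:ShrewdEmbeddings}(iii) to pull the $\Sigma_2$-statement back to $\HH{\bar{\theta}}\subseteq\VV_\kappa$ and then transfer it to $\VV_\kappa$ via $\Sigma_1$-absoluteness. The extra details you supply --- the explicit upward direction via L\'{e}vy absoluteness and the rank argument showing that the parameter $a$ is fixed by $j$ (using $\VV_{\bar{\kappa}}\subseteq X$ and $j\restriction\VV_{\bar{\kappa}}=\id_{\VV_{\bar{\kappa}}}$) --- are exactly the steps the paper leaves implicit, and they are carried out correctly.
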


\begin{proof}
 Pick a $\Sigma_2$-formula $\varphi(v_0,\ldots,v_{m-1})$ and sets $z_0,\ldots,z_{m-1}\in\VV_\kappa$ with the property that the statement $\varphi(z_0,\ldots,z_{m-1})$ holds in $\VV$. 
  By $\Sigma_1$-absoluteness, there exists a cardinal $\theta>\kappa$ with the property that $\varphi(z_0,\ldots,z_{m-1})$ holds in $\HH{\theta}$. 
 An application of Lemma \ref{lemma:ShrewdEmbeddings} now yields cardinals $\bar{\kappa}<\bar{\theta}<\kappa$ and an elementary embedding $\map{j}{X}{\HH{\theta}}$ such that  $\bar{\kappa}+1\subseteq X\prec\HH{\bar{\theta}}$,    $j\restriction\bar{\kappa}=\id_{\bar{\kappa}}$, $j(\bar{\kappa})=\kappa$ and $z_0,\ldots,z_{m-1}\in\ran{j}$. 
 Since shrewd cardinals are inaccessible, we have $\VV_{\bar{\kappa}}\subseteq X$ and $j\restriction\VV_{\bar{\kappa}}=\id_{\VV_{\bar{\kappa}}}$. 
   In particular, we know that $z_i\in\VV_{\bar{\kappa}}$ and $j(z_i)=z_i$ holds for all $i<m$. 
 But then $\varphi(z_0,\ldots,z_{m-1})$ holds in $\HH{\bar{\theta}}\subseteq\VV_\kappa$ and hence $\Sigma_1$-absoluteness implies that this statement also holds in $\VV_\kappa$.  
\end{proof}


\section{Weakly shrewd cardinals}

This section contains an analysis of the basic properties of weakly shrewd cardinals. 
We start by slightly modifying the proof of  Lemma \ref{lemma:ShrewdEmbeddings} to obtain an analogous embedding characterization for weakly shrewd cardinals.

\begin{lemma}\label{lemma:WShrewdCharEmb}
 The following statements are equivalent for every cardinal $\kappa$: 
 \begin{enumerate}
  \item $\kappa$ is a weakly shrewd cardinal. 
  
  \item For all sufficiently large cardinals $\theta>\kappa$, there exist   cardinals $\bar{\kappa}<\bar{\theta}$,   an elementary submodel $X$ of $\HH{\bar{\theta}}$    and an elementary embedding $\map{j}{X}{\HH{\theta}}$ with $\bar{\kappa}+1\subseteq X$, $j\restriction\bar{\kappa}=\id_{\bar{\kappa}}$ and $j(\bar{\kappa})=\kappa>\bar{\kappa}$.   
  
    \item For all  cardinals $\theta>\kappa$ and all $z\in\HH{\theta}$, there exist   cardinals $\bar{\kappa}<\bar{\theta}$,   an elementary submodel $X$ of $\HH{\bar{\theta}}$    and an elementary embedding $\map{j}{X}{\HH{\theta}}$ with $\bar{\kappa}+1\subseteq X$, $j\restriction\bar{\kappa}=\id_{\bar{\kappa}}$, $j(\bar{\kappa})=\kappa>\bar{\kappa}$ and $z\in\ran{j}$.   
 \end{enumerate}
\end{lemma}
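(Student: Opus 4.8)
The plan is to prove the cycle $(\mathrm{i})\Rightarrow(\mathrm{iii})\Rightarrow(\mathrm{ii})\Rightarrow(\mathrm{i})$ by adapting the proof of Lemma \ref{lemma:ShrewdEmbeddings}, systematically replacing the rank initial segments $\VV_\alpha$ used there by sets of the form $\HH{\vartheta}$. The essential change is that, whereas shrewdness gets applied inside some $\VV_\lambda$ with $\HH{\theta}$ occurring as an element, weak shrewdness must be applied inside some $\HH{\theta'}$ with $\theta'$ large and with $\HH{\theta}$, the model we wish to embed into, occurring as an \emph{element} of $\HH{\theta'}$. This is exactly what lets the reflecting formula speak about satisfaction in $\HH{\theta}$ without running into Tarski's theorem.

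For $(\mathrm{i})\Rightarrow(\mathrm{iii})$ I would fix once and for all an $\calL_\in$-formula $\Phi(v_0,v_1)$ asserting, in its model, that $v_1=\delta$ is an infinite cardinal and that there are a cardinal $\vartheta>\delta$, a set $Y\subseteq\HH{\vartheta}$ and a bijection $\map{b}{\delta}{Y}$ with $\delta+1\subseteq Y$, $b(0)=\delta$, $b(1)$ an ordered pair with second coordinate $\delta$, and $b(\omega\cdot(1+\alpha))=\alpha$ for $\alpha<\delta$, such that the Gödel number $\Gamma(a,\alpha_0,\dots,\alpha_{n-1})$ lies in $A=v_0$ if and only if $\mathsf{Sat}(Y,\langle b(\alpha_0),\dots,b(\alpha_{n-1})\rangle,a)$ if and only if $\mathsf{Sat}(\HH{\vartheta},\langle b(\alpha_0),\dots,b(\alpha_{n-1})\rangle,a)$, for all $a\in\mathsf{Fml}$ with $n$ free variables; here $\Gamma$ is the Gödel pairing function, whose values on tuples from $\delta$ stay below $\delta$ because $\delta$ is a cardinal. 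Given $\theta>\kappa$ and $z\in\HH{\theta}$, I would pick $Y\prec\HH{\theta}$ of size $\kappa$ with $\kappa\cup\{\kappa,z\}\subseteq Y$, a bijection $\map{b}{\kappa}{Y}$ with $b(0)=\kappa$, $b(1)=\langle z,\kappa\rangle$ and $b(\omega\cdot(1+\alpha))=\alpha$, and let $A\subseteq\kappa$ code the satisfaction relation of $Y$ through $b$ and $\Gamma$. Choosing a sufficiently large cardinal $\theta'$ with $\HH{\theta},Y,b\in\HH{\theta'}$ and $\HH{\theta'}$ sufficiently elementary in $\VV$, the statement $\Phi(A,\kappa)$ holds in $\HH{\theta'}$, so weak shrewdness yields cardinals $\bar\kappa<\bar\theta'$ with $\bar\kappa<\kappa$ and $\Phi(A\cap\bar\kappa,\bar\kappa)$ holding in $\HH{\bar\theta'}$. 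Reading off the witnesses $\bar\vartheta$, $X$, $\bar b$ provided inside $\HH{\bar\theta'}$ and setting $\bar\theta=\bar\vartheta$, the second chain of biconditionals gives $X\prec\HH{\bar\theta}$, while the first one makes $\map{j=b\circ\bar b^{-1}}{X}{\HH{\theta}}$ an elementary map with $j\restriction\bar\kappa=\id_{\bar\kappa}$, $j(\bar\kappa)=\kappa>\bar\kappa$ and $\langle z,\kappa\rangle\in\ran{j}$, whence $z\in\ran{j}$. The only routine care needed is that \anf{being a cardinal} and the computation of $\HH{\bar\vartheta}$ are absolute between $\HH{\bar\theta'}$ and $\VV$.

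The implication $(\mathrm{iii})\Rightarrow(\mathrm{ii})$ is immediate, since $(\mathrm{ii})$ merely drops the parameter $z$ and weakens \anf{all $\theta$} to \anf{all sufficiently large $\theta$}. For the reverse it is cleanest first to record the direct argument $(\mathrm{iii})\Rightarrow(\mathrm{i})$: if $\kappa$ were not weakly shrewd, witnessed by $\Phi$, a cardinal $\theta_0>\kappa$ and $A_0\subseteq\kappa$ with $\Phi(A_0,\kappa)$ holding in $\HH{\theta_0}$ but never reflecting, then applying $(\mathrm{iii})$ with $\theta=\theta_0^+$ and $z=\langle\theta_0,A_0\rangle$ produces $j$ with $\langle\theta_0,A_0\rangle\in\ran{j}$; writing $\theta_0=j(\bar\theta_0)$ and $A_0=j(\bar A_0)$, elementarity together with $j\restriction\bar\kappa=\id_{\bar\kappa}$ forces $\bar A_0=A_0\cap\bar\kappa$ and $\bar\kappa<\bar\theta_0$, and pulling the true assertion \anf{$\Phi(A_0,\kappa)$ holds in $\HH{\theta_0}$} back along $j$ shows that $\Phi(A_0\cap\bar\kappa,\bar\kappa)$ holds in $\HH{\bar\theta_0}$, a reflection contradicting the choice of $A_0$. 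Note that here no control over the size of $\bar\theta_0$ is needed, because weak shrewdness quantifies over \emph{all} cardinals above $\bar\kappa$.

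The main obstacle is closing the cycle through the parameter-free statement $(\mathrm{ii})$. In Lemma \ref{lemma:ShrewdEmbeddings} the corresponding step reflects the first-order statement \anf{$\kappa$ is not shrewd via $\Phi$} down along $j$ and derives a contradiction at the point $\bar\kappa$; this succeeds there \emph{only} because shrewdness forces $\bar\theta<\kappa<\theta$, so every witness produced inside $X$ automatically lies below the target height $\theta$. For weakly shrewd $\kappa$ this bound is exactly what fails: $(\mathrm{ii})$ permits $\bar\theta$ to be arbitrarily large relative to $\theta$, so the witnessing cardinal $\bar\vartheta$ extracted from $X\models$\anf{$\bar\kappa$ is not weakly shrewd via $\Phi$} need not satisfy $\bar\vartheta<\theta$, and the instantiation of the reflected failure statement at $\bar\vartheta$ is no longer legitimate inside $\HH{\theta}$. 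Moreover, the natural attempt to repair this by running $(\mathrm{ii})$ at a much larger target $\Theta$ and restricting the resulting embedding to $X_0\cap\HH{\bar\theta_*}$, where $\theta=j_0(\bar\theta_*)$, only reduces the problem to having $\theta$ and $z$ in $\ran{j_0}$, which $(\mathrm{ii})$ again does not deliver. I therefore expect the decisive step to be a careful bootstrapping that upgrades $(\mathrm{ii})$ to $(\mathrm{iii})$ by expressing \anf{there is a parameterised embedding into the \emph{set} $\HH{\theta}$ with $z$ in its range} as a first-order property of $\langle\kappa,\theta,z\rangle$ that one forces to hold in some $\HH{\Theta}$ via an internal run of the $(\mathrm{i})\Rightarrow(\mathrm{iii})$ construction; making this internalization precise, given that the reflecting models here can be taller than the models they embed into, is the genuinely delicate point of the whole proof.
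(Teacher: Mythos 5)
Your treatment of (i)$\Rightarrow$(iii) is essentially the paper's own argument (coding the satisfaction relation of an elementary submodel $Y\prec\HH{\theta}$ into a subset of $\kappa$ and reflecting inside a large $\HH{\vartheta}$ that contains $\HH{\theta}$ as an element), and your direct argument for (iii)$\Rightarrow$(i) is correct, modulo the routine care of expressing \anf{$\Phi(A_0,\kappa)$ holds in $\HH{\theta_0}$} inside $\HH{\theta_0^+}$ by relativization rather than by a set parameter, since $\HH{\theta_0}$ need not be an element of $\HH{\theta_0^+}$. But the proposal does not prove the implication that closes your intended cycle, namely (ii)$\Rightarrow$(i): in the final paragraph you only describe the obstacle and then \anf{expect} a bootstrapping of (ii) up to (iii), whose decisive step you explicitly leave open. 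Since (ii) is a priori weaker than (iii) --- no parameter $z$ is available and only sufficiently large targets $\theta$ are covered --- your (iii)$\Rightarrow$(i) argument cannot stand in for it, and the lemma remains unproven. This is a genuine gap, not a presentational one.

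The missing idea is a minimal-counterexample trick that replaces the unavailable range parameter by definability; no upgrading of (ii) to (iii) is needed. Suppose (ii) holds and $\kappa$ is not weakly shrewd, witnessed by $\Phi$. Fix $\vartheta>\kappa$ with $\HH{\vartheta}$ sufficiently elementary in $\VV$, and let $\theta<\vartheta$ be the \emph{least} cardinal such that for some $A\subseteq\kappa$ the statement $\Phi(A,\kappa)$ holds in $\HH{\theta}$ and no pair $\bar{\kappa}<\bar{\theta}$ of cardinals with $\bar{\kappa}<\kappa$ reflects it. This $\theta$ is definable in $\HH{\vartheta}$ from the single parameter $\kappa$; hence, applying (ii) at $\vartheta$ to obtain $\map{j}{X}{\HH{\vartheta}}$ with $j(\bar{\kappa})=\kappa$, the cardinal $\theta$ lies in $\ran{j}$ automatically, because $\kappa\in\ran{j}$ --- this is exactly the membership in $\ran{j}$ that you correctly observed (ii) fails to deliver for an arbitrary $\theta$, and that minimality delivers for free. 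One then pulls $\theta$ back to $\bar{\theta}\in X$ with $j(\bar{\theta})=\theta$, uses elementarity (an existential statement, so again no range parameter is needed) to find $A\in X$ with $A\subseteq\bar{\kappa}$ such that $X$ believes that $\Phi(A,\bar{\kappa})$ holds in $\HH{\bar{\theta}}$ and that $A$ admits no reflection below $\bar{\kappa}$, and notes that $j(A)\cap\bar{\kappa}=A$. Your concern about tall reflecting models ($\bar{\theta}$ possibly far above $\theta$, even above $\vartheta$) is then handled by the sufficient elementarity of $\HH{\vartheta}$ in $\VV$: the statement that $j(A)$ admits no reflecting pair transfers from $\HH{\vartheta}$ to $\VV$, where it may be instantiated at $(\bar{\kappa},\bar{\theta})$ no matter how large $\bar{\theta}$ is; this contradicts the fact that $\Phi(A,\bar{\kappa})=\Phi(j(A)\cap\bar{\kappa},\bar{\kappa})$ holds in the real $\HH{\bar{\theta}}$, which follows from $X\prec\HH{\bar{\vartheta}}$ and the correctness of the computation of $\HH{\bar{\theta}}$ inside $\HH{\bar{\vartheta}}$. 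In short, the definability of the least counterexample does all the work that you were trying to extract from an internalized run of the (i)$\Rightarrow$(iii) construction.
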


\begin{proof}
 Assume that (i) holds. 
 Fix a recursive enumeration $\seq{a_l}{l<\omega}$ of the class $\mathsf{Fml}$. 
  Let $\Phi(v_0,v_1)$ be an $\calL_\in$-formula such  that $\Phi(A,\delta)$ expresses that the conjunction of the following statements holds true: 
  \begin{enumerate}   
   \item[(a)] $\delta$ is an infinite cardinal. 
   
   \item[(b)] There is a cardinal $\theta>\delta$, a subset $X$ of $\HH{\theta}$ and a bijection $\map{b}{\delta}{X}$ such that the following statements hold: 
    \begin{itemize}    
     \item $\delta+1\subseteq X$, $b(0)=\delta$ and $b(\omega\cdot(1+\alpha))=\alpha$ for all $\alpha<\delta$. 
    
     
     \item The class $\HH{\theta}$ is a set and, given 
     $\alpha_0,\ldots,\alpha_{n-1}<\delta$ and $l<\omega$ with the property that $a_l$ codes a formula with $n$ free variables,  we have\footnote{We let $\map{\goedel{\cdot}{\ldots,\cdot}}{\On^{n+1}}{\On}$ denote \emph{iterated G\"odel pairing}.} 
      \begin{equation*}
       \begin{split}
         \goedel{l}{\alpha_0,\ldots,\alpha_{n-1}}\in A ~ & \Longleftrightarrow ~ \mathsf{Sat}(X,\langle b(\alpha_0),\ldots,b(\alpha_{n-1})\rangle,a_l) \\
          & \Longleftrightarrow ~ \mathsf{Sat}(\HH{\theta},\langle b(\alpha_0),\ldots,b(\alpha_{n-1})\rangle,a_l).
         \end{split}
        \end{equation*}
    \end{itemize}
  \end{enumerate}

 Fix a cardinal $\theta>\kappa$, $z\in\HH{\theta}$ and a cardinal $\vartheta>2^\theta$ with the property that $\HH{\vartheta}$ is sufficiently elementary in $\VV$. 
 Pick an elementary submodel $Y$ of $\HH{\theta}$ of cardinality $\kappa$ with $\kappa\cup\{\kappa,z\}\subseteq Y$ and a bijection $\map{b}{\kappa}{Y}$ with $b(0)=\kappa$, $b(1)=\langle z,\kappa\rangle$ and $b(\omega\cdot(1+\alpha))=\alpha$ for all $\alpha<\kappa$. 
 Define $A$ to be the 
 set of all ordinals of the form $\goedel{l}{\alpha_0,\ldots,\alpha_{n-1}}$ such that $l<\omega$, $\alpha_0,\ldots,\alpha_{n-1}<\kappa$, $a_l$ codes a formula with $n$ free variables and $\mathsf{Sat}(Y,\langle b(\alpha_0),\ldots,b(\alpha_{n-1})\rangle,a_l)$ holds.   
 Then $\Phi(A,\kappa)$ holds in $\HH{\vartheta}$ and our assumption  yields cardinals $\bar{\kappa}<\bar{\vartheta}$ such that $\bar{\kappa}<\kappa$ and $\Phi(A\cap\bar{\kappa},\bar{\kappa})$ holds in $\HH{\bar{\vartheta}}$. 
 Since the formula defining the predicate $\mathsf{Sat}$ is absolute between $\HH{\vartheta}$, $\HH{\bar{\vartheta}}$ and $\VV$, the definition of $\Phi$ now yields a cardinal $\bar{\kappa}<\bar{\theta}<\bar{\vartheta}$, a subset $X$ of $\HH{\bar{\theta}}$ and a bijection $\map{\bar{b}}{\bar{\kappa}}{X}$ such that the following statements hold:  
\begin{itemize}
  
  \item $\bar{\kappa}+1\subseteq X$, $\bar{b}(0)=\bar{\kappa}$ and $\bar{b}(\omega\cdot(1+\alpha))=\alpha$ for all $\alpha<\bar{\kappa}$. 
  
  
  \item Given an $\calL_\in$-formula $\varphi(v_0,\ldots,v_{n-1})$ and $\alpha_0,\ldots,\alpha_{n-1}<\bar{\kappa}$, we have 
   \begin{equation*}
     \begin{split}
       & \HH{\theta}\models\varphi(b(\alpha_0),\ldots,b(\alpha_{n-1})) ~ \Longleftrightarrow ~ Y\models\varphi(b(\alpha_0),\ldots,b(\alpha_{n-1})) \\
      \Longleftrightarrow ~ & \HH{\bar{\theta}}\models\varphi(\bar{b}(\alpha_0),\ldots,\bar{b}(\alpha_{n-1})) ~ \Longleftrightarrow ~       X\models\varphi(\bar{b}(\alpha_0),\ldots,\bar{b}(\alpha_{n-1})). 
   \end{split}
  \end{equation*}
 \end{itemize}
 
 This shows that $X$ is an elementary submodel of $\HH{\bar{\theta}}$ with $\bar{\kappa}+1\subseteq X$ and the map $\map{j=b\circ\bar{b}^{{-}1}}{X}{\HH{\theta}}$ is an elementary embedding with $j\restriction\bar{\kappa}=\id_{\bar{\kappa}}$, $j(\bar{\kappa})=\kappa$ and $z\in\ran{j}$. These computations show that (iii) holds in this case.

 Now, assume that (ii) holds and (i) fails. Pick an $\calL_\in$-formula $\Phi(v_0,v_1)$ witnessing that $\kappa$ is not a weakly shrewd cardinal, and a sufficiently large cardinal $\vartheta>\kappa$ with the property that $\HH{\vartheta}$ is sufficiently elementary in $\VV$. 
  Then there exists a cardinal $\kappa<\theta<\vartheta$ with the property that for some subset $A$ of $\kappa$, the statement $\Phi(A,\kappa)$ holds in $\HH{\theta}$ and there are no cardinals $\bar{\kappa}<\bar{\theta}$ such that $\bar{\kappa}<\kappa$ and $\Phi(A\cap\bar{\kappa},\bar{\kappa})$ holds in $\HH{\bar{\theta}}$.  
   Let $\theta$ be the minimal cardinal with this property. 
  By our assumption, we can find cardinals $\bar{\kappa}<\bar{\vartheta}$  and an elementary embedding $\map{j}{X}{\HH{\vartheta}}$ with $\bar{\kappa}+1\subseteq X\prec\HH{\bar{\vartheta}}$, $j\restriction\bar{\kappa}=\id_{\bar{\kappa}}$ and $j(\bar{\kappa})=\kappa>\bar{\kappa}$.  
  Since the cardinal $\theta$ is definable in $\HH{\vartheta}$ by an $\calL_\in$-formula with parameter $\kappa$, there is a cardinal $\bar{\theta}$ in $X$ with $j(\bar{\theta})=\theta$. 
  Since $\HH{\vartheta}$ is sufficiently elementary in $\VV$, elementarity now yields  a subset $A$ of $\bar{\kappa}$ in $X$ with the property that $\Phi(j(A),\kappa)$ holds in $\HH{\theta}$ and $\Phi(j(A)\cap\bar{\kappa},\bar{\kappa})$ does not hold in $\HH{\bar{\theta}}$. 
  Since $j(A)\cap\bar{\kappa}=A$, we can use elementarity once more to derive a contradiction and conclude that (i) holds in this case. 
\end{proof}

\begin{corollary}\label{corollary:ShrewdWeaklyShrewd}
 Shrewd cardinals are weakly shrewd. \qed 
\end{corollary}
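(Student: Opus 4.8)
The plan is to read off the result by directly comparing the two embedding characterizations just established, namely Lemma~\ref{lemma:ShrewdEmbeddings} and Lemma~\ref{lemma:WShrewdCharEmb}. The key observation is that statement~(ii) of Lemma~\ref{lemma:ShrewdEmbeddings} is literally a strengthening of statement~(ii) of Lemma~\ref{lemma:WShrewdCharEmb}: the former asks for witnessing cardinals satisfying $\bar{\kappa}<\bar{\theta}<\kappa$, whereas the latter only imposes $\bar{\kappa}<\bar{\theta}$ together with $\kappa=j(\bar{\kappa})>\bar{\kappa}$, dropping the requirement that $\bar{\theta}$ lie below $\kappa$. Every other demand (that $X$ be an elementary submodel of $\HH{\bar{\theta}}$, that $j\colon X\to\HH{\theta}$ be elementary, that $\bar{\kappa}+1\subseteq X$, that $j\restriction\bar{\kappa}=\id_{\bar{\kappa}}$, and that $j(\bar{\kappa})=\kappa$) is identical in both clauses, and both quantify over all sufficiently large $\theta>\kappa$.

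Concretely, first I would assume that $\kappa$ is a shrewd cardinal and invoke Lemma~\ref{lemma:ShrewdEmbeddings} to obtain statement~(ii) of that lemma. Fixing a sufficiently large cardinal $\theta>\kappa$, this yields cardinals $\bar{\kappa}<\bar{\theta}<\kappa$, an elementary submodel $X$ of $\HH{\bar{\theta}}$ and an elementary embedding $\map{j}{X}{\HH{\theta}}$ with $\bar{\kappa}+1\subseteq X$, $j\restriction\bar{\kappa}=\id_{\bar{\kappa}}$ and $j(\bar{\kappa})=\kappa$. The only points requiring a moment's verification are that these same witnesses meet the hypotheses of Lemma~\ref{lemma:WShrewdCharEmb}(ii): indeed $\bar{\kappa}<\bar{\theta}$ is immediate, and since $\bar{\kappa}<\bar{\theta}<\kappa$ we get $\kappa=j(\bar{\kappa})>\bar{\kappa}$, which is exactly the extra strict inequality demanded there. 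Thus statement~(ii) of Lemma~\ref{lemma:WShrewdCharEmb} holds for $\kappa$, and applying the equivalence in that lemma shows that $\kappa$ is weakly shrewd.

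There is essentially no genuine obstacle here; the content of the corollary has already been absorbed into the embedding characterizations, so the argument reduces to checking that the list of requirements in the weakly shrewd characterization is a subset of (equivalently, is entailed by) the list in the shrewd characterization. This is why the statement is marked with \qed and deserves no separate displayed proof. I would also note in passing that attempting to derive the corollary \emph{directly} from the two defining Definitions, comparing reflection in $\VV_{\kappa+\alpha}$ against reflection in $\HH{\theta}$, would be strictly more work, since it would force one to re-encode satisfaction over $\HH{\theta}$ inside a suitable $\VV_\lambda$, which is precisely the coding carried out inside the proofs of Lemmas~\ref{lemma:ShrewdEmbeddings} and~\ref{lemma:WShrewdCharEmb}; routing the implication through the embedding characterizations avoids repeating that work.
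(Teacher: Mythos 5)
Your proof is correct and is exactly the argument the paper intends: the corollary carries a \qed precisely because statement~(ii) of Lemma~\ref{lemma:ShrewdEmbeddings} immediately entails statement~(ii) of Lemma~\ref{lemma:WShrewdCharEmb}, the only extra requirement $j(\bar{\kappa})=\kappa>\bar{\kappa}$ following from $\bar{\kappa}<\bar{\theta}<\kappa$. Nothing further is needed.
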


Building upon the  equivalence established in Lemma \ref{lemma:WShrewdCharEmb}, we now focus on 
consequences of weak shrewdness. 
 The below results will allow us to precisely characterize the class of structural reflecting cardinals that are not shrewd. 
  Moreover, they will allow us to show that the existence of such cardinals below the continuum is consistent.

  We start by proving two basic observation about cardinal arithmetic properties of weakly shrewd cardinals.

\begin{proposition}\label{proposition:WeaklyMahlo}
 Weakly shrewd cardinals are weakly Mahlo. 
\end{proposition}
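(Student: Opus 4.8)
The plan is to derive everything from the embedding characterization in Lemma \ref{lemma:WShrewdCharEmb}, which hands us, for a suitable parameter $z$, cardinals $\bar{\kappa}<\bar{\theta}$, an elementary submodel $X\prec\HH{\bar{\theta}}$ and an elementary embedding $\map{j}{X}{\HH{\theta}}$ with $\bar{\kappa}+1\subseteq X$, $j\restriction\bar{\kappa}=\id_{\bar{\kappa}}$, $j(\bar{\kappa})=\kappa>\bar{\kappa}$ and $z\in\ran{j}$. The crucial general observation, which I would isolate first, is that such an embedding transfers structural facts between $\kappa$ and $\bar{\kappa}$ faithfully: since $\bar{\theta}>\bar{\kappa}$, the model $\HH{\bar{\theta}}$ computes the cardinality and cofinality of $\bar{\kappa}$ correctly, and as $\bar{\kappa}+1\subseteq X\prec\HH{\bar{\theta}}$ these computations are reflected in $X$; the analogous statement holds for $\HH{\theta}$ and $\kappa$. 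Combining this with the elementarity of $j$ and $j(\bar{\kappa})=\kappa$ yields that $\bar{\kappa}$ is genuinely a cardinal, and that $\bar{\kappa}$ is regular if and only if $\kappa$ is regular.

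I would then prove a club-reflection step. Given any closed unbounded $C\subseteq\kappa$, apply Lemma \ref{lemma:WShrewdCharEmb}(iii) with $C\in\ran{j}$, say $C=j(\bar{C})$. Elementarity gives that $X$ believes $\bar{C}$ to be club in $\bar{\kappa}$, and since $j$ is the identity below $\bar{\kappa}$ one checks that $\bar{C}=C\cap\bar{\kappa}$; as $\bar{\kappa}+1\subseteq X$ this set is genuinely unbounded in $\bar{\kappa}$, so closedness of $C$ forces $\bar{\kappa}\in C$. Together with the previous paragraph, this says that every club in $\kappa$ contains the cardinal $\bar{\kappa}$, which moreover is regular provided $\kappa$ is regular. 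Hence, once regularity of $\kappa$ is established, the set of regular cardinals below $\kappa$ is stationary.

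The remaining, and I expect the most delicate, step is to show that $\kappa$ itself is regular. Suppose not and let $\lambda=\cof{\kappa}<\kappa$, fixed by a cofinal map $\map{f}{\lambda}{\kappa}$. I would apply the characterization with a parameter $z$ coding both $f$ and the club $C=\Set{\alpha<\kappa}{\lambda<\alpha}$, so that $f,\lambda,C\in\ran{j}$. The club-reflection step then places $\bar{\kappa}$ in $C$, that is $\lambda<\bar{\kappa}<\kappa$; in particular $\lambda<\bar{\kappa}$ forces $j(\lambda)=\lambda$, so $\lambda$ is fixed and $\bar{f}:=j^{-1}(f)$ has domain $\lambda$. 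Since $j$ is the identity on $\lambda$ and on all ordinals below $\bar{\kappa}$, a short computation gives $f(i)=j(\bar{f}(i))=\bar{f}(i)$ for all $i<\lambda$, so $f=\bar{f}$. But elementarity makes $\bar{f}$ cofinal in $\bar{\kappa}$, whence $\kappa=\sup f[\lambda]=\sup\bar{f}[\lambda]=\bar{\kappa}<\kappa$, a contradiction. This rules out singular $\kappa$.

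Finally, I would assemble the pieces: $\kappa$ is regular by the last step, and by the club-reflection step the regular cardinals below $\kappa$ form a stationary set; in particular $\kappa$ cannot be a successor cardinal, for then the regular cardinals below it would be bounded, so $\kappa$ is a regular limit cardinal whose regular cardinals reflect stationarily, i.e.\ $\kappa$ is weakly Mahlo. The main obstacle is the regularity argument, whose success hinges on simultaneously arranging $\bar{\kappa}>\lambda$ (via the club $C$) and $f\in\ran{j}$ from a \emph{single} application of Lemma \ref{lemma:WShrewdCharEmb}(iii); the rest is bookkeeping about the correctness of $\HH{\bar{\theta}}$ and $\HH{\theta}$ and the behaviour of $j$ below its critical point.
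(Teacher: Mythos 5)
Your proof is correct, and it rests on the same key tool as the paper's, namely the embedding characterization of weak shrewdness (Lemma \ref{lemma:WShrewdCharEmb}) combined with the fact that $j$ is the identity below $\bar{\kappa}$, so cofinal maps and clubs transfer faithfully between $\bar{\kappa}$ and $\kappa$. The organization, however, is genuinely different. The paper applies the characterization \emph{once}, at $\theta=\kappa^+$ (so that necessarily $\bar{\theta}=\bar{\kappa}^+$ and $X\prec\HH{\bar{\kappa}^+}$ computes cofinalities below $\bar{\kappa}$ correctly), and runs both halves by contradiction: if $\kappa$ were singular, elementarity makes $\bar{\kappa}$ singular in $X$, so $X$ itself already contains a cofinal map $\map{c}{\cof{\bar{\kappa}}}{\bar{\kappa}}$, and then $j(c)[\cof{\bar{\kappa}}]=c[\cof{\bar{\kappa}}]\subseteq\bar{\kappa}$ is bounded in $\kappa$; if $\kappa$ were regular but not weakly Mahlo, the club of singulars reflects via $j$-elementarity to a club $C\subseteq\bar{\kappa}$ of singulars inside $X$, and $\bar{\kappa}\in j(C)$ makes $\bar{\kappa}$ singular, contradicting the first half. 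You instead thread the $\VV$-side witnesses (the cofinal map $f$ and the clubs $C$) into $\ran{j}$ via clause (iii) of the lemma, using one application per witness, and you prove stationarity of the regular cardinals directly rather than by contradiction; your device of adjoining the club $\Set{\alpha<\kappa}{\lambda<\alpha}$ to the parameter, so as to force $\bar{\kappa}>\lambda$, replaces the paper's trick of finding the cofinal map on the small side inside $X$. What the paper's route buys is economy: a single embedding and no parameter bookkeeping. What yours buys is a reusable, positively stated club-reflection step — every club in $\kappa$ contains the image point $\bar{\kappa}$ of some such embedding — which is essentially the mechanism the paper later isolates and exploits in Lemma \ref{lemma:ReflCardReg}.
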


\begin{proof}
 Let $\kappa$ be a weakly shrewd cardinal. Then we can find a cardinal $\bar{\kappa}<\kappa$, an elementary submodel $X$ of $\HH{\bar{\kappa}^+}$ with $\bar{\kappa}+1\subseteq X$ and an elementary embedding $\map{j}{X}{\HH{\kappa^+}}$ with  $j\restriction\bar{\kappa}=\id_{\bar{\kappa}}$ and $j(\bar{\kappa})=\kappa$. 
 First, assume that $\kappa$ is singular. Then elementarity implies that $\bar{\kappa}$ is singular and there is a cofinal function $\map{c}{\cof{\bar{\kappa}}}{\bar{\kappa}}$ that is an element of $X$. 
 Since $j(\cof{\bar{\kappa}})=\cof{\bar{\kappa}}$, we can use elementarity to conclude that $$j(c)[\cof{\bar{\kappa}}] ~ = ~ c[\cof{\bar{\kappa}}] ~ \subseteq ~ \bar{\kappa}$$ is a cofinal subset of $\kappa$, a contradiction. 
 Now, assume that $\kappa$ is not weakly Mahlo. By elementarity, there exists a closed unbounded subset $C$ of $\bar{\kappa}$ in $X$ that consists of singular ordinals. But then $\bar{\kappa}\in j(C)$ implies that $\bar{\kappa}$ is singular and elementarity implies that $\bar{\kappa}$ is singular in $X$, contradicting the above computations. 
\end{proof}

\begin{proposition}\label{proposition:SRMahlo}
 If $\kappa$ is a weakly shrewd cardinal with $\kappa=\kappa^{{<}\kappa}$, then $\kappa$ is a Mahlo cardinal. 
\end{proposition}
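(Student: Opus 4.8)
The plan is to combine Proposition~\ref{proposition:WeaklyMahlo} with the embedding characterization of Lemma~\ref{lemma:WShrewdCharEmb} in order to show that every closed unbounded subset of $\kappa$ contains an inaccessible cardinal. Since $\kappa$ is regular, being weakly Mahlo by Proposition~\ref{proposition:WeaklyMahlo}, this suffices to conclude that $\kappa$ is a Mahlo cardinal. So fix a closed unbounded set $C\subseteq\kappa$. Applying the third clause of Lemma~\ref{lemma:WShrewdCharEmb} with $\theta=\kappa^+$ and $z=C\in\HH{\kappa^+}$, I obtain cardinals $\bar\kappa<\bar\theta$, an elementary submodel $X\prec\HH{\bar\theta}$ with $\bar\kappa+1\subseteq X$ and an elementary embedding $\map{j}{X}{\HH{\kappa^+}}$ with $j\restriction\bar\kappa=\id_{\bar\kappa}$, $j(\bar\kappa)=\kappa>\bar\kappa$ and $C\in\ran{j}$, say $C=j(\bar C)$. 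The goal is then to verify that $\bar\kappa$ is an inaccessible element of $C$.

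The membership $\bar\kappa\in C$ follows from a standard continuity argument. Since $C$ is closed and unbounded in $\kappa$ in $\HH{\kappa^+}$ and $j(\bar\kappa)=\kappa$, $j(\bar C)=C$, elementarity shows that $\bar C$ is closed and unbounded in $\bar\kappa$. As $j$ fixes every ordinal below $\bar\kappa$, we have $\bar C=j[\bar C]\subseteq j(\bar C)=C$, and the closure of $C$ in $\kappa$ together with $\sup(\bar C)=\bar\kappa<\kappa$ then yields $\bar\kappa\in C$. The regularity of $\bar\kappa$ is obtained by reflecting the regularity of $\kappa$: the model $\HH{\kappa^+}$ satisfies that $\kappa$ is regular, so elementarity gives that $X$ satisfies that $\bar\kappa$ is regular and, since $X\prec\HH{\bar\theta}$, the cardinal $\bar\kappa$ is genuinely regular.

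The crux is to show that $\bar\kappa$ is a strong limit cardinal, and this is the only place where the hypothesis $\kappa=\kappa^{{<}\kappa}$ is used. Fix $\mu<\bar\kappa$. Then $\mu<\kappa$ and the assumption $\kappa=\kappa^{{<}\kappa}$ give $2^\mu\leq\kappa^\mu=\kappa$, so that $\POT{\mu}\in\HH{\kappa^+}$ and $\HH{\kappa^+}$ correctly computes $\betrag{\POT{\mu}}=2^\mu$. Let $\nu$ be the cardinal for which $\betrag{\POT{\mu}}=\nu$ holds in $X$; since $j(\mu)=\mu$, applying $j$ yields $j(\nu)=2^\mu$. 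If $2^\mu=\kappa=j(\bar\kappa)$ held, then the injectivity of $j$ would force $\nu=\bar\kappa$, so that $\betrag{\POT{\mu}}=\bar\kappa$ would hold in $X$ and hence, reflecting down to $\HH{\bar\theta}$ (which contains all subsets of $\mu$), we would obtain the genuine equality $2^\mu=\bar\kappa<\kappa$, a contradiction. Therefore $2^\mu<\kappa=j(\bar\kappa)$, which forces $\nu<\bar\kappa$ and, as $j$ fixes $\nu$, the equality $2^\mu=\nu<\bar\kappa$. Thus $2^\mu<\bar\kappa$ for every $\mu<\bar\kappa$, so $\bar\kappa$ is a strong limit cardinal, and together with its regularity this makes $\bar\kappa$ inaccessible.

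Since $C$ was an arbitrary closed unbounded subset of $\kappa$ and $\bar\kappa\in C$ is inaccessible, the set of inaccessible cardinals below $\kappa$ is stationary in $\kappa$; as $\kappa$ is regular and a limit of inaccessible (hence strong limit) cardinals, it is itself strongly inaccessible, and therefore Mahlo. I expect the main technical point to be the bookkeeping ensuring that the relevant power sets and their cardinalities are computed correctly in $\HH{\kappa^+}$, in $X$ and in $\HH{\bar\theta}$ — which is exactly what the assumption $\kappa=\kappa^{{<}\kappa}$ secures, by keeping $\POT{\mu}$ inside $\HH{\kappa^+}$ for every $\mu<\bar\kappa$.
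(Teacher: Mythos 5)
Your proof is correct, but it is organized differently from the paper's. The paper's argument is shorter: it uses Proposition \ref{proposition:WeaklyMahlo} at full strength, so that Mahloness reduces to showing that $\kappa$ is a strong limit cardinal, and it proves this by contradiction. If $\kappa$ is not a strong limit, then $\kappa=\kappa^{{<}\kappa}$ yields an $\alpha<\kappa$ with $2^\alpha=\kappa$; pulling this statement back through an embedding $\map{j}{X}{\HH{\kappa^+}}$ with $X\prec\HH{\bar{\kappa}^+}$ produces an $\alpha<\bar{\kappa}$ with $X\models\anf{2^\alpha=\bar{\kappa}}$, which $\HH{\bar{\kappa}^+}$ computes correctly, so $2^\alpha=\bar{\kappa}$ holds in $\VV$; pushing forward through $j$ (using $j(\alpha)=\alpha$) gives $2^\alpha=\kappa$, hence $\bar{\kappa}=\kappa$, a contradiction. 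You instead use only the regularity of $\kappa$ from Proposition \ref{proposition:WeaklyMahlo} and re-derive stationarity directly: feeding an arbitrary club $C$ into Lemma \ref{lemma:WShrewdCharEmb}(iii), you show that the reflected cardinal $\bar{\kappa}$ lands in $C$ and is inaccessible, the strong-limit part being the same $\kappa=\kappa^{{<}\kappa}$ computation, localized at $\bar{\kappa}$ rather than at $\kappa$. This buys you the stationarity of the inaccessible cardinals below $\kappa$ en route (formally stronger, though of course equivalent once Mahloness is established), and it mirrors the way the paper proves Proposition \ref{proposition:WeaklyMahlo} itself; the cost is the extra bookkeeping with $\bar{C}$, the regularity of $\bar{\kappa}$, and the two-model comparison of $2^\mu$. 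Both arguments ultimately rest on the same two ingredients: the embedding characterization of weak shrewdness and the observation that $\kappa=\kappa^{{<}\kappa}$ keeps $\POT{\mu}$ inside $\HH{\kappa^+}$ for every $\mu<\kappa$, so that $\HH{\kappa^+}$ computes $2^\mu$ correctly.
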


\begin{proof}
 Pick a cardinal $\bar{\kappa}$ and an elementary embedding $\map{j}{X}{\HH{\kappa^+}}$ with the property that  $\bar{\kappa}+1\subseteq X\prec\HH{\bar{\kappa}^+}$, $j\restriction\bar{\kappa}=\id_{\bar{\kappa}}$ and $j(\bar{\kappa})=\kappa>\bar{\kappa}$. 
  Assume, towards a contradiction, that $\kappa$ is not Mahlo. By  Proposition \ref{proposition:WeaklyMahlo}, this implies that $\kappa$ is not a strong limit cardinal and hence our assumptions show that $2^\alpha=\kappa$ holds for some $\alpha<\kappa$. 
  In this situation, elementarity yields an $\alpha<\bar{\kappa}$ with $X\models\anf{2^\alpha=\bar{\kappa}}$. Another application of elementarity now shows that $\bar{\kappa}=2^\alpha=\kappa$ holds, a contradiction.  
\end{proof}

We are now ready to provide the desired characterization of weakly shrewd  cardinals that are not shrewd. 
 This results and its proof should be compared with {\cite[Theorem 1.3]{zbMATH07149985}} that provides an analogous statement for weakly remarkable cardinals that are not remarkable.

 \begin{lemma}\label{lemma:SRnotShrewd}
  The following statements are equivalent for every weakly shrewd cardinal $\kappa$: 
  \begin{enumerate}
   \item $\kappa$ is not a shrewd cardinal. 
   
   \item $\kappa$ is not a $\Sigma_2$-reflecting cardinal. 
   
   \item There exists a cardinal $\delta>\kappa$ with the property that the set $\{\delta\}$ is definable by a $\Sigma_2$-formula with parameters in $\HH{\kappa}$. 
  \end{enumerate}
 \end{lemma}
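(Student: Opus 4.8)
The plan is to prove the equivalences by establishing $(ii)\Rightarrow(i)$ and $(i)\Rightarrow(ii)$, together with $(ii)\Leftrightarrow(iii)$. The implication $(ii)\Rightarrow(i)$ is immediate: Corollary \ref{corollary:Refl} shows that shrewd cardinals are $\Sigma_2$-reflecting, so a weakly shrewd cardinal that is not $\Sigma_2$-reflecting cannot be shrewd. The heart of the matter is the reverse direction $(i)\Rightarrow(ii)$, which I would prove in contrapositive form: \emph{if $\kappa$ is weakly shrewd and $\Sigma_2$-reflecting, then $\kappa$ is shrewd}. Here the two embedding characterizations (Lemma \ref{lemma:ShrewdEmbeddings} and Lemma \ref{lemma:WShrewdCharEmb}) are tailor-made, their only formal difference being whether the domain cardinal $\bar\theta$ can be pushed below $\kappa$.

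So assume $\kappa$ is weakly shrewd and $\Sigma_2$-reflecting; in particular $\kappa$ is inaccessible and $\VV_\kappa=\HH{\kappa}\prec_{\Sigma_2}\VV$. Fix a large cardinal $\theta>\kappa$. The construction underlying Lemma \ref{lemma:WShrewdCharEmb} yields $\bar\kappa<\bar\theta$, an elementary submodel $X\prec\HH{\bar\theta}$ with $\bar\kappa+1\subseteq X$ and $\betrag{X}=\bar\kappa$, and an elementary embedding $\map{j}{X}{\HH{\theta}}$ with $j\restriction\bar\kappa=\id_{\bar\kappa}$ and $j(\bar\kappa)=\kappa$, where $\bar\theta$ may exceed $\kappa$. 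Let $\map{\pi}{X}{M}$ be the transitive collapse. Since $\bar\kappa+1\subseteq X$ we get $\pi\restriction(\bar\kappa+1)=\id$ and $\pi(\bar\kappa)=\bar\kappa$, and since $\betrag{X}=\bar\kappa<\kappa$ with $\kappa$ inaccessible, $M\in\VV_\kappa$. Writing $k=j\circ\pi^{{-}1}$, the map $\map{k}{M}{\HH{\theta}}$ is elementary with $k\restriction\bar\kappa=\id_{\bar\kappa}$ and $k(\bar\kappa)=\kappa$. Now the statement \anf{\emph{there is a cardinal $\eta$ and an elementary embedding $\map{e}{M}{\HH{\eta}}$ with $e\restriction(\bar\kappa+1)=\id$}} is $\Sigma_2$ with parameters $M,\bar\kappa\in\VV_\kappa$, and it is witnessed in $\VV$ by $\eta=\bar\theta$ and $e=\pi^{{-}1}$. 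By $\Sigma_2$-reflection it holds in $\VV_\kappa$, producing a cardinal $\bar\theta'<\kappa$ and an elementary $\map{e}{M}{\HH{\bar\theta'}}$ with $e\restriction(\bar\kappa+1)=\id$. Setting $X'=e[M]\prec\HH{\bar\theta'}$ and $j'=k\circ e^{{-}1}$, the embedding $\map{j'}{X'}{\HH{\theta}}$ witnesses clause (ii) of Lemma \ref{lemma:ShrewdEmbeddings} with $\bar\kappa<\bar\theta'<\kappa$, so $\kappa$ is shrewd. The crucial point is that collapsing turns the domain into a small set in $\VV_\kappa$, so that $\Sigma_2$-reflection can be applied with parameters below $\kappa$ to reinstate the domain below $\kappa$.

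For $(ii)\Leftrightarrow(iii)$, the direction $(iii)\Rightarrow(ii)$ is a short uniqueness argument: if $\{\delta\}$ were defined by a $\Sigma_2$-formula $\varphi(\cdot,z)$ with $z\in\HH{\kappa}$ and $\delta>\kappa$, and $\kappa$ were $\Sigma_2$-reflecting, then $\exists x\,\varphi(x,z)$ reflects into $\VV_\kappa$, and the reflected witness $\delta'<\kappa$ satisfies $\varphi(\delta',z)$ in $\VV$ by $\Sigma_2$-elementarity, forcing $\delta'=\delta$ and contradicting $\delta>\kappa$. For $(ii)\Rightarrow(iii)$ I would split on inaccessibility. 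If $\kappa$ is not inaccessible then, being weakly Mahlo by Proposition \ref{proposition:WeaklyMahlo} and hence a regular limit cardinal, Proposition \ref{proposition:SRMahlo} forces $\kappa<\kappa^{{<}\kappa}=2^{{<}\kappa}$, so some $\gamma<\kappa$ has $2^\gamma>\kappa$, and $\delta=2^\gamma$ works since $\{2^\gamma\}$ is $\Sigma_2$-definable from $\gamma\in\HH{\kappa}$. If $\kappa$ is inaccessible but not $\Sigma_2$-reflecting, then $\VV_\kappa\not\prec_{\Sigma_2}\VV$, so there are a $\Pi_1$-formula $\phi(x,z)$ and $z\in\VV_\kappa=\HH{\kappa}$ with $\VV\models\exists x\,\phi(x,z)$ but $\VV_\kappa\models\neg\exists x\,\phi(x,z)$; as $\VV_\kappa=\HH{\kappa}\prec_{\Sigma_1}\VV$ makes $\Pi_1$-formulas absolute, every witness of $\phi(\cdot,z)$ has rank $\geq\kappa$. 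I would let $\delta$ be the least cardinal strictly above the least rank of such a witness, with parameter $z$.

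The step I expect to be most delicate is checking that this last $\delta$ is genuinely $\Sigma_2$-definable \emph{and} satisfies $\delta>\kappa$. Ranks are controlled through the $\Delta_1$-definable cumulative hierarchy, so \anf{\emph{$\beta$ is the least rank of a witness of $\phi(\cdot,z)$}} is $\Sigma_2$: the existence clause $\exists b\in\VV_{\beta+1}\,\phi(b,z)$ is $\Sigma_2$, while the minimality clause is a bounded quantification of a $\Sigma_1$-formula and hence, by $\Sigma_1$-collection, remains $\Sigma_1$; quantifying $\beta$ existentially and passing to the least cardinal above $\beta$ keeps the whole definition $\Sigma_2$, and strictness $\delta>\kappa$ follows since all witnesses have rank $\geq\kappa$. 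I would phrase everything through $\VV_\eta$ rather than $\HH{\eta}$ wherever possible, since \anf{$y=\VV_\eta$} is $\Delta_1$ whereas \anf{$y=\HH{\eta}$} is only $\Pi_1$; mismanaging this interchange is exactly what would inflate the complexity past $\Sigma_2$.
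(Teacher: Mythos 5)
Your proposal is correct, and for the central implication it takes a genuinely different route from the paper. The parts you share with the paper are (ii)$\Rightarrow$(i) via Corollary \ref{corollary:Refl} and the equivalence (ii)$\Leftrightarrow$(iii): your case split (non-inaccessible via Proposition \ref{proposition:SRMahlo} and $2^\gamma>\kappa$; inaccessible via a least-witness definition) is the same standard argument the paper gives, only packaged through ranks of witnesses of a $\Pi_1$-formula instead of through the least cardinal $\delta$ with $\varphi(z)$ holding in $\VV_\delta$. The divergence is in (i)$\Rightarrow$(ii). The paper assumes, towards a contradiction, that $\kappa$ is weakly shrewd, $\Sigma_2$-reflecting and not shrewd, fixes the \emph{least} $\theta$ witnessing the failure of the embedding property of Lemma \ref{lemma:ShrewdEmbeddings}, and then runs a two-step transfer: the non-existence of embeddings is pulled back under $j$ into $\HH{\bar{\vartheta}}$, reflected into $\VV_\kappa$ by $\Sigma_2$-reflection, re-imported into $\HH{\bar{\vartheta}}$ below $\min(X\cap[\kappa,\bar{\vartheta}))$, and finally pushed up by $j$ to contradict the minimality of $\theta$. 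You instead prove the contrapositive directly: collapse the domain $X$ of the weak-shrewdness embedding to a transitive $M$, note that $M\in\VV_\kappa$ (this is where you need $\betrag{X}=\bar{\kappa}<\kappa$, which the construction in Lemma \ref{lemma:WShrewdCharEmb} indeed provides, and which one could also arrange by shrinking $X$ with L\"owenheim--Skolem), observe that \anf{some cardinal $\eta$ admits an elementary $\map{e}{M}{\HH{\eta}}$ fixing $\bar{\kappa}+1$ pointwise} is a true $\Sigma_2$-statement with parameters in $\VV_\kappa$, reflect it below $\kappa$, and recompose to obtain the embedding required by clause (ii) of Lemma \ref{lemma:ShrewdEmbeddings} with $\bar{\theta}'<\kappa$. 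This is shorter, needs no minimal counterexample, and produces the shrewd embedding property for every $\theta>\kappa$ uniformly; the paper's argument avoids transitive collapses altogether and works entirely with elementary submodels of sets of the form $\HH{\theta}$, at the price of the more involved back-and-forth.

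One small correction to your complexity bookkeeping: \anf{$y=\VV_\eta$} is \emph{not} $\Delta_1$; it is $\Pi_1$ and provably not $\Sigma_1$ (a $\Sigma_1$ definition would be upward absolute from transitive models that compute $\VV_\eta$ incorrectly), so it sits at exactly the same level as \anf{$y=\HH{\eta}$}. Accordingly, your minimality clause is $\Sigma_1$ only after a witness for $\VV_\beta$ has been introduced; with the $\Pi_1$ verification \anf{$W=\VV_\beta$} included, it becomes $\Sigma_2$. This costs nothing: every occurrence of \anf{$W=\VV_\beta$} in your definition of $\delta$ lies under an existential quantifier, and the remaining matrix is a conjunction of $\Pi_1$- and $\Sigma_1$-clauses, so the whole definition is still $\Sigma_2$ --- but the reason it stays $\Sigma_2$ is this quantifier structure, not any $\Delta_1$-ness of the cumulative hierarchy.
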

 
 \begin{proof}
  First, the implication from (ii) to (i) is given by Corollary \ref{corollary:Refl}.  
  
  Next, assume that (iii) holds and $\kappa$ is an inaccessible cardinal.  
  Fix  a $\Sigma_2$-formula $\varphi(v_0,v_1)$, a cardinal $\delta>\kappa$ and $z\in\HH{\kappa}$ with the property that $$\forall x ~ [\varphi(x,z) ~ \longleftrightarrow ~ x=\delta]$$ holds. 
  Then the statement $\exists x ~ \varphi(x,z)$ holds in $\VV$ and, since $\kappa$ is inaccessible, $\Sigma_1$-absoluteness implies that it fails in $\VV_\kappa$. 
  In particular, we know that (ii) holds in this situation. 
  
  Now, assume that (ii) holds. If $\kappa$ is not inaccessible, then Proposition \ref{proposition:SRMahlo} yields an $\alpha<\kappa$ with $2^\alpha>\kappa$ and, since the set $\{2^\alpha\}$ is definable by a $\Sigma_2$-formula with parameter $\alpha$, we can conclude that (iii) holds in this case. 
 We may therefore assume that $\kappa$ is an inaccessible cardinal that is not $\Sigma_2$-reflecting. 
  By standard arguments, this shows that there is an $\calL_\in$-formula $\varphi(v)$ and $z\in\VV_\kappa$ with the property that there is a cardinal $\delta$ such that $z\in\VV_\delta$ and $\varphi(z)$ holds in $\VV_\delta$, and there is no cardinal $\alpha\leq\kappa$ such that $z\in\VV_\alpha$ and $\varphi(z)$ holds in $\VV_\alpha$. 
  Let $\delta$ denote the least cardinal such that  $z\in\VV_\delta$ and $\varphi(z)$ holds in $\VV_\delta$. 
  Then $\delta>\kappa$ and the set $\{\delta\}$ is definable by a $\Sigma_2$-formula with parameter $z$. This shows that (iii) also holds in this case.

  Finally, assume, towards a contradiction, that (i) holds and (ii) fails. 
  By Lemma \ref{lemma:ShrewdEmbeddings}, there is a cardinal $\theta>\kappa$ with the property that for all cardinals $\mu<\nu<\kappa$, there is no elementary embedding $\map{j}{X}{\HH{\theta}}$ with $\mu+1\subseteq X\prec\HH{\nu}$, $j\restriction\mu=\id_\mu$ and $j(\mu)=\kappa$. 
  Let $\theta$ be minimal with this property and pick a strong limit cardinal $\vartheta>\theta$ with the property that $\HH{\vartheta}$ is sufficiently elementary in $\VV$. 
  Using the weak shrewdness of $\kappa$, we find cardinals $\bar{\kappa}<\bar{\theta}<\bar{\vartheta}$  and an elementary embedding $\map{j}{X}{\HH{\vartheta}}$ with $\bar{\kappa}\cup\{\bar{\kappa},\bar{\theta}\}\subseteq X\prec\HH{\bar{\vartheta}}$, $j\restriction\bar{\kappa}=\id_{\bar{\kappa}}$, $j(\bar{\kappa})=\kappa>\bar{\kappa}$ and $j(\bar{\theta})=\theta$.   
  Since $\map{j\restriction(X\cap\HH{\bar{\theta}})}{X\cap\HH{\bar{\theta}}}{\HH{\theta}}$ is an elementary embedding with $\bar{\kappa}+1\subseteq X\cap\HH{\bar{\theta}}\prec\HH{\bar{\theta}}$, $j\restriction\bar{\kappa}=\id_{\bar{\kappa}}$ and $j(\bar{\kappa})=\kappa$, we know that $\bar{\theta}\geq\kappa$ and hence we can conclude that $\bar{\vartheta}>\kappa$.  
  Elementarity now shows that, in $\HH{\bar{\vartheta}}$, there is a cardinal $\rho>\bar{\kappa}$ with the property that, for all cardinals $\mu<\nu<\bar{\kappa}$,  there is no elementary embedding $\map{k}{Y}{\HH{\rho}}$ with $\mu+1\subseteq Y\prec\HH{\nu}$, $k\restriction\mu=\id_\mu$ and $k(\mu)=\bar{\kappa}$. 
  Since this statement can be expressed by a $\Sigma_2$-formula with parameter $\bar{\kappa}\in\VV_\kappa$, $\Sigma_1$-absoluteness implies that it holds in $\VV$ and hence the fact that $\kappa$ is $\Sigma_2$-reflecting causes the statement to also hold in $\VV_\kappa$.    
   Therefore, we can find $\bar{\kappa}<\rho<\kappa$ with the property that, in $\VV_\kappa$,  for all cardinals $\mu<\nu<\bar{\kappa}$,  there is no elementary embedding $\map{k}{Y}{\HH{\rho}}$ with $\mu+1\subseteq Y\prec\HH{\nu}$, $k\restriction\mu=\id_\mu$ and $k(\mu)=\bar{\kappa}$. 
   Since this statement can be expressed in $\VV_\kappa$ by a $\Pi_1$-formula with parameters $\bar{\kappa}$ and $\rho$, $\Sigma_1$-absoluteness implies that it also holds in $\HH{\bar{\vartheta}}$. 
  By the above computations, we have $\bar{\theta}\in X\cap[\kappa,\bar{\vartheta})\neq\emptyset$ and this allows us to define $\lambda=\min(X\cap[\kappa,\bar{\vartheta}))$. 
 We then know that, in $\HH{\bar{\vartheta}}$, there exists a cardinal $\bar{\kappa}<\tau<\lambda$ such that for all cardinals $\mu<\nu<\bar{\kappa}$,  there is no elementary embedding $\map{k}{Y}{\HH{\tau}}$ with $\mu+1\subseteq Y\prec\HH{\nu}$, $k\restriction\mu=\id_\mu$ and $k(\mu)=\bar{\kappa}$. 
 By elementarity, we can find such a cardinal $\tau$ in $X$ and, by the above setup, it follows that $\tau<\kappa$. 
  But then elementarity implies that, in $\HH{\vartheta}$, the cardinal $j(\tau)$ has the property that for all cardinals $\mu<\nu<\kappa$,  there is no elementary embedding $\map{k}{Y}{\HH{j(\tau)}}$ with $\mu+1\subseteq Y\prec\HH{\nu}$, $k\restriction\mu=\id_\mu$ and $k(\mu)=\kappa$. 
  Since $\HH{\vartheta}$ was chosen to be sufficiently elementary in $\VV$, this statement also holds in $\VV$. But this contradicts the minimality of $\theta$, because $\tau<\kappa\leq\bar{\theta}$ implies that $j(\tau)<\theta$.  
  These computations show that (i) implies (ii). 
 \end{proof}

 We now use the above characterization to show that, over the theory $\ZFC$, the existence of a shrewd cardinal is equiconsistent with the existence of an inaccessible weakly shrewd cardinal. 
 The proof of this equiconsistency  will make use of the following notion that will also be central for the proofs of Theorems \ref{theorem:WShrewNonShrewGCH} and \ref{theorem:HigherReflection}.

   \begin{definition}
    Given infinite cardinals $\kappa<\delta$, the cardinal $\kappa$ is  \emph{$\delta$-hyper-shrewd} if for all sufficiently large  cardinals $\theta>\delta$ and all $z\in\HH{\theta}$, there exist 
  cardinals $\bar{\kappa}<\kappa<\delta<\bar{\theta}$, 
  an elementary submodel $X$  of $\HH{\bar{\theta}}$    
   and an elementary embedding $\map{j}{X}{\HH{\theta}}$ with $\bar{\kappa}\cup\{\bar{\kappa},\delta\}\subseteq X$, $j\restriction\bar{\kappa}=\id_{\bar{\kappa}}$,  $j(\bar{\kappa})=\kappa$, $j(\delta)=\delta$ and $z\in\ran{j}$.   
\end{definition}

 Note that the second item in Lemma \ref{lemma:WShrewdCharEmb} implies that all hyper-shrewd cardinals are weakly shrewd. 
 The following lemma provides us with  typical examples of hyper-shrewd cardinals:

 \begin{lemma}\label{lemma:SRnonShrewdFix}
  Let $\kappa$ be a weakly shrewd cardinal that is not shrewd.
  \begin{enumerate}
   \item There exists  $\delta>\kappa$ with the property that the set $\{\delta\}$ is definable by a $\Sigma_2$-formula with parameters in $\HH{\kappa}$.  
   
   \item If $\delta>\kappa$ is a cardinal with the property that the set $\{\delta\}$ is definable by a $\Sigma_2$-formula with parameters in $\HH{\kappa}$, then $\kappa$ is $\delta$-hyper-shrewd. 
  \end{enumerate}
 \end{lemma}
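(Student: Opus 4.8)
The first part is an immediate restatement of the implication from (i) to (iii) in Lemma~\ref{lemma:SRnotShrewd}: since $\kappa$ is a weakly shrewd cardinal that is not shrewd, that lemma directly produces a cardinal $\delta>\kappa$ whose singleton is definable by a $\Sigma_2$-formula with parameters in $\HH{\kappa}$.

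For the second part, the plan is to extract the hyper-shrewd embedding from the embedding characterization of weak shrewdness in Lemma~\ref{lemma:WShrewdCharEmb} and then use the $\Sigma_2$-definability of $\delta$ to force the embedding to fix it. Fix a $\Sigma_2$-formula $\varphi(v_0,v_1)$ and a parameter $p\in\HH{\kappa}$ with $\forall x\,[\varphi(x,p)\leftrightarrow x=\delta]$, write $\varphi(x,v)$ in the form $\exists y\,\pi(x,y,v)$ with $\pi$ a $\Pi_1$-formula, and fix a witness $y_0$ with $\pi(\delta,y_0,p)$. Given a sufficiently large cardinal $\theta>\delta$ (large enough that $y_0\in\HH{\theta}$) and some $z\in\HH{\theta}$, I would apply Lemma~\ref{lemma:WShrewdCharEmb}.(iii) with the parameter $\langle z,\delta,p\rangle$ to obtain cardinals $\bar\kappa<\bar\theta$, an elementary submodel $X$ of $\HH{\bar\theta}$ and an elementary embedding $\map{j}{X}{\HH{\theta}}$ with $\bar\kappa+1\subseteq X$, $j\restriction\bar\kappa=\id_{\bar\kappa}$, $j(\bar\kappa)=\kappa>\bar\kappa$ and $\langle z,\delta,p\rangle\in\ran{j}$, so that $z,\delta,p\in\ran{j}$. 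Writing $\bar p=j^{{-}1}(p)$ and $\bar\delta=j^{{-}1}(\delta)$, every requirement in the definition of $\delta$-hyper-shrewdness will follow once I show that $j$ fixes both $p$ and $\delta$.

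The fixing of $p$ is routine: since $p\in\HH{\kappa}=\HH{j(\bar\kappa)}$, elementarity gives $\bar p\in\HH{\bar\kappa}$, and because $\bar\kappa=\crit{j}$ is the critical point, a standard $\in$-induction (using that $\bar\kappa\subseteq X$ forces $\tc{\bar p}\subseteq X$) shows $j(\bar p)=\bar p$, hence $\bar p=p$. The main obstacle is the fixing of $\delta$, i.e. proving $\bar\delta=\delta$; the difficulty is that $\bar\delta$ is only known to be the \emph{internal} $\varphi(\cdot,p)$-witness of $X$ and of $\HH{\bar\theta}$, while weak shrewdness gives no control over whether $\HH{\bar\theta}$ is $\Sigma_2$-correct, so a priori $\bar\delta$ might be a spurious witness. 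The key observation resolving this is that only the $\Pi_1$-matrix $\pi$ matters. Indeed, from $\HH{\theta}\models\varphi(\delta,p)$ (valid since $y_0\in\HH{\theta}$ and $\pi$ is downward absolute) I would pull back along $j$ to get $X\models\varphi(\bar\delta,p)$, hence $\HH{\bar\theta}\models\varphi(\bar\delta,p)$, so there is some $\bar y\in\HH{\bar\theta}$ with $\HH{\bar\theta}\models\pi(\bar\delta,\bar y,p)$. Since $\bar\theta$ is an uncountable cardinal and the negation of $\pi$ is $\Sigma_1$, $\Sigma_1$-absoluteness makes $\pi$ absolute between $\HH{\bar\theta}$ and $\VV$, whence $\VV\models\pi(\bar\delta,\bar y,p)$ and therefore $\VV\models\varphi(\bar\delta,p)$. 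By the uniqueness clause in the definition of $\delta$ this yields $\bar\delta=\delta$.

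Finally I would collect the conclusions. The equality $\bar\delta=\delta$ gives $\delta\in X$, $j(\delta)=\delta$, and $\delta<\bar\theta$ (as $\bar\delta\in X\prec\HH{\bar\theta}$), while $\bar\kappa<\kappa$ holds by construction and $\kappa<\delta$ by hypothesis, so $\bar\kappa<\kappa<\delta<\bar\theta$. Together with $\bar\kappa\cup\{\bar\kappa,\delta\}\subseteq X$, $j\restriction\bar\kappa=\id_{\bar\kappa}$, $j(\bar\kappa)=\kappa$, $j(\delta)=\delta$ and $z\in\ran{j}$, this is exactly a witness for the $\delta$-hyper-shrewdness property at $\theta$ and $z$. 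Since $\theta$ ranged over all sufficiently large cardinals and $z$ over all elements of $\HH{\theta}$, this shows that $\kappa$ is $\delta$-hyper-shrewd.
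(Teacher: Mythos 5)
Your proof is correct and takes essentially the same route as the paper's: part (i) by quoting Lemma~\ref{lemma:SRnotShrewd}, and part (ii) by applying Lemma~\ref{lemma:WShrewdCharEmb}.(iii), observing that $p$ lies below the critical point and is hence fixed, and then using L\'{e}vy $\Sigma_1$-absoluteness between $\HH{\bar{\theta}}$ and $\VV$ together with the uniqueness of the $\Sigma_2$-witness to conclude $j(\delta)=\delta$. The only cosmetic difference is that you put $\delta$ directly into the parameter handed to the embedding lemma, whereas the paper recovers $\delta\in\ran{j}$ from elementarity and the definability of $\delta$ in $\HH{\theta}$ from the fixed parameter.
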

 
 \begin{proof}
   The first statement follows directly from  Lemma \ref{lemma:SRnotShrewd}.
   In the following, fix $y\in\HH{\kappa}$, a cardinal $\delta>\kappa$ and a $\Sigma_2$-formula $\varphi(v_0,v_1)$ with the property that $\delta$ is the unique element satisfying $\varphi(\delta,y)$. 
 Pick a cardinal $\theta>\delta$ with the property that $\varphi(\delta,y)$ holds in $\HH{\theta}$ and fix an element  $z$ of $\HH{\theta}$. 
 Using Lemma \ref{lemma:WShrewdCharEmb}, we find cardinals $\bar{\kappa}<\bar{\theta}$ and an elementary embedding $\map{j}{X}{\HH{\theta}}$ with the property that $\bar{\kappa}+1\subseteq X\prec\HH{\bar{\theta}}$, $j\restriction\bar{\kappa}=\id_{\bar{\kappa}}$, $j(\bar{\kappa})=\kappa>\bar{\kappa}$ and $y,z\in\ran{j}$. 
  Then $j\restriction(\HH{\bar{\kappa}}\cap X)=\id_{\HH{\bar{\kappa}}\cap X}$ 
  %
  and this shows that $y\in X$ and $j(y)=y$.  
  In this situation, elementarity yields a cardinal $\bar{\delta}$ in $X$ such that $j(\bar{\delta})=\delta$ and $\varphi(\bar{\delta},y)$ holds in $\HH{\bar{\theta}}$. 
   %
 But then 
 $\Sigma_1$-absoluteness implies that the statements $\varphi(\delta,y)$ 
  and $\varphi(\bar{\delta},y)$ both hold in $\VV$. 
 By our assumptions on the formula $\varphi$, this allows us to conclude that $\bar{\delta}=\delta=j(\bar{\delta})=j(\delta)$. 
 These computations show that $\kappa$ is $\delta$-hyper-shrewd with respect to $\theta$ and $z$. 
 \end{proof}

 We are now ready to show that shrewd and inaccessible weakly shrewd cardinals possess the same consistency strength.

\begin{lemma}\label{lemma:ConsHyperShrewd}
  If $\kappa$ is an inaccessible cardinal that is $\delta$-hyper-shrewd for some cardinal $\delta>\kappa$, then the interval $(\kappa,\delta)$ contains an inaccessible cardinal and, if $\varepsilon$ is the least inaccessible cardinal above $\kappa$, then $\kappa$ is a shrewd cardinal in $\VV_\varepsilon$.  
\end{lemma}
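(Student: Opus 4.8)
The plan is to extract both conclusions directly from the embeddings supplied by $\delta$-hyper-shrewdness, in the same spirit in which Lemma~\ref{lemma:ShrewdEmbeddings} was used above to derive consequences of shrewdness; the decisive extra feature is that these embeddings fix $\delta$, hence also any cardinal that is canonically definable from $\kappa$ together with $\delta$.

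First, to locate an inaccessible inside $(\kappa,\delta)$, I would argue by contradiction. Assuming $(\kappa,\delta)$ contains no inaccessible, I would fix a large $\theta>\delta$ and apply $\delta$-hyper-shrewdness to obtain cardinals $\bar\kappa<\kappa<\delta<\bar\theta$, a submodel $X\prec\HH{\bar\theta}$ with $\{\bar\kappa,\delta\}\subseteq X$ and an embedding $\map{j}{X}{\HH{\theta}}$ with $j\restriction\bar\kappa=\id_{\bar\kappa}$, $j(\bar\kappa)=\kappa$ and $j(\delta)=\delta$. Since the sentence ``$\kappa$ is inaccessible, $\kappa<\delta$, and there is no inaccessible cardinal in $(\kappa,\delta)$'' holds in $\HH{\theta}$ and mentions only the parameters $\kappa=j(\bar\kappa)$ and $\delta=j(\delta)$ from $\ran{j}$, elementarity transports it to $X$ and hence to $\HH{\bar\theta}$, where it asserts that there is no inaccessible in $(\bar\kappa,\delta)$. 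Since every genuine inaccessible below $\delta$ is recognized as such in $\HH{\bar\theta}$ (its smaller power sets are correctly represented, as $\delta<\bar\theta$), this contradicts the inaccessibility of $\kappa\in(\bar\kappa,\delta)$. It follows that the least inaccessible $\varepsilon$ above $\kappa$ satisfies $\kappa<\varepsilon<\delta$.

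Next, to verify shrewdness of $\kappa$ in $\VV_\varepsilon$, I would fix an arbitrary instance: an $\calL_\in$-formula $\Phi$, an ordinal $\alpha<\varepsilon$ and a set $A\subseteq\VV_\kappa$ with $\VV_{\kappa+\alpha}\models\Phi(A,\kappa)$ (since $\varepsilon$ is inaccessible, these are exactly the instances of Definition~\ref{definition:ShrewdCardinals} interpreted in $\VV_\varepsilon$). The key move is to feed the parameter $z=\langle A,\alpha,\varepsilon\rangle\in\HH{\delta}$ into $\delta$-hyper-shrewdness, obtaining $\bar\kappa,\bar\theta,X,j$ as before together with $z\in\ran{j}$, so that $j(\bar A)=A$, $j(\bar\alpha)=\alpha$ and $j(\bar\varepsilon)=\varepsilon$ for suitable preimages. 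Pulling back the facts that $\kappa$ is inaccessible and that $\varepsilon$ is the \emph{least} inaccessible above $\kappa$, I would conclude that $\bar\kappa$ is inaccessible and that $\bar\varepsilon$ is the least inaccessible above $\bar\kappa$; since $\kappa$ is itself inaccessible and $>\bar\kappa$, this forces $\bar\varepsilon\leq\kappa$ and therefore $\bar\alpha<\bar\varepsilon\leq\kappa$, so $\bar\kappa,\bar\alpha<\kappa$. Using that $\bar\kappa$ is inaccessible and $\bar\kappa+1\subseteq X\prec\HH{\bar\theta}$, I would invoke the standard argument (as in the proof of Lemma~\ref{lemma:ShrewdEmbeddings}) to get $\VV_{\bar\kappa}\subseteq X$ and $j\restriction\VV_{\bar\kappa}=\id$, whence $\bar A=j(\bar A)\cap\VV_{\bar\kappa}=A\cap\VV_{\bar\kappa}$. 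Finally I would transport the statement ``$\VV_{\kappa+\alpha}\models\Phi(A,\kappa)$'' -- a property of the parameters $\kappa,\alpha,A\in\ran{j}$, correctly evaluated in $\HH{\theta}$ because $\kappa+\alpha<\varepsilon<\delta<\theta$ -- back along $j$, obtaining $\VV_{\bar\kappa+\bar\alpha}\models\Phi(A\cap\VV_{\bar\kappa},\bar\kappa)$ with $\bar\kappa,\bar\alpha<\kappa$. As $\bar\kappa+\bar\alpha<\kappa<\varepsilon$, this witness lies in $\VV_\varepsilon$, completing the verification.

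The step I expect to be the crux is the control of the reflected ordinal $\bar\alpha$: a bare application of the embedding only bounds $\bar\alpha$ below $\bar\varepsilon$, and the whole point of passing to $\delta$-hyper-shrewdness (rather than plain weak shrewdness) is that fixing $\delta$ renders $\varepsilon$ -- which by the first part lies strictly below $\delta$ -- definable in a way that pins its preimage $\bar\varepsilon$ down to the least inaccessible above $\bar\kappa$, hence at most $\kappa$. Once $\bar\alpha<\kappa$ is secured, the remaining identification of $\bar A$ with $A\cap\VV_{\bar\kappa}$ and the transport of the satisfaction statement across $j$ are routine reflection bookkeeping.
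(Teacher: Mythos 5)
Your proof is correct, and while your first assertion is handled essentially as in the paper, your verification of shrewdness in $\VV_\varepsilon$ takes a genuinely different route. For the existence of an inaccessible in $(\kappa,\delta)$, the paper runs your argument in the opposite direction: since $\kappa$ itself witnesses in $\HH{\bar{\vartheta}}$ that there is an inaccessible in $(\bar{\kappa},\delta)$, elementarity places such a cardinal inside $X$, and applying $j$ pushes it up into $(\kappa,\delta)$; your contrapositive version uses exactly the same ingredients, namely $j(\bar{\kappa})=\kappa$, $j(\delta)=\delta$ and the absoluteness of inaccessibility below $\bar{\vartheta}$. For the second assertion, the paper argues by contradiction through the embedding characterization of Lemma~\ref{lemma:ShrewdEmbeddings} applied inside $\VV_\varepsilon$: it takes the least cardinal $\theta<\varepsilon$ for which that characterization fails in $\VV_\varepsilon$, uses that $\theta$ and $\varepsilon$ are definable from $\kappa$ alone in $\HH{\vartheta}$ to obtain preimages $\bar{\theta}<\bar{\varepsilon}<\kappa$ (the strict bound below $\kappa$ coming from Mahloness of $\kappa$, i.e.\ Proposition~\ref{proposition:WeaklyMahlo} together with inaccessibility), and then derives a contradiction from the embedding $j\restriction(\HH{\bar{\theta}}\cap X)$, which lies in $\VV_\varepsilon$ by inaccessibility of $\varepsilon$. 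You instead verify Rathjen's definition relativized to $\VV_\varepsilon$ directly, feeding each instance $\langle A,\alpha,\varepsilon\rangle$ into the embedding as a parameter and bounding the reflected ordinal by pulling back the statement that $\varepsilon$ is the least inaccessible above $\kappa$, so that $\bar{\varepsilon}\leq\kappa$ because $\kappa$ itself is inaccessible above $\bar{\kappa}$. Your route avoids both the appeal to Lemma~\ref{lemma:ShrewdEmbeddings} inside $\VV_\varepsilon$ and the use of Proposition~\ref{proposition:WeaklyMahlo}, at the cost of the (routine, and correctly flagged by you) absoluteness bookkeeping for statements of the form $\anf{\VV_{\kappa+\alpha}\models\Phi(A,\kappa)}$ across $\HH{\theta}$, $\HH{\bar{\theta}}$ and $\VV_\varepsilon$; the paper trades this bookkeeping for the minimality-plus-definability device. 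One correction to your closing analysis: fixing $\delta$ is not what pins $\bar{\varepsilon}$ down --- that comes from placing $\varepsilon$ in $\ran{j}$ and pulling back its defining property, which plain weak shrewdness (Lemma~\ref{lemma:WShrewdCharEmb}~(iii)) already permits; $\delta$-hyper-shrewdness is needed precisely and only for the first assertion, namely to guarantee that an inaccessible cardinal above $\kappa$ exists at all.
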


\begin{proof}
 Pick a sufficiently large cardinal $\vartheta$, cardinals $\bar{\kappa}<\kappa<\delta<\bar{\vartheta}$ and an elementary embedding $\map{j}{X}{\HH{\vartheta}}$ with $\bar{\kappa}\cup\{\bar{\kappa},\delta\}\subseteq X\prec\HH{\bar{\vartheta}}$, $j\restriction\bar{\kappa}=\id_{\bar{\kappa}}$,  $j(\bar{\kappa})=\kappa$ and $j(\delta)=\delta$.   
 %
 %
 By elementarity, there is exists an inaccessible cardinal in the interval $(\bar{\kappa},\delta)$ that is an element of $X$. 
 This directly implies that there exists an inaccessible cardinal in the interval $(\kappa,\delta)$.

  Now, let $\varepsilon$ denote the least inaccessible cardinal above $\kappa$ and assume, towards a contradiction, that $\kappa$ is not a shrewd cardinal in $\VV_\varepsilon$.  
   By Lemma \ref{lemma:ShrewdEmbeddings}, there exists a cardinal $\kappa<\theta<\varepsilon$ such that for all cardinals $\mu<\nu<\kappa$, there is no elementary embedding  $\map{k}{Y}{\HH{\theta}}$ in $\VV_\varepsilon$ such that $\bar{\kappa}+1\subseteq Y\prec\HH{\nu}$, $k\restriction\mu=\id_\mu$ and $k(\mu)=\kappa$. 
  Let $\theta$ denote the least cardinal with this property. 
  Since  $\varepsilon$ and $\theta$ are both definable in $\HH{\vartheta}$ by $\calL_\in$-formulas that only use the parameter $\kappa$, we can find cardinals $\bar{\varepsilon}$ and $\bar{\theta}$ in $X$ with $j(\bar{\varepsilon})=\varepsilon$ and $j(\bar{\theta})=\theta$. 
  Then $\bar{\varepsilon}$ is the least inaccessible cardinal above $\bar{\kappa}$ and, since Proposition \ref{proposition:WeaklyMahlo} shows that $\kappa$ is a Mahlo cardinal, it follows that  $\bar{\theta}<\bar{\varepsilon}<\kappa$. 
 But this yields a contradiction, because the map $\map{j\restriction(\HH{\bar{\theta}}\cap X)}{\HH{\bar{\theta}}\cap X}{\HH{\theta}}$ is an elementary embedding with $\bar{\kappa}+1\subseteq\HH{\bar{\theta}}\cap X\prec\HH{\bar{\theta}}$, $j\restriction\bar{\kappa}=\id_{\bar{\kappa}}$ and $j(\bar{\kappa})=\kappa>\bar{\kappa}$, and the inaccessibility of $\varepsilon$ implies that this map is an element of $\VV_\varepsilon$.  
\end{proof}

We now use a small variation of a standard argument, commonly attributed to Kunen, to show that both weak shrewdness and hyper-shrewdness are  downwards absolute to $\LL$.

\begin{proposition}\label{proposition:DownwardsToL}
 Let $M$ be an inner model, let $\kappa<\vartheta$ and $\bar{\kappa}<\bar{\vartheta}$ be $M$-cardinals, and let   $\map{j}{Y}{\HH{\vartheta}^M}$ be an elementary embedding with $\bar{\kappa}+1\subseteq Y\prec\HH{\bar{\vartheta}}^M$, $j\restriction\bar{\kappa}=\id_{\bar{\kappa}}$ and $j(\bar{\kappa})=\kappa>\bar{\kappa}$.
   If $\bar{\kappa}<\bar{\theta}<\bar{\vartheta}$ is an $M$-cardinal in $Y$ with $\HH{\bar{\theta}}^M\in\HH{\bar{\vartheta}}^M$,  
   then there exists an elementary submodel $X\in Y$ of  $\HH{\bar{\theta}}^M$ with $\bar{\kappa}+1\subseteq X\subseteq Y$ and the property that the embedding $j\restriction X$ is an element of $M$. 
\end{proposition}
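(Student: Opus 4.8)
The plan is to produce $X$ as a Skolem hull computed inside $M$, and then to argue that, although $j$ itself need not lie in $M$, its restriction to $X$ can be reconstructed from parameters that $M$ already contains. The guiding idea (the Kunen-style point) is that $j$ acts on the ordinal generators $\bar\kappa+1$ in a completely explicit way and commutes with Skolem functions, so that $j\restriction X$ becomes a definable function of $M$-objects.

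First I would work inside $\HH{\bar\vartheta}^M$. Since $\bar\theta\in Y$ and $\HH{\bar\theta}^M\in\HH{\bar\vartheta}^M$, the set $\HH{\bar\theta}^M$ is an element of $Y$, and elementarity of $Y\prec\HH{\bar\vartheta}^M$ (together with the fact that $\HH{\bar\vartheta}^M$ satisfies enough of $\ZFC$ to well-order each of its elements) yields a well-order $W$ of $\HH{\bar\theta}^M$ with $W\in Y$. Let $\vec h=\langle h_t\mid t\text{ a Skolem term}\rangle$ be the canonical Skolem functions of the structure $\langle\HH{\bar\theta}^M,\in,W\rangle$; being definable from $\HH{\bar\theta}^M$ and $W$, we have $\vec h\in Y$. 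Define $X$ to be the closure of $\bar\kappa+1$ under $\vec h$. Then $X\prec\HH{\bar\theta}^M$ and $\bar\kappa+1\subseteq X$ by Tarski--Vaught, and $X$ is definable in $\HH{\bar\vartheta}^M$ from the parameters $\vec h$ and $\bar\kappa$, both of which lie in $Y$; hence $X\in Y$. For $X\subseteq Y$, note that $\HH{\bar\vartheta}^M$ sees a surjection $\map{f}{\bar\kappa}{X}$ (the language is countable and $\bar\kappa$ infinite, so $\betrag{X}=\bar\kappa$), and such an $f$ can be chosen in $Y$; since $\bar\kappa\subseteq Y$, we get $f(\alpha)\in Y$ for every $\alpha<\bar\kappa$ and therefore $X=\ran{f}\subseteq Y$.

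The crux is to show $j\restriction X\in M$. Every $x\in X$ has a representation $x=h_t(\xi_0,\ldots,\xi_{n-1})$ with $\xi_0,\ldots,\xi_{n-1}\leq\bar\kappa$; since $\vec h\in Y$ and each $\xi_i\in Y$, elementarity of $j$ gives $j(x)=(j\vec h)_t(j(\xi_0),\ldots,j(\xi_{n-1}))$. Now $j\restriction\bar\kappa=\id_{\bar\kappa}$ and $j(\bar\kappa)=\kappa$ show that each $j(\xi_i)$ is given by the map $e$ that fixes the ordinals below $\bar\kappa$ and sends $\bar\kappa$ to $\kappa$, while applying $j$ to the statement ``$\vec h$ is the canonical Skolem sequence of $\langle\HH{\bar\theta}^M,\in,W\rangle$'' identifies $j\vec h$ with the canonical Skolem sequence of $\langle\HH{j(\bar\theta)}^M,\in,j(W)\rangle$; in particular $j\vec h\in\HH{\vartheta}^M\subseteq M$, and each argument $e(\xi_i)\leq\kappa<j(\bar\theta)$ lies in its domain. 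I can therefore define, inside $M$, a function $k$ on $X$ by setting $k(x)=(j\vec h)_t(e(\xi_0),\ldots,e(\xi_{n-1}))$, where $(t,\xi_0,\ldots,\xi_{n-1})$ is the canonical (say lexicographically least, using a fixed enumeration of terms and the order on ordinals) representation of $x$; all of $X$, $\bar\kappa$, $\kappa$, $j\vec h$ and $e$ lie in $M$, so $k\in M$. Since the value $j(x)$ does not depend on the chosen representation, the displayed computation shows $k=j\restriction X$, whence $j\restriction X\in M$, as required.

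I expect the main obstacle to be the verification underlying this last step: that the $M$-definable function $k$, built from one canonically chosen representation of each element, genuinely agrees with $j\restriction X$ on every representation. This is exactly where the fact that $j$ is a total, well-defined elementary map that commutes with the Skolem terms is used, and it is the heart of the Kunen-style reconstruction. A secondary technical point is to secure the well-order $W$, and hence $\vec h$, inside $Y$; this relies on the hypothesis $\HH{\bar\theta}^M\in\HH{\bar\vartheta}^M$, which guarantees that $\HH{\bar\theta}^M$ is well-orderable inside $\HH{\bar\vartheta}^M$.
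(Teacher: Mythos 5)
Your proof is correct and follows essentially the same Kunen-style reconstruction as the paper: produce $X\in Y$ of cardinality $\bar{\kappa}$ with $\bar{\kappa}+1\subseteq X\subseteq Y$, then recover $j\restriction X$ inside $M$ from the image under $j$ of a single enumerating object lying in $Y$, using that $j$ fixes all ordinal indices below $\bar{\kappa}$. The paper implements this more economically: it picks any elementary submodel $X\in Y$ of $\HH{\bar{\theta}}^M$ of cardinality $\bar{\kappa}$ and a bijection $\map{b}{\bar{\kappa}}{X}$ in $Y$, and observes directly that $j\restriction X=j(b)\circ b^{{-}1}$ with $b\in\HH{\bar{\vartheta}}^M$ and $j(b)\in\HH{\vartheta}^M$, so the well-order, Skolem functions and canonical-representation bookkeeping (the step you flag as the main obstacle) are unnecessary.
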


\begin{proof}
 Our assumptions ensure that $Y$ contains an elementary submodel $X$ of $\HH{\bar{\theta}}^M$ of cardinality $\bar{\kappa}$ with $\bar{\kappa}+1\subseteq X$. 
 By elementarity, there exists a bijection $\map{b}{\bar{\kappa}}{X}$ in $Y$. Since we have $\bar{\kappa}\cup\{b\}\subseteq Y\prec\HH{\bar{\vartheta}}^M$, we know that $X$ is a subset of $Y$. 
 Given $x\in X$, we have $j(b^{{-}1}(x))=b^{{-}1}(x)$ and hence we know that $j(x)=(j(b)\circ b^{{-}1})(x)$ holds. Since $b\in\HH{\bar{\vartheta}}^M$ and $j(b)\in\HH{\vartheta}^M$, this shows that $j\restriction X$ is an element of $M$.   
\end{proof}

\begin{corollary}\label{corollary:Downwards}
 \begin{enumerate}
  \item Every weakly shrewd cardinal is a weakly shrewd cardinal in $\LL$. 
  
  \item Given a cardinal $\delta$, every $\delta$-hyper-shrewd cardinal is a $\delta$-hyper-shrewd cardinal in $\LL$. 
 \end{enumerate}
\end{corollary}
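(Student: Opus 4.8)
The plan is to derive both statements from the embedding characterizations relativized to $\LL$, using Proposition \ref{proposition:DownwardsToL} with $M=\LL$ as the main engine. For (1), since Lemma \ref{lemma:WShrewdCharEmb} is a theorem of $\ZFC$ and $\LL\models\ZFC$, it suffices to verify statement (ii) of that lemma inside $\LL$: for every sufficiently large $\LL$-cardinal $\theta>\kappa$, I must produce, as an element of $\LL$, an elementary embedding $\map{j_0}{X}{\HH{\theta}^\LL}$ with $X\prec\HH{\bar\theta}^\LL$, $\bar\kappa+1\subseteq X$, $j_0\restriction\bar\kappa=\id_{\bar\kappa}$ and $j_0(\bar\kappa)=\kappa>\bar\kappa$ for some $\LL$-cardinals $\bar\kappa<\bar\theta$. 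The hypothesis of Proposition \ref{proposition:DownwardsToL} demands an embedding in $\VV$ whose domain and codomain are the $\LL$-structures $\HH{\bar\vartheta}^\LL$ and $\HH{\vartheta}^\LL$, so the whole task reduces to manufacturing such an embedding from the weak shrewdness of $\kappa$ in $\VV$.

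Concretely, for (1) I would fix the target $\LL$-cardinal $\theta$, choose a $\VV$-cardinal $\vartheta>\theta$ and a sufficiently large $\VV$-cardinal $\Theta>\vartheta$, and apply Lemma \ref{lemma:WShrewdCharEmb}(iii) in $\VV$ with parameter $z=\langle\vartheta,\HH{\vartheta}^\LL,\theta\rangle$. This yields cardinals $\bar\kappa<\bar\Theta$ with $\bar\kappa<\kappa$, an elementary submodel $X'\prec\HH{\bar\Theta}$ and an elementary embedding $\map{j'}{X'}{\HH{\Theta}}$ with $\bar\kappa+1\subseteq X'$, $j'\restriction\bar\kappa=\id_{\bar\kappa}$, $j'(\bar\kappa)=\kappa$ and $z\in\ran{j'}$. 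Pulling $z$ back gives $\bar\vartheta,\bar\theta\in X'$ with $j'(\bar\vartheta)=\vartheta$, $j'(\bar\theta)=\theta$ and, by elementarity, $\HH{\bar\vartheta}^\LL\in X'$. I would then set $Y=X'\cap\HH{\bar\vartheta}^\LL$ and $j=j'\restriction Y$. A Tarski--Vaught argument using $\HH{\bar\vartheta}^\LL\in X'\prec\HH{\bar\Theta}$ shows $Y\prec\HH{\bar\vartheta}^\LL$, and the elementarity of $j'$ shows that $\map{j}{Y}{\HH{\vartheta}^\LL}$ is elementary and retains $\bar\kappa+1\subseteq Y$, $j\restriction\bar\kappa=\id_{\bar\kappa}$, $j(\bar\kappa)=\kappa>\bar\kappa$ and $\bar\theta\in Y$ with $j(\bar\theta)=\theta$. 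This is exactly the hypothesis of Proposition \ref{proposition:DownwardsToL}; applying it to the $\LL$-cardinal $\bar\theta$ produces $X\in Y$ with $X\prec\HH{\bar\theta}^\LL$, $\bar\kappa+1\subseteq X$ and $j\restriction X\in\LL$. As the image of $X$ lies in $\HH{\theta}^\LL$ (because $j(\bar\theta)=\theta$), the map $j\restriction X$ is the desired witness inside $\LL$, and letting $\theta$ range over all large $\LL$-cardinals gives weak shrewdness of $\kappa$ in $\LL$.

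For (2) I would run the same construction starting from the definition of $\delta$-hyper-shrewdness in $\VV$, additionally carrying $\delta$ (an $\LL$-cardinal, since it is a $\VV$-cardinal) and the prescribed parameter along in $z$. Since $\delta=j'(\delta)<\vartheta=j'(\bar\vartheta)$ forces $\delta<\bar\vartheta$, one checks $\delta\in Y$ and $j(\delta)=\delta$, and similarly $\bar\kappa\cup\{\bar\kappa\}\subseteq Y$ and the preimage of the parameter lies in $Y\cap\HH{\bar\theta}^\LL$. The only extra point is that I must secure $\delta$ and this preimage inside the submodel $X$ of Proposition \ref{proposition:DownwardsToL}; this is harmless because its proof builds $X$ as an arbitrary elementary submodel of $\HH{\bar\theta}^\LL$ of size $\bar\kappa$ containing $\bar\kappa+1$, so I may simply require it to contain these finitely many additional elements of $\HH{\bar\theta}^\LL$. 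The resulting $j\restriction X\in\LL$ then fixes $\delta$ and has the parameter in its range, witnessing $\delta$-hyper-shrewdness of $\kappa$ in $\LL$.

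I expect the main obstacle to be the passage from the true $\HH{\cdot}$ hierarchy, in which shrewdness hands us embeddings, to the $\LL$-hierarchy required both by Proposition \ref{proposition:DownwardsToL} and by the target statement. The crucial device is to insert $\HH{\vartheta}^\LL$ as a parameter so that its preimage $\HH{\bar\vartheta}^\LL$ is captured in $X'$, and then to verify that $X'\cap\HH{\bar\vartheta}^\LL$ is an elementary submodel of $\HH{\bar\vartheta}^\LL$ on which $j'$ restricts to an elementary map into $\HH{\vartheta}^\LL$. Once this is in place, the remaining steps are absoluteness bookkeeping: that $\VV$-cardinals are $\LL$-cardinals, that $\HH{\bar\theta}^\LL\in\HH{\bar\vartheta}^\LL$ whenever $\bar\theta<\bar\vartheta$ are $\LL$-cardinals (using $\GCH$ in $\LL$), and that satisfaction for a fixed set-sized structure is absolute between $\VV$ and $\LL$, so that $j\restriction X$ really is an elementary embedding from the point of view of $\LL$.
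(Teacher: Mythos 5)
Your proposal is correct and follows essentially the same route as the paper: both use the embedding characterization of weak shrewdness (resp.\ the definition of $\delta$-hyper-shrewdness) in $\VV$ to produce an elementary embedding between elementary submodels of structures of the form $\HH{\bar{\vartheta}}^\LL$ and $\HH{\vartheta}^\LL$, and then invoke Proposition \ref{proposition:DownwardsToL} with $M=\LL$ (allowing finitely many prescribed elements such as $\delta$ and $\bar{z}$ in the submodel $X$, exactly as the paper's proof of part (ii) implicitly does) to obtain a witnessing embedding lying in $\LL$. The only cosmetic difference is that you pass to a higher level $\Theta>\vartheta$ and carry $\HH{\vartheta}^\LL$ as an element-parameter before cutting down via Tarski--Vaught, whereas the paper restricts its embedding $\map{k}{Y}{\HH{\vartheta}}$ directly to $Y\cap\LL_{\bar{\vartheta}}$, using that $\LL_{\bar{\vartheta}}$ is a definable class of $\HH{\bar{\vartheta}}$.
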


\begin{proof}
 (i) Assume that $\kappa$ is a weakly shrewd cardinal. Fix an $\LL$-cardinal $\theta>\kappa$ and a regular cardinal $\vartheta>\theta$. 
    By Lemma \ref{lemma:WShrewdCharEmb}, there exists cardinals $\bar{\kappa}<\bar{\vartheta}$ and an elementary embedding  $\map{k}{Y}{\HH{\vartheta}}$ with $\bar{\kappa}+1\subseteq Y\prec\HH{\vartheta}$, $k\restriction\bar{\kappa}=\id_{\bar{\kappa}}$, $k(\bar{\kappa})=\kappa>\bar{\kappa}$ and $\theta\in\ran{k}$. 
    Pick $\bar{\theta}\in Y$ with $k(\bar{\theta})=\theta$. 
    Since $\bar{\theta}\in \LL_{\bar{\vartheta}}\cap Y\prec\LL_{\bar{\vartheta}}$, we can apply Proposition \ref{proposition:DownwardsToL} to find an elementary submodel $X\in\LL_{\bar{\vartheta}}\cap Y$ of $\LL_{\bar{\theta}}$ with $\betrag{X}^\LL=\bar{\kappa}$, $\bar{\kappa}+1\subseteq X$ and $j=k\restriction X\in\LL$. 
    But then $\map{j}{X}{\LL_\theta}$ is an elementary embedding in $\LL$ with $j\restriction\bar{\kappa}=\id_{\bar{\kappa}}$ and $j(\bar{\kappa})=\kappa>\bar{\kappa}$. 
 These computations show that $\kappa$ is weakly shrewd in $\LL$. 
 
 (ii) Now, assume that $\kappa$ is $\delta$-hyper-shrewd for some cardinal $\delta>\kappa$. Fix an $\LL$-cardinal $\theta>\delta$, $z\in\LL_\theta$ and a sufficiently large regular cardinal $\vartheta>\theta$. 
  Pick cardinals $\bar{\kappa}<\bar{\vartheta}$ and an elementary embedding  $\map{k}{Y}{\HH{\vartheta}}$ with $\bar{\kappa}\cup\{\bar{\kappa},\delta\}\subseteq Y\prec\HH{\vartheta}$, $k\restriction\bar{\kappa}=\id_{\bar{\kappa}}$, $k(\bar{\kappa})=\kappa>\bar{\kappa}$, $k(\delta)=\delta$ and $\theta,z\in\ran{k}$. 
 In addition, fix $\bar{\theta},\bar{z}\in Y$ with $j(\bar{\theta})=\theta$ and $j(\bar{z})=z$. 
 Now, use Proposition \ref{proposition:DownwardsToL} to find an elementary submodel $X\in\LL_{\bar{\vartheta}}\cap Y$ of $\LL_{\bar{\theta}}$  with $\betrag{X}^\LL=\bar{\kappa}$, $\bar{\kappa}\cup\{\bar{\kappa},\delta,\bar{z}\}\subseteq X$ and $j=k\restriction X\in\LL$. 
 Then $j$ witnesses that $\kappa$ is $\delta$-hyper-shrewd with respect to $\theta$ and $z$ in $\LL$. 
  In particular, we have shown that $\kappa$ is a $\delta$-hyper-shrewd cardinal in $\LL$. 
\end{proof}

\begin{corollary}\label{corollary:EquiInacc}
 The following statements are equiconsistent over $\ZFC$: 
 \begin{enumerate}
  \item There exists a shrewd cardinal. 
  
  \item There exists an inaccessible weakly shrewd cardinal. 
  
    \item There exists a weakly shrewd cardinal. 
 \end{enumerate}
\end{corollary}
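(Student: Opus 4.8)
The plan is to establish the cycle of consistency implications $\mathrm{Con}(\text{(i)})\to\mathrm{Con}(\text{(ii)})\to\mathrm{Con}(\text{(iii)})\to\mathrm{Con}(\text{(i)})$. The first two implications are witnessed by outright implications between the statements and hence require no extra arguments: a shrewd cardinal is totally indescribable and therefore strongly inaccessible, and it is weakly shrewd by Corollary \ref{corollary:ShrewdWeaklyShrewd}, so (i) directly yields (ii); and (ii) trivially yields (iii). Thus every model of $\ZFC$ satisfying (i) also satisfies (ii), and likewise for (ii) and (iii).

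The content of the corollary therefore lies in the implication $\mathrm{Con}(\text{(iii)})\to\mathrm{Con}(\text{(i)})$. I would work inside a model of $\ZFC$ in which (iii) holds, fix a weakly shrewd cardinal $\kappa$, and distinguish two cases. If $\kappa$ is already shrewd, then (i) holds outright and there is nothing to prove. Otherwise $\kappa$ is weakly shrewd but not shrewd, and I would apply Lemma \ref{lemma:SRnonShrewdFix} to obtain a cardinal $\delta>\kappa$ such that $\{\delta\}$ is definable by a $\Sigma_2$-formula with parameters in $\HH{\kappa}$, together with the conclusion that $\kappa$ is $\delta$-hyper-shrewd.

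The remaining idea is to descend to $\LL$ and there produce a set-sized model carrying a genuine shrewd cardinal. Since $\kappa<\delta$ remain cardinals in $\LL$, Corollary \ref{corollary:Downwards} gives that $\kappa$ is both weakly shrewd and $\delta$-hyper-shrewd in $\LL$. Relativizing Proposition \ref{proposition:WeaklyMahlo} to $\LL$, the weak shrewdness of $\kappa$ in $\LL$ makes $\kappa$ weakly Mahlo, hence weakly inaccessible, in $\LL$; and as $\GCH$ holds in $\LL$, this upgrades to strong inaccessibility there. Thus in $\LL$ the cardinal $\kappa$ is inaccessible and $\delta$-hyper-shrewd, so Lemma \ref{lemma:ConsHyperShrewd} applies inside $\LL$: letting $\varepsilon$ be the least inaccessible cardinal above $\kappa$ in $\LL$ (which exists in the interval $(\kappa,\delta)$ by that lemma), we conclude that $\kappa$ is shrewd in $\VV_\varepsilon^\LL=\LL_\varepsilon$. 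Since $\varepsilon$ is inaccessible in $\LL$, the transitive set $\LL_\varepsilon$ models $\ZFC$ and now satisfies \anf{there exists a shrewd cardinal}, yielding a model of $\ZFC+\text{(i)}$ and closing the cycle.

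Almost all of the work is carried by the cited lemmas, so the argument is one of careful assembly rather than of new ideas. The single point demanding care is verifying that the inaccessibility hypothesis of Lemma \ref{lemma:ConsHyperShrewd} is genuinely met in $\LL$: the descent to $\LL$ only preserves \emph{weak} shrewdness and $\delta$-hyper-shrewdness, and it is the combination of the relativized Proposition \ref{proposition:WeaklyMahlo} with $\GCH$ in $\LL$ that turns weak inaccessibility into the strong inaccessibility that the lemma requires.
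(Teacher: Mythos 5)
Your proposal is correct and follows essentially the same route as the paper: the only nontrivial direction is $\mathrm{Con}(\text{(iii)})\to\mathrm{Con}(\text{(i)})$, handled by descending to $\LL$ (Corollary \ref{corollary:Downwards}), securing inaccessibility of $\kappa$ in $\LL$ via its weak Mahloness together with $\GCH$, and then combining Lemma \ref{lemma:SRnonShrewdFix} with Lemma \ref{lemma:ConsHyperShrewd} to obtain $\LL_\varepsilon$ as a set model of $\ZFC$ with a shrewd cardinal. The only (immaterial) difference is that you perform the shrewd/non-shrewd case split in $\VV$ and push $\delta$-hyper-shrewdness down to $\LL$ via Corollary \ref{corollary:Downwards}(ii), whereas the paper first descends to $\LL$ and performs the case split and the application of Lemma \ref{lemma:SRnonShrewdFix} there.
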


\begin{proof}
 Let $\kappa$ be a weakly shrewd cardinal. Then Corollary \ref{corollary:Downwards} shows that $\kappa$ is a weakly shrewd cardinal in $\LL$ and Proposition \ref{proposition:SRMahlo} implies that $\kappa$ is inaccessible in $\LL$. If $\kappa$ is not a shrewd cardinal in $\LL$, then a combination of Lemma \ref{lemma:SRnonShrewdFix} with Lemma \ref{lemma:ConsHyperShrewd} yields an ordinal $\varepsilon>\kappa$ that is inaccessible in $\LL$ and has the property that $\kappa$ is a shrewd cardinal in $\LL_\varepsilon$. This shows that, over $\ZFC$, the consistency of (iii) implies the consistency of (i). Since all shrewd cardinals are inaccessible weakly shrewd cardinals, this implication yields the statement of the corollary.  
\end{proof}

We now show that the existence of weakly shrewd cardinals that are not shrewd is consistent by proving that subtle cardinals provide a proper upper bound for the consistency strength of the existence of hyper-shrewd cardinals. 
Remember that a cardinal $\delta$ is \emph{subtle} if for every sequence $\seq{d_\alpha}{\alpha<\delta}$ with $d_\alpha\subseteq\alpha$ for all $\alpha<\delta$ and every closed unbounded subset $C$ of $\delta$, there exist $\alpha,\beta\in C$ with $\alpha<\beta$ and $d_\alpha=d_\beta\cap\alpha$ (see \cite{JensenKunen1969:Ineffable}).

\begin{lemma}\label{lemma:SubtleHyperShrewd}
 If $\delta$ is a subtle cardinal, then the set of all inaccessible $\delta$-hyper-shrewd cardinals is stationary in $\delta$.   
\end{lemma}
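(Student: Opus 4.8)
The plan is to pass through a reflection reformulation of $\delta$-hyper-shrewdness and then to realize stationarily many witnesses using the two-point coherence property of subtle cardinals. Given $\kappa<\delta$, call $\kappa$ \emph{$\delta$-reflecting} if for every $\calL_\in$-formula $\Phi(v_0,v_1,v_2)$ and every $A\subseteq\kappa$, the existence of some cardinal $\theta>\delta$ with $\HH{\theta}\models\Phi(A,\kappa,\delta)$ entails the existence of a cardinal $\bar\kappa<\kappa$ and a cardinal $\bar\theta>\delta$ with $\HH{\bar\theta}\models\Phi(A\cap\bar\kappa,\bar\kappa,\delta)$. Mirroring the implication from (i) to (iii) in the proof of Lemma \ref{lemma:WShrewdCharEmb} — coding the satisfaction diagram of an elementary submodel $Y\prec\HH{\theta}$ of size $\kappa$ with $\kappa\cup\{\kappa,\delta,z\}\subseteq Y$ by a subset $A\subseteq\kappa$ through a bijection $\map{b}{\kappa}{Y}$ with $b(0)=\kappa$ and $b(1)=\delta$, and then reflecting the associated statement while \emph{keeping $\delta$ fixed} — I would first check that every inaccessible $\delta$-reflecting cardinal is $\delta$-hyper-shrewd. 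The extra clause $j(\delta)=\delta$ appears precisely because $\delta$ enters the reflected statement as a fixed parameter rather than as a value of $b$ that is moved; so it suffices to show that the inaccessible $\delta$-reflecting cardinals are stationary in $\delta$.

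For this, I would fix a closed unbounded $C\subseteq\delta$ and argue by contradiction, assuming that the relevant cardinals in $C$ all fail to be $\delta$-reflecting. For each such $\kappa$ I would select, relative to a fixed recursive enumeration of $\calL_\in$-formulas, a witness of least formula-index: a formula $\Phi_\kappa$ and a canonically chosen $A_\kappa\subseteq\kappa$ such that $\Phi_\kappa(A_\kappa,\kappa,\delta)$ holds in some $\HH{\theta}$ with $\theta>\delta$, while no $\bar\kappa<\kappa$ and no $\bar\theta>\delta$ realize $\HH{\bar\theta}\models\Phi_\kappa(A_\kappa\cap\bar\kappa,\bar\kappa,\delta)$. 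The decisive observation is that, because the offending height $\theta$ is quantified existentially, each witness is fully described by the pair $\seq{\Phi_\kappa}{A_\kappa}$ with $A_\kappa\subseteq\kappa$, and can therefore be coded by a single $d_\kappa\subseteq\kappa$: I would reserve an initial segment of coordinates for the index of $\Phi_\kappa$ and use iterated G\"odel pairing on the remaining coordinates for $A_\kappa$, passing to the (still large) set of $\kappa$ closed under the pairing function.

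Applying the coherence property of the subtle cardinal $\delta$ to $\seq{d_\kappa}{\kappa}$ and $C$ then yields $\kappa_0<\kappa_1$ with $d_{\kappa_0}=d_{\kappa_1}\cap\kappa_0$. Since the formula-index sits below $\omega$ and the pairing is respected, this forces $\Phi_{\kappa_0}=\Phi_{\kappa_1}=:\Phi$ and $A_{\kappa_0}=A_{\kappa_1}\cap\kappa_0$. As $\kappa_0$ is itself a non-reflecting point, $\Phi(A_{\kappa_0},\kappa_0,\delta)=\Phi(A_{\kappa_1}\cap\kappa_0,\kappa_0,\delta)$ holds in some $\HH{\theta_0}$ with $\theta_0>\delta$; taking $\bar\kappa=\kappa_0$ and $\bar\theta=\theta_0$ this is exactly a reflection below $\kappa_1$ of the statement that $\kappa_1$'s witness declared impossible, a contradiction. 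Hence some cardinal of $C$ is $\delta$-reflecting, and since $C$ was arbitrary the $\delta$-reflecting cardinals are stationary.

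The step I expect to be the main obstacle is securing \emph{inaccessibility} of the witnesses, that is, arranging the coherent pair $\kappa_0<\kappa_1$ to consist of inaccessible cardinals. In contrast with genuine shrewdness (Lemma \ref{lemma:ShrewdEmbeddings}), where the reflecting models have rank below $\kappa$ and inaccessibility is read off the embedding by the strong-limit argument, the models attached to $\delta$-reflection have height $\bar\theta>\delta>\kappa$, so that argument is unavailable and inaccessibility must be imported from the combinatorics rather than from the embedding. To handle this I would enrich the coded data with a \emph{low-rank} component, namely a failure witness for shrewdness in $\VV_\delta$ in the sense of Rathjen's {\cite[Lemma 2.7]{zbMATH02168085}} (whose heights, lying below $\kappa$, again need only be quantified existentially and coded as subsets of $\kappa$), so that the coherence of this component produces an embedding with domain a genuine $\HH{\bar\theta}$ of height below $\kappa_1$ and thereby forces $\kappa_1$ to be inaccessible via the strong-limit computation of Lemma \ref{lemma:ShrewdEmbeddings}. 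Carrying out a single application of the subtle property that simultaneously drives the low-rank component (for inaccessibility) and the high-rank $\delta$-reflection component (for hyper-shrewdness, fixing $\delta$) on the \emph{same} coherent pair, and keeping the bookkeeping of both codings and the attendant $\Sigma_1$-absoluteness consistent, is the delicate part of the argument.
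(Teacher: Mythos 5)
Your reduction to the stationarity of inaccessible $\delta$-reflecting cardinals is sound, and the high-rank half of your coherence argument does work: because the reflected height $\bar\theta$ in your notion of $\delta$-reflection is allowed to be an \emph{arbitrary} cardinal above $\delta$, the positive part of $\kappa_0$'s failure witness (which holds in some $\HH{\theta_0}$ with $\theta_0>\delta$) immediately refutes the negative part of $\kappa_1$'s witness once coherence yields $\Phi_{\kappa_0}=\Phi_{\kappa_1}$ and $A_{\kappa_0}=A_{\kappa_1}\cap\kappa_0$. This is a genuine alternative to the paper's mechanism, which instead codes the full satisfaction diagram of a model $X_\kappa\prec\HH{\theta_\kappa}$ with $\kappa\cup\{\kappa,\delta,z_\kappa\}\subseteq X_\kappa$ and $\theta_\kappa>\delta$, and lets coherence of two diagrams produce the elementary embedding $b_{\kappa_1}\circ b_{\kappa_0}^{-1}$ refuting the failure of hyper-shrewdness at $\kappa_1$; both versions exploit the same feature, namely that no matching of heights above $\delta$ between the two witnesses is ever required.

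The genuine gap is exactly where you place it: the non-inaccessible points of the club, and your low-rank repair does not work as described. First, coherence of \emph{syntactic} failure witnesses never ``produces an embedding''; embeddings come from coherence of satisfaction diagrams, so the phrase ``produces an embedding with domain a genuine $\HH{\bar\theta}$ of height below $\kappa_1$'' has no proof behind it, and ``forcing $\kappa_1$ to be inaccessible'' is not a coherent aim --- what you must show is that every coherent pair subtlety could return consists of two points carrying high-rank witnesses. Second, read charitably as a reflection contradiction, the low-rank component fails for a concrete reason: a witness for the failure of shrewdness of $\kappa_0$ in $\VV_\delta$ consists of $(\Psi,B_{\kappa_0},\alpha_{\kappa_0})$ with $\VV_{\kappa_0+\alpha_{\kappa_0}}\models\Psi(B_{\kappa_0},\kappa_0)$, where $\alpha_{\kappa_0}$ may be any ordinal below $\delta$; refuting $\kappa_1$'s witness requires ordinals $\bar\kappa,\bar\alpha<\kappa_1$, and nothing guarantees $\alpha_{\kappa_0}<\kappa_1$. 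This height-matching problem is precisely what makes reflection for genuine shrewdness harder than for your $\delta$-reflection notion; repairing it needs additional devices you do not supply (shrinking $C$ to closure points of $\kappa\mapsto\kappa+\alpha_\kappa$, a coherent rank-based coding of subsets of $\VV_\kappa$ on beth fixed points, and a type flag so that mixed pairs cannot cohere). The paper avoids all of this with a far more elementary trick that you could adopt instead: since $\delta$ is an inaccessible limit of inaccessibles, one may assume every element of $C$ is a limit of inaccessible cardinals, so every non-inaccessible element of $C$ is \emph{singular}; to each singular $\lambda\in C$ one attaches the code $\Set{\goedel{\cof{\lambda}}{c_\lambda(\xi)}}{\xi<\cof{\lambda}}$ of a strictly increasing cofinal map $\map{c_\lambda}{\cof{\lambda}}{\lambda}$, and a short combinatorial computation (the Claim in the paper's proof) shows that no coherent pair can involve such a code at all --- no embedding, no reflection, and no height control are needed.
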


\begin{proof}
 Let $\delta$ be a subtle cardinal and assume, towards a contradiction, that the set of inaccessible $\delta$-hyper-shrewd cardinals is not stationary in $\delta$.   
 Since $\delta$ is an inaccessible limit of inaccessible cardinals, there is a closed unbounded subset $C$ of $\delta$ consisting of cardinals that are not $\delta$-hyper-shrew and are limits of inaccessible cardinals. 
Given an element $\kappa$ of $C$ that is inaccessible, our assumptions yield a cardinal $\theta_\kappa>\delta$ and an element $z_\kappa$ of $\HH{\theta_\kappa}$ with the property that for all cardinals $\bar{\kappa}<\kappa<\delta<\bar{\theta}$ there is no elementary embedding $\map{j}{X}{\HH{\theta_\kappa}}$ satisfying $\bar{\kappa}\cup\{\bar{\kappa},\delta\}\subseteq X\prec\HH{\bar{\theta}}$, $j\restriction\bar{\kappa}=\id_{\bar{\kappa}}$,  $j(\bar{\kappa})=\kappa$, $j(\delta)=\delta$ and $z_\kappa\in\ran{j}$.   
 For each inaccessible cardinal $\kappa$ in $C$, fix an elementary submodel $X_\kappa$ of $\HH{\theta_\kappa}$ of cardinality $\kappa$ with $\kappa\cup\{\kappa,\delta,z_\kappa\}\subseteq X_\kappa$, and a bijection $\map{b_\kappa}{\kappa}{X_\kappa}$ with $b(0)=\kappa$, $b(1)=\delta$, $b(2)=\langle z_\kappa,\kappa\rangle$ and $b(\omega\cdot(1+\alpha))=\alpha$ for all $\alpha<\kappa$. 
   Next, if $\lambda$ is an element of $C$ that is not inaccessible, then $\lambda$ is a singular cardinal and we fix a strictly increasing cofinal function $\map{c_\lambda}{\cof{\lambda}}{\lambda}$ with $c_\lambda(0)=0$.

   Fix an enumeration $\seq{a_l}{l<\omega}$ of $\mathsf{Fml}$ and pick a sequence $\seq{d_\alpha}{\alpha<\delta}$ such that the following statements hold for all $\alpha<\kappa$: 
   \begin{itemize}
    \item $d_\alpha$ is a subset of $\alpha$. 
    
    \item If $\alpha$ is an inaccessible cardinal in $C$, then the set $d_\alpha$ consists of all ordinals of the form $\goedel{l}{\alpha_0,\ldots,\alpha_{n-1}}$ such that $l<\omega$, $\alpha_0,\ldots,\alpha_{n-1}<\alpha$, $a_l$ codes a formula with $n$ free variables  and $\mathsf{Sat}(X_\alpha,\langle b(\alpha_0),\ldots,b(\alpha_{n-1})\rangle,a_l)$ holds. 
    
    \item If $\alpha$ is a singular cardinal in $C$, then the set $d_\alpha$ consists of all ordinals of the form $\goedel{\cof{\alpha}}{c_\alpha(\xi)}$ with $\xi<\cof{\alpha}$.  
   \end{itemize}

  The subtlety of $\delta$ now yields $\alpha,\beta\in C$ with $\alpha<\beta$ and $d_\alpha=d_\beta\cap\alpha$.

  \begin{claim*}
   The ordinals $\alpha$ and $\beta$ are both  inaccessible. 
  \end{claim*} 
  
  \begin{proof}[Proof of the Claim]
   Assume, towards a contradcition, that either $\alpha$ or $\beta$ is not inaccessible. 
   If $\alpha$ is not inaccessible, then $\goedel{\cof{\alpha}}{0}\in d_\alpha\subseteq d_\beta$ and hence $\cof{\alpha}=\cof{\beta}<\alpha<\beta$. 
   In the other case, if $\beta$ is not inaccessible, then the fact that $d_\beta\cap\alpha=d_\alpha\neq\emptyset$ yields a $\xi<\cof{\beta}$ with $\goedel{\cof{\beta}}{c_\beta(\xi)}\in d_\alpha$ and this shows that $\cof{\alpha}=\cof{\beta}<\alpha<\beta$ also holds in this case. 
  Let $\zeta_0$ be the minimal element of $\cof{\alpha}$ with $c_\beta(\zeta_0)\geq\alpha$. Then there exists $\zeta_0<\zeta_1<\cof{\alpha}$ such that $c_\beta(\xi)<c_\alpha(\zeta_1)$ holds for all $\xi<\zeta_0$. 
  Since our setup ensures that $\goedel{\cof{\alpha}}{c_\alpha(\zeta_1)}\in d_\alpha\subseteq d_\beta$, there exists $\xi<\cof{\alpha}$ with $c_\alpha(\zeta_1)=c_\beta(\xi)$. 
  Moreover, since $c_\beta$ is strictly increasing, the fact that $c_\beta(\xi)=c_\alpha(\zeta_1)<\alpha\leq c_\beta(\zeta_0)$ implies that $\xi<\zeta_0$ and hence we can conclude  that $c_\beta(\xi)<c_\alpha(\zeta_1)=c_\beta(\xi)$, a contradiction. 
  \end{proof}

 By the definition of the sequence $\seq{d_\alpha}{\alpha<\delta}$, we now know that $$X_\alpha\models\varphi(b_\alpha(\alpha_0),\ldots,b_\alpha(\alpha_{n-1})) ~ \Longleftrightarrow ~ \HH{\theta_\beta}\models\varphi(b_\beta(\alpha_0),\ldots,b_\beta(\alpha_{n-1}))$$ holds for every $\calL_\in$-formula $\varphi(v_0,\ldots,v_{n-1})$ and all $\alpha_0,\ldots,\alpha_{n-1}<\alpha$. 
  In particular, if we define $$\map{j=b_\beta\circ b_\alpha^{{-}1}}{X_\alpha}{\HH{\theta_\beta}},$$ then $j$ is an elementary embedding with $\alpha\cup\{\alpha,\delta\}\subseteq X_\alpha\prec\HH{\theta_\alpha}$, $j\restriction\alpha=\id_\alpha$,  $j(\alpha)=\beta$, $j(\delta)=\delta$ and $\langle z_\beta,\beta\rangle\in\ran{j}$. But then elementarity ensures that $z_\beta$ is also an element of $\ran{j}$, contradicting the above assumptions.    
\end{proof}

We continue by showing that weakly shrewd cardinals can exist below the cardinality of the continuum:

\begin{lemma}\label{lemma:AddGenExtHyperShrew}
 If $\kappa$ is a cardinal that is $\delta$-hyper-shrewd for some cardinal $\delta>\kappa$ and $G$ is $\Add{\omega}{\delta}$-generic over $\VV$, then $\kappa$ is $\delta$-hyper-shrewd in $\VV[G]$. 
\end{lemma}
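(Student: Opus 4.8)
The plan is to lift an embedding witnessing $\delta$-hyper-shrewdness in $\VV$ up to the generic extension $\VV[G]$, using the homogeneity of Cohen forcing to align the relevant generic filters. Write $\PPP=\Add{\omega}{\delta}$, and recall that $\PPP$ is ccc of cardinality $\delta$, so that it preserves all cardinals and cofinalities and $\HH{\theta}^{\VV[G]}=\HH{\theta}^\VV[G]$ holds for every cardinal $\theta>\delta$, with every element of $\HH{\theta}^{\VV[G]}$ being the interpretation of a $\PPP$-name lying in $\HH{\theta}^\VV$. To verify $\delta$-hyper-shrewdness in $\VV[G]$, I would fix a sufficiently large cardinal $\theta>\delta$ and some $z\in\HH{\theta}^{\VV[G]}$, choose a $\PPP$-name $\dot z\in\HH{\theta}^\VV$ with $\dot z^G=z$, and then apply $\delta$-hyper-shrewdness in $\VV$ at $\theta$ with the parameter $\dot z$. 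This produces cardinals $\bar\kappa<\kappa<\delta<\bar\theta$, an elementary submodel $X\prec\HH{\bar\theta}^\VV$ of cardinality $\bar\kappa$, and an elementary embedding $\map{j}{X}{\HH{\theta}^\VV}$ with $\bar\kappa\cup\{\bar\kappa,\delta\}\subseteq X$, $j\restriction\bar\kappa=\id_{\bar\kappa}$, $j(\bar\kappa)=\kappa$, $j(\delta)=\delta$ and $\dot z\in\ran{j}$.

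The key step is to replace $G$ by a generic filter along which $j$ actually lifts. Since $j(\delta)=\delta$ and $\PPP$ is definable from $\delta$, we have $j(\PPP)=\PPP$, and for every $\gamma\in\delta\cap X$ the ordinal $j(\gamma)$ again lies below $\delta$. As $\betrag{X\cap\delta}\leq\bar\kappa<\delta$, the injection $j\restriction(\delta\cap X)$ extends in $\VV$ to a permutation $\pi$ of $\delta$. This permutation induces an automorphism $\pi^*$ of $\PPP$ acting on the coordinates, and a short computation shows that $\pi^*(p)=j(p)$ holds for every condition $p\in\PPP\cap X$, because all coordinates of such a $p$ lie in $\delta\cap X$. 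I would then set $H=(\pi^*)^{{-}1}[G]$. Since $\pi^*\in\VV$ is an automorphism of $\PPP$, the filter $H$ is again $\PPP$-generic over $\VV$ with $\VV[H]=\VV[G]$, and by the defining property of $H$ we have $j(p)=\pi^*(p)\in G$ for every $p\in H\cap X$; that is, $j[H\cap X]\subseteq G$.

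With the generics aligned, I would invoke the standard lifting criterion: since $H\cap X$ is $\PPP$-generic over $X$, the filter $G$ is $j(\PPP)=\PPP$-generic over $\HH{\theta}^\VV$, and $j[H\cap X]\subseteq G$, the embedding $j$ lifts to an elementary embedding $\map{j^+}{X[H\cap X]}{\HH{\theta}^\VV[G]}$ with $j^+(\dot\tau^{H\cap X})=j(\dot\tau)^G$ for every $\PPP$-name $\dot\tau\in X$. Using that $H$ is generic over all of $\VV$ and that $X\prec\HH{\bar\theta}^\VV$, the usual argument shows that $X[H\cap X]=\{\dot\tau^H\mid\dot\tau\in X\}$ is an elementary submodel of $\HH{\bar\theta}^{\VV[H]}=\HH{\bar\theta}^{\VV[G]}$, while $\HH{\theta}^\VV[G]=\HH{\theta}^{\VV[G]}$. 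It then remains to read off the required properties: ordinals below $\bar\kappa$ are fixed by $j^+$, so $j^+\restriction\bar\kappa=\id_{\bar\kappa}$, and $j^+(\bar\kappa)=j(\bar\kappa)=\kappa$, $j^+(\delta)=j(\delta)=\delta$; moreover $\bar\kappa\cup\{\bar\kappa,\delta\}\subseteq X\subseteq X[H\cap X]$; and, writing $\dot z=j(\dot w)$ for a name $\dot w\in X$, we obtain $z=\dot z^G=j(\dot w)^G=j^+(\dot w^{H})\in\ran{j^+}$. Thus $j^+$ witnesses $\delta$-hyper-shrewdness of $\kappa$ in $\VV[G]$ with respect to $\theta$ and $z$.

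I expect the main obstacle to be the alignment of the generic filters carried out in the second paragraph: a naive attempt to lift $j$ using the trace $G\cap X$ of the original generic fails, since for $p\in G\cap X$ the shifted condition $j(p)$ need not belong to $G$. The hypothesis $j(\delta)=\delta$ is precisely what keeps the images of coordinates inside $\delta$, so that the required rearrangement is realised by an automorphism of $\PPP=\Add{\omega}{\delta}$ itself; replacing $G$ by its pullback $H$ along this automorphism both preserves the generic extension and forces the containment $j[H\cap X]\subseteq G$. A secondary point requiring care is that the source model must be elementary in $\HH{\bar\theta}^{\VV[G]}$, which is why it is essential to work with the genuine $\VV$-generic $H$, rather than with some filter merely generic over $X$, and to exploit the identity $\VV[H]=\VV[G]$.
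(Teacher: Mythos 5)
Your proposal is, in all essentials, the paper's own proof: choose a name $\dot z\in\HH{\theta}^\VV$ for the new parameter, apply $\delta$-hyper-shrewdness in $\VV$ to $\dot z$, extend the injection $j\restriction(X\cap\delta)$ to a permutation of $\delta$, pull $G$ back along the induced automorphism of $\Add{\omega}{\delta}$ to obtain a generic filter $H$ with $\VV[H]=\VV[G]$ and $j[H\cap X]\subseteq G$, and lift $j$ to $X[H\cap X]$; the final verification of the lifted embedding's properties also matches the paper.

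There is, however, one assertion that does not follow from the definition you invoke, and your key step depends on it. You claim that $\delta$-hyper-shrewdness yields an elementary submodel $X\prec\HH{\bar\theta}^\VV$ \emph{of cardinality $\bar\kappa$}. The definition only requires $\bar\kappa\cup\{\bar\kappa,\delta\}\subseteq X$ and places no bound whatsoever on $\betrag{X}$; in particular, nothing rules out $\delta\subseteq X$. In that case $j\restriction(X\cap\delta)=j\restriction\delta$ is an injection of $\delta$ into $\delta$ whose range omits $\bar\kappa$ (ordinals below $\bar\kappa$ are fixed, while every $\alpha\in X$ with $\alpha\geq\bar\kappa$ satisfies $j(\alpha)\geq j(\bar\kappa)=\kappa>\bar\kappa$), so it extends to no permutation of $\delta$, and the alignment argument in your second paragraph breaks down. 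This is precisely why the paper inserts an explicit shrinking step: after obtaining the embedding $\map{k}{Y}{\HH{\theta}}$ from the definition, it passes to an elementary submodel $X$ of $Y$ of cardinality $\bar\kappa$ with $\bar\kappa\cup\{\bar\kappa,\delta\}\subseteq X$ and $\dot z\in k[X]$, and then works with $j=k\restriction X$. Adding this one sentence repairs your argument; everything else you wrote (including the use of the countable chain condition to see that $H\cap X$ is generic over $X$, that $X[H\cap X]\prec\HH{\bar\theta}^{\VV[H]}=\HH{\bar\theta}^{\VV[G]}$, and that $\HH{\theta}^{\VV[G]}=\HH{\theta}^\VV[G]$) then goes through exactly as in the paper.
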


\begin{proof}
 Work in $\VV$ and pick a sufficiently large cardinal $\theta>2^\delta$ and an $\Add{\omega}{\delta}$-name $\dot{z}$ in $\HH{\theta}$. 
  By our assumptions, there are cardinals $\bar{\kappa}<\kappa<\delta<\bar{\theta}$    and an elementary embedding $\map{k}{Y}{\HH{\theta}}$ with $\bar{\kappa}\cup\{\bar{\kappa},\delta\}\subseteq Y\prec\HH{\bar{\theta}}$, $k\restriction\bar{\kappa}=\id_{\bar{\kappa}}$,  $k(\bar{\kappa})=\kappa$, $k(\delta)=\delta$ and $\dot{z}\in\ran{k}$.   
  Let $X$ be an elementary submodel of $Y$ of cardinality $\bar{\kappa}$ with the property that $\bar{\kappa}\cup\{\bar{\kappa},\delta\}\subseteq X$ and $\dot{z}\in k[X]$. 
   Then the map $\map{j=k\restriction X}{X}{\HH{\theta}}$ is an elementary embedding with $j\restriction\bar{\kappa}=\id_{\bar{\kappa}}$,  $j(\bar{\kappa})=\kappa$, $j(\delta)=\delta$ and $\dot{z}\in\ran{j}$. 
  Since $\betrag{X}=\bar{\kappa}<\delta$, there exists a permutation $\sigma$ of $\delta$ that extends the injection $j\restriction(X\cap\delta)$. 
 Let $\tau$ denote the automorphism of the partial order $\Add{\omega}{\delta}$ that is induced by the action of $\sigma$ on the supports of conditions, i.e. given a condition $p$ in $\Add{\omega}{\delta}$, we have $\supp{\tau(p)}=\sigma[\supp{p}]$ and $\tau(p)(\sigma(\alpha))=p(\alpha)$ for all $\alpha\in\supp{p}$. 
  Then it is easy to see that $\tau(p)=j(p)$ holds for every condition in $\Add{\omega}{\delta}$ that is an element of $X$. 
  Moreover, since elementarity implies that $2^\delta<\bar{\theta}$, we also know that the automorphism $\tau$ is an element of $\HH{\bar{\theta}}$.

  Now, work in $\VV[G]$ and set $\bar{G}=\tau^{{-}1}[G]$. 
  Then $j[\bar{G}\cap X]\subseteq G$ and we can therefore construct a canonical lifting $$\Map{j_*}{X[\bar{G}\cap X]}{\HH{\theta}^\VV[G]}{\dot{x}^{\bar{G}\cap X}}{j(\dot{x})^G}$$ of $j$. 
  Then $\HH{\theta}^\VV[G]=\HH{\theta}^{\VV[G]}$ and, since $\tau\in\HH{\bar{\theta}}^\VV$, we also have $$\HH{\bar{\theta}}^\VV[G] ~ = ~ \HH{\bar{\theta}}^\VV[\bar{G}] ~ = ~ \HH{\bar{\theta}}^{\VV[G]}.$$ 
  In particular, we know that the set $X[\bar{G}\cap X]$ is an  elementary submodels of $\HH{\bar{\theta}}^{\VV[G]}$ and we can conclude that the embedding $j_*$ witnesses the $\delta$-hyper-shrewdness of $\kappa$ with respect to $\theta$ and $\dot{z}^G$ in $\VV[G]$. 
\end{proof}

The next lemma will allow us to show that the existence of a hyper-shrewd cardinal is equiconsistent to the existence of a weakly shrewd cardinal that is not shrewd.

\begin{lemma}\label{lemma:MakeDefShrewd}
 Assume that $\VV=\LL$. If $\kappa$ is a cardinal that is $\delta$-hyper-shrewd for some cardinal $\delta>\kappa$ and $G$ is $\Add{\delta^+}{1}$-generic over $\VV$, then, in $\VV[G]$, the set $\{\delta\}$ is definable by a $\Sigma_2$-formula without parameters and the cardinal $\kappa$ is inaccessible, weakly shrewd and not shrewd. 
\end{lemma}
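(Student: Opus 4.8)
The plan is to verify the four assertions about $\VV[G]$ separately, where I write $\PPP=\Add{\delta^+}{1}$. The uniform tool is that $\PPP$ is ${<}\delta^+$-closed and, since $\VV=\LL$ gives $\GCH$, has cardinality $\delta^+$; hence $\PPP$ preserves cardinals and adds no subset of any ordinal below $\delta^+$, so $\HH{\delta^+}^{\VV[G]}=\HH{\delta^+}^\VV=\LL_{\delta^+}$, and as $\kappa<\delta^+$ the structure $\VV_\kappa$ is unchanged. Inaccessibility of $\kappa$ in $\VV[G]$ is then immediate: $\kappa$ is already inaccessible in $\VV=\LL$ (it is weakly shrewd, hence weakly Mahlo by Proposition~\ref{proposition:WeaklyMahlo}, hence regular with $\kappa=\kappa^{{<}\kappa}$, hence Mahlo by Proposition~\ref{proposition:SRMahlo}), and the closure of $\PPP$ preserves regularity and the strong-limit property below $\delta^+$.

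For the definability of $\{\delta\}$, I would first note that $G$ adds a non-constructible subset of $\delta^+$ while adding no non-constructible subset of any smaller ordinal, so that $\delta^+$ is the \emph{least} cardinal carrying a non-constructible subset. I would capture this by the formula $\varphi_0(\lambda)$ asserting that $\lambda$ is a cardinal, that $\exists a\,(a\subseteq\lambda\wedge a\notin\LL)$, and that $\HH{\lambda}=\LL_\lambda$. The first conjunct is $\Pi_1$, the second is $\Sigma_2$ (a bounded existential over a $\Pi_1$-matrix), and the third is $\Sigma_2$, written as $\exists w\,[\,w=\LL_\lambda\wedge\HH{\lambda}\subseteq w\,]$ with ``$w=\LL_\lambda$'' being $\Delta_1$ and ``$\HH{\lambda}\subseteq w$'' being $\Pi_1$; thus $\varphi_0$ is $\Sigma_2$, and it holds exactly at $\lambda=\delta^+$, since below $\delta^+$ the second conjunct fails while above $\delta^+$ the generic subset refutes $\HH{\lambda}=\LL_\lambda$. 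I would then define $\delta$ as the largest cardinal of $\LL_{\delta^+}=\HH{\delta^+}$, i.e.\ by $\exists\lambda\exists M\,[\,\varphi_0(\lambda)\wedge M=\LL_\lambda\wedge M\models\anf{v_0\text{ is the largest cardinal}}\,]$, a $\Sigma_2$-formula because the satisfaction clause is bounded in $M$.

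The main work, and the main obstacle, is that $\kappa$ remains weakly shrewd in $\VV[G]$; here I would adapt the automorphism technique from the proof of Lemma~\ref{lemma:AddGenExtHyperShrew}, but using $\delta$-hyper-shrewdness rather than bare weak shrewdness, so as to accommodate the parameter $\PPP$, which lies above $\kappa$. Given $\Phi$, a cardinal $\theta>\kappa$ and $A\subseteq\kappa$ with $\HH{\theta}^{\VV[G]}\models\Phi(A,\kappa)$, I would fix $p\in G$ with $p\Vdash_\PPP\anf{\Phi(\check A,\check\kappa)\text{ holds in }\HH{\theta}}$, noting $A\in\VV$ by closure. Working in $\VV$, I would apply $\delta$-hyper-shrewdness with target $\chi$ chosen large enough, capturing $A,p,\theta,\delta^{++}$ in $\ran{j}$, to get $\bar\kappa<\kappa<\delta<\bar\chi$, $X\prec\HH{\bar\chi}$ and $\map{j}{X}{\HH{\chi}}$ with $j\restriction\bar\kappa=\id_{\bar\kappa}$, $j(\bar\kappa)=\kappa$ and $j(\delta)=\delta$. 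Capturing $\delta^{++}$ forces $\bar\chi>\delta^{++}$, so $\delta^+,\delta^{++},\PPP\in X$ and $j$ fixes each of them; setting $\bar A=j^{{-}1}(A)=A\cap\bar\kappa$, $\bar\theta=j^{{-}1}(\theta)$ and $\bar p=j^{{-}1}(p)\in\PPP$, elementarity yields $\bar p\Vdash_\PPP\anf{\Phi(\check{\bar A},\check{\bar\kappa})\text{ holds in }\HH{\bar\theta}}$ in $\VV$.

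The remaining, and genuinely delicate, point is to transfer this \emph{forced} reflection to the \emph{actual} generic $G$; this is exactly where the large supports of $\Add{\delta^+}{1}$ block the naive lifting, since one cannot arrange $j(q)\in G$ for all small conditions $q$. I would instead observe that $p=j(\bar p)$ and $\bar p$ are isomorphic conditions: $j\restriction\dom{\bar p}$ is an order-isomorphism of $\dom{\bar p}$ onto $\dom{p}$ with $p\circ(j\restriction\dom{\bar p})=\bar p$, so extending it to a permutation $\sigma$ of $\delta^+$ and letting $\tau$ be the induced automorphism of $\PPP$ gives $\tau(\bar p)=p$. Then $\bar G=\tau^{{-}1}[G]$ is generic with $\VV[\bar G]=\VV[G]$ and $\bar p\in\bar G$, whence $\HH{\bar\theta}^{\VV[G]}=\HH{\bar\theta}^{\VV[\bar G]}\models\Phi(A\cap\bar\kappa,\bar\kappa)$, which is precisely the instance of reflection witnessing weak shrewdness of $\kappa$ at $\bar\kappa<\kappa$. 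Finally, since $\kappa$ is weakly shrewd in $\VV[G]$ and $\{\delta\}$ is $\Sigma_2$-definable there without parameters (in particular with parameters in $\HH{\kappa}$) for some $\delta>\kappa$, Lemma~\ref{lemma:SRnotShrewd} gives that $\kappa$ is not shrewd in $\VV[G]$, completing the proof.
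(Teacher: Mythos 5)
Your handling of inaccessibility, of the $\Sigma_2$-definition of $\{\delta\}$, and of \anf{not shrewd} via Lemma \ref{lemma:SRnotShrewd} is correct, and your setup for preserving weak shrewdness (closure gives $A\in\VV$ and cardinal preservation; hyper-shrewdness applied with $A,p,\theta,\delta^{++}$ captured, so that $j$ fixes $\delta^+$, $\delta^{++}$ and $\PPP$; elementarity pulls the forcing statement back to $\bar p$) runs parallel to the paper's argument. The gap is the final transfer step: the automorphism $\tau$ with $\tau(\bar p)=p$ need not exist. First, the two domains can have different cardinalities, so that no bijection between them exists at all: if, say, $\dom{\bar p}=\bar\kappa$, then $\dom{p}=j(\dom{\bar p})=\kappa$, and no automorphism of $\PPP$ induced by a permutation of $\delta^+$ (all of which preserve the cardinality of supports) can send $\bar p$ to $p$. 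Second, $j\restriction\dom{\bar p}$ is in general not even a function on $\dom{\bar p}$: the set $\dom{\bar p}$ is an \emph{element} of $X$ but need not be a \emph{subset} of $X$ (it may have cardinality up to $\delta$, while nothing in the definition of hyper-shrewdness puts $\dom{\bar p}$ inside $X$), and even on $\dom{\bar p}\cap X$ the image of $j$ need not exhaust $\dom{p}$. This is precisely the large-support obstruction you yourself identified for the naive lifting; your substitute does not escape it. Nor can you evade it by picking $p\in G$ with small domain: without further argument there is no guarantee that $G$ contains any small-domain condition forcing the statement.

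The missing idea --- and what the paper uses --- is the \emph{weak homogeneity} of $\Add{\delta^+}{1}$: for any two conditions, a permutation of $\delta^+$ moves the ($\leq\delta$-sized) domain of one off the domain of the other, so the induced automorphism makes them compatible; since the statement being forced has only check-name parameters (all of $A\cap\bar\kappa$, $\bar\kappa$, $\bar\theta$ lie in $\VV$) and automorphisms fix check names, its truth is decided by the trivial condition. The paper applies this upstairs: in $\HH{\vartheta^+}$, \emph{every} condition of $\Add{\delta^+}{1}$ forces $\varphi(A,\kappa)$ to hold in the $\HH{\theta}$ of the generic extension; this universally quantified statement is then pulled back through $j$ (using $j(\delta)=\delta$, so the poset itself is fixed), giving that in $\HH{\bar\vartheta^+}$ every condition forces $\varphi(A\cap\bar\kappa,\bar\kappa)$ to hold in $\HH{\bar\theta}$, and since $G$ is generic over $\HH{\bar\vartheta^+}$, the reflected instance holds in $\VV[G]$. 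Your proof is repaired the same way: once you have $\bar p\Vdash_\PPP\anf{\Phi(A\cap\bar\kappa,\bar\kappa)\text{ holds in }\HH{\bar\theta}}$, weak homogeneity upgrades this to the trivial condition forcing it (equivalently, you could have taken $p$ to be the trivial condition from the outset), so the conclusion holds in $\VV[G]$ whether or not $\bar p\in G$; no automorphism matching $\bar p$ with $p$ is needed or available.
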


\begin{proof}
 Work in $\VV$ and fix an $\calL_\in$-formula $\varphi(v_0,v_1)$, a cardinal $\theta>\kappa$ and a subset $A$ of $\kappa$ with the property that the statement $\varphi(A,\kappa)$ holds in $\HH{\theta}^{\VV[G]}$. 
   Pick a sufficiently large cardinal $\vartheta>\max\{\delta^+,\theta\}$, cardinals $\bar{\kappa}<\kappa<\delta^+<\bar{\vartheta}$,    
   and an elementary embedding $\map{j}{X}{\HH{\vartheta^+}}$ with the property that $\bar{\kappa}\cup\{\bar{\kappa},\delta,\bar{\vartheta}\}\subseteq X\prec\HH{\bar{\vartheta}^+}$, $j\restriction\bar{\kappa}=\id_{\bar{\kappa}}$,  $j(\bar{\kappa})=\kappa$, $j(\delta)=\delta$ and $A\in\ran{j}$.   
   Since Proposition \ref{proposition:SRMahlo} implies that $\kappa$ is an inaccessible cardinal, we know that $\bar{\kappa}$ is also inaccessible, $\VV_{\bar{\kappa}}$ is a subset of $X$ and $j\restriction\VV_{\bar{\kappa}}=\id_{\VV_{\bar{\kappa}}}$. 
   In particular, the fact that $A$ is an element of $\ran{j}$ implies that $A\cap\bar{\kappa}\in X$ and $j(A\cap\bar{\kappa})=A$. 
   Using the weak homogeneity of $\Add{\delta^+}{1}$ and the fact that $\HH{\vartheta^+}^\VV[G]=\HH{\vartheta^+}^{\VV[G]}$, our assumptions now imply that, in $\HH{\vartheta^+}$, every condition in the partial order $\Add{\delta^+}{1}$ forces the statement $\varphi(A,\kappa)$ to hold in the $\HH{\theta}$ of the generic extension of $\HH{\vartheta^+}$. 
   Pick a cardinal $\bar{\theta}$ in $X$ with $j(\bar{\theta})=\theta$. 
  Since  $\delta$ is a fixed point of $j$, elementarity implies that, in $\HH{\bar{\vartheta}^+}$, every condition in $\Add{\delta^+}{1}$ forces the statement $\varphi(A\cap\bar{\kappa},\bar{\kappa})$ to hold in the $\HH{\bar{\theta}}$ of the generic extension of $\HH{\bar{\vartheta}^+}$. 
 
  Next, observe that we have $\HH{\bar{\vartheta}^+}^\VV[G]=\HH{\bar{\vartheta}^+}^{\VV[G]}$ and therefore the fact that $\bar{\theta}^+<\bar{\vartheta}^+$ holds in $\VV$ implies that $$\HH{\bar{\theta}}^{\HH{\bar{\vartheta}^+}^{\VV[G]}} ~ = ~ \HH{\bar{\theta}}^{\VV[G]}.$$
  This allows us to conclude that $\varphi(A\cap\bar{\kappa},\bar{\kappa})$ holds in $\HH{\bar{\theta}}^{\VV[G]}$. 
  Using the fact that, in $\VV$, the partial order $\Add{\delta^+}{1}$ is ${<}\delta^+$-closed and satisfies the $\delta^{++}$-chain condition, the above computations allow us conclude that $\kappa$ is weakly shrewd in $\VV[G]$.  

 Now, work in $\VV[G]$. Then $\delta^+$ is the least ordinal containing a non-constructible subset and hence the set $\{\delta\}$ is definable by a $\Sigma_2$-formula without parameters.  
 By Lemma \ref{lemma:SRnotShrewd}, this allows us to conclude that $\kappa$ is an inaccessible weakly shrewd cardinal that is not a shrewd cardinal. 
\end{proof}

We are now ready to determine the position of accessible weakly shrewd cardinals in the large cardinal hierarchy.

\begin{proof}[Proof of Theorem \ref{theorem:WShrewNonShrewGCH}]
 (i) Let $\kappa$ be a weakly shrewd cardinal that is not shrewd. 
  Then Lemma \ref{lemma:SRnonShrewdFix} shows that $\kappa$ is $\delta$-hyper-shrewd for some cardinal $\delta>\kappa$ and Corollary \ref{corollary:Downwards} implies that $\kappa$ is a $\delta$-hyper-shrewd cardinal in $\LL$. 
  Since Proposition \ref{proposition:SRMahlo} ensures that $\kappa$ is an inaccessible cardinal in $\LL$, Lemma \ref{lemma:ConsHyperShrewd} now allows us to find an ordinal $\varepsilon>\kappa$ that is inaccessible in $\LL$ and has the property that $\kappa$ is a shrewd cardinal in $\LL_\varepsilon$. 

 (ii) Let $\delta$ be the least subtle cardinal and let $C$ be a closed unbounded subset of $\delta$. By Lemma  \ref{lemma:SubtleHyperShrewd}, there is an inaccessible weakly shrewd cardinal $\kappa$ in $C$. 
  Since the statement \anf{$\mu$ is subtle} is absolute between $\HH{\mu^+}$ and $\VV$ for every infinite cardinal $\mu$, the minimality of $\delta$ implies that the set $\{\delta\}$ is definable by a $\Sigma_2$-formula without parameters. 
  An application of Lemma \ref{lemma:SRnotShrewd} now allows us to conclude that $\kappa$ is not a shrewd cardinal.

 (iii) First, assume that $\kappa$ is a weakly shrewd cardinal that is not  shrewd. 
 Using Lemma \ref{lemma:SRnonShrewdFix}, we find a cardinal $\delta>\kappa$ such that $\kappa$ is $\delta$-hyper-shrewd. 
 Let $G$ be $\Add{\omega}{\delta}$-generic over $\VV$. Then Lemma \ref{lemma:AddGenExtHyperShrew} implies that $\kappa$ is a weakly shrewd cardinal smaller than $2^{\aleph_0}$ in $\VV[G]$. 
 These arguments show that, over the theory $\ZFC$,  the consistency of the statement (a) listed in the theorem implies the consistency of the statement (c) listed in the theorem. 
 
 Now,  assume that $\kappa$ is a weakly shrewd cardinal that is not inaccessible. Then $\kappa$ is not a shrewd cardinal and hence Lemma \ref{lemma:SRnonShrewdFix} shows that $\kappa$ is $\delta$-hyper-shrewd for some cardinal $\delta>\kappa$. 
  In this situation, Corollary \ref{corollary:Downwards} implies that $\kappa$ is a $\delta$-hyper-shrewd cardinal in $\LL$. 
 Let $G$ be $\Add{\delta^+}{1}^\LL$-generic over $\LL$.
  Then Lemma \ref{lemma:MakeDefShrewd} shows that, in $\LL[G]$, the cardinal $\kappa$ inaccessible, weakly shrewd and not shrewd. 
 These arguments show that the consistency of the statement (b) listed in the theorem implies the consistency of the statement (a) listed in the theorem. 
\end{proof}

We end this section by showing that weak shrewdness is also a direct consequence of large cardinal notions introduced by Schindler in \cite{schindler_2001} and Wilson in \cite{zbMATH07149985}. 
Note that the results of \cite{MR3598793} show that the below definition of remarkability is equivalent to Schindler's original definition.

\begin{definition}
 \begin{enumerate}
  \item (Schindler) A cardinal $\kappa$ is \emph{remarkable} if for every ordinal $\alpha>\kappa$, there is an ordinal $\beta<\kappa$ and a generic elementary embedding $\map{j}{\VV_\beta}{\VV_\alpha}$ with $j(\crit{j})=\kappa$. 

  \item (Wilson)
 A cardinal $\kappa$ is \emph{weakly remarkable} if for every ordinal $\alpha>\kappa$, there is an ordinal $\beta$ and a generic elementary embedding $\map{j}{\VV_\beta}{\VV_\alpha}$ with $j(\crit{j})=\kappa$. 
 \end{enumerate}
\end{definition}

 Results of Wilson in \cite{zbMATH07149985} now allow us to find additional natural examples of hyper-shrewd cardinals.

\begin{lemma}\label{lemma:WeaklyRemarkableShrewd}
 Every weakly remarkable cardinal is weakly shrewd. 
\end{lemma}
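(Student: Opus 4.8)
The plan is to verify condition~(ii) of the embedding characterization in Lemma~\ref{lemma:WShrewdCharEmb}, feeding it the generic embeddings provided by weak remarkability and converting them into genuine ground-model embeddings via the Kunen-style absoluteness argument used in Proposition~\ref{proposition:DownwardsToL}. So fix a weakly remarkable cardinal $\kappa$ and an arbitrary cardinal $\theta>\kappa$, and set $\alpha=\theta^+$. The point of this choice is that $\theta$ is the largest cardinal of $\VV_{\theta^+}$ and that both $\theta$ and the set $\HH{\theta}$ are definable in $\VV_{\theta^+}$ without parameters (here I use $\HH{\theta}\subseteq\VV_\theta\subseteq\VV_{\theta^+}$), so that $\theta$ is automatically captured by any elementary embedding into $\VV_{\theta^+}$.

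First I would apply weak remarkability to the ordinal $\alpha=\theta^+$ to obtain, in some generic extension $\VV[G]$, an ordinal $\beta$ and an elementary embedding $j:\VV_\beta\to\VV_{\theta^+}$ with $\crit{j}=\bar{\kappa}$ and $j(\bar{\kappa})=\kappa$. Letting $\bar{\theta}=j^{{-}1}(\theta)$ (which exists by the definability of $\theta$), elementarity shows that $\bar{\theta}$ is the largest cardinal of $\VV_\beta$, that $\bar{\kappa}<\bar{\theta}$ since $\kappa<\theta$, and that $j$ restricts to an elementary embedding $\hat{\jmath}=j\restriction\HH{\bar{\theta}}:\HH{\bar{\theta}}\to\HH{\theta}$ with $\crit{\hat{\jmath}}=\bar{\kappa}$ and $\hat{\jmath}(\bar{\kappa})=\kappa$; this uses that $\HH{\bar{\theta}}$ and $\HH{\theta}$ are computed correctly and definably inside $\VV_\beta=\VV_{\bar{\theta}^+}$ and $\VV_{\theta^+}$, so that satisfaction over these $\HH$-structures is internal to the two $\VV_\gamma$'s.

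The main obstacle is that $\hat{\jmath}$ lives only in $\VV[G]$, whereas Lemma~\ref{lemma:WShrewdCharEmb} demands a genuine embedding in $\VV$. To bridge this gap I would run the argument of Proposition~\ref{proposition:DownwardsToL} with the ground model $\VV$ playing the role of the inner model inside $\VV[G]$: working in $\VV$, pick an elementary submodel $X\prec\HH{\bar{\theta}}$ of cardinality $\bar{\kappa}$ with $\bar{\kappa}+1\subseteq X$ and a bijection $\map{b}{\bar{\kappa}}{X}$ in $\VV$. For $x\in X$, since $b^{{-}1}(x)<\bar{\kappa}=\crit{j}$ is fixed by $j$, one computes $\hat{\jmath}(x)=j(b)(b^{{-}1}(x))$, so that $\hat{\jmath}\restriction X=j(b)\circ b^{{-}1}$. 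Now $b\in\VV$, while $j(b)\in\ran{j}\subseteq\VV_{\theta^+}^\VV\subseteq\VV$, so the map $j^*:=j(b)\circ b^{{-}1}$ is an element of $\VV$. Since being an elementary embedding between the fixed sets $X$ and $\HH{\theta}$ is absolute between $\VV$ and $\VV[G]$, the map $\map{j^*}{X}{\HH{\theta}}$ is elementary in $\VV$, and it satisfies $j^*\restriction\bar{\kappa}=\id_{\bar{\kappa}}$ and $j^*(\bar{\kappa})=\kappa>\bar{\kappa}$, with $\bar{\kappa}+1\subseteq X\prec\HH{\bar{\theta}}$ holding in $\VV$.

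As $\theta>\kappa$ was an arbitrary cardinal, this verifies condition~(ii) of Lemma~\ref{lemma:WShrewdCharEmb}, whence $\kappa$ is weakly shrewd. The remaining routine points I would still check are that $\bar{\kappa}$ and $\bar{\theta}$ are genuine cardinals rather than merely cardinals of $\VV_\beta$ --- which holds because $\VV_\beta=\VV_{\bar{\theta}^+}$ computes cardinalities below $\bar{\theta}$ correctly, so a putative collapsing bijection would be moved by $j$ to a collapse of $\kappa$ in $\VV_{\theta^+}$, a contradiction --- together with the elementarity and $\HH$-absoluteness facts used to pass between the embeddings $\VV_\beta\to\VV_{\theta^+}$ and $\HH{\bar{\theta}}\to\HH{\theta}$.
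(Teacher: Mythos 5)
Your proof is correct, and it takes a recognizably different route from the paper's, even though both rest on the same two pillars: verifying clause~(ii) of Lemma~\ref{lemma:WShrewdCharEmb} and converting a generic embedding supplied by weak remarkability into a ground-model one via the Kunen computation $j\restriction X=j(b)\circ b^{-1}$ underlying Proposition~\ref{proposition:DownwardsToL}. The paper argues by contradiction: it takes the \emph{least} cardinal $\theta$ for which the embedding characterization fails, chooses a strong limit $\vartheta>\theta$ with $\VV_\vartheta=\HH{\vartheta}$ sufficiently elementary in $\VV$ (so that the minimal bad $\theta$ becomes definable from $\kappa$ inside $\VV_\vartheta$, and hence lands in the range of the generic embedding), applies weak remarkability at $\vartheta$, and then invokes Proposition~\ref{proposition:DownwardsToL} in $\VV[G]$ with $M=\VV$ to produce a ground-model embedding contradicting the badness of $\theta$. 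You instead treat an \emph{arbitrary} $\theta>\kappa$ directly, applying weak remarkability at $\theta^+$ and exploiting that $\theta$ is definable in $\VV_{\theta^+}$ as its largest cardinal; this eliminates the minimal-counterexample and ``sufficiently elementary'' machinery and yields the ``for all $\theta$'' form of clause~(iii) outright. The price is that your argument leans on structural facts about $\VV_{\theta^+}$ and $\VV_\beta$ that the paper's choice of a strong limit $\vartheta$ (where $\VV_\vartheta=\HH{\vartheta}$) gets for free: that these rank initial segments compute cardinals, cardinalities and the sets $\HH{\mu}$ correctly, that elementarity forces $\beta=\bar{\theta}^+$ (or at least that $\beta$ is a limit ordinal above $\bar{\theta}$, which is what guarantees the bijection $b$ lies in $\dom{j}$ so that $j(b)$ is defined --- a point worth stating explicitly), and that satisfaction between fixed set structures is absolute between $\VV$ and $\VV[G]$. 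All of these are indeed routine, as you indicate, so the proof stands; but they are exactly where the content of your ``remaining routine points'' lives, and the paper's organization is designed to sidestep them.
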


\begin{proof}
  Let $\kappa$ be a weakly remarkable cardinal. Assume, towards a contradiction, that $\kappa$ is not weakly shrewd and let $\theta>\kappa$ denote the least cardinal with the property that for all cardinals $\bar{\kappa}<\bar{\theta}$,   there is no elementary embedding $\map{j}{X}{\HH{\theta}}$ with $\bar{\kappa}+1\subseteq X\prec\HH{\theta}$, $j\restriction\bar{\kappa}=\id_{\bar{\kappa}}$ and $j(\bar{\kappa})=\kappa>\bar{\kappa}$.   
 Pick a strong limit cardinal $\vartheta>\theta$ with the property that $\HH{\vartheta}=\VV_\vartheta$ is sufficiently elementary in $\VV$. 
 We then know that $\theta$ is definable in $\VV_\vartheta$ by a formula with parameter $\kappa$. 
 By our assumptions, we can now find  ordinals $\bar{\kappa}<\bar{\vartheta}$ and a generic elementary embedding $\map{j}{\VV_{\bar{\vartheta}}}{\VV_\vartheta}$ with $\crit{j}=\bar{\kappa}$ and $j(\bar{\kappa})=\kappa$. 
  Then elementarity implies that $\bar{\kappa}$ is a cardinal in $\VV$  and $\bar{\vartheta}$ is a strong limit cardinal in $\VV$. 
  Moreover, the definability of $\theta$ yields a $\VV$-cardinal $\bar{\theta}$ with $j(\bar{\theta})=\theta$. 
  An application of Proposition \ref{proposition:DownwardsToL} in the given generic extension of $\VV$ now yields an elementary submodel $X$ of $\HH{\bar{\theta}}^\VV$ in $\VV$ with $\bar{\kappa}+1\subseteq X$ and the property that the embedding $\map{j\restriction X}{X}{\HH{\theta}}$ is an element of $\VV$. 
  Since $j\restriction\bar{\kappa}=\id_{\bar{\kappa}}$ and $j(\bar{\kappa})=\kappa>\bar{\kappa}$, the existence of such an embedding contradicts our assumptions on $\theta$. 
\end{proof}

\begin{corollary}\label{corollary:RemarkableShrewd}
 A weakly remarkable cardinal is remarkable if and only if it is shrewd. 
\end{corollary}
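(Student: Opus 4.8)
The plan is to make $\Sigma_2$-reflection the common hinge between the two large cardinal properties. Let $\kappa$ be a weakly remarkable cardinal. First I would apply Lemma~\ref{lemma:WeaklyRemarkableShrewd} to conclude that $\kappa$ is weakly shrewd, which is precisely what is needed to bring $\kappa$ into the scope of Lemma~\ref{lemma:SRnotShrewd}. The equivalence of items~(i) and~(ii) of that lemma then yields one half of the desired biconditional in disguise: $\kappa$ is a shrewd cardinal if and only if it is $\Sigma_2$-reflecting.

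For the remarkability side, I would invoke \cite[Theorem~1.3]{zbMATH07149985}, the result for weakly remarkable cardinals that this paper explicitly flags as the analogue of Lemma~\ref{lemma:SRnotShrewd}. That theorem asserts that a weakly remarkable cardinal fails to be remarkable exactly when it fails to be $\Sigma_2$-reflecting; equivalently, $\kappa$ is remarkable if and only if it is inaccessible and $\VV_\kappa\prec_{\Sigma_2}\VV$, that is, if and only if $\kappa$ is $\Sigma_2$-reflecting. Thus both target properties have now been expressed through the single pivot notion.

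Chaining the two characterizations closes the argument: $\kappa$ is shrewd if and only if $\kappa$ is $\Sigma_2$-reflecting if and only if $\kappa$ is remarkable, whence $\kappa$ is remarkable if and only if it is shrewd. No case distinction on the inaccessibility of $\kappa$ is required, since both Lemma~\ref{lemma:SRnotShrewd} and the cited theorem already absorb the accessible case (there both properties fail and $\kappa$ is trivially not $\Sigma_2$-reflecting). The only point demanding care — and the main, rather minor, obstacle — is confirming that the notion of $\Sigma_2$-reflection appearing in \cite{zbMATH07149985} coincides with the one used here (inaccessibility together with $\VV_\kappa\prec_{\Sigma_2}\VV$); once this bookkeeping is settled, the corollary is immediate and its entire weight rests on the two cited equivalences.
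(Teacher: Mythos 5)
Your proposal is correct and coincides with the paper's own argument: the paper likewise combines Lemma~\ref{lemma:WeaklyRemarkableShrewd}, the equivalence of (i) and (ii) in Lemma~\ref{lemma:SRnotShrewd}, and Wilson's {\cite[Theorem~1.3]{zbMATH07149985}}, chaining both properties through $\Sigma_2$-reflection exactly as you describe.
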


\begin{proof}
 By {\cite[Theorem 1.3]{zbMATH07149985}}, a weakly remarkable cardinal is remarkable if and only if it is $\Sigma_2$-reflecting. 
  In combination with Lemmas \ref{lemma:SRnotShrewd} and  \ref{lemma:WeaklyRemarkableShrewd}, this result directly provides the desired equivalence. 
\end{proof}

Note that {\cite[Theorem 1.4]{zbMATH07149985}} shows that every $\omega$-Erd\H{o}s cardinal is a limit of weakly remarkable cardinals that are not remarkable. By the above results,  all of these cardinals are weakly shrewd and  not shrewd.


\section{Structural reflection}

We now connect weak shrewdness with principles of structural reflection. 

 \begin{lemma}\label{lemma:ShrewdReflection}
  If $\kappa$ is weakly shrewd and $\calC$ is  a class of structures of the same type that is definable by a $\Sigma_2$-formula with parameters in $\HH{\kappa}$, then $\SRm{\kappa}{\calC}$ holds. 
 \end{lemma}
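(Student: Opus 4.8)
The plan is to deduce the statement from the embedding characterization of weak shrewdness in Lemma \ref{lemma:WShrewdCharEmb}, combined with the fact that $\HH{\bar{\theta}}\prec_{\Sigma_1}\VV$ holds for every uncountable cardinal $\bar{\theta}$ (i.e.\ $\Sigma_1$-absoluteness). First I would fix a $\Sigma_2$-formula $\psi(v_0,v_1)$ and a parameter $z\in\HH{\kappa}$ with $\calC=\Set{B}{\psi(B,z)\text{ holds}}$, and write $\psi(v_0,v_1)$ as $\exists w~\chi(w,v_0,v_1)$ for some $\Pi_1$-formula $\chi$. Given a structure $A\in\calC$ of cardinality $\kappa$, I would pick a witness $w_0$ with $\chi(w_0,A,z)$ true in $\VV$ and a sufficiently large cardinal $\theta>\kappa$ with $A,z,w_0\in\HH{\theta}$; since $\chi$ is $\Pi_1$ and hence downwards absolute, $\chi(w_0,A,z)$ then holds in $\HH{\theta}$. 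Applying Lemma \ref{lemma:WShrewdCharEmb}.(iii) to this $\theta$ and the element $\langle A,z,w_0\rangle$ yields cardinals $\bar{\kappa}<\bar{\theta}$, an elementary submodel $X\prec\HH{\bar{\theta}}$ and an elementary embedding $\map{j}{X}{\HH{\theta}}$ with $\bar{\kappa}+1\subseteq X$, $j\restriction\bar{\kappa}=\id_{\bar{\kappa}}$, $j(\bar{\kappa})=\kappa>\bar{\kappa}$ and $\langle A,z,w_0\rangle\in\ran{j}$. Setting $\bar{A}=j^{-1}(A)$ and $\bar{w}=j^{-1}(w_0)$, the argument used in the proof of Lemma \ref{lemma:SRnonShrewdFix} shows that $z\in\HH{\kappa}\cap\ran{j}$ forces $z\in\HH{\bar{\kappa}}$ and $j(z)=z$.

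Two things then need to be checked. For the \emph{size}, elementarity transports $\HH{\theta}\models\anf{\betrag{A}=\kappa}$ backwards to $X\models\anf{\betrag{\bar{A}}=\bar{\kappa}}$ and hence to $\HH{\bar{\theta}}\models\anf{\betrag{\bar{A}}=\bar{\kappa}}$, so that $\betrag{\bar{A}}=\bar{\kappa}<\kappa$. For \emph{membership} $\bar{A}\in\calC$, I would transport $\HH{\theta}\models\chi(w_0,A,z)$ backwards: the elementarity of $j$ together with $X\prec\HH{\bar{\theta}}$ gives $\HH{\bar{\theta}}\models\chi(\bar{w},\bar{A},z)$. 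The decisive point is now that $\chi$ is $\Pi_1$ and all of $\bar{w},\bar{A},z$ lie in $\HH{\bar{\theta}}$, so $\HH{\bar{\theta}}\prec_{\Sigma_1}\VV$ lifts this statement to $\VV$ (applying $\Sigma_1$-elementarity to the negation of $\chi$). Thus $\chi(\bar{w},\bar{A},z)$, and therefore $\psi(\bar{A},z)$, holds in $\VV$, i.e.\ $\bar{A}\in\calC$.

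It remains to produce the embedding. Since $\bar{A}\in X$ has cardinality $\bar{\kappa}$ and $\bar{\kappa}\subseteq X$, a bijection in $X$ between $\bar{\kappa}$ and the universe of $\bar{A}$ shows that this universe is contained in $X$, so that $j$ is defined on it and maps it into the universe of $A=j(\bar{A})$. Using the absoluteness of the formalized satisfaction relation for the structures $\bar{A}\in\HH{\bar{\theta}}$ and $A\in\HH{\theta}$, together with $X\prec\HH{\bar{\theta}}$ and the elementarity of $j$, one verifies the chain of equivalences showing that $j$ restricted to the universe of $\bar{A}$ is an elementary embedding of $\bar{A}$ into $A$. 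As $\bar{A}\in\calC$ has cardinality less than $\kappa$, this establishes $\SRm{\calC}{\kappa}$ for the given $A$.

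The main obstacle is exactly the phenomenon flagged in the introduction: for a genuinely $\Sigma_2$-definable class, membership is \emph{not} absolute between $\HH{\bar{\theta}}$ and $\VV$, so the naive pull-back only yields that $\bar{A}\in\calC$ as computed inside $\HH{\bar{\theta}}$. The device that overcomes this is to carry along a fixed $\Pi_1$-witness $w_0$ for $A\in\calC$ and to reflect the \emph{witnessed} $\Pi_1$-statement $\chi(\bar{w},\bar{A},z)$ upwards by $\Sigma_1$-absoluteness; this is precisely the place where reflection between the two models $\HH{\theta}$ and $\HH{\bar{\theta}}$, rather than a bare elementary embedding into $\VV$, is genuinely needed.
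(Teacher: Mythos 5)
Your proof is correct and is essentially the paper's own argument: both apply Lemma \ref{lemma:WShrewdCharEmb}.(iii) to a cardinal $\theta$ with $\HH{\theta}\models\varphi(A,z)$, pull the structure back along $j$ (noting $z\in\HH{\bar{\kappa}}$ and $j(z)=z$), verify membership in $\calC$ by lifting a witnessed $\Pi_1$-statement from $\HH{\bar{\theta}}$ to $\VV$ via $\Sigma_1$-absoluteness, and use $\bar{\kappa}\subseteq X$ to see that $j$ restricts to an elementary embedding of the pulled-back structure into $A$. The only cosmetic difference is that you carry the specific $\Pi_1$-witness $w_0$ through the embedding, whereas the paper transfers the existential $\Sigma_2$-statement directly by elementarity and picks a fresh witness inside $\HH{\bar{\theta}}$.
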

 
 \begin{proof}
 Fix a $\Sigma_2$-formula $\varphi(v_0,v_1)$ and  $z$ in $\HH{\kappa}$ with $\calC=\Set{A}{\varphi(A,z)}$. Pick a structure $B$ in $\calC$ of cardinality $\kappa$.  
  Then there exists a cardinal $\theta>\kappa$ with the property that $B\in\HH{\theta}$ and $\varphi(B,z)$ holds in $\HH{\theta}$. 
  Using Lemma \ref{lemma:WShrewdCharEmb}, we find cardinals  $\bar{\kappa}<\bar{\theta}$  and an elementary embedding $\map{j}{X}{\HH{\theta}}$ satisfying $\bar{\kappa}+1\subseteq X\prec \HH{\bar{\theta}}$, $j\restriction\bar{\kappa}=\id_{\bar{\kappa}}$, $j(\bar{\kappa})=\kappa>\bar{\kappa}$ and $B,z\in\ran{j}$.  
  Then we have $j\restriction(\HH{\bar{\kappa}}\cap X)=\id_{\HH{\bar{\kappa}}\cap X}$, and hence we know that $z\in\HH{\bar{\kappa}}$ and $j(z)=z$. 
  Pick $A\in X$ with $j(A)=B$. Then elementarity and $\Sigma_1$-absoluteness implies that $\varphi(A,z)$ holds and hence $A$ is a structure in $\calC$. 
  Since the structure $B$ has cardinality $\kappa$ in $\HH{\theta}$, we know that the structure $A$ has cardinality $\bar{\kappa}$, and the fact that $\bar{\kappa}$ is a subset of $X$ allows us to conclude that $j$ induces an elementary embedding of $A$ into $B$. 
  \end{proof}

 By combining the above lemma with the results of the previous sections, we can now prove the desired characterization of weak shrewdness through the principle $\mathrm{SR}^-$ for $\Sigma_2$-definable classes of structures.

 \begin{proof}[Proof of Theorem \ref{theorem:SRandSRCARD}]
  Fix an infinite cardinal $\kappa$.

  First, assume that $\kappa$ is the least cardinal with the property that $\SRm{\calW}{\kappa}$ holds.  
  %
 %
 Fix a cardinal $\vartheta>
 \kappa$
  with the property that  $\HH{\vartheta}$ is sufficiently elementary in $\VV$ and an elementary submodel $Y$ of $\HH{\vartheta}$ of cardinality $\kappa$ with $\kappa+1\subseteq Y$.
  Then $\vartheta$ witnesses that the structure $\langle Y,\in,\kappa\rangle$ is an element of the class $\calW$. 
  By our assumption, we can find an elementary embedding $j$ of a structure $\langle X,\in,\bar{\kappa}\rangle$ of cardinality smaller than $\kappa$ in $\calW$ into $\langle Y,\in,\kappa\rangle$. 
  Fix a cardinal $\bar{\vartheta}$ witnessing that $\langle X,\in,\bar{\kappa}\rangle$ is an element of $\calW$. 
    Then we know that  $\bar{\kappa}<\bar{\vartheta}$ are cardinals, $X$ is an elementary submodel of $\HH{\bar{\vartheta}}$  and $\map{j}{X}{\HH{\vartheta}}$ is an elementary embedding  with $\bar{\kappa}+1\subseteq X$ and $j(\bar{\kappa})=\kappa>\bar{\kappa}$.

  \begin{claim*}
   $j\restriction\bar{\kappa}=\id_{\bar{\kappa}}$. 
  \end{claim*}
  
  \begin{proof}[Proof of the Claim]
   Let $\mu\leq\bar{\kappa}$ be the minimal ordinal with $j(\mu)>\mu$. 
   Assume, towards a contradiction, that $\mu<\bar{\kappa}$. 
   Since $\mu+1\subseteq\bar{\kappa}+1\subseteq X$, elementarity implies that $\mu$ is a cardinal. 
  Moreover, since $\HH{\vartheta}$ was chosen to be sufficiently elementary in $\VV$, the minimality of $\kappa$ and the fact that $j(\mu)<\kappa$ allow us to use elementarity to find a cardinal $\mu<\theta<\bar{\vartheta}$ in $X$ and an elementary submodel $Z$ of $\HH{\theta}$ in $X$ with the property that the cardinal $j(\theta)$ witnesses that the structure $\langle j(Z),\in,j(\mu)\rangle$ is an element of $\calW$ and there is no elementary embedding from an element of $\calW$ of cardinality less than $j(\mu)$ into $\langle j(Z),\in,j(\mu)\rangle$. 
  In this situation, we know that the structure $\langle Z,\in,\mu\rangle$ is an element of $\calW$ of cardinality $\mu$ and, since $\mu\subseteq X$ implies that $Z\subseteq X$, the map $j$ induces an elementary embedding from $\langle Z,\in,\theta\rangle$ into $\langle j(Z),\in,j(\mu)\rangle$, a contradiction. 
  \end{proof}
  
  By Lemma \ref{lemma:WShrewdCharEmb}, the above claim shows that $\kappa$ is weakly shrewd in this case. 
  By Lemma \ref{lemma:ShrewdReflection}, the minimality of $\kappa$ implies that there are no weakly shrewd cardinals smaller than $\kappa$. 
  In particular, we know that (ii) implies (i).

  Now, assume that $\kappa$ is the least cardinal with the property that $\SRm{\calC}{\kappa}$ holds for every class $\calC$ of structures of the same type that is definable by a $\Sigma_2$-formula with parameters in $\HH{\kappa}$. 
   Then $\SRm{\calW}{\kappa}$ holds and we can define $\mu\leq\kappa$ to be the least cardinal such that $\SRm{\calW}{\mu}$ holds. 
   By the above computations, we know that $\mu$ is a weakly shrewd cardinal and therefore Lemma \ref{lemma:ShrewdReflection} implies that $\SRm{\calC}{\mu}$ holds for every class $\calC$ of structures of the same type that is definable by a $\Sigma_2$-formula with parameters in $\HH{\mu}$. 
 The minimality of $\kappa$ then implies that $\kappa=\mu$ and hence $\kappa$ is a weakly shrewd cardinal. 
 Moreover, another application of Lemma \ref{lemma:ShrewdReflection} shows that the minimality of $\kappa$ implies that there are no weakly shrewd cardinals below $\kappa$. 
 These arguments show that (iii) also implies (i). 
 
 Now, assume that $\kappa$ is the least weakly shrewd cardinal. 
  Then Lemma \ref{lemma:ShrewdReflection} and the above computations show that $\SRm{\calC}{\kappa}$ holds for every class $\calC$ of structures of the same type that is definable by a $\Sigma_2$-formula with parameters in $\HH{\kappa}$, and that $\SRm{\calW}{\mu}$ fails for all cardinals $\mu$ smaller than $\kappa$. 
  In combination, this shows that $\kappa$ is both the least cardinal with the property that $\SRm{\calW}{\kappa}$ holds and the least cardinal with the property that  $\SRm{\calC}{\kappa}$ holds for every class $\calC$ of structures of the same type that is definable by a $\Sigma_2$-formula with parameters in $\HH{\kappa}$. 
  This shows that (i) implies that both (ii) and (iii) hold.  
 \end{proof}

 Next, we determine the consistency strength of structural reflection for $\Sigma_2$-definable classes of structures.

\begin{proof}[Proof of Theorem \ref{theorem:ConsStrengthRefl}]
  By Corollary \ref{corollary:ShrewdWeaklyShrewd} and Lemma \ref{lemma:ShrewdReflection}, it suffices to show that, over the theory $\ZFC$, the consistency of statement (ii) implies the consistency of the statement (i). 
 Hence, assume that there exists a cardinal $\kappa$ with the property that $\SRm{\calC}{\kappa}$ holds for every class $\calC$ of structures of the same type that is definable by a $\Sigma_1(Cd)$-formula without parameters. 
 Moreover, we may assume that $0^\#$ does not exist, because otherwise a combination of  {\cite[Lemma 1.3]{schindler_2001}} with Lemma \ref{lemma:WeaklyRemarkableShrewd} and Corollary \ref{corollary:RemarkableShrewd} ensures the existence of many shrewd cardinals in $\LL$. 
   In the following, we let $\calL$ denote the first-order language extending of $\calL_\in$ by  two constant symbols  and let $\calK$ denote the class of all $\calL$-structures $\langle X,\in,\delta,\theta\rangle$ such that $\delta$ is an infinite cardinal, $\theta$ is an ordinal greater than $\delta$ and there exists a cardinal $\vartheta>\theta$ with the property that $X$ is an elementary submodel of $\LL_\vartheta$ 
 with $\delta\cup\{\delta,\theta\}\subseteq X$.   
  It is easy to see that the class $\calK$ is definable by $\Sigma_1(Cd)$-formula without parameters. 
  
  Now, pick an $\LL$-cardinal $\theta>\kappa$, a cardinal $\vartheta>\theta$ and an elementary submodel $Y$ of $\LL_\vartheta$ of cardinality $\kappa$ with $\kappa\cup\{\kappa,\theta\}\subseteq Y$. 
  Then  the $\calL$-structure $\langle Y,\in,\kappa,\theta\rangle$ is an element of $\calK$ and therefore our assumptions yield an elementary embedding $j$ of a structure $\langle X,\in,\bar{\kappa},\bar{\theta}\rangle$ of cardinality less than $\kappa$ in $\calK$ into $\langle Y,\in,\kappa,\theta\rangle$. 
  Pick a cardinal $\bar{\vartheta}$ witnessing that $\langle X,\in,\bar{\kappa},\bar{\theta}\rangle$ is contained in $\calK$. Then we know that $\bar{\kappa}$ is a cardinal smaller than $\kappa$ with $j(\bar{\kappa})=\kappa$, $\bar{\theta}$ is an $\LL$-cardinal with $j(\bar{\theta})=\theta$ and $X$ is an elementary submodel of $\LL_{\bar{\vartheta}}$ with $\bar{\kappa}\cup\{\bar{\kappa},\bar{\theta}\}\subseteq X$.

  \begin{claim*}
   $j\restriction\bar{\kappa}=\id_{\bar{\kappa}}$. 
  \end{claim*}
  
  \begin{proof}[Proof of the Claim]
   Let $\mu\leq\bar{\kappa}$ be the minimal ordinal in X with $j(\mu)>\mu$. 
   Assume, towards a contradiction that $\mu<\bar{\kappa}$ holds.  
   Note that, since $j(\bar{\kappa})=\kappa$ holds and $\bar{\kappa}+1\subseteq X$ implies $\LL_{\bar{\kappa}}\cup\{\LL_{\bar{\kappa}}\}\subseteq X$, we know that the map $\map{j\restriction\LL_{\bar{\kappa}}}{\LL_{\bar{\kappa}}}{\LL_\kappa}$ is an elementary embedding between transitive structures. 
   In this situation, our assumption implies that this embedding has a critical point. 
   But then the fact that $\bar{\kappa}$ is a cardinal implies that $\betrag{\crit{j\restriction\LL_{\bar{\kappa}}}}\leq \mu<\bar{\kappa}$ and hence {\cite[Theorem 18.27]{MR1940513}} shows that $0^\#$ exists, a contradiction. 
  \end{proof}

   Since the above claim  shows that $\map{j}{Y}{\LL_\vartheta}$ is an elementary embedding with $\bar{\kappa}\cup\{\bar{\kappa},\bar{\theta}\}\subseteq Y\prec\LL_{\bar{\vartheta}}$, $j\restriction\bar{\kappa}=\id_{\bar{\kappa}}$ and $j(\bar{\kappa})=\kappa$, we can apply Proposition \ref{proposition:DownwardsToL} to find an elementary submodel $X$ of $\LL_\theta$ in $\LL$ with $X\subseteq Y$ and the property that the map $j\restriction X$ is also an element of $\LL$.  
  These computations now allow us to conclude that $\kappa$ is a weakly shrewd cardinal in $\LL$. By Corollary \ref{corollary:EquiInacc}, this argument shows that, over  $\ZFC$, the consistency of the second statement listed in the theorem implies the consistency of the first statement listed there.  
\end{proof}

 Using ideas from the above proofs, we can show that a failure of the converse implication of Lemma \ref{lemma:ShrewdReflection} has non-trivial consistency strength. 
 This argument again makes use of the class $\calW$ defined in Section \ref{section:Intro}.

 \begin{lemma}
  If $\SRm{\calW}{\kappa}$ holds for some infinite cardinal $\kappa$ that is  not a weakly shrewd cardinal, then $0^\#$ exists. 
 \end{lemma}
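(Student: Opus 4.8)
The plan is to prove the contrapositive in the following form: assuming that $\SRm{\calW}{\kappa}$ holds but $0^\#$ does not exist, I would show that $\kappa$ must be weakly shrewd, so that the extra hypothesis that $\kappa$ is \emph{not} weakly shrewd forces the existence of $0^\#$. This mirrors the critical-point analysis carried out in the proofs of Theorems \ref{theorem:SRandSRCARD} and \ref{theorem:ConsStrengthRefl}; the only difference is that here the non-existence of $0^\#$, rather than a minimality assumption on $\kappa$, is what rules out a nontrivial critical point. Throughout, I would work with the embedding characterization of weak shrewdness provided by Lemma \ref{lemma:WShrewdCharEmb}.

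First, I would fix an arbitrary cardinal $\vartheta>\kappa$ and choose an elementary submodel $Y$ of $\HH{\vartheta}$ of cardinality $\kappa$ with $\kappa+1\subseteq Y$. Then $\vartheta$ witnesses that $\langle Y,\in,\kappa\rangle$ is an element of $\calW$ of cardinality $\kappa$, so $\SRm{\calW}{\kappa}$ yields an elementary embedding $j$ of some $\langle X,\in,\bar{\kappa}\rangle\in\calW$ of cardinality less than $\kappa$ into $\langle Y,\in,\kappa\rangle$. Unpacking the definition of $\calW$, this provides cardinals $\bar{\kappa}<\kappa$, a witness $\bar{\vartheta}$ with $\bar{\kappa}+1\subseteq X\prec\HH{\bar{\vartheta}}$, and (after composing with $Y\prec\HH{\vartheta}$) an elementary embedding $\map{j}{X}{\HH{\vartheta}}$ with $j(\bar{\kappa})=\kappa>\bar{\kappa}$, where $\bar{\kappa}$ is an infinite cardinal. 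In contrast to the situation in Theorem \ref{theorem:SRandSRCARD}, I would have no a priori control over $j\restriction\bar{\kappa}$, since the minimality argument used there is no longer available.

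The key step is then to show that $j\restriction\bar{\kappa}=\id_{\bar{\kappa}}$. Since $\bar{\kappa}\subseteq X\prec\HH{\bar{\vartheta}}$, every element of $\LL_{\bar{\kappa}}$ is definable in $\HH{\bar{\vartheta}}$ from an ordinal below $\bar{\kappa}$, so $\LL_{\bar{\kappa}}\subseteq X$, and elementarity gives $j(\LL_{\bar{\kappa}})=\LL_\kappa$; hence $\map{j\restriction\LL_{\bar{\kappa}}}{\LL_{\bar{\kappa}}}{\LL_\kappa}$ is an elementary embedding between transitive structures. If $j\restriction\bar{\kappa}$ were not the identity, this embedding would have a critical point $\mu<\bar{\kappa}$, and since $\bar{\kappa}$ is a cardinal, {\cite[Theorem 18.27]{MR1940513}} would then produce $0^\#$, contrary to assumption; thus $j\restriction\bar{\kappa}=\id_{\bar{\kappa}}$. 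As $\vartheta$ was an arbitrary cardinal above $\kappa$, I would thereby have verified the embedding characterization in Lemma \ref{lemma:WShrewdCharEmb} for all sufficiently large $\theta$, so $\kappa$ is weakly shrewd, which is the desired contradiction. I expect the only genuine subtleties to be bookkeeping ones: confirming that $\LL_{\bar{\kappa}}\subseteq X$ and that $j\restriction\LL_{\bar{\kappa}}$ is a genuine elementary map into $\LL_\kappa$ (so that a critical point below the cardinal $\bar{\kappa}$ indeed triggers {\cite[Theorem 18.27]{MR1940513}}), together with correctly matching the quantifier \anf{for all sufficiently large $\theta$} in Lemma \ref{lemma:WShrewdCharEmb} against the arbitrary choice of $\vartheta$ made above.
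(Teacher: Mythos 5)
Your proof is correct and takes essentially the same approach as the paper's: both apply $\SRm{\calW}{\kappa}$ to an elementary-submodel structure exactly as in Theorem \ref{theorem:SRandSRCARD}, restrict the resulting embedding to $\LL_{\bar{\kappa}}$, and invoke {\cite[Theorem 18.27]{MR1940513}} to produce $0^\#$ from a critical point below the cardinal $\bar{\kappa}$. The only difference is organizational: you argue contrapositively (no $0^\#$ forces $j\restriction\bar{\kappa}=\id_{\bar{\kappa}}$ for every $\vartheta$, hence weak shrewdness via Lemma \ref{lemma:WShrewdCharEmb}), whereas the paper fixes a single bad $\theta$ witnessing the failure of weak shrewdness and derives $0^\#$ directly.
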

 
 \begin{proof}
  By Lemma \ref{lemma:WShrewdCharEmb}, our assumptions yield a cardinal $\theta>\kappa$ with the property that for all  cardinals $\bar{\kappa}<\bar{\theta}$, there is no elementary embedding $\map{j}{X}{\HH{\theta}}$ with $\bar{\kappa}+1\subseteq X\prec\HH{\bar{\theta}}$, $j\restriction\bar{\kappa}=\id_{\bar{\kappa}}$ and $j(\bar{\kappa})=\kappa>\bar{\kappa}$. 
  As in the proof of Theorem \ref{theorem:SRandSRCARD}, we can use these assumption to find cardinals $\bar{\kappa}<\bar{\theta}$ and an elementary embedding $\map{j}{X}{\HH{\theta}}$ with $\bar{\kappa}+1\subseteq X\prec\HH{\bar{\theta}}$ and $j(\bar{\kappa})=\kappa$. 
  By our assumptions, we know that $j\restriction\bar{\kappa}\neq\id_{\bar{\kappa}}$. 
 Since $\LL_{\bar{\kappa}}\cup\{\LL_{\bar{\kappa}}\}\subseteq X$, we now know that $\map{j\restriction\LL_{\bar{\kappa}}}{\LL_{\bar{\kappa}}}{\LL_\kappa}$ is an elementary embedding between transitive structures that has a critical point. 
 As in the proof of Theorem \ref{theorem:ConsStrengthRefl}, the fact that $\bar{\kappa}$ is a cardinal and  $\betrag{\crit{j\restriction\LL_{\bar{\kappa}}}}<\bar{\kappa}$ now allows us to apply {\cite[Theorem 18.27]{MR1940513}} to conclude that $0^\#$ exists.  
\end{proof}

 By combining the above observation with Lemma \ref{lemma:ShrewdReflection}, we can now conclude that, in the constructible universe $\LL$, weak shrewdness is equivalent to the validity of the principle $\mathrm{SR}^-$ for $\Sigma_2$-definable classes.

 \begin{corollary}\label{corollary:0-ShrewdRefl}
  If $\VV=\LL$ holds, then the following statements are equivalent for every infinite cardinal $\kappa$: 
  \begin{enumerate}
   \item $\kappa$ is a weakly shrewd cardinal. 
   
   \item $\SRm{\calW}{\kappa}$ holds. 
   
   \item $\SRm{\calC}{\kappa}$ holds for every class $\calC$ of structures of the same type that is definable by a $\Sigma_2$-formula with parameters in $\HH{\kappa}$.  \qed
  \end{enumerate}
 \end{corollary}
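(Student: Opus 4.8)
The plan is to establish the cycle of implications (i) $\Rightarrow$ (iii) $\Rightarrow$ (ii) $\Rightarrow$ (i), observing in advance that only the final implication will require the hypothesis $\VV=\LL$. The implication from (i) to (iii) is immediate, since it is precisely the content of Lemma \ref{lemma:ShrewdReflection}, which holds over $\ZFC$ and makes no appeal to $\VV=\LL$: if $\kappa$ is weakly shrewd, then $\SRm{\calC}{\kappa}$ holds for every class $\calC$ of structures of the same type that is definable by a $\Sigma_2$-formula with parameters in $\HH{\kappa}$.

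For the implication (iii) $\Rightarrow$ (ii), I would argue that the class $\calW$ is itself covered by the principle asserted in (iii). Recall that $\calW$ is definable by a $\Sigma_1(PwSet)$-formula without parameters and that $PwSet$ is a $\Pi_1$-definable class. By the closure property of the L\'{e}vy hierarchy recorded in the introduction, namely that every $\Sigma_1(R)$-definable class is $\Sigma_2$-definable whenever $R$ is $\Pi_1$-definable, the class $\calW$ is therefore $\Sigma_2$-definable without parameters, and hence in particular $\Sigma_2$-definable with parameters in $\HH{\kappa}$. Applying (iii) to $\calC=\calW$ then yields $\SRm{\calW}{\kappa}$. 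This step again uses no special feature of $\LL$.

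The implication (ii) $\Rightarrow$ (i) is the only place where $\VV=\LL$ enters, and I would establish it by contraposition using the lemma immediately preceding this corollary. That lemma asserts that if $\SRm{\calW}{\kappa}$ holds for an infinite cardinal $\kappa$ that is not weakly shrewd, then $0^\#$ exists. Since the assumption $\VV=\LL$ rules out the existence of $0^\#$, it follows that whenever $\SRm{\calW}{\kappa}$ holds, $\kappa$ must in fact be weakly shrewd. This closes the cycle and yields the equivalence of (i), (ii) and (iii).

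I expect no substantial obstacle here: as the \qed already appended to the statement signals, the corollary is a bookkeeping assembly of the two preceding lemmas together with the definability of $\calW$, and requires no new mathematics. The one point meriting a line of care is confirming that $\calW$ genuinely lies in the scope of the principle in (iii), that is, verifying its $\Sigma_2$-definability; but this is read off directly from the definability facts established in the introduction, and so presents no real difficulty.
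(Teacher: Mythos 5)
Your proof is correct and follows exactly the paper's intended argument: the corollary is stated with a \qed precisely because it combines Lemma \ref{lemma:ShrewdReflection} (for (i)$\Rightarrow$(iii)), the $\Sigma_2$-definability of $\calW$ via the $\Pi_1$-definability of $PwSet$ (for (iii)$\Rightarrow$(ii)), and the preceding lemma on $0^\#$ together with the fact that $\VV=\LL$ rules out $0^\#$ (for (ii)$\Rightarrow$(i)).
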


 In addition, we can also use the above lemma to motivate the statement of Theorem \ref{theorem:HigherReflection} by showing that the existence of a weakly shrewd cardinal does not imply the existence of a reflection point for classes of structures defined by $\Sigma_3$-formulas. 
 Note that the above results show that, over $\ZFC$, the consistency of the existence of a shrewd cardinal implies the consistency of the assumptions of the following corollary.

\begin{corollary}\label{corollary:ShrewdNoMoreRefl}
 If $\VV=\LL$ holds and there exists a single weakly shrewd cardinal, then there is no cardinal $\rho$ with the property that $\SRm{\calC}{\rho}$ holds for every class $\calC$ of structures of the same type that is definable by a $\Sigma_3$-formula without parameters. 
\end{corollary}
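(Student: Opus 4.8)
The plan is to argue by contradiction: assuming some cardinal $\rho$ reflects every class definable by a parameter-free $\Sigma_3$-formula, I would first pin down what $\rho$ must be and then manufacture one specific $\Sigma_3$-definable class whose reflection at $\rho$ is impossible. For the first step, recall that the class $\calW$ from Section \ref{section:Intro} is $\Sigma_1(PwSet)$-definable without parameters, hence $\Sigma_2$-definable, and therefore also definable by a $\Sigma_3$-formula without parameters. Consequently the assumed reflection property of $\rho$ gives $\SRm{\calW}{\rho}$, and Corollary \ref{corollary:0-ShrewdRefl} (available since $\VV=\LL$) shows that $\rho$ is weakly shrewd. As we are assuming that there is a single weakly shrewd cardinal $\kappa$, this forces $\rho=\kappa$, so that $\kappa$ is in particular the least weakly shrewd cardinal.

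For the second step I would let $\calC$ be the class of all $\calL_{\dot{c}}$-structures $\langle X,\in,\nu\rangle\in\calW$ with the additional property that $\nu$ is a weakly shrewd cardinal. Fixing a cardinal $\vartheta>\kappa$ with $\HH{\vartheta}$ sufficiently elementary and an elementary submodel $Y\prec\HH{\vartheta}$ of cardinality $\kappa$ with $\kappa+1\subseteq Y$, the cardinal $\vartheta$ witnesses $\langle Y,\in,\kappa\rangle\in\calW$, and since $\kappa$ is weakly shrewd we obtain $\langle Y,\in,\kappa\rangle\in\calC$ of cardinality $\kappa$. Applying $\SRm{\calC}{\kappa}$ then yields an elementary embedding of some $\langle X,\in,\bar{\nu}\rangle\in\calC$ of cardinality less than $\kappa$ into $\langle Y,\in,\kappa\rangle$. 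Membership of $\langle X,\in,\bar{\nu}\rangle$ in $\calC$ means, on the one hand, that $\langle X,\in,\bar{\nu}\rangle\in\calW$, so $\bar{\nu}$ is an infinite cardinal with $\bar{\nu}+1\subseteq X$ and $\betrag{X}=\bar{\nu}<\kappa$, and, on the other hand, that $\bar{\nu}$ is genuinely weakly shrewd. This contradicts the fact that $\kappa$ is the least weakly shrewd cardinal and completes the argument.

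The main obstacle is to verify that $\calC$ is in fact definable by a $\Sigma_3$-formula without parameters; this is the only place where $\VV=\LL$ does real work beyond the invocation of Corollary \ref{corollary:0-ShrewdRefl}. Since $\calW$ is $\Sigma_2$-definable, it suffices to show that \anf{$\nu$ is weakly shrewd} is $\Pi_2$ in $\LL$, because then $\calC$ is defined by the conjunction of a $\Sigma_2$- and a $\Pi_2$-formula, which is $\Sigma_3$. To obtain the $\Pi_2$-bound I would unwind the defining clause $\forall\Phi\,\forall\theta\,\forall A\,[\,\HH{\theta}\models\Phi(A,\nu)\rightarrow\exists\bar{\kappa}<\bar{\theta}\,(\bar{\kappa}<\nu\wedge\HH{\bar{\theta}}\models\Phi(A\cap\bar{\kappa},\bar{\kappa}))\,]$ and use that in $\LL$ the map $\alpha\mapsto\LL_\alpha$ is $\Delta_1$ and $\HH{\theta}=\LL_\theta$ for cardinals $\theta$; hence the predicate \anf{$\HH{\theta}\models\Phi(\ldots)$} is $\Delta_1$, the inner existential block is $\Sigma_1$, and the only unbounded universal quantifier is the one over $\theta$, leaving a $\Pi_2$-formula. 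In a general model of $\ZFC$ this predicate is only $\Delta_2$, which is precisely why the argument requires $\VV=\LL$. Finally, as in the proofs of Theorem \ref{theorem:SRandSRCARD} and Theorem \ref{theorem:ConsStrengthRefl}, I would note that no closure of $\calC$ under isomorphic copies is needed, since the principle $\mathrm{SR}^-$ is applied directly to the genuine structures contained in $\calC$.
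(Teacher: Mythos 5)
The first half of your argument is exactly the paper's: since $\calW$ is $\Sigma_2$-definable without parameters, hence $\Sigma_3$-definable, the assumed reflection property yields $\SRm{\calW}{\rho}$, and Corollary \ref{corollary:0-ShrewdRefl} makes $\rho$ weakly shrewd, so $\rho=\kappa$. The two proofs diverge only in the choice of the witnessing class: the paper uses the $\Sigma_3$-definability of the singleton $\{\kappa\}$ to produce a non-empty $\Sigma_3$-definable class all of whose members have cardinality $\kappa$, while you intersect $\calW$ with the class of structures whose distinguished constant is weakly shrewd. Either way, everything rests on showing that weak shrewdness admits a $\Sigma_3$ (or simpler) parameter-free definition over $\LL$, and your justification of this -- the claim that \anf{$\nu$ is weakly shrewd} is $\Pi_2$ in $\LL$ -- is where the proof breaks.

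The conclusion of the definition of weak shrewdness demands the existence of \emph{cardinals} $\bar{\kappa}<\bar{\theta}$ with $\bar{\kappa}<\nu$ such that $\Phi(A\cap\bar{\kappa},\bar{\kappa})$ holds in $\HH{\bar{\theta}}$. The predicate \anf{$\bar{\theta}$ is a cardinal} is $\Pi_1$ and provably not $\Sigma_1$, even over $\LL$: a $\Sigma_1$ definition would be upward absolute from transitive sets, and condensing a countable elementary submodel of a large $\LL_\gamma$ would then turn fake cardinals into real ones. So even granting that $\alpha\mapsto\LL_\alpha$ is $\Delta_1$ and $\HH{\bar{\theta}}=\LL_{\bar{\theta}}$, your inner existential block has the form $\exists\bar{\kappa}\,\exists\bar{\theta}\,[\,\Pi_1\wedge\Delta_1\,]$, which is $\Sigma_2$ -- indeed \anf{there is a cardinal $\bar{\theta}$ such that $\HH{\bar{\theta}}\models\dots$} is the canonical normal form of a $\Sigma_2$ statement -- and not $\Sigma_1$ as you assert; the cardinality clauses you silently dropped when unwinding the definition are exactly where the complexity sits, and $\VV=\LL$ does not remove them. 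Your computation therefore gives $\Pi_3$, not $\Pi_2$, so your class $\calC$ is only $\Pi_3$-definable and is not covered by the hypothesis, which speaks exclusively of $\Sigma_3$-definable classes: the contradiction never gets started, precisely at the step you yourself single out as the main obstacle. The fact you need is nonetheless true, but by a different argument: over $\LL$, $\nu$ is weakly shrewd if and only if there exists a cardinal $\theta>\nu$ with $\HH{\theta}\prec_{\Sigma_2}\VV$ for which the reflection clause holds for all $\Phi$ and all $A\subseteq\nu$ with $\Phi(A,\nu)$ true in $\HH{\theta}$ (applying this restricted reflection to the $\Sigma_2$-formula \anf{there is a cardinal $\mu$ above the second coordinate such that $\Phi$ holds in $\HH{\mu}$} recovers full weak shrewdness, since a genuine cardinal $\bar{\theta}$ computes cardinals and $\HH{\mu}$ correctly below it); as $\HH{\theta}\prec_{\Sigma_2}\VV$ is $\Pi_2$-expressible, this trades your problematic universal quantifier for an existential one and yields a genuine $\Sigma_3$ definition, after which the remainder of your argument does go through.
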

 
\begin{proof}
 Let $\kappa$ denote the unique weakly shrewd cardinal.
 Then it is easy to see that the set $\{\kappa\}$ is definable by a $\Sigma_3$-formula without parameters. 
 Therefore, there is a non-empty class $\calC$ of structures of the same type that is definable by a $\Sigma_3$-formula without parameters and consists of structures of cardinality $\kappa$. 
 In particular, we know that $\SRm{\calC}{\kappa}$ fails. 
 
 Now, assume, towards a contradiction, that there is a cardinal $\rho$ with the property that  $\SRm{\calC}{\rho}$ holds for every class $\calC$ of structures of the same type that is definable by a $\Sigma_3$-formula without parameters. 
 An application of Corollary \ref{corollary:0-ShrewdRefl} then directly shows that $\rho$ is a weakly shrewd cardinal and hence we can conclude that $\kappa=\rho$ holds. 
 Since $\SRm{\calC}{\kappa}$ fails, this yields a contradiction. 
\end{proof}

 In the remainder of this section, we show that hyper-shrewd cardinals imply the existence of cardinals with strong structural reflection properties.

 \begin{proof}[Proof of Theorem \ref{theorem:HigherReflection}]
  Let $\kappa$ be a weakly shrewd cardinal that is not shrewd.  
  Then Lemma \ref{lemma:SRnotShrewd} already shows that there is a cardinal $\delta>\kappa$ with the property that the set $\{\delta\}$ is definable by a $\Sigma_2$-formula with parameters in $\HH{\kappa}$. 
  For the remainder of this proof, fix such a cardinal $\delta$. 
  
  (ii) Given a natural number $n>0$ and an ordinal $\alpha<\kappa$,  assume, towards a contradiction, that for every cardinal $\alpha<\rho<\delta$, there exists a class $\calC$ of structures of the same type that is definable by a $\Sigma_n$-formula with parameters in $\HH{\rho}$ such that $\SRm{\calC}{\rho}$ fails.\footnote{Note that, by using an universal $\Sigma_n$-formula, this statement can be expressed by a single $\calL_\in$-formula with parameters $\alpha$ and $\delta$.}
  Now, let $\varepsilon$ denote the least strong limit cardinal above $\delta$ with the property that, in $\HH{\varepsilon}$, for every cardinal $\alpha<\rho<\delta$, there exists a class $\calC$ of structures of the same type that is definable by a $\Sigma_n$-formula with parameters in $\HH{\rho}$ such that $\SRm{\calC}{\rho}$ fails. 
   Then the definability of $\delta$ ensures that the set $\{\varepsilon\}$ is also definable by a $\Sigma_2$-formula with parameters in $\HH{\kappa}$. 
  In this situation, in $\HH{\varepsilon}$, there is  a class $\calC$ of structures of the same type and a structure $B$ of cardinality $\kappa$ in $\calC$ such that $\calC$ is definable by a $\Sigma_n$-formula $\varphi(v_0,v_1)$ with parameter $z\in\HH{\kappa}$ and there is no elementary embedding of a structure of cardinality less than $\kappa$ in $\calC$ into $B$. 
 Since Lemma \ref{lemma:SRnonShrewdFix} shows that $\kappa$ is $\varepsilon$-hyper-shrewd, there are  cardinals $\bar{\kappa}<\kappa<\varepsilon<\bar{\theta}$ and an elementary embedding $\map{j}{X}{\HH{\theta}}$ with $\bar{\kappa}\cup\{\bar{\kappa},\varepsilon\}\subseteq X\prec\HH{\bar{\theta}}$, $j\restriction\bar{\kappa}=\id_{\bar{\kappa}}$,  $j(\bar{\kappa})=\kappa$, $j(\varepsilon)=\varepsilon$ and $B,z\in\ran{j}$.
 Moreover, since $j\restriction(\HH{\bar{\kappa}}\cap X)=\id_{\HH{\bar{\kappa}}\cap X}$, we know that $z\in \HH{\bar{\kappa}}\cap X$ and $j(z)=z$. 
 Pick $A\in\HH{\bar{\theta}}\cap X$ with $j(A)=B$. 
  Then $A\in\HH{\varepsilon}$ and, since both $\varepsilon$ and $z$ are fixed points of $j$, we know that $\varphi(A,z)$ holds in $\HH{\varepsilon}$. 
  In particular, we can conclude that $A$ is a structure of cardinality $\bar{\kappa}$ in $\calC$ and, since $\bar{\kappa}$ is a subset of $X$,  the embedding  $j$ induces an elementary embedding of $A$ into $B$. 
  But now, the fact $A$ and $B$ are both contained in $\HH{\varepsilon}$ implies that this embedding is also an element of $\HH{\varepsilon}$, a contradiction.  
  
  (iii) Assume that $\kappa$ is inaccessible and $0^\#$ does not exist. 
  Using Lemma \ref{lemma:SRnonShrewdFix}, we find a cardinal $\theta>2^\delta$, cardinals $\bar{\kappa}<\kappa<\delta<\bar{\theta}$ and an elementary embedding $\map{j}{X}{\HH{\theta}}$ with $\bar{\kappa}\cup\{\bar{\kappa},\delta\}\subseteq X\prec\HH{\bar{\theta}}$, $j\restriction\bar{\kappa}=\id_{\bar{\kappa}}$,  $j(\bar{\kappa})=\kappa$ and  $j(\delta)=\delta$.
   Let $\varepsilon$ denote the minimal element in $X\cap[\bar{\kappa},\delta]$ satisfying $j(\varepsilon)=\varepsilon$. 
   
  \begin{claim*}
   $\varepsilon$ is an inaccessible cardinal.  
  \end{claim*}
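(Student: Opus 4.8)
The plan is to read off the inaccessibility of $\varepsilon$ from its minimality, exploiting throughout that $j(\varepsilon)=\varepsilon$ forces every cardinal-theoretic invariant of $\varepsilon$ to be a fixed point of $j$ as well. Since $\bar\kappa<\varepsilon\le\delta<\bar\theta<\theta$ and $\theta>2^\delta$, the structure $\HH{\theta}$ computes the cardinality, the cofinality, and the continuum function at $\varepsilon$ correctly, and for an invariant that is additionally visible in $\HH{\bar\theta}$ (this is automatic for $\betrag{\varepsilon}$ and $\cof{\varepsilon}$, both below $\bar\theta$) the corresponding value lies in $X$; applying the elementarity of $\map{j}{X}{\HH\theta}$ to $j(\varepsilon)=\varepsilon$ then shows it is fixed by $j$. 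Before starting I would record three auxiliary facts: that $\crit{j}=\bar\kappa$ and, since $\kappa=j(\bar\kappa)$ is inaccessible, $\bar\kappa$ is inaccessible too, so that $\VV_{\bar\kappa}\subseteq X$ and $j\restriction\VV_{\bar\kappa}=\id_{\VV_{\bar\kappa}}$; that $\kappa$ is in fact Mahlo by Propositions \ref{proposition:WeaklyMahlo} and \ref{proposition:SRMahlo}, whence $\bar\kappa$ is Mahlo; and that $\kappa<\varepsilon$, because $\varepsilon>\bar\kappa$ (as $\bar\kappa$ is moved) and monotonicity of $j$ gives $\varepsilon=j(\varepsilon)>j(\bar\kappa)=\kappa$.

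I would first dispose of the easy half of inaccessibility. If $\varepsilon$ were not a cardinal, then $\betrag{\varepsilon}<\varepsilon$; from $\varepsilon>\kappa$ we get $\betrag{\varepsilon}\ge\kappa>\bar\kappa$, and $\betrag{\varepsilon}$ is then a fixed point of $j$ lying in $X\cap[\bar\kappa,\delta]$ strictly below $\varepsilon$, contradicting minimality. The same scheme shows $\varepsilon$ is a strong limit cardinal: assuming it is not, let $\mu_0$ be the least ordinal with $2^{\mu_0}\ge\varepsilon$. Being definable from $\varepsilon$, the ordinal $\mu_0$ is fixed by $j$; if $\mu_0\ge\bar\kappa$ this contradicts minimality as before, while if $\mu_0<\bar\kappa$ the strong limitness of $\bar\kappa$ forces $2^{\mu_0}<\bar\kappa<\varepsilon$, contradicting the choice of $\mu_0$. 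The same argument applied to the cardinal predecessor shows, more cheaply, that $\varepsilon$ is a limit cardinal.

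The substantive point is the regularity of $\varepsilon$. Put $\lambda=\cof{\varepsilon}$, so $j(\lambda)=\lambda$. If $\bar\kappa\le\lambda$, then $\lambda$ is a fixed point in $X\cap[\bar\kappa,\delta]$ with $\lambda\le\varepsilon$, and minimality forces $\lambda=\varepsilon$, i.e.\ $\varepsilon$ is regular. The delicate case is $\lambda<\bar\kappa=\crit{j}$, where the fixedness of $\lambda$ carries no information. Here I would fix inside $X$ a continuous, strictly increasing, cofinal map $\map{c}{\lambda}{\varepsilon}$ with $c(0)\ge\bar\kappa$, so that every value $c(\xi)$ lies in $[\bar\kappa,\varepsilon)$ and is therefore strictly increased by $j$, while $j$ stays continuous along $c$ at limits below $\lambda$ (this uses $\lambda<\crit{j}$ and elementarity, since the supremum of a sequence of length below the critical point is reflected). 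The auxiliary function sending $\xi<\lambda$ to the least $\eta<\lambda$ with $c(\eta)>j(c(\xi))$ satisfies $g(\xi)>\xi$, so its set of limit closure points is club in $\lambda$; choosing a limit $\xi^*$ in this club gives $j(c(\xi))<c(\xi^*)$ for all $\xi<\xi^*$, and continuity yields $j(c(\xi^*))=\sup_{\xi<\xi^*}j(c(\xi))\le c(\xi^*)$. As $c(\xi^*)\ge\bar\kappa$ forces $j(c(\xi^*))\ge c(\xi^*)$, we obtain a fixed point $c(\xi^*)\in X\cap[\bar\kappa,\varepsilon)$, contradicting minimality.

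The main obstacle I foresee is exactly this sub-critical cofinality case, and within it the subcase $\lambda=\omega$: then $\lambda$ has no limit ordinals below it and the closure-point argument produces no proper fixed point. To exclude $\cof{\varepsilon}=\omega$ I would invoke the standing hypothesis that $0^\#$ does not exist: by Corollary \ref{corollary:Downwards} the relevant embedding descends to $\LL$, where $\varepsilon$ is a strong limit cardinal and the analysis above applies with an uncountable computed cofinality, while Jensen's covering lemma relates cofinalities in $\VV$ and $\LL$ and rules out a cardinal of true cofinality $\omega$ being regular in $\LL$. Assembling the pieces, $\varepsilon$ is a regular strong limit cardinal and hence inaccessible, which is the content of the claim; the accompanying inequality $\kappa<\varepsilon$ was established in the first paragraph and $\varepsilon\le\delta$ is immediate from $\varepsilon\in X\cap[\bar\kappa,\delta]$.
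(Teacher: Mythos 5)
Your steps showing that $\varepsilon$ is a cardinal and a strong limit cardinal, and your direct treatment of the case $\cof{\varepsilon}\geq\bar\kappa$ of regularity, are correct and in the spirit of the paper's argument. The gap lies in the remaining case $\cof{\varepsilon}<\bar\kappa$, where your argument rests twice on the principle that $j$ cannot move ordinals of $X$ in $[\bar\kappa,\varepsilon)$ downwards: once to get $g(\xi)>\xi$, and decisively in the assertion that $c(\xi^*)\geq\bar\kappa$ \emph{forces} $j(c(\xi^*))\geq c(\xi^*)$. That principle is a theorem about elementary embeddings with \emph{transitive} domains; here $X$ is not transitive (the paper stresses this in a footnote to Lemma \ref{lemma:ShrewdEmbeddings}), and from $j\restriction\bar\kappa=\id_{\bar\kappa}$, $j(\bar\kappa)=\kappa$ and $j(\delta)=\delta$ one only gets $j(\alpha)>\kappa$ for $\alpha\in X\cap(\bar\kappa,\delta]$. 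Nothing rules out $j(\alpha)<\alpha$ for ordinals of $X$ in $(\kappa,\varepsilon)$ that are not definable from $j$-fixed parameters (isomorphisms between distinct elementary submodels of $\HH{\theta}$ with the same transitive collapse routinely move ordinals downwards), and the minimality of $\varepsilon$ only excludes \emph{fixed} points below $\varepsilon$, not downward movement. Your closure-point scheme can in fact be repaired without any monotonicity, by comparing the two increasing continuous cofinal maps $c$ and $j(c)$ (note $j(c)(\xi)=j(c(\xi))$ for $\xi<\lambda<\bar\kappa$): any two such maps agree on a club in $\lambda$ when $\lambda$ is regular and uncountable, and any common value above $\bar\kappa$ is a genuine fixed point of $j$ in $X\cap[\bar\kappa,\varepsilon)$, contradicting minimality — but this repair still leaves $\lambda=\omega$ open.

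That $\omega$-cofinality case is the second gap, and your patch for it does not work. Corollary \ref{corollary:Downwards} produces a \emph{different} embedding (a restriction with a smaller domain lying in $\LL$), whose minimal fixed point in the relevant interval need not be $\varepsilon$; since the claim concerns the $\varepsilon$ defined from the given $j$, descending to $\LL$ proves nothing about it. Moreover, even in $\LL$, covering only yields that the $\VV$-singular cardinal $\varepsilon$ is singular in $\LL$; its $\LL$-cofinality can perfectly well still be $\omega$, so the promised \anf{uncountable computed cofinality} is unavailable. The paper's proof dodges both problems at once: if $\varepsilon$ is singular then (as $0^\#$ does not exist and $\varepsilon$ is a cardinal) it is singular in $\LL$; let $\lambda=\cof{\varepsilon}^{\LL}$ and let $\map{c}{\lambda}{\varepsilon}$ be the $<_\LL$-least cofinal map in $\LL$. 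Since this $c$ is definable from $\varepsilon$ by a formula absolute between $\HH{\bar\theta}$ and $\HH{\theta}$, we get $\lambda,c\in X$, $j(\lambda)=\lambda$ and $j(c)=c$; minimality of $\varepsilon$ then forces $\lambda<\bar\kappa$, whence $j(c(\xi))=j(c)(j(\xi))=c(\xi)$ for every $\xi<\lambda$, i.e. \emph{every} value of $c$ is outright fixed by $j$, and choosing $\xi$ with $c(\xi)>\bar\kappa$ contradicts the minimality of $\varepsilon$. This argument is uniform in the cofinality and needs no monotonicity; you should substitute it (or use it at least for the countable-cofinality case) for your closure-point machinery.
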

   
   \begin{proof}[Proof of the Claim]
    First, assume, towards a contradiction, that $\varepsilon$ is singular. 
    Since we have $\betrag{\varepsilon}=\betrag{\varepsilon}^{\HH{\theta}}=\betrag{\varepsilon}^{\HH{\bar{\theta}}}\in X$, it follows that $j(\betrag{\varepsilon})=\betrag{\varepsilon}\geq\bar{\kappa}$ and hence the minimality of $\varepsilon$ implies that $\varepsilon$ is a cardinal. 
     Then the non-existence of $0^\#$ implies that $\varepsilon$ is a singular cardinal in $\LL$. 
     Set $\lambda=\cof{\varepsilon}^\LL<\varepsilon\leq\delta$ and let $\map{c}{\lambda}{\varepsilon}$ be the $<_\LL$-least cofinal map from $\lambda$ to $\varepsilon$ in $\LL$. 
  Since $\LL_\delta\subseteq\HH{\theta}\cap\HH{\bar{\theta}}$, we now know that $\lambda,c\in X$, $j(\lambda)=\lambda$ and $j(c)=c$. But this implies that $\lambda<\bar{\kappa}$ and hence $\lambda\subseteq X$. 
  Pick $\xi<\lambda$ with $c(\xi)>\bar{\kappa}$. Then $$\bar{\kappa} ~ < ~ \kappa ~ = ~ j(\bar{\kappa}) ~ < ~ j(c(\xi)) ~ = ~ j(c)(j(\xi)) ~ = ~ c(\xi) ~ < ~ \varepsilon,$$ contradicting the minimality of $\varepsilon$. 
  This shows that $\varepsilon$ is a regular cardinal. 
  
  Now, assume, towards a contradiction, that $\varepsilon$ is not a strong limit cardinal and let $\mu<\varepsilon$ be the least cardinal satisfying $2^\mu\geq\varepsilon$. 
Then the inaccessibility of $\kappa$ implies that $\mu\geq\kappa$. 
 Since elementarity implies that $2^\delta\in\HH{\theta}\cap\HH{\bar{\theta}}$, we know that $\mu$ is the least cardinal with $2^\mu\geq\varepsilon$ in both $\HH{\theta}$ and $\HH{\bar{\theta}}$. 
 But this implies that $\mu\in X\cap[\kappa,\varepsilon)$ with $j(\mu)=\mu$, a contradiction.     
   \end{proof}
   
  Now, assume, towards a contradiction, that there exists $\calC\subseteq\VV_\varepsilon$, $a\in\mathsf{Fml}$ and $z\in\HH{\kappa}$ such that $\calC=\Set{A\in\VV_\varepsilon}{\mathsf{Sat}(\VV_\varepsilon,\langle A,z\rangle,a)}$ and, in $\VV_\varepsilon$,\footnote{This statement about the structure $\langle\VV_\varepsilon,\in\rangle$ is again stated using the formalized satisfaction relation $\mathsf{Sat}$.} the class $\calC$ consists of structures of the same type and $\SRm{\calC}{\kappa}$ fails. 
   By elementarity, we find $\calC\in \VV_{\varepsilon+1}\cap X$,  $a\in\mathsf{Fml}\subseteq X$ and $z\in\HH{\bar{\kappa}}\cap X$ such that the elements $j(\calC)$, $j(a)$ and $j(z)$ witness the above statement.  
  Since  we have $j(a)=a$ and $j(z)=z$, we also know that $j(\calC)=\calC$. 
  Another application of elementarity now yields $B\in \calC\cap X$ such that $\mathsf{Sat}(\VV_\varepsilon,\langle j(B),z\rangle,a)$ holds and, in $\VV_\varepsilon$, the structure $j(B)$ witnesses the failure of $\SRm{\calC}{\kappa}$.  
 We then know that  $\mathsf{Sat}(\VV_\varepsilon,\langle B,z\rangle,a)$ holds and hence $B$ is also an element of $\calC$. 
 Since $B$ is a structure of cardinality $\bar{\kappa}$ in $\VV_\varepsilon$, it follows that the domain of $B$ is a subset of $X$ and, by the inaccessibility of $\varepsilon$, the restriction of $j$ to this domain is an element of $\VV_\varepsilon$. But this restricted map is an elementary embedding from $B$ into $j(B)$ in $\VV_\varepsilon$, a contradiction. 
   The above arguments show that, in $\VV_\varepsilon$, the principle $\SRm{\calC}{\kappa}$ holds for every class $\calC$ that is defined by a formula using parameters from $\HH{\kappa}$. 
\end{proof}

 We now use Theorems  \ref{theorem:WShrewNonShrewGCH} and \ref{theorem:HigherReflection} to show that the consistency strength of the existence of cardinals with maximal local structural reflection properties is strictly smaller than the existence of a weakly shrewd cardinal that is not shrewd. 
 Moreover,  by combining Theorem \ref{theorem:HigherReflection} with the compactness theorem, we can prove that the principle $\mathrm{SR}^-$ cannot be used to characterize large cardinal properties that imply strong inaccessibility.

 \begin{proof}[Proof of Theorem \ref{theorem:ConsHyper}]
  (i) Let $\kappa$ be a weakly shrewd cardinal that is not shrewd. 
  Then Lemma \ref{lemma:SRnonShrewdFix} shows that $\kappa$ is $\delta$-hyper-shrewd for some cardinal $\delta>\kappa$ and we can apply Corollary \ref{corollary:Downwards} to conclude that $\kappa$ is  $\delta$-hyper-shrewd  in $\LL$. 
 Let $G$ be $\Add{\delta^+}{1}^\LL$-generic over $\LL$.
 Then Lemma \ref{lemma:MakeDefShrewd} shows that,  in $\LL[G]$, the set $\{\delta\}$ is definable by a $\Sigma_2$-formula without parameters and $\kappa$ is an inaccessible  weakly shrewd cardinal that is not shrewd. 
  Since $0^\#$ does not exist in $\LL[G]$ and the partial order $\Add{\delta^+}{1}^\LL$ is ${<}\delta$-closed in $\LL$, we can apply the last part of Theorem \ref{theorem:HigherReflection} in $\LL[G]$ to find $\kappa<\varepsilon<\delta$ with the property that $\langle\LL_\varepsilon,\in,\kappa\rangle$ is a transitive model of the formalized $\calL_{\dot{c}}$-theory $\ZFC  +  \Set{\mathsf{SR}^-_n}{0<n<\omega}$ with respect to some canonical formalized satisfaction predicate. 
 Since such a satisfaction predicate can be defined by a $\Delta_1^{\ZFC^-}$-formula, the model $\langle\LL_\varepsilon,\in,\kappa\rangle$ also has these properties in both $\LL$ and $\VV$. 
These computations prove the first part of the theorem.

 (ii) Assume that the existence of a weakly shrewd cardinal that is not  shrewd is consistent with the axioms of $\ZFC$. 
  By Theorem \ref{theorem:WShrewNonShrewGCH}, this assumption implies that the existence of a weakly shrewd cardinal smaller than $2^{\aleph_0}$ is consistent with $\ZFC$. 
  Since the set $\{2^{\aleph_0}\}$ is always definable by a $\Sigma_2$-formula without parameters, we can now apply Theorem \ref{theorem:HigherReflection} to show that for all $0<n<\omega$, the $\calL_{\dot{c}}$-theory $\ZFC+\mathsf{SR}^-_n+\anf{\dot{\kappa}<2^{\aleph_0}}$ is consistent. 
  By the \emph{Compactness Theorem}, this allows us to conclude that our assumption implies the consistency of the $\calL_{\dot{c}}$-theory $$\ZFC ~ + ~ \Set{\mathsf{SR}^-_n}{0<n<\omega} ~ + ~ \anf{\hspace{2.3pt}\dot{\kappa}<2^{\aleph_0}}.$$ 
  
  Now, assume that the above $\calL_{\dot{c}}$-theory is consistent. 
  By Theorem \ref{theorem:SRandSRCARD}, this implies that $\ZFC$ is consistent with the existence of a weakly shrewd cardinal smaller than $2^{\aleph_0}$ and therefore $\ZFC$ is consistent with the existence of a weakly shrewd cardinal that is not shrewd. 
 \end{proof}


 \section{Fragments of weak shrewdness}

 Motivated by Rathjen's definition of \emph{$A$-$\eta$-shrewd cardinals} (see {\cite[Definition 2.2]{zbMATH02168085}}), we now study restrictions of weak shrewdness and derive embedding characterizations for the resulting large cardinal notions. 
 Together with the concept of local $\Sigma_n(R)$-classes, this analysis will allow us to characterize several classical notions from the lower part of the large cardinal hierarchy through the principle $\textrm{SR}^-$ in the next section. 
 Moreover, our results will allow us to show that Hamkins' \emph{weakly compact embedding property} is equivalent to L{\'e}vy's notion of \emph{weak $\Pi^1_1$-indescribability}.

  Remember that $\calL_{\dot{A}}$ denotes the first-order language that extends $\calL_\in$ by a unary predicate symbol $\dot{A}$.

\begin{definition}
 Given a class $R$, a natural number $n>0$ and a cardinal $\theta$, an infinite cardinal $\kappa<\theta$ is \emph{weakly $(\Sigma_n,R,\theta)$-shrewd} if for every $\Sigma_n$-formula $\Phi(v_0,v_1)$ in $\calL_{\dot{A}}$ and every  $A\subseteq\kappa$ with the property that $\Phi(A,\kappa)$ holds in $\langle\HH{\theta},\in,R\rangle$,  there exist cardinals $\bar{\kappa}<\bar{\theta}$ such that $\bar{\kappa}<\kappa$ and $\Phi(A\cap{\bar{\kappa}},\bar{\kappa})$ holds in $\langle\HH{\bar{\theta}},\in,R\rangle$. 
\end{definition}

 It is easy to see that a cardinal $\kappa$ is weakly shrewd if and only if it is weakly $(\Sigma_n,R,\theta)$-shrewd for all cardinals $\theta>\kappa$, all $0<n<\omega$ and all classes $R$ that are definable by $\Pi_1$-formulas with parameters in $\HH{\kappa}$. 
 The following variation of Lemma \ref{lemma:WShrewdCharEmb} provides a characterization of  restricted weak shrewdness in terms of elementary embeddings between set-sized structures:

\begin{lemma}\label{lemma:EmbLEvelShrewd}
 The following statements are equivalent for all classes $R$, all natural numbers $n>0$ and all infinite cardinals $\kappa<\theta$: 
 \begin{enumerate}
  \item $\kappa$ is weakly $(\Sigma_n,R,\theta)$-shrewd. 
  
  \item For every $A\subseteq\kappa$, there exist cardinals $\bar{\kappa}<\bar{\theta}$ and a $\Sigma_n$-elementary embedding $\map{j}{\langle X,\in,R\rangle}{\langle\HH{\theta},\in,R\rangle}$ satisfying $\langle X,\in,R\rangle\prec_{\Sigma_{n-1}}\langle\HH{\bar{\theta}},\in,R\rangle$,\footnote{In the setting of this lemma, the notions of $\Sigma_n$-elementary embeddings and submodels are defined through the absoluteness of $\Sigma_n$-formulas in the extended language $\calL_{\dot{A}}$.}   $\bar{\kappa}+1\subseteq X$, $j\restriction\bar{\kappa}=\id_{\bar{\kappa}}$,  $j(\bar{\kappa})=\kappa>\bar{\kappa}$ and $A\in\ran{j}$. 
  
  \item For every $z\in\HH{\theta}$, there exist cardinals $\bar{\kappa}<\bar{\theta}$    and an elementary embedding $\map{j}{\langle X,\in,R\rangle}{\langle\HH{\theta},\in,R\rangle}$ satisfying $\langle X,\in,R\rangle\prec_{\Sigma_{n-1}}\langle\HH{\bar{\theta}},\in,R\rangle$, $\bar{\kappa}+1\subseteq X$, $j\restriction\bar{\kappa}=\id_{\bar{\kappa}}$,  $j(\bar{\kappa})=\kappa>\bar{\kappa}$ and $z\in\ran{j}$.   
  \end{enumerate}
\end{lemma}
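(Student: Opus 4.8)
The plan is to follow the template of the proof of Lemma \ref{lemma:WShrewdCharEmb}, establishing the cycle (i)$\Rightarrow$(iii)$\Rightarrow$(ii)$\Rightarrow$(i). The implication (iii)$\Rightarrow$(ii) is immediate, since every $A\subseteq\kappa$ lies in $\HH{\theta}$ (as $\betrag{A}\leq\kappa<\theta$), so applying (iii) with $z=A$ produces an embedding with $A\in\ran{j}$. For (ii)$\Rightarrow$(i), fix a $\Sigma_n$-formula $\Phi(v_0,v_1)$ in $\calL_{\dot A}$ and $A\subseteq\kappa$ with $\langle\HH{\theta},\in,R\rangle\models\Phi(A,\kappa)$, and apply (ii) to $A$ to obtain $\bar\kappa<\bar\theta$, $X$ and $j$. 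Writing $A=j(\bar A)$, the conditions $\bar A\subseteq\bar\kappa$ and $j\restriction\bar\kappa=\id_{\bar\kappa}$ force $\bar A=A\cap\bar\kappa$. Since $j$ is $\Sigma_n$-elementary and $\Phi$ is $\Sigma_n$, we get $\langle X,\in,R\rangle\models\Phi(A\cap\bar\kappa,\bar\kappa)$; writing $\Phi=\exists\vec x\,\psi$ with $\psi$ a $\Pi_{n-1}$-formula and using $\langle X,\in,R\rangle\prec_{\Sigma_{n-1}}\langle\HH{\bar\theta},\in,R\rangle$ to lift the witnessed $\Pi_{n-1}$-matrix, one concludes $\langle\HH{\bar\theta},\in,R\rangle\models\Phi(A\cap\bar\kappa,\bar\kappa)$, which is exactly what weak $(\Sigma_n,R,\theta)$-shrewdness demands. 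This step is precisely what the hypothesis $\prec_{\Sigma_{n-1}}$ is calibrated to supply.

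The substance lies in (i)$\Rightarrow$(iii). Given $z\in\HH{\theta}$, I would first pick an elementary submodel $Y\prec\langle\HH{\theta},\in,R\rangle$ of cardinality $\kappa$ with $\kappa\cup\{\kappa,z\}\subseteq Y$, together with a bijection $\map{b}{\kappa}{Y}$ satisfying $b(0)=\kappa$, $b(1)=\langle z,\kappa\rangle$ and $b(\omega\cdot(1+\alpha))=\alpha$, and then code the internal satisfaction relation of $\langle Y,\in,R\cap Y\rangle$ into a subset $A$ of $\kappa$ via $b$, exactly as in the proof of Lemma \ref{lemma:WShrewdCharEmb}. The task is to design a single $\Sigma_n$-formula $\Phi(v_0,v_1)$ in $\calL_{\dot A}$ such that $\langle\HH{\theta},\in,R\rangle\models\Phi(A,\kappa)$ and such that, after reflecting $\Phi$ down to $\langle\HH{\bar\theta},\in,R\rangle$ by weak $(\Sigma_n,R,\theta)$-shrewdness, the reflected instance unpacks into the desired data $(\bar\kappa,\bar\theta,X,j)$ with $j=b\circ\bar b^{{-}1}$. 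The formula $\Phi(A,\kappa)$ will assert that $\kappa$ is an infinite cardinal and that there exist a set $W$ and a bijection $\map{b}{\kappa}{W}$ obeying the above coding conditions under which $A$ correctly codes the satisfaction relation of $\langle W,\in,\dot A\cap W\rangle$, together with one further clause expressing the downward persistence of $\Sigma_{n-1}$-truth into $W$.

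The crucial --- and most delicate --- point is the complexity of this last clause, and here the argument genuinely departs from Lemma \ref{lemma:WShrewdCharEmb}. Since $\theta$ is now fixed and $\Phi$ is evaluated inside $\langle\HH{\theta},\in,R\rangle$ itself, Tarski's theorem forbids referring to the full satisfaction of the ambient structure; instead I would use the partial $\Sigma_{n-1}$-truth predicate, which is uniformly $\Sigma_{n-1}$-definable in $\calL_{\dot A}$. The clause asserts only the single implication that, for every $\Sigma_{n-1}$-formula and all parameters in $W$, ambient truth implies internal satisfaction in $W$; as the consequent is $\Delta_1$, this implication is $\Pi_{n-1}$, so prefixing the existential block $\exists W\,\exists b$ keeps $\Phi$ provably equivalent to a $\Sigma_n$-formula. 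The key observation that makes one direction enough is that, after reflection, this single $\Pi_{n-1}$ clause already yields full $\Sigma_{n-1}$-elementarity $\langle X,\in,R\rangle\prec_{\Sigma_{n-1}}\langle\HH{\bar\theta},\in,R\rangle$: the remaining three persistence directions (the converse for $\Sigma_{n-1}$ and both directions for $\Pi_{n-1}$) follow purely logically, by contraposition and an induction on quantifier alternations, and this derivation never appeals to transitivity or $\Delta_0$-absoluteness of the non-transitive model $X$. The rest is routine: using that $\bar\kappa$ is an infinite cardinal (hence closed under the iterated G\"odel pairing, so that $A\cap\bar\kappa$ agrees with $A$ on the relevant codes), the coding clauses force $j$ to be a full isomorphism of $\langle X,\in,R\cap X\rangle$ onto $Y\prec\langle\HH{\theta},\in,R\rangle$, hence a fully (in particular $\Sigma_n$-) elementary embedding into $\langle\HH{\theta},\in,R\rangle$ with $j\restriction\bar\kappa=\id_{\bar\kappa}$, $j(\bar\kappa)=\kappa$ and, via $b(1)=\langle z,\kappa\rangle$, with $z\in\ran{j}$. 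I expect the main obstacle to be exactly this complexity bookkeeping for $\Phi$: pinning down that the one-directional $\Pi_{n-1}$ clause is at once cheap enough to keep $\Phi$ a $\Sigma_n$-formula and strong enough to recover full $\Sigma_{n-1}$-elementarity of the reflected submodel.
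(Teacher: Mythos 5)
Your proposal is correct and follows essentially the same route as the paper's proof: the same cycle of implications, the same coding of the satisfaction relation of an elementary submodel $Y\prec\langle\HH{\theta},\in,R\rangle$ into a subset of $\kappa$ via a normalized bijection, reflection of a single $\Sigma_n$-formula consisting of the coding clauses plus a truth-persistence clause, and recovery of the embedding as $j=b\circ\bar{b}^{-1}$. The one point where you deviate is the persistence clause: the paper asserts downward persistence of $\Pi_n$-truth into $X$ (via a universal $\Pi_n$-formula, a clause of complexity $\Sigma_n$, which still fits the budget), whereas you assert downward persistence of $\Sigma_{n-1}$-truth (complexity $\Pi_{n-1}$). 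Your key claim---that this single direction already yields full $\Sigma_{n-1}$-elementarity of $X$ in $\HH{\bar{\theta}}$ by contraposition and cumulativity of the L\'{e}vy classes, with no appeal to transitivity or $\Sigma_0$-absoluteness of $X$---is correct, so your variant works and is, if anything, marginally cheaper; both arguments need the same closure-property fine print for formula classes over $\HH{\vartheta}$-structures that the paper relegates to footnotes.

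There is one slip, confined to the case $n=1$ (which the lemma covers and which the paper later relies on, e.g.\ for the characterizations of weak inaccessibility and weak Mahloness): your $\Phi$ opens with the conjunct \anf{$\kappa$ is an infinite cardinal}, which is $\Pi_1$, so for $n=1$ the resulting formula is of the form $\Pi_1\wedge\Sigma_1$ and not (provably equivalent to) a $\Sigma_1$-formula, and weak $(\Sigma_1,R,\theta)$-shrewdness cannot be applied to it. Relatedly, your count \anf{the implication is $\Pi_{n-1}$} is off at $n=1$, where ambient $\Sigma_0$-truth is $\Delta_1$ rather than $\Pi_0$; this second point is harmless, since the clause is still $\Sigma_1$-expressible, but the cardinal conjunct is a genuine obstruction at $n=1$. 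The paper's formulation shows the repair: assert only that $\kappa$ is a limit ordinal (a $\Delta_0$ statement), and observe that the definition of weak $(\Sigma_n,R,\theta)$-shrewdness already produces \emph{cardinals} $\bar{\kappa}<\bar{\theta}$, so the reflected $\bar{\kappa}$ is automatically an infinite cardinal, hence closed under iterated G\"odel pairing, which is all that your unpacking of the coding clauses requires.
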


\begin{proof}
 First, assume that (ii) holds. Fix a $\Sigma_n$-formula $\Phi(v_0,v_1)$ in $\calL_{\dot{A}}$ and a subset $A$ of $\kappa$ such that $\Phi(A,\kappa)$ holds in $\langle\HH{\theta},\in,R\rangle$. 
  Pick cardinals $\bar{\kappa}<\bar{\theta}$ and a $\Sigma_n$-elementary embedding $\map{j}{\langle X,\in,R\rangle}{\langle\HH{\theta},\in,R\rangle}$ such that $\langle X,\in,R\rangle\prec_{\Sigma_{n-1}}\langle\HH{\bar{\theta}},\in,R\rangle$,   $\bar{\kappa}+1\subseteq X$, $j\restriction\bar{\kappa}=\id_{\bar{\kappa}}$,  $j(\bar{\kappa})=\kappa>\bar{\kappa}$ and $A\in\ran{j}$. 
  Then $A\cap\bar{\kappa}\in X$ and $j(A\cap\bar{\kappa})=A$. Therefore elementarity implies that the $\Sigma_n$-statement $\Phi(A\cap\bar{\kappa},\bar{\kappa})$ holds in $\langle X,\in,R\rangle$ and, since our assumptions ensure that $\Sigma_n$-statements are upwards-absolute from $\langle X,\in,R\rangle$ to $\langle\HH{\bar{\theta}},\in,R\rangle$, we can conclude that this statement also holds in $\langle\HH{\bar{\theta}},\in,R\rangle$. 
  These computations show that (i) holds in this case. 
  
  Now, assume that (i) holds and fix an element $z$ of $\HH{\theta}$. 
  Pick an elementary submodel $\langle Y,\in,R\rangle$ of $\langle\HH{\theta},\in,R\rangle$ of cardinality $\kappa$ with $\kappa\cup\{\kappa,z\}\subseteq Y$, and a bijection $\map{b}{\kappa}{Y}$ with $b(0)=\kappa$, $b(1)=\langle z,\kappa\rangle$ and $b(\omega\cdot(1+\alpha))=\alpha$ for all $\alpha<\kappa$. 
  Let  $\mathsf{Fml}_*$ denote the class of all formalized $\calL_{\dot{A}}$-formulas and let $\mathsf{Sat}_*$ denote the formalized satisfaction relation for $\calL_{\dot{A}}$-structures. 
  Then the classes $\mathsf{Fml}_*$ and $\mathsf{Sat}_*$ are both defined by  $\Sigma_1$-formulas in $\calL_\in$.   
   Fix a recursive enumeration $\seq{a_l}{l<\omega}$ of the class $\mathsf{Fml}_*$ and let $A$ be the subset of $\kappa$ consisting of all ordinals of the form $\goedel{l}{\alpha_0,\ldots,\alpha_{m-1}}$ with the property that $l<\omega$, $\alpha_0,\ldots,\alpha_{m-1}<\kappa$, $a_l$ codes a formula with $m$ free variables and $\mathsf{Sat}_*(Y,R\cap Y,\langle b(\alpha_0),\ldots,b(\alpha_{m-1})\rangle,a_l)$ holds. 
   Let $\psi(v_0,v_1)$ be a universal $\Pi_n$-formula  in $\calL_{\dot{A}}$ (as constructed in {\cite[Section 1]{JensenFine}}).\footnote{Note that for every infinite cardinal $\vartheta$ and every class $R$, the structure $\langle\HH{\vartheta},\in,R\rangle$ is closed under functions which are \emph{rudimentary in $R$} . This shows that for every $\Pi_n$-formula $\varphi(v)$ in $\calL_{\dot{A}}$, we can find  $k<\omega$ such that $$\langle X,\in,R\rangle\models\forall x ~ [\varphi(x)\longleftrightarrow\psi(k,x)]$$ holds for every class $R$, every infinite cardinal $\vartheta$ and every elementary submodel $\langle X,\in,R\rangle$ of $\langle\HH{\vartheta},\in,R\rangle$.} 
   Now, pick a  $\Sigma_n$-formula\footnote{Note that, given $0<n<\omega$, a standard induction shows that the class of all $\calL_{\dot{A}}$-formulas $\varphi(v_0,\ldots,v_{m-1})$ with the property that there exists a $\Sigma_n$-formula $\psi(v_0,\ldots,v_{m-1})$ in $\calL_{\dot{A}}$ such that $\ZFC$ proves that $$\langle\HH{\vartheta},\in,R\rangle\models\forall x_0,\ldots,x_{m-1} ~ [\varphi(x_0,\ldots,x_{m-1})\longleftrightarrow\psi(x_0,\ldots,x_{m-1})]$$ holds for every class $R$ and every infinite cardinal $\vartheta$ is closed under conjunctions, disjunctions, bounded universal quantification and unbounded existential quantification.} $\Phi(v_0,v_1)$ in $\calL_{\dot{A}}$ such  that $\Phi(B,\delta)$ holds in structures of the form $\langle\HH{\vartheta},\in,R\rangle$ if and only if $\delta<\vartheta$ is a limit ordinal and, in $\HH{\vartheta}$,  there exists a set $X$ and a bijection $\map{b}{\delta}{X}$ such that the following statements hold: 
    \begin{enumerate}    
     \item If $x\in X$ and $k<\omega$ such that $\psi(k,x)$ holds in $\langle\HH{\vartheta},\in,R\rangle$, then $\psi(k,x)$ holds in $\langle X,\in,R\rangle$. 
     
     \item $\delta+1\subseteq X$, $b(0)=\delta$ and $b(\omega\cdot(1+\alpha))=\alpha$ for all $\alpha<\delta$. 
     
     \item Given      $\alpha_0,\ldots,\alpha_{m-1}<\delta$ and $l<\omega$ such that $a_l$ codes a formula with $m$ free variables,  we have $$\goedel{l}{\alpha_0,\ldots,\alpha_{m-1}}\in B ~  \Longleftrightarrow ~ \mathsf{Sat}_*(X,R\cap X,\langle b(\alpha_0),\ldots,b(\alpha_{m-1})\rangle,a_l).$$
  \end{enumerate}
  
  Then $\Phi(A,\kappa)$ holds in $\langle\HH{\theta},\in,R\rangle$ and hence we can find cardinals $\bar{\kappa}<\bar{\theta}$ such that  $\bar{\kappa}<\kappa$ and $\Phi(A\cap{\bar{\kappa}},\bar{\kappa})$ holds in $\langle\HH{\bar{\theta}},\in,R\rangle$. 
 Pick a set $X\in\HH{\bar{\theta}}$ and a bijection $\map{\bar{b}}{\bar{\kappa}}{X}$ witnessing this. 
 Then the definition of the formula $\Phi$ ensures that all $\Pi_n$-formulas in $\calL_{\dot{A}}$ are downwards-absolute from $\langle\HH{\bar{\theta}},\in,R\rangle$ to $\langle X,\in,R\rangle$ and this directly allows us to conclude that $\langle X,\in,R\rangle\prec_{\Sigma_{n-1}}\langle\HH{\bar{\theta}},\in,R\rangle$. 
 Define $$\map{j=b\circ\bar{b}}{X}{\HH{\theta}}.$$
 Then we know that $j$ is an elementary embedding of $\langle X,\in,R\rangle$ into $\langle\HH{\theta},\in,R\rangle$ satisfying  $j\restriction\bar{\kappa}=\id_{\bar{\kappa}}$, $j(\bar{\kappa})=\kappa$ and $z\in\ran{j}$. 
   This shows that (iii) holds in this case. 
\end{proof}

We now derive some easy consequences of the equivalences established above.

\begin{corollary}\label{corollary:FragShrewdWeaklyInacc}
 Given infinite cardinals $\kappa<\theta$, if $\kappa$ is weakly $(\Sigma_1,\emptyset,\theta)$-shrewd, then $\kappa$ is weakly inaccessible.  
\end{corollary}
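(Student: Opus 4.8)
The plan is to read off weak inaccessibility from the embedding characterization of Lemma \ref{lemma:EmbLEvelShrewd}, taken with $R=\emptyset$ and $n=1$. In this case item (iii) of that lemma asserts that for every $z\in\HH{\theta}$ there are cardinals $\bar{\kappa}<\bar{\theta}$ and an embedding $\map{j}{\langle X,\in\rangle}{\langle\HH{\theta},\in\rangle}$ that is $\Sigma_1$-elementary, satisfies $\langle X,\in\rangle\prec_{\Sigma_0}\langle\HH{\bar{\theta}},\in\rangle$, $\bar{\kappa}+1\subseteq X$, $j\restriction\bar{\kappa}=\id_{\bar{\kappa}}$, $j(\bar{\kappa})=\kappa>\bar{\kappa}$ and $z\in\ran{j}$. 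Since $\Sigma_0$-formulas are closed under negation, such a $j$ is fully $\Sigma_0$-elementary; as $\HH{\bar{\theta}}$ is transitive, every $\Sigma_0$-fact that $X$ records about its elements is then genuinely true. I would use these embeddings to establish the three defining features of weak inaccessibility: that $\kappa$ is uncountable, regular and a limit cardinal.

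Uncountability does not even require the embedding. Applying the weak $(\Sigma_1,\emptyset,\theta)$-shrewdness of $\kappa$ directly to the $\Delta_0$-formula $\Phi(v_0,v_1)$ expressing \anf{$v_1$ is a nonzero limit ordinal}, which holds of $\kappa$ in $\HH{\theta}$, would yield a cardinal $\bar{\kappa}<\kappa$ that is itself a nonzero limit ordinal. As this is impossible when $\kappa=\omega$, and $\kappa$ is an infinite cardinal, we conclude $\kappa>\omega$.

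The heart of the matter is regularity. Assuming towards a contradiction that $\kappa$ is singular, I would fix a strictly increasing cofinal map $\map{c}{\lambda}{\kappa}$ with $\lambda=\cof{\kappa}<\kappa$, noting that $c\in\HH{\theta}$ since its transitive closure has size $\kappa<\theta$. Applying item (iii) with $z=c$ produces $j$, $\bar{\kappa}$ and $\bar{\theta}$ as above with $c\in\ran{j}$, and I set $\bar{c}=j^{-1}(c)$. Because \anf{being a function whose domain is an ordinal} is $\Sigma_0$, $\bar{c}$ genuinely is such a function, with domain some $\bar{\lambda}$ satisfying $j(\bar{\lambda})=\lambda$; as $j$ is the identity below $\bar{\kappa}$ and sends $\bar{\kappa}$ to $\kappa$, the inequality $\lambda<\kappa$ forces $\bar{\lambda}=\lambda<\bar{\kappa}$. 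Moreover, for each $\xi<\lambda$ we have $j(\bar{c}(\xi))=c(\xi)<\kappa$, and the same order considerations give $\bar{c}(\xi)<\bar{\kappa}$, hence $\bar{c}(\xi)=j(\bar{c}(\xi))=c(\xi)$. Thus $\bar{c}=c$ and $\ran{c}\subseteq\bar{\kappa}<\kappa$, contradicting the cofinality of $c$.

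Finally, to see that $\kappa$ is a limit cardinal I would suppose $\kappa=\mu^+$ and apply item (iii) with $z=\mu$. Writing $\bar{\mu}=j^{-1}(\mu)$, the inequality $\mu<\kappa=j(\bar{\kappa})$ forces $\bar{\mu}<\bar{\kappa}$ and hence $\mu=\bar{\mu}<\bar{\kappa}$; but $\bar{\kappa}$ is a cardinal with $\mu<\bar{\kappa}<\kappa=\mu^+$, contradicting the absence of cardinals strictly between $\mu$ and $\mu^+$. Combining the three steps gives that $\kappa$ is weakly inaccessible. I expect the regularity step to be the main obstacle: the delicate point is verifying that the preimage $\bar{c}$ is literally equal to $c$, which is exactly where the identity behaviour of $j$ below $\bar{\kappa}$ together with $j(\bar{\kappa})=\kappa$ is decisive, and one must confirm that all the facts about $c$ that are transported are $\Sigma_0$, so that the mere $\Sigma_1$-elementarity supplied by the $n=1$ fragment suffices.
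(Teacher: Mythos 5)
Your proof is correct and follows essentially the same route as the paper: both rely on the embedding characterization of Lemma \ref{lemma:EmbLEvelShrewd} (with $n=1$, $R=\emptyset$) and exploit that $j\restriction\bar{\kappa}=\id_{\bar{\kappa}}$ together with $j(\bar{\kappa})=\kappa$ traps any witness of singularity below $\bar{\kappa}$, yielding the contradiction. The only cosmetic differences are that the paper reflects a cofinal subset of $\kappa$ together with its order type where you reflect a cofinal function, and the paper compresses the uncountability and limit-cardinal steps into the remark that the lemma ``directly implies'' them, which you spell out explicitly.
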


\begin{proof}
 Assume, towards a contradiction, that $\kappa$ is not weakly inaccessible. 
 Since Lemma \ref{lemma:EmbLEvelShrewd} directly implies that all weakly $(\Sigma_1,\emptyset,\theta)$-shrewd cardinals are uncountable limit cardinals, this assumption allows us to find  a cofinal subset $A$ of $\kappa$ of order-type $\lambda<\kappa$. 
  By Lemma \ref{lemma:EmbLEvelShrewd}, there exist cardinals  $\bar{\kappa}<\bar{\theta}$    and an elementary embedding $\map{j}{X}{\HH{\theta}}$ satisfying $X\prec_{\Sigma_0}\HH{\bar{\theta}}$, $\bar{\kappa}+1\subseteq X$, $j\restriction\bar{\kappa}=\id_{\bar{\kappa}}$,  $j(\bar{\kappa})=\kappa>\bar{\kappa}$ and $A,\lambda\in\ran{j}$.   
  But then we have $\lambda\cup\{\lambda,A\cap\bar{\kappa}\}\subseteq X$, $j\restriction(\lambda+1)=\id_{\lambda+1}$ and $j(A\cap\bar{\kappa})=A$. 
  By elementarity, this implies that $\otp{A}=\lambda=\otp{A\cap\bar{\kappa}}$ and hence $A\subseteq\bar{\kappa}$, a contradiction. 
\end{proof}

 Next, we observe that, in the case $\theta=\kappa^+$, the Lemma \ref{lemma:EmbLEvelShrewd} yields non-trivial elementary embeddings between transitive structures. 
 Note that the same argument as for $\Sigma_0$-formulas in $\calL_\in$ shows that $\langle M,\in,R\rangle\prec_{\Sigma_0}\langle N,\in,R\rangle$ holds for every class $R$ and all transitive classes $M$ and $N$ with $M\subseteq N$.

 \begin{corollary}\label{corollary:KappaPlusEmbe}
 Let $R$ be a class, let $n>0$ be a natural number, let  $\kappa$ be a weakly $(\Sigma_n,R,\kappa^+)$-shrewd cardinal and let $z\in\HH{\kappa^+}$. 
  Then there exists a transitive set $N$ and a non-trivial elementary embedding $\map{j}{\langle N,\in,R\rangle}{\langle\HH{\kappa^+},\in,R\rangle}$ with the property that $\crit{j}$ is a cardinal, $\langle N,\in,R\rangle\prec_{\Sigma_{n-1}}\langle\HH{\crit{j}^+},\in,R\rangle$, $j(\crit{j})=\kappa$ and $z\in\ran{j}$. 
 \end{corollary}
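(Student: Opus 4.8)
The plan is to instantiate the embedding characterisation of restricted weak shrewdness from Lemma \ref{lemma:EmbLEvelShrewd} at the value $\theta=\kappa^+$ and then pass to a transitive collapse, exploiting that the target model $\HH{\kappa^+}$ is itself transitive and satisfies ``every set is small''. Concretely, I would first apply Lemma \ref{lemma:EmbLEvelShrewd}(iii) to the weakly $(\Sigma_n,R,\kappa^+)$-shrewd cardinal $\kappa$ and the given parameter $z$, obtaining cardinals $\bar{\kappa}<\bar{\theta}$, a model with $\langle X,\in,R\rangle\prec_{\Sigma_{n-1}}\langle\HH{\bar{\theta}},\in,R\rangle$, and an elementary embedding $\map{j_0}{\langle X,\in,R\rangle}{\langle\HH{\kappa^+},\in,R\rangle}$ satisfying $\bar{\kappa}+1\subseteq X$, $j_0\restriction\bar{\kappa}=\id_{\bar{\kappa}}$, $j_0(\bar{\kappa})=\kappa>\bar{\kappa}$ and $z\in\ran{j_0}$.

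The next step is to produce the transitive domain $N$. Since $\HH{\kappa^+}$ satisfies the $\calL_\in$-sentence asserting, with parameter $\kappa=j_0(\bar{\kappa})$, that every set has transitive closure of cardinality at most $\kappa$, the elementarity of $j_0$ forces $X$ to satisfy the corresponding statement with $\bar{\kappa}$ in place of $\kappa$. Letting $\map{\pi}{X}{N}$ be the Mostowski collapse, I would then read off that $N$ is a transitive set contained in $\HH{\bar{\kappa}^+}$, that $\pi\restriction(\bar{\kappa}+1)=\id_{\bar{\kappa}+1}$ (because $\bar{\kappa}+1\subseteq X$), and hence that $j=j_0\circ\pi^{{-}1}$ is a non-trivial elementary embedding into $\langle\HH{\kappa^+},\in,R\rangle$ with $\crit{j}=\bar{\kappa}$, $j(\crit{j})=\kappa$ and $z\in\ran{j}$. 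The cardinality of $\crit{j}=\bar{\kappa}$ is guaranteed by the lemma, so the relevant value of $\HH{\crit{j}^+}$ is $\HH{\bar{\kappa}^+}$.

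It then remains to verify the localisation $\langle N,\in,R\rangle\prec_{\Sigma_{n-1}}\langle\HH{\bar{\kappa}^+},\in,R\rangle$. For $n=1$ this is immediate from the remark preceding the statement, since $N$ and $\HH{\bar{\kappa}^+}$ are transitive with $N\subseteq\HH{\bar{\kappa}^+}$ and $\Sigma_0$-formulas of $\calL_{\dot{A}}$ are absolute between such structures. For $n>1$ I would transport the property $\langle X,\in,R\rangle\prec_{\Sigma_{n-1}}\langle\HH{\bar{\theta}},\in,R\rangle$ supplied by the lemma through the collapse isomorphism $\pi$ and combine it with $\HH{\bar{\kappa}^+}\subseteq\HH{\bar{\theta}}$ and the absoluteness of lower-complexity formulas in order to descend the reflection to the genuine $\HH{\bar{\kappa}^+}$.

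I expect the main obstacle to be precisely this last point, namely reconciling the \emph{external} predicate $R$ with the collapse. The Mostowski collapse transports the interpretation $R\cap X$ of $\dot{A}$ to $\pi[R\cap X]$, and one must check that on the transitive domain this agrees with $R\cap N$ (so that the resulting structure is the genuine $\langle N,\in,R\rangle$ and $j$ really preserves $\dot{A}$), and, simultaneously, that the $\Sigma_{n-1}$-elementarity holding in $\HH{\bar{\theta}}$ survives the passage to the smaller model $\HH{\bar{\kappa}^+}$. Handling the predicate correctly — via the $\dot{A}$-preservation of $j_0$ together with the identity behaviour of $\pi$ below the critical point — is the delicate part of the argument, while the existence and the basic structural properties of the embedding follow routinely once Lemma \ref{lemma:EmbLEvelShrewd} is in hand.
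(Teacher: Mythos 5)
Your first step coincides with the paper's: apply Lemma \ref{lemma:EmbLEvelShrewd}(iii) at $\theta=\kappa^+$ and use elementarity of $j_0$ to see that $X$ believes every set to be a surjective image of $\bar{\kappa}$. But the detour through the Mostowski collapse is where the proposal breaks down, and precisely at the two points you yourself flag without resolving. If $X$ were not transitive, the collapse would in general \emph{not} satisfy $\pi[R\cap X]=R\cap N$: for instance, collapsing an elementary submodel of $\langle\HH{\theta},\in,Cd\rangle$ of size $\aleph_0$ sends the uncountable cardinals in $X$ to countable ordinals, which are not cardinals. So $j_0\circ\pi^{{-}1}$ is only guaranteed to be elementary as a map from $\langle N,\in,\pi[R\cap X]\rangle$, not from the genuine $\langle N,\in,R\rangle$. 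Likewise, an isomorphic copy of a $\Sigma_{n-1}$-elementary submodel of $\langle\HH{\bar{\theta}},\in,R\rangle$ need not be a $\Sigma_{n-1}$-elementary submodel of anything, so the property $\langle X,\in,R\rangle\prec_{\Sigma_{n-1}}\langle\HH{\bar{\theta}},\in,R\rangle$ cannot simply be ``transported through $\pi$''. The ingredients you propose to use ($\dot{A}$-preservation by $j_0$ and the identity behaviour of $\pi$ below $\bar{\kappa}$) say nothing about what $\pi$ does to the elements of $X$ above $\bar{\kappa}$, which is exactly where both problems live.

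The missing idea — and the paper's actual argument — is that $X$ is \emph{already transitive}, so that no collapse is needed and both obstacles vanish. Indeed, by elementarity of $j_0$, every $x\in X$ admits some $f\in X$ with $X\models\anf{\textit{$f$ is a surjection from $\bar{\kappa}$ onto $x$}}$; this is a $\Sigma_0$-statement about $f,\bar{\kappa},x$, hence true in $\VV$ by $\Sigma_0$-absoluteness between $X$ and the transitive set $\HH{\bar{\theta}}$, and since $\bar{\kappa}\subseteq X$, each value $f(\alpha)$ with $\alpha<\bar{\kappa}$ lies in $X$, so $x=f[\bar{\kappa}]\subseteq X$. Thus $N=X$ is transitive and, by the same absoluteness, $N\subseteq\HH{\bar{\kappa}^+}$, so $j=j_0$ itself works. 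Note also that your final step needs a finer case distinction than ``descending the reflection by absoluteness'': for $n=1$ the conclusion is the remark preceding the statement; for $n=2$ one sandwiches, using upward $\Sigma_1$-absoluteness from $N$ to $\HH{\bar{\kappa}^+}$ and from $\HH{\bar{\kappa}^+}$ to $\HH{\bar{\theta}}$ together with $\langle N,\in,R\rangle\prec_{\Sigma_1}\langle\HH{\bar{\theta}},\in,R\rangle$; and for $n>2$ one does not descend at all, but shows $\bar{\theta}=\bar{\kappa}^+$ outright, because the $\Pi_2$-statement that every set is a surjective image of $\bar{\kappa}$ transfers from $N$ to $\HH{\bar{\theta}}$ once $n-1\geq 2$.
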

 
 \begin{proof} 
  With the help of Lemma \ref{lemma:EmbLEvelShrewd}, we can  find cardinals $\bar{\kappa}<\bar{\theta}$    and an elementary embedding $\map{j}{\langle N,\in,R\rangle}{\langle\HH{\kappa^+},\in,R\rangle}$ with $\langle N,\in,R\rangle\prec_{\Sigma_{n-1}}\langle\HH{\bar{\theta}},\in,R\rangle$, $\bar{\kappa}+1\subseteq N$, $j\restriction\bar{\kappa}=\id_{\bar{\kappa}}$,  $j(\bar{\kappa})=\kappa>\bar{\kappa}$ and $z\in\ran{j}$. 
  In this situation,   elementarity implies that $N=\HH{\bar{\kappa}^+}^N=\HH{\bar{\kappa}^+}\cap N\subseteq\HH{\bar{\kappa}^+}$ and, since $\bar{\kappa}\subseteq N$, this shows that $N$ is transitive. 
  In particular, we know that $\langle N,\in,R\rangle\prec_{\Sigma_0}\langle\HH{\bar{\kappa}^+},\in,R\rangle$ and this completes the proof in the case $n=1$.  
  Next,  if $n>2$, then elementarity directly implies that $\bar{\theta}=\bar{\kappa}^+$ and this shows that $N$ also possesses the desired properties in this case.  
  Finally, if $n=2$, then we have $\langle N,\in,R\rangle\prec_{\Sigma_1}\langle\HH{\bar{\theta}},\in,R\rangle$,  transitivity implies that all $\Sigma_1$-formulas are upwards-absolute from $\langle N,\in,R\rangle$ to $\langle\HH{\bar{\kappa}^+},\in,R\rangle$ and from  $\langle\HH{\bar{\kappa}^+},\in,R\rangle$ to $\langle\HH{\bar{\theta}},\in,R\rangle$, and therefore we can conclude that $\langle N,\in,R\rangle\prec_{\Sigma_{1}}\langle\HH{\bar{\kappa}^+},\in,R\rangle$ holds in this case. 
\end{proof}

 The next lemma will allow us to show that both weak inaccessibility and weak Mahloness are equivalent to certain restrictions of weak shrewdness.

\begin{lemma}\label{lemma:ReflCardReg}
 Given a class $R$ of cardinals, the following statements are equivalent for every cardinal $\kappa$ in $R$: 
 \begin{enumerate}
  \item $\kappa$ is regular and  the set $R\cap\kappa$ is stationary in $\kappa$. 
  
  \item $\kappa$ is weakly $(\Sigma_1,R,\kappa^+)$-shrewd. 
 \end{enumerate} 
\end{lemma}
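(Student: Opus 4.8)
The plan is to prove both implications directly, exploiting the embedding characterization from Corollary \ref{corollary:KappaPlusEmbe} for the direction (ii) $\Rightarrow$ (i) and a Skolem-hull-plus-collapse argument for (i) $\Rightarrow$ (ii). The crucial observation that makes the latter direction manageable is that the definition of weak $(\Sigma_1,R,\kappa^+)$-shrewdness places no upper bound on the witnessing cardinal $\bar{\theta}$; hence it suffices to reflect the given $\Sigma_1$-statement \emph{within} the single structure $\langle\HH{\kappa^+},\in,R\rangle$ and then take $\bar{\theta}=\kappa^+$.

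For (i) $\Rightarrow$ (ii), fix a $\Sigma_1$-formula $\Phi(v_0,v_1)=\exists w\,\psi(w,v_0,v_1)$ in $\calL_{\dot{A}}$ with $\psi$ a $\Sigma_0$-formula, and $A\subseteq\kappa$ with $\Phi(A,\kappa)$ holding in $\langle\HH{\kappa^+},\in,R\rangle$; fix a witness $w$. Using the regularity of $\kappa$, I would build a continuous increasing chain $\seq{M_\alpha}{\alpha<\kappa}$ of elementary submodels of $\langle\HH{\kappa^+},\in,R\rangle$ of size less than $\kappa$ with $\{A,\kappa,w\}\cup\alpha\subseteq M_\alpha$, so that $D=\Set{\alpha<\kappa}{M_\alpha\cap\kappa=\alpha}$ is closed unbounded in $\kappa$. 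Since $R\cap\kappa$ is stationary, I can pick $\bar{\kappa}\in D\cap R$ and set $M=M_{\bar{\kappa}}$, so that $M\cap\kappa=\bar{\kappa}\in R$. Letting $\map{\pi}{M}{\bar{M}}$ be the transitive collapse, routine calculations give $\pi\restriction\bar{\kappa}=\id_{\bar{\kappa}}$, $\pi(\kappa)=\bar{\kappa}$ and $\pi(A)=A\cap\bar{\kappa}$, while elementarity transports the $\Sigma_0$-statement $\psi(w,A,\kappa)$ to $\psi(\pi(w),A\cap\bar{\kappa},\bar{\kappa})$ in $\langle\bar{M},\in,\pi[R\cap M]\rangle$. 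Finally, I would verify $\pi[R\cap M]=R\cap\bar{M}$ and apply $\Sigma_0$-absoluteness between the transitive structures $\bar{M}\subseteq\HH{\kappa^+}$ to conclude that $\Phi(A\cap\bar{\kappa},\bar{\kappa})$ holds in $\langle\HH{\kappa^+},\in,R\rangle$; taking $\bar{\theta}=\kappa^+$ then witnesses weak $(\Sigma_1,R,\kappa^+)$-shrewdness.

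For (ii) $\Rightarrow$ (i), I first note that weak $(\Sigma_1,R,\kappa^+)$-shrewdness trivially implies weak $(\Sigma_1,\emptyset,\kappa^+)$-shrewdness, so Corollary \ref{corollary:FragShrewdWeaklyInacc} yields that $\kappa$ is weakly inaccessible and in particular regular. For stationarity, let $C$ be any closed unbounded subset of $\kappa$. Applying Corollary \ref{corollary:KappaPlusEmbe} with $z=C$ produces a transitive set $N$ and an elementary embedding $\map{j}{\langle N,\in,R\rangle}{\langle\HH{\kappa^+},\in,R\rangle}$ with $\crit{j}=\bar{\kappa}$ a cardinal, $j(\bar{\kappa})=\kappa$ and $C\in\ran{j}$. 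Writing $C=j(\bar{C})$ and using $\crit{j}=\bar{\kappa}$, one gets $\bar{C}=C\cap\bar{\kappa}$ unbounded in $\bar{\kappa}$, whence $\bar{\kappa}=\sup(C\cap\bar{\kappa})\in C$ by closedness; moreover $\kappa\in R$ together with elementarity gives $\bar{\kappa}\in R$. Thus $\bar{\kappa}\in C\cap R\cap\kappa$, showing that $R\cap\kappa$ is stationary.

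I expect the main obstacle to be the verification that the collapse does not distort the predicate, i.e.\ that $\pi[R\cap M]=R\cap\bar{M}$. Since $R$ consists of cardinals and $\HH{\kappa^+}$ satisfies \anf{\emph{$\kappa$ is the largest cardinal}}, elementarity forces $\bar{M}$ to satisfy \anf{\emph{$\bar{\kappa}$ is the largest cardinal}}; combined with the downward absoluteness of cardinals to the transitive set $\bar{M}$ and with $\bar{\kappa}\in R$, this pins down $R\cap\bar{M}=(R\cap\bar{\kappa})\cup\{\bar{\kappa}\}=\pi[R\cap M]$, which is exactly what the $\Sigma_0$-absoluteness step requires.
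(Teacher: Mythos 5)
Your proof is correct and follows essentially the same route as the paper: the direction (ii)$\Rightarrow$(i) is identical (Corollary \ref{corollary:FragShrewdWeaklyInacc} for regularity, then Corollary \ref{corollary:KappaPlusEmbe} applied to a club $C$, pulling back $C$ through the embedding), and the direction (i)$\Rightarrow$(ii) uses the same elementary-chain-plus-transitive-collapse construction, including the same key claim that the collapse respects the predicate because $R$ consists of cardinals. The only difference is cosmetic: where the paper finishes (i)$\Rightarrow$(ii) by citing the embedding characterization of Lemma \ref{lemma:EmbLEvelShrewd}, you verify the definition of weak $(\Sigma_1,R,\kappa^+)$-shrewdness directly, exploiting that the witnessing cardinal $\bar{\theta}$ may be taken to be $\kappa^+$ itself.
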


\begin{proof}
 First, assume that (i) holds and fix $z\in\HH{\kappa^+}$. 
 With the help of an elementary chain of submodels, we can use the stationarity of $R\cap\kappa$ in $\kappa$ to find a cardinal $\bar{\kappa}\in R\cap\kappa$ and an elementary substructure $\langle X,\in,R\rangle$ of   $\langle\HH{\kappa^+},\in,R\rangle$ of cardinality $\bar{\kappa}$ such that $X\cap\kappa=\bar{\kappa}$ and $\tc{\{z\}}\subseteq X$. 
   Let $\map{\pi}{X}{N}$ denote the corresponding transitive collapse. 
 
 \begin{claim*}
  $\pi[R\cap X]=N\cap R$. 
 \end{claim*}
 
 \begin{proof}[Proof of the Claim]
  In one direction, if $\delta\in R\cap X\subseteq\kappa^+$, then the fact that $R$ consists of cardinals implies that either $\delta=\kappa$ and $\pi(\delta)=\bar{\kappa}\in R$, or $\delta\in X\cap\kappa=\bar{\kappa}$ and $\pi(\delta)=\delta\in R$. 
  In the other direction, if $\alpha\in N$ with $\pi^{{-}1}(\alpha)\notin R$, then either $\alpha<\bar{\kappa}$ and $\alpha=\pi^{{-}1}(\alpha)\notin R$, or $\alpha>\bar{\kappa}$, $\alpha$ has cardinality $\bar{\kappa}$ in $N$ and we can again make use of the fact that $R$ consists of cardinals to conclude that $\alpha\notin R$. 
 \end{proof} 

 The above claim now shows that $\map{\pi}{\langle N,\in,R\rangle}{\langle X,\in,R\rangle}$ is an isomorphism and this directly implies that $\map{\pi^{{-}1}}{\langle  N,\in,R\rangle}{\langle\HH{\kappa^+},\in,R\rangle}$ is an elementary embedding with $\crit{\pi^{{-}1}}=\bar{\kappa}$, $\pi^{{-}1}(\bar{\kappa})=\kappa$ and $z\in\ran{\pi^{{-}1}}$. 
 Moreover, we know that $N$ is a transitive set of cardinality $\bar{\kappa}$ and hence $\langle N,\in,R\rangle\prec_{\Sigma_0}\langle\HH{\bar{\kappa}^+},\in,R\rangle$. 
 By Lemma \ref{lemma:EmbLEvelShrewd}, this shows that (ii) holds in this case. 

 Now, assume that (ii) holds. Then Corollary \ref{corollary:FragShrewdWeaklyInacc} shows that $\kappa$ is regular. 
  Fix a closed unbounded subset $C$ of $\kappa$ and use Corollary \ref{corollary:KappaPlusEmbe} to find a transitive set $N$ and a non-trivial elementary embedding $\map{j}{\langle N,\in,R\rangle}{\langle\HH{\kappa^+},\in,R\rangle}$ such that   $j(\crit{j})=\kappa$ and $C\in\ran{j}$. 
 Then $C\cap\crit{j}\in N$, $j(C\cap\crit{j})=C$ and hence $\crit{j}\in\Lim(C)\subseteq C$. Since elementarity implies that $\crit{j}\in R$, we can now conclude that $C\cap R\neq\emptyset$. 
  This shows that (i) holds in this case. 
\end{proof}

\begin{corollary}\label{corollary:WeaklyInnMahloShrewd}
 \begin{enumerate}
  \item A cardinal $\kappa$ is weakly inaccessible if and only if it is weakly $(\Sigma_1,Cd,\kappa^+)$-shrewd. 
  
  \item A cardinal $\kappa$ is weakly Mahlo if and only if it is weakly $(\Sigma_1,Rg,\kappa^+)$-shrewd. 
 \end{enumerate}
\end{corollary}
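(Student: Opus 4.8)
The plan is to derive both equivalences directly from Lemma \ref{lemma:ReflCardReg}, which, for a class $R$ consisting of cardinals and a cardinal $\kappa\in R$, equates weak $(\Sigma_1,R,\kappa^+)$-shrewdness with the assertion that $\kappa$ is regular and $R\cap\kappa$ is stationary in $\kappa$. Thus the whole task reduces to translating this combinatorial condition, for the specific predicates $R=Cd$ and $R=Rg$, into weak inaccessibility and weak Mahloness, respectively.

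For part (i) I would take $R=Cd$, noting that every cardinal lies in $Cd$, so Lemma \ref{lemma:ReflCardReg} applies to $\kappa$ with no extra hypothesis. The point is that $Cd\cap\kappa$, the set of cardinals below $\kappa$, is a \emph{closed} subset of $\kappa$, since a limit of cardinals is again a cardinal. For a closed subset of a regular cardinal, being stationary is equivalent to being unbounded, and $Cd\cap\kappa$ is unbounded in $\kappa$ precisely when $\kappa$ is a limit cardinal. Hence \anf{$\kappa$ is regular and $Cd\cap\kappa$ is stationary} is equivalent to \anf{$\kappa$ is a regular limit cardinal}, i.e. to weak inaccessibility, and the claimed equivalence follows from the lemma.

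For part (ii) I would take $R=Rg$ and use the standard characterization of weak Mahloness: $\kappa$ is weakly Mahlo if and only if $\kappa$ is regular and the set $Rg\cap\kappa$ of regular cardinals below $\kappa$ is stationary in $\kappa$ (stationarity of $Rg\cap\kappa$ already forces $\kappa$ to be a limit cardinal, hence weakly inaccessible). The forward direction is then immediate: a weakly Mahlo $\kappa$ is regular, so $\kappa\in Rg$, and Lemma \ref{lemma:ReflCardReg} yields weak $(\Sigma_1,Rg,\kappa^+)$-shrewdness.

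The one genuine subtlety, and the step I expect to require the most care, is the backward direction of part (ii): Lemma \ref{lemma:ReflCardReg} is only stated for cardinals $\kappa$ already lying in $R$, so before invoking it I must rule out that a \emph{singular} $\kappa$ could be weakly $(\Sigma_1,Rg,\kappa^+)$-shrewd. This I would handle by a monotonicity observation: any $\Sigma_1$-formula of $\calL_\in$ is in particular a $\Sigma_1$-formula of $\calL_{\dot{A}}$ that does not mention $\dot{A}$, and its truth value in $\langle\HH{\nu},\in,Rg\rangle$ coincides with its truth value in $\langle\HH{\nu},\in,\emptyset\rangle$; hence weak $(\Sigma_1,Rg,\kappa^+)$-shrewdness implies weak $(\Sigma_1,\emptyset,\kappa^+)$-shrewdness. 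By Corollary \ref{corollary:FragShrewdWeaklyInacc} the latter already forces $\kappa$ to be weakly inaccessible, in particular regular, so $\kappa\in Rg$ and Lemma \ref{lemma:ReflCardReg} applies to give that $Rg\cap\kappa$ is stationary, whence $\kappa$ is weakly Mahlo. The same monotonicity remark gives an alternative, lemma-free route to the backward direction of part (i) straight from Corollary \ref{corollary:FragShrewdWeaklyInacc}.
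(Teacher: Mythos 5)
Your proof is correct and takes essentially the same route as the paper, which likewise derives both equivalences from Lemma \ref{lemma:ReflCardReg} together with the observations that a regular cardinal is weakly inaccessible if and only if it is a stationary limit of cardinals, and is weakly Mahlo if and only if it is a stationary limit of regular cardinals. The one point where you go beyond the paper's very terse argument --- ruling out that a singular $\kappa$ could be weakly $(\Sigma_1,Rg,\kappa^+)$-shrewd before invoking the lemma, via monotonicity in the predicate plus Corollary \ref{corollary:FragShrewdWeaklyInacc} --- is left implicit there (it is exactly how regularity is established inside the proof of Lemma \ref{lemma:ReflCardReg} itself, without using the hypothesis $\kappa\in R$), so making it explicit is added rigor rather than a different approach.
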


\begin{proof}
 Note that a regular cardinal is weakly inaccessible if and only if it is a stationary limit of cardinals. Moreover, a regular cardinal is defined to be weakly Mahlo if and only it is a stationary limit of regular cardinals. Therefore both statement follow directly from Lemma \ref{lemma:ReflCardReg}. 
\end{proof}

Next, we show that weak $\Pi^1_n$-indescribability also corresponds to a certain canonical restrictions of weak shrewdness.

\begin{lemma}\label{lemma:WeakIndWeakShrewd}
 The following statements are equivalent for every cardinal $\kappa$ and every natural number $n>0$: 
 \begin{enumerate}
  \item $\kappa$ is weakly $\Pi^1_n$-indescribable. 
  
  \item $\kappa$ is weakly $(\Sigma_{n+1},\emptyset,\kappa^+)$-shrewd. 
 \end{enumerate}
\end{lemma}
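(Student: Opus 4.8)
The plan is to prove both implications by translating the second-order reflection expressed by weak $\Pi^1_n$-indescribability into the first-order reflection over $\HH{\kappa^+}$ expressed by weak $(\Sigma_{n+1},\emptyset,\kappa^+)$-shrewdness, using the embedding characterization from Corollary \ref{corollary:KappaPlusEmbe} as the bridge. The basic dictionary is the following translation fact: for every $\Pi^1_n$-sentence $\Phi$ in the language of structures of the form $\langle\lambda,\in,B\rangle$ there is a $\Pi_n$-formula $\hat\Phi(v_0,v_1)$ in $\calL_\in$ such that for every infinite cardinal $\lambda$ and every $B\subseteq\lambda$ we have $\langle\lambda,\in,B\rangle\models\Phi$ if and only if $\HH{\lambda^+}\models\hat\Phi(B,\lambda)$. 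This translation is obtained by replacing each second-order quantifier $\forall X$ (respectively $\exists X$) ranging over subsets of $\lambda$ by a first-order quantifier over $\HH{\lambda^+}$ guarded by the $\Delta_0$-condition $X\subseteq\lambda$, and by bounding the first-order quantifiers of the matrix by $\lambda$; since all guards are $\Delta_0$ and the matrix becomes $\Delta_0$ in the parameters, the $n$ alternating second-order blocks produce exactly a $\Pi_n$-formula. Crucially, $\hat\Phi(B,\lambda)$ only refers to subsets of $\lambda$, so its truth value is absolute between $\HH{\lambda^+}$ and any $\HH{\delta}$ with $\delta>\lambda$.

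For the implication (ii)$\Rightarrow$(i), fix $A\subseteq\kappa$ and a $\Pi^1_n$-sentence $\Phi$ with $\langle\kappa,\in,A\rangle\models\Phi$, so that $\HH{\kappa^+}\models\hat\Phi(A,\kappa)$. Applying Corollary \ref{corollary:KappaPlusEmbe} (with $R=\emptyset$, the role of $n$ there played by $n+1$, and $z=A$) yields a transitive set $N$ and an elementary embedding $\map{j}{N}{\HH{\kappa^+}}$ with $\bar\kappa=\crit{j}$ an infinite cardinal, $\langle N,\in\rangle\prec_{\Sigma_n}\langle\HH{\bar\kappa^+},\in\rangle$, $j(\bar\kappa)=\kappa$ and $A\in\ran{j}$. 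Since $j\restriction\bar\kappa=\id_{\bar\kappa}$, the preimage of $A$ is $A\cap\bar\kappa$, and elementarity gives $N\models\hat\Phi(A\cap\bar\kappa,\bar\kappa)$. As $\hat\Phi$ is $\Pi_n$ and $N\prec_{\Sigma_n}\HH{\bar\kappa^+}$, this transfers to $\HH{\bar\kappa^+}\models\hat\Phi(A\cap\bar\kappa,\bar\kappa)$, and the translation fact then gives $\langle\bar\kappa,\in,A\cap\bar\kappa\rangle\models\Phi$ with $\bar\kappa<\kappa$. Hence $\kappa$ is weakly $\Pi^1_n$-indescribable.

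For the implication (i)$\Rightarrow$(ii), I would verify the embedding characterization in Lemma \ref{lemma:EmbLEvelShrewd}(ii). Given $A\subseteq\kappa$, fix an elementary submodel $M\prec\HH{\kappa^+}$ of cardinality $\kappa$ with $\kappa+1\subseteq M$ and $A\in M$, together with a bijection $b\colon\kappa\to M$ that, as in the proofs of Lemmas \ref{lemma:WShrewdCharEmb} and \ref{lemma:EmbLEvelShrewd}, records $\kappa$, $A$ and the ordinals below $\kappa$ at fixed coordinates; let $E\subseteq\kappa$ code the full elementary diagram of $\langle M,\in\rangle$ under $b$ via $\mathsf{Sat}$. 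I would then write down a single sentence $\Psi$ over $\langle\kappa,\in,A,E\rangle$ asserting that $E$ codes a well-founded, extensional model whose transitive collapse is $\Sigma_n$-correct as a submodel of $\HH{\kappa^+}$ and which, through $b$, fixes the ordinals below its version of $\kappa$, sends that version to $\kappa$ and has $A$ in its range. Because $\langle M,\in\rangle\prec\langle\HH{\kappa^+},\in\rangle$, the sentence $\Psi$ holds; weak $\Pi^1_n$-indescribability then reflects it to some $\bar\kappa<\kappa$, and decoding $E\cap\bar\kappa$ produces a well-founded, $\Sigma_n$-correct coded model whose transitive collapse $\bar N$ admits, via the recorded diagram, an elementary embedding into $M$ with critical point $\bar\kappa$, identity below $\bar\kappa$, and $A$ in its range. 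Composing with the inclusion $M\subseteq\HH{\kappa^+}$ gives the embedding required by Lemma \ref{lemma:EmbLEvelShrewd}(ii), with $\bar\theta=\bar\kappa^+$ and $X=\bar N\prec_{\Sigma_n}\HH{\bar\kappa^+}$.

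The hard part will be the complexity bookkeeping that makes $\Psi$ a genuine $\Pi^1_n$-sentence over $\langle\kappa,\in,A,E\rangle$, so that weak $\Pi^1_n$-indescribability applies. The point of routing the argument through a single coded submodel is that satisfaction of the (arbitrarily complex) $\Sigma_{n+1}$-data is pushed inside the model and becomes an arithmetic property of $E$; the only genuinely second-order requirements are the well-foundedness of the code, which is $\Pi^1_1$, and the $\Sigma_n$-correctness of the collapse, which I would express — exactly as the formula $\Phi$ is built in the proof of Lemma \ref{lemma:EmbLEvelShrewd} — using a universal $\Pi_n$-formula in $\calL_{\dot{A}}$. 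Verifying that these conditions can be assembled, in the correct quantifier positions, into a $\Pi^1_n$-sentence (absorbing the $\Pi^1_1$-well-foundedness clause using $\Pi^1_1\subseteq\Pi^1_n$ for $n\geq 1$) is the delicate step, and it is precisely dual to the coding carried out in Lemma \ref{lemma:EmbLEvelShrewd}; this is where the shift between the second-order level $n$ and the first-order level $n+1$ originates.
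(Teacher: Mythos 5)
Your direction (ii)$\Rightarrow$(i) is correct and is essentially the paper's own argument: the paper likewise applies Corollary \ref{corollary:KappaPlusEmbe} at index $n+1$, pulls the predicates back to $A\cap\crit{j}$ (respectively $A_i\cap\crit{j}^{k_i}$ for relations of higher arity), and uses the fact that the translated assertion is $\Pi_n$-expressible over $\HH{\cdot^+}$, hence passes from $N$ to $\HH{\crit{j}^+}$ by $\Sigma_n$-elementarity and then to $\VV$ by absoluteness. (You should allow finitely many relations of arbitrary arity rather than a single $A\subseteq\kappa$, but your translation handles this verbatim.)

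The genuine gap is in (i)$\Rightarrow$(ii), precisely at the step you postpone. For $n\geq 2$ the sentence $\Psi$ you describe cannot be assembled as a $\Pi^1_n$-sentence, and the obstruction is not the well-foundedness clause (which in your setup is even automatic, since the coded membership relation is a restriction of the true $\in$ via $b$) but the correctness clause. First, to say over $\langle\kappa,\in,A,E\rangle$ that the collapse $N$ of the coded model is $\Sigma_n$-correct \emph{in $\HH{\kappa^+}$}, you must express $\Sigma_n$- and $\Pi_n$-truth of $\HH{\kappa^+}$ in second-order logic over $\kappa$; this needs the coding of elements of $\HH{\kappa^+}$ by well-founded extensional relations on $\kappa$ together with the G\"odel-pairing predicate (the paper's classes $\CCCC(\delta)$ and $P$), machinery that is absent from Lemma \ref{lemma:EmbLEvelShrewd}: there the ambient model is $\HH{\vartheta}$ itself, so ambient $\Pi_n$-truth is literally a $\Pi_n$-formula, which is why the analogous clause comes out $\Sigma_n$ \emph{over $\HH{\vartheta}$}. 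Second, and decisively, after translation the complexity lands on the wrong side: the downward half of correctness (\anf{every $\Sigma_n$-statement of $\HH{\kappa^+}$ with parameters in $N$ holds in $N$}) is $\Pi^1_n$, but the upward half (\anf{every $\Sigma_n$-statement true in $N$ holds in $\HH{\kappa^+}$}) places the translated truth predicate in positive position and is therefore $\Sigma^1_n$; worse, since your target is Lemma \ref{lemma:EmbLEvelShrewd}(ii) at index $n+1$, you need $\bar{N}\prec_{\Sigma_n}\HH{\bar{\kappa}^+}$, and getting this from a single downward-absoluteness clause requires downward $\Pi_{n+1}$-absoluteness, which is $\Sigma^1_{n+1}$. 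Only for $n=1$ is the upward half free (the collapse is transitive, so $\Sigma_1$-statements go up automatically), and only there is your $\Psi$ genuinely $\Pi^1_1$. The repair is exactly how the paper organizes this direction: the statement to be reflected is $\Sigma^1_{n+1}$, i.e.\ of the form $\exists X\subseteq\kappa\,\Phi$ with $\Phi$ a $\Pi^1_n$-sentence, so one fixes a witness $X$ and reflects the $\Pi^1_n$-sentence $\Phi$ with $X$ as an additional predicate, which is licensed by {\cite[Theorem 6]{MR0281606}} (weakly $\Pi^1_n$-indescribable cardinals reflect $\Sigma^1_{n+1}$-statements). Note also that the paper never constructs an embedding in this direction: it translates the given $\Sigma_{n+1}$-statement $\exists x\,\varphi(x,A,\kappa)$ about $\HH{\kappa^+}$ directly into such a $\Sigma^1_{n+1}$-statement over $\kappa$ and reflects it, which is considerably lighter than reflecting a whole $\Sigma_n$-correct coded submodel.
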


\begin{proof}
 First, assume that (ii) holds, $A_0,\ldots,A_{m-1}$ are relations on $\kappa$ and $\Phi$ is a $\Pi^1_n$-sentence that holds in $\langle \kappa,\in,A_0,\ldots,A_{m-1}\rangle$. 
 Using Corollary \ref{corollary:KappaPlusEmbe}, we find a transitive set $N$ and a non-trivial elementary embedding $\map{j}{N}{\HH{\kappa^+}}$ with the property that $\crit{j}$ is a cardinal,  $N\prec_{\Sigma_n}\HH{\crit{j}^+}$, $j(\crit{j})=\kappa$ and $A_0,\ldots,A_{m-1}\in\ran{j}$. 
 Given $i<m$, if $A_i$ is a $k_i$-ary relation on $\kappa$, then have $A\cap\crit{j}^{k_i}\in N$ with $j(A\cap\crit{j}^{k_i})=A$. 
 Therefore, elementarity implies that, in $N$, the sentence $\Phi$ holds in the structure $$\langle\crit{j},\in,A_0\cap\crit{j}^{k_0},\ldots,A_{m-1}\cap\crit{j}^{k_{m-1}}\rangle.$$ 
  Since this statement 
   can be expressed by a $\Pi_n$-formula with parameters in $N$, our assumptions imply that it also holds in $\HH{\crit{j}^+}$ and therefore it holds in $\VV$ too. 
  These computations show that (i) holds in this case. 
  
  Now, assume that (i) holds. 
  In order to derive the desired conclusion, we introduce a canonical translation of $\Pi_n$-statements over $\HH{\kappa^+}$ into $\Pi^1_n$-statements over some expansions of the structure $\langle\kappa,\in\rangle$ by finitely-many relation symbols. 
  First, we set $$P ~ = ~ \Set{\langle\alpha,\beta,\goedel{\alpha}{\beta}\rangle}{\alpha,\beta\in\On}.$$ 
  For each infinite cardinal $\delta$, we now define $\CCCC(\delta)$ to consists of all subsets $B$ of $\delta$ with the property that $$E_B ~ = ~ \Set{\langle\alpha,\beta\rangle}{\alpha,\beta<\delta, ~ \goedel{\alpha}{\beta}\in B}$$ is an extensional and well-founded relation on $\delta$. 
  Note that with the help of rank functions, it is easy to see that there is a $\Sigma^1_1$-formula that uniformly defines $\CCCC(\delta)$ in $\langle\delta,\in,P\cap\delta^3\rangle$. 
  Given $B\in\CCCC(\delta)$, we let $\map{\pi_B}{\langle\delta,E_B\rangle}{\langle z_B,\in\rangle}$ denote the corresponding transitive collapse and set $x_B=\pi_B(0)$. Then $x_B,z_B\in\HH{\delta^+}$ for all $B\in\CCCC(\delta)$ and it is also easy to see that every element of $\HH{\delta^+}$ is of the form $x_B$ for some $B\in\CCCC(\delta)$. 
  Now, given an infinite cardinal $\delta$ and subsets $A,B_0,\ldots,B_{m-1}$ of $\delta$ with $B_0,\ldots,B_{m-1}\in\CCCC(\delta)$, we say that $B\in\CCCC(\delta)$ \emph{codes a model containing $A,x_{B_0},\ldots,x_{B_{m-1}}$} if the following statements hold: 
  \begin{itemize}
   \item $x_B=\kappa$, $\pi_B(1)=A$ and $\pi_B^{{-}1}(\alpha)=\omega\cdot(1+\alpha)$ for all $\alpha<\kappa$. 
   
   \item For every $i<m$, the transposition $\tau_i=(0,i+2)$\footnote{I.e. $\tau_i$ is the unique permutation of $\kappa$ with $\supp{\tau_i}=\{0,1+2\}$, $\tau_i(0)=i+2$ and $\tau_i(i+2)=0$.} is an isomorphism of $\langle\kappa,E_B\rangle$ and $\langle\kappa,E_{B_i}\rangle$. 
  \end{itemize}
  
  Note that, in the above situation, an easy induction shows that for all $i<m$, we have $\pi_B=\pi_{B_i}\circ\tau_i$,  $z_B=z_{B_i}$ and $x_{B_i}=\pi_B(i+2)$. 
  Moreover, it is easy to see that for every $m<\omega$, there is a $\Sigma_1^1$-formula $\Psi(v_0,\ldots,v_{m+1})$ with second-order variables $v_0,\ldots,v_{m+1}$ and the property that for every infinite cardinal $\delta$ and all $A,B,B_0,\ldots,B_{m-1}\subseteq\delta$, the statement $\Psi(A,B,B_0,\ldots,B_{m-1})$ holds in $\langle\delta,\in,P\cap\delta^3\rangle$ if and only if $B,B_0,\ldots,B_{m-1}\in\CCCC(\delta)$ and $B$ codes a model containing $A,x_{B_0},\ldots,x_{B_{m-1}}$. 
  Given some $\Sigma_1$-formula $\varphi(v_0,\ldots,v_m)$ in $\calL_\in$, we can now use the fact that a $\Sigma_1$-formula in $\calL_\in$  holds true if and only if it holds true in a transitive set containing all of its parameters  to find a $\Sigma_1^1$-formula $\Phi(v_0,\ldots,v_m)$ with second-order variables $v_1\ldots,v_m$ and the property that for  every infinite cardinal $\delta$, all $A\subseteq\delta$ and all $B_0,\ldots,B_{m-1}\in\CCCC(\delta)$, we have $$\HH{\delta^+}\models\varphi(A,x_{B_0},\ldots,x_{B_{m-1}}) ~ \Longleftrightarrow ~ \langle\delta,\in,P\cap\delta^3\rangle\models\Phi(A,B_0,\ldots,B_{m-1}).$$

  Now, fix a $\Pi_n$-formula $\varphi(v_0,v_1,v_2)$ and a subset $A$ of $\kappa$ with the property that the statement $\exists x ~ \varphi(x,A,\kappa)$ holds in $\HH{\kappa^+}$. 
   With the help of the above constructions, we can now find a $\Pi^1_n$-sentence $\Phi$ with the property that $$\HH{\delta^+}\models\exists x ~ \varphi(x,B,\delta) ~ \Longleftrightarrow ~ \exists X\subseteq\delta ~ \langle\delta,P\cap\delta^3,B,X\rangle\models\Phi$$ holds for every infinite cardinal $\delta$ and every subset $B$ of $\delta$. 
   Then there exists $X\subseteq\kappa$ with  $\langle\kappa,P\cap\kappa^3,A,X\rangle\models\Phi$ and hence we can apply {\cite[Theorem 6]{MR0281606}} to find a cardinal $\bar{\kappa}<\kappa$ such that $\Phi$ holds in $\langle\bar{\kappa},P\cap\bar{\kappa}^3,A\cap\bar{\kappa},X\rangle$. 
  But this shows that $\exists x ~ \varphi(x,A\cap\bar{\kappa},\bar{\kappa})$ holds in $\HH{\bar{\kappa}^+}$. 
  These computations allow us to conclude that (ii) holds in this case.  
\end{proof}

 We end this section by using the embedding characterization for weakly $\Pi^1_1$-indescribable cardinals provided by Corollary \ref{corollary:KappaPlusEmbe} and Lemma \ref{lemma:WeakIndWeakShrewd} to show that this large cardinal property is equivalent to Hamkins' \emph{weakly compact embedding property}. 
 Remember that a cardinal $\kappa$ has the weakly compact embedding property if for every transitive set $M$ of cardinality $\kappa$ with $\kappa\in M$, there is a transitive set $N$ and an elementary embedding $\map{j}{M}{N}$ with $\crit{j}=\kappa$.

 \begin{corollary}\label{corollary:WCEPSmallEmb}
  The following statements are equivalent for every infinite cardinal $\kappa$: 
  \begin{enumerate}
    \item $\kappa$ has the weakly compact embedding property.  
    
    \item $\kappa$ is weakly $\Pi^1_1$-indescribable. 
    
        \item There is a transitive set $N$ and a non-trivial elementary embedding $\map{j}{N}{\HH{\kappa^+}}$ with the property that $\crit{j}$ is a cardinal, $j(\crit{j})=\kappa$ and $N\prec_{\Sigma_1}\HH{\crit{j}^+}$.  
        
    \item For every cardinal $\theta>\kappa$ and all $z\in\HH{\theta}$, there is a transitive set $N$ and a non-trivial elementary embedding $\map{j}{N}{\HH{\theta}}$ with the property that $\crit{j}$ is a cardinal, $j(\crit{j})=\kappa$, $\HH{\crit{j}^+}^N\prec_{\Sigma_1}\HH{\crit{j}^+}$ and $z\in\ran{j}$.  
  \end{enumerate}
 \end{corollary}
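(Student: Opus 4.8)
The plan is to prove the four statements equivalent by a cycle, using as the engine the reduction of~(ii) to a shrewdness property together with the embeddings manufactured by Corollary~\ref{corollary:KappaPlusEmbe}. First I would record the two facts that drive everything: by Lemma~\ref{lemma:WeakIndWeakShrewd} statement~(ii) is equivalent to $\kappa$ being weakly $(\Sigma_2,\emptyset,\kappa^+)$-shrewd, and Corollary~\ref{corollary:KappaPlusEmbe}, applied with $n=2$ and $R=\emptyset$, converts this property (for any prescribed $z\in\HH{\kappa^+}$) into a transitive $N$ and an embedding $\map{j}{N}{\HH{\kappa^+}}$ with $\crit j$ a cardinal, $j(\crit j)=\kappa$, $N\prec_{\Sigma_1}\HH{\crit j^+}$ and $z\in\ran j$. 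Forgetting the clause $z\in\ran j$, this is exactly statement~(iii), so $(\mathrm{ii})\Rightarrow(\mathrm{iii})$ is immediate; the substance of the corollary lies in weaving in Hamkins' property~(i) and the $\theta$-uniform version~(iv).

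For $(\mathrm{ii})\Rightarrow(\mathrm{iv})$ I would rerun the coding argument of Lemma~\ref{lemma:EmbLEvelShrewd}, but reflecting \emph{over $\kappa$} instead of over $\theta$: given $\theta>\kappa$ and $z\in\HH\theta$, pick $Y\prec\HH\theta$ of size $\kappa$ with $(\kappa+1)\cup\{z\}\subseteq Y$ and encode $Y$ together with its satisfaction relation as a subset $A\subseteq\kappa$; the assertion that $A$ codes an elementary submodel whose transitive collapse is $\Sigma_1$-correct is, through the translation in the proof of Lemma~\ref{lemma:WeakIndWeakShrewd}, a $\Pi^1_1$-statement about $\langle\kappa,\in,A\rangle$, and weak $\Pi^1_1$-indescribability reflects it to some $\bar\kappa<\kappa$ and $A\cap\bar\kappa$; decoding yields the required $\map{j}{N}{\HH\theta}$. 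The point that makes this work for \emph{every} $\theta$ — even though $\kappa$ need not be weakly $(\Sigma_2,\emptyset,\theta)$-shrewd — is that $\theta$ enters only through the code $A\subseteq\kappa$, and that~(iv) asks for full elementarity into $\HH\theta$ (which is automatic for the inverse of a collapse of a genuine submodel) while demanding only $\Sigma_1$-correctness of the collapse. The reverse implication $(\mathrm{iv})\Rightarrow(\mathrm{iii})$ is the instance $\theta=\kappa^+$, where elementarity forces $N\subseteq\HH{\crit j^+}$, so $N=\HH{\crit j^+}^N$ and the two $\Sigma_1$-elementarity clauses coincide; and $(\mathrm{iv})\Rightarrow(\mathrm{ii})$ follows the pattern of Lemma~\ref{lemma:WeakIndWeakShrewd}, taking the relations $\vec A$ witnessing a $\Pi^1_1$-statement into the range of a full, $\Sigma_1$-correct embedding and reflecting the resulting $\Sigma_2$-content upward to $\HH{\crit j^+}$.

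The genuinely new content is the equivalence $(\mathrm{i})\Leftrightarrow(\mathrm{ii})$. For $(\mathrm{i})\Rightarrow(\mathrm{ii})$ I would proceed classically: given relations $\vec A$ on $\kappa$ and a $\Pi^1_1$-sentence $\Phi$ holding in $\langle\kappa,\in,\vec A\rangle$, collapse an elementary submodel of $\HH{\kappa^+}$ containing $(\kappa+1)\cup\{\vec A\}$ to a transitive $M$ of size $\kappa$ with $\kappa,\vec A\in M$, apply the weakly compact embedding property to obtain $\map{j}{M}{N}$ with $\crit j=\kappa$, and use that $N$ is transitive — so its subsets of $\kappa$ are genuine and $\Phi$, being universal in its second-order quantifier, persists to $\langle\kappa,\in,\vec A\rangle$ inside $N$ — together with $\kappa<j(\kappa)$ and elementarity to extract a reflection point $\lambda<\kappa$. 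For $(\mathrm{ii})\Rightarrow(\mathrm{i})$ I would feed the embeddings of Corollary~\ref{corollary:KappaPlusEmbe}, applied with $z=M$, back into the construction of a genuine $\map{j}{M}{N}$ with critical point $\kappa$.

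I expect the main obstacle to be precisely this last step. Corollary~\ref{corollary:KappaPlusEmbe} delivers a \emph{downward} embedding $\map{j}{N}{\HH{\kappa^+}}$ with $j(\bar\kappa)=\kappa$, equivalently — after setting $\bar M=j^{-1}(M)$ — an elementary $\map{i}{\bar M}{M}$ with $\crit i=\bar\kappa$ and $i(\bar\kappa)=\kappa$, whereas Hamkins' property requires an \emph{upward} embedding out of $M$ with critical point $\kappa$. Reconciling the two directions is the crux: I would try to read off from $i$ a genuine $M$-ultrafilter on $\kappa$ and then argue, using the $\Sigma_1$-correctness of $N$ and the absoluteness of well-foundedness, that the associated ultrapower is well-founded, which produces the desired embedding. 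A secondary delicate point is that statement~(iii) asserts only a \emph{single} reflecting embedding, so its promotion to the full reflection scheme of~(ii) has to be routed through~(iv) and~(i) (or through the uniform ``for every $z$'' form of Corollary~\ref{corollary:KappaPlusEmbe}) rather than attempted by a direct single-embedding argument.
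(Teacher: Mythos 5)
Your implications (i)$\Rightarrow$(ii), (ii)$\Rightarrow$(iii), (iv)$\Rightarrow$(iii) and (iv)$\Rightarrow$(ii) are all sound, and your direct coding argument for (ii)$\Rightarrow$(iv) --- reflecting a $\Pi^1_1$-package (Tarski conditions, well-foundedness, ``being a cardinal'', $\Sigma_1$-correctness of the collapse) over $\kappa$ --- is workable in principle, though it is a genuinely different and more laborious route than the paper's, which instead proves (i)$\Rightarrow$(iv) by reflecting an embedding inside the target model of a weakly compact embedding. The problem is that your cycle never closes, and it fails to close exactly at the step you yourself flag as the crux. First, the ultrafilter strategy for (ii)$\Rightarrow$(i) cannot work as described: from $\map{i}{\bar{M}}{M}$ with $\crit{i}=\bar{\kappa}$ and $i(\bar{\kappa})=\kappa$, the only measure you can read off is the $\bar{M}$-ultrafilter $\Set{X\in\POT{\bar{\kappa}}\cap\bar{M}}{\bar{\kappa}\in i(X)}$ on $\bar{\kappa}$, whose ultrapower has critical point $\bar{\kappa}$, not $\kappa$. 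An $M$-ultrafilter on $\kappa$ inducing an embedding with critical point $\kappa$ would have to decide all $\kappa$-many sets in $\POT{\kappa}\cap M$, whereas $i$ carries information about only $\bar{\kappa}$-many of them (those in its range), and there is no canonical extension; this is not a repairable detail but a structural mismatch. Second, no implication in your plan has (iii) as hypothesis, so even granting everything else, (iii) would merely be a consequence of the other three statements, not equivalent to them. Your closing remark that the promotion of (iii) ``has to be routed through (iv) and (i)'' has the direction backwards: one must first get \emph{out of} (iii), and you provide no way to do so.

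The missing idea, which repairs both defects at once and is how the paper closes its cycle, is the implication (iii)$\Rightarrow$(i) proved by contradiction and absoluteness, with no ultrapower anywhere. Suppose (iii) holds via $\map{j}{N}{\HH{\kappa^+}}$ and (i) fails. The failure is witnessed by a transitive $M\in\HH{\kappa^+}$ of cardinality $\kappa$ with $\kappa\in M$, and the witnessing statement ``for every transitive set $B$, there is no elementary embedding $\map{k}{M}{B}$ with $\crit{k}=\kappa$'' holds in $\HH{\kappa^+}$. By elementarity of $j$, there is $M'\in N$ such that, in $N$, the set $M'$ is transitive of cardinality $\crit{j}$ with $\crit{j}\in M'$ and the analogous statement holds at $\crit{j}$. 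That statement is $\Pi_1$ in the parameters $M'$ and $\crit{j}$, so $N\prec_{\Sigma_1}\HH{\crit{j}^+}$ transfers it to $\HH{\crit{j}^+}$, and L\'{e}vy absoluteness transfers it to $\VV$. But $\map{j\restriction M'}{M'}{j(M')}$ is an elementary embedding into a transitive set with critical point $\crit{j}$, contradicting the transferred statement. In other words, the embedding required by Hamkins' property is never constructed from $M$; the single given embedding $j$, restricted to the putative counterexample pulled back into its domain, already refutes the $\Pi_1$-expressible failure. Grafting this argument onto your plan as (iii)$\Rightarrow$(i) (and discarding the ultrafilter approach) would make your proof complete.
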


\begin{proof}
 First, assume that (i) holds. Fix a cardinal $\theta>\kappa$ and  $z\in\HH{\theta}$. 
 Pick an elementary submodel $X$ of $\HH{\theta}$ of cardinality $\kappa$ with $\kappa\cup\{\kappa,z\}\subseteq X$ and let $\map{\pi}{\langle X,\in\rangle}{\langle B,\in\rangle}$ denote the corresponding transitive collapse. 
 Note that elementarity directly implies that $\pi^{{-}1}\restriction\HH{\kappa^+}^B=\id_{\HH{\kappa^+}^B}$. 
 Pick an elementary submodel $M$ of $\HH{\kappa^+}$ of cardinality $\kappa$ with $\kappa\cup\{B\}\subseteq M$ and fix a bijection $\map{b}{\kappa}{B}$ in $M$. 
  Since $M$ is transitive, we can use the weakly compact embedding property to find a transitive set $N$ and an elementary embedding $\map{j}{M}{N}$ with $\crit{j}=\kappa$. 
 Define $$D ~ = ~ \Set{\goedel{\alpha}{\beta}}{b(\alpha)\in b(\beta)} ~ \in ~ M$$ and let $E\in M$ be the binary relation on $\kappa$ coded by $D$. 
 Then $\langle \kappa,E\rangle$ is well-founded and extensional, and $\map{b}{\langle \kappa,E\rangle}{\langle B,\in\rangle}$ is the corresponding transitive collapse. 
 Now, since $D=j(D)\cap\kappa\in N$ implies that $E$ is contained in $N$ and $N$ is a transitive model of $\ZFC^-$, the above observations show that $B$ and $b$ are also elements of $N$. 
 Moreover, since $j\restriction\kappa=\id_\kappa$, we have $j(x)=(j(b)\circ b^{{-}1})(x)$ for all $x\in B$ and therefore we know that the elementary embedding $\map{j\restriction B}{B}{j(B)}$ is an element of $N$. 
  In addition, since we have $\HH{\kappa^+}^B\subseteq\HH{\kappa^+}^N\subseteq\HH{\kappa^+}$ and our setup ensures that $\HH{\kappa^+}^B=\HH{\kappa^+}^X$ is an elementary submodel of $\HH{\kappa^+}$, we know that  $\HH{\kappa^+}^B\prec_{\Sigma_1}\HH{\kappa^+}^N$. 
  Using elementarity, these computations yield a transitive set $A$ of cardinality less than $\kappa$ and an elementary embedding $\map{k}{A}{B}$ with the property that $\crit{k}$ is a cardinal with $k(\crit{k})=\kappa$, $\pi(z)\in\ran{k}$ and $\HH{\crit{k}^+}^A\prec_{\Sigma_1}\HH{\crit{k}^+}$. 
  Finally, using the fact that $\pi^{{-}1}(\kappa)=\kappa$, we can  conclude that the elementary embedding $\map{\pi^{{-}1}\circ k}{A}{\HH{\theta}}$ possesses all of the properties listed in (iv).  
  Therefore, these arguments show that (iv) holds in this case.

  Next, a combination of Lemma \ref{lemma:EmbLEvelShrewd} with Lemma \ref{lemma:WeakIndWeakShrewd} directly shows that (iv) implies (ii) and (ii) implies (iii). 
  
  Finally, assume, towards a contradiction, that (iii) holds and (i) fails. 
  By elementarity, we know that, in $N$, there is a transitive set $M$ of cardinality $\crit{j}$  with $\crit{j}\in M$ and the property that for every transitive set $B$, there is no elementary embedding $\map{k}{M}{B}$ with $\crit{k}=\crit{j}$. 
   %
 %
 Since this statement can be formalized by a $\Pi_1$-formula with parameters $M$ and $\crit{j}$, our assumptions imply that it holds in $\HH{\crit{j}^+}$ and, by $\Sigma_1$-absoluteness, it also holds in $\VV$. 
 But this yields a contradiction, because the map $\map{j\restriction M}{M}{j(M)}$ is an elementary embedding with these properties. 
\end{proof}

 Note that the above result also shows that one has to add inaccessibility to the assumptions of the statements in {\cite[Section 4]{SECFLC}} to obtain correct results.


\section{Characterizations of small large cardinals}

In this section, we will complete the proof of Theorem \ref{theorem:CharacterizationSmallLargeCardinals}. 
 We start by showing that all  local $\Sigma_n(R)$-classes considered in this section are in fact $\Sigma_2$-definable.

\begin{proposition}\label{proposition:SpecialCAseSigmanR}
 Let $R$ be a class, let $n>0$ be a natural number and let $S$ be a class that is  uniformly locally $\Sigma_n(R)$-definable in some parameter $z$. 
 \begin{enumerate}
  \item If $n=1$, then the class $S$ is $\Sigma_1(R)$-definable in the parameter $z$.  
  
  \item If  the class $PwSet$ is $\Sigma_1(R)$-definable in the parameter $z$, then the class $S$ is definable in the same way. 
  
  \item If  the class $R$ is definable by a $\Pi_1$-formula with parameter $z$, then the class $S$ is definable by a $\Sigma_2$-formula with parameter $z$. 
 \end{enumerate}
\end{proposition}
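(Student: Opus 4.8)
The plan is to handle all three parts through one mechanism. To decide "$x\in S$" it suffices, by uniform local $\Sigma_n(R)$-definability, to find a single infinite cardinal $\kappa$ with $z\in\HH{\kappa}$ and $x\in\HH{\kappa^+}$, to produce the set $M=\HH{\kappa^+}$ together with the restriction $r=R\cap M$, and to ask whether the fixed defining formula $\varphi$ of $S$ holds in the set-sized structure $\langle M,\in,r\rangle$. The crucial observation is that, because $\varphi$ is a \emph{fixed} formula and $M$ is a \emph{set}, the relativisation $\varphi^{\langle M,\in,r\rangle}(x,z)$ is $\Delta_0$ in the parameters $M$ and $r$, no matter how large $n$ is: every quantifier becomes bounded by $M$ and the symbol $\dot{A}$ becomes membership in the set $r$. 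Hence the whole complexity of $\Sigma_n$ is absorbed into bounded set-satisfaction, and the only quantities whose L\'evy complexity must be tracked are "$\kappa$ is an infinite cardinal with $z\in\HH{\kappa}$", "$M=\HH{\kappa^+}$", and "$r=R\cap M$". In each case "$x\in S$" will be an existential quantification over $M$, $\kappa$, $r$ and auxiliary witnesses of a combination of these, and I would verify the biconditional using the local hypothesis at \emph{one} value of $\kappa$ only, so that a single witness always suffices and the "for all large $\kappa$" side of the equivalence is never invoked.

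For (i), where $n=1$, no coding is needed and I would show that the formula $\varphi$ itself defines $S$ globally. If $x\in S$, choose $\kappa$ with $x,z\in\HH{\kappa}$; then $\langle\HH{\kappa^+},\in,R\rangle\models\varphi(x,z)$, and upward $\Sigma_1$-absoluteness between the transitive structure $\langle\HH{\kappa^+},\in,R\rangle$ and $\langle\VV,\in,R\rangle$ (the interpretations of $\dot{A}$ agree on $\HH{\kappa^+}$) gives $\langle\VV,\in,R\rangle\models\varphi(x,z)$. Conversely, a $\Sigma_0$-witness for $\varphi$ in $\VV$ lies in some $\HH{\kappa^+}$, and downward $\Sigma_0$-absoluteness returns it to $\langle\HH{\kappa^+},\in,R\rangle$, so the local hypothesis yields $x\in S$. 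This is the $R$-relativised form of the $\Sigma_1$-absoluteness fact recorded in the introduction.

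For (iii) the budget is $\Sigma_2$, and I would carry out the complexity bookkeeping directly. Using that a surjection $\kappa\to\tc{\{a\}}$ witnessing $a\in\HH{\kappa^+}$ already lies in $\HH{\kappa^+}$, the statement "$M=\HH{\kappa^+}$" (given that $\kappa$ is a cardinal) is $\Pi_1$: closure of $M$ under such surjections is $\Delta_0$ with all quantifiers bounded by $M$, while fullness ("every set of hereditary size ${\le}\kappa$ lies in $M$") is literally $\Pi_1$; and "$\kappa$ is a cardinal" is $\Pi_1$. The clause "$z\in\HH{\kappa}$" is $\Sigma_1$, but its witness is a single bijection, which I would move into the outer existential block, leaving only a $\Delta_0$ clause. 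Finally, since $R$ is $\Pi_1$ in $z$, the equation "$r=R\cap M$" splits into a $\Pi_1$ half ("$a\in r\to a\in R$" for $a\in M$) and a $\Sigma_1$ half ("$a\in R\to a\in r$" for $a\in M$), the latter being $\forall a\in M\,\exists w\,(\dots)$, which is $\Sigma_1$ by $\Sigma_1$-Collection and whose collected witness set can likewise be moved outside. Assembling "$x\in S$" as $\exists M,\kappa,r,\dots$ of a $\Pi_1$ matrix then yields a $\Sigma_2$-definition in the parameter $z$.

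For (ii) the target is $\Sigma_1(R)$, which forces two improvements over (iii). First, I would use $PwSet$ to obtain a genuine $\Sigma_1(R)$-definition of $\kappa\mapsto\HH{\kappa^+}$, reusing the coding of hereditarily small sets by subsets of $\kappa$ from the proof of Lemma \ref{lemma:WeakIndWeakShrewd}: for any infinite ordinal $\gamma$ the set $M_\gamma=\Set{x_B}{B\in\CCCC(\gamma)}$ equals $\HH{\betrag{\gamma}^+}$, so cardinal-hood of $\kappa$ disappears (it is replaced by "$\gamma$ is an ordinal", which is $\Delta_0$), and, given the set $\POT{\gamma}$ supplied $\Sigma_1(R)$ by the $\Sigma_1(R)$-definition of $PwSet$, the relations "$B\in\CCCC(\gamma)$" and "$a=x_B$" are $\Delta_1$, whence "$M=M_\gamma$" is $\Sigma_1(R)$. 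Second, since we now work in $\langle\VV,\in,R\rangle$ with $\dot{A}$ atomic, "$r=R\cap M$" is $\Delta_0(R)$ from $M$, and "$z\in\HH{\betrag{\gamma}}$" is $\Sigma_1$; collecting all existential witnesses makes "$x\in S$" a $\Sigma_1(R)$-statement. I expect the main obstacle to be exactly this $\Sigma_1(R)$-definability of $\kappa\mapsto\HH{\kappa^+}$ from $PwSet$: the delicate points are keeping every bounded universal quantifier over the now-available set $\POT{\gamma}$ within $\Sigma_1$ by means of $\Sigma_1$-Collection, and using well-foundedness of the coded relations in its $\Sigma_1$ form (existence of a rank function) in positive positions and in its $\Pi_1$ form in negative positions so that the governing implications remain $\Sigma_1$.
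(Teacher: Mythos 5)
Your overall mechanism is exactly the paper's: reduce global membership in $S$ to satisfaction of the fixed formula $\varphi$ in the set-sized structure $\langle\HH{\kappa^+},\in,R\cap\HH{\kappa^+}\rangle$, observe that this relativized satisfaction is bounded in the parameters, and then spend the entire L\'{e}vy-complexity budget on identifying $\kappa$, $\HH{\kappa^+}$ and $R\cap\HH{\kappa^+}$. Parts (i) and (iii) are correct as you argue them; the only differences from the paper are cosmetic (in (i) the paper quantifies over transitive sets $N$ rather than reusing $\varphi$ globally, and in (iii) the paper identifies $R\cap\HH{\delta^+}$ by relativizing the $\Pi_1$-definition to $\HH{\delta^+}$ and invoking $\HH{\delta^+}\prec_{\Sigma_1}\VV$, where you instead split $r=R\cap M$ into a $\Pi_1$ half and a Collection-bounded $\Sigma_1$ half — both yield the same $\Sigma_2$ bound).

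One step in (ii) is wrong as literally stated: the clause \anf{$z\in\HH{\betrag{\gamma}}$} is \emph{not} $\Sigma_1$. Under $\AC$, $\betrag{\tc{\{z\}}}<\betrag{\gamma}$ is equivalent to the non-existence of an injection of $\gamma$ into $\tc{\{z\}}$, which is $\Pi_1$; there is no positive witness for a strict cardinality inequality, so no single object can be moved into your outer existential block to certify it. (Note the contrast with your part (iii), where the matrix certifies that $\kappa$ is a cardinal, so \anf{$z\in\HH{\kappa}$} genuinely is $\Sigma_1$: witness an ordinal $\mu<\kappa$ and a surjection of $\mu$ onto $\tc{\{z\}}$. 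Your trick of replacing the cardinal $\kappa$ by an arbitrary infinite ordinal $\gamma$ is what destroys this.) The gap is repairable inside your own framework, and in two ways: since $M=M_\gamma=\HH{\betrag{\gamma}^+}$ is already an existential witness, every candidate injection of $\gamma$ into $\tc{\{z\}}$ for $z\in M$ lies in $M$, so the clause can be written as \anf{$z\in M$ and no $i\in M$ is an injection of $\gamma$ into $\tc{\{z\}}$}, which is $\Delta_0$ in $M$, $\gamma$ and $z$; alternatively, bound the offending universal quantifier by the set $\POT{\gamma\times\tc{\{z\}}}$ supplied by the $\Sigma_1(R)$-definition of $PwSet$, exactly as you already do for the rest of the coding. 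With that one correction, your proof of (ii) goes through and matches the paper's (which asserts the $\Sigma_1(R)$-definability of $Cd$ and of $\delta\mapsto\HH{\delta^+}$ from $PwSet$ without detail).
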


\begin{proof}
 Let $\varphi(v_0,v_1)$ be a $\Sigma_n$-formula in $\calL_{\dot{A}}$ witnessing that $S$ is uniformly locally $\Sigma_n(R)$-definable. 
 
 (i) If $n=1$, then the absoluteness of $\Sigma_0$-formulas between transitive structures implies that $S$ consists of all sets $x$ with the property that there exists a transitive set $N$ such that $x,z\in N$ and $\varphi(x,z)$ holds in $\langle N,\in,R\rangle$. 
  This equality then directly provides a $\Sigma_1(R)$-definition of $S$ in the parameter $z$. 
  
  (ii) If the class $PwSet$ is $\Sigma_1(R)$-definable in  parameter $z$, then the class $Cd$ of all cardinals and the function that sends a cardinal $\delta$ to the set $\HH{\delta^+}$ are definable in the same way. 
 It now follows that $S$ consists of all sets $x$ such that there exists a cardinal $\delta$ such that $x,z\in\HH{\delta^+}$ and $\varphi(x,z)$ holds in $\langle\HH{\delta^+},\in,R\rangle$. This again provides a $\Sigma_1(R)$-definition of $S$ in the parameter $z$. 
 
 (iii) Assume that there is a $\Pi_1$-formula $\psi(v_0,v_1)$ in $\calL_\in$ that witnesses that the class $R$ is $\Pi_1$-definable in the parameter $z$. 
  By $\Sigma_1$-absoluteness, the class $S$ consists of all sets $x$ with the property that there exists a cardinal $\delta$ and a set $Q$ such that $x,z\in\HH{\delta^+}$, $\varphi(x,z)$ holds in $\langle\HH{\delta^+},\in,Q\rangle$ and $$Q ~ = ~ \Set{y\in\HH{\delta^+}}{\HH{\delta^+}\models\psi(y,z)}.$$ 
  Since the function that sends a cardinal $\delta$ to the set $\HH{\delta^+}$ is definable by a $\Sigma_2$-formula without parameters, we can conclude that $S$ is  $\Sigma_2$-definable in the parameter $z$.  
\end{proof}

We now show how the restricted forms of weak shrewdness introduced above are connected to principles of structural reflection for local $\Sigma_n(R)$-classes.

\begin{lemma}\label{lemma:ReflUniformLocalFromEmb}
 Let $R$ be a class,  let $n>0$ be a natural number and let $\bar{\kappa}<\kappa$ be infinite cardinals. 
  If there exists a transitive set $N$  and an elementary embedding $\map{j}{\langle N,\in,R\rangle}{\langle\HH{\kappa^+},\in,R\rangle}$ with $\bar{\kappa}\in N$, $\langle N,\in,R\rangle\prec_{\Sigma_{n-1}}\langle\HH{\bar{\kappa}^+},\in,R\rangle$ and  $j(\bar{\kappa})=\kappa$, 
  then $j(\crit{j})$ is a regular cardinal and $\SRm{\calC}{j(\crit{j})}$ holds for every local $\Sigma_n(R)$-class over the set  $\Set{x\in N}{j(x)=x}$. 
\end{lemma}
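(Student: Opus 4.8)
Throughout, write $\mu=\crit{j}$ and $\rho=j(\mu)=j(\crit{j})$. Since $j(\bar\kappa)=\kappa>\bar\kappa$, the embedding is non-trivial, so $\mu\le\bar\kappa$ and hence $\rho=j(\mu)\le j(\bar\kappa)=\kappa$. The plan is to first show that $\rho$ is a regular cardinal, and then to establish $\SRm{\calC}{\rho}$ by reflecting a hypothetical \emph{failure} of this principle down through $j$.

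For regularity, I would argue that $\mu$ is a regular cardinal inside $N$ and transfer this upward. Since $j\restriction\mu=\id_\mu$ and $j(\mu)>\mu$, any $f\in N$ that is, in $N$, a cofinal map from some $\lambda<\mu$ into $\mu$ (or a bijection between some $\eta<\mu$ and $\mu$) satisfies $j(f)\restriction\lambda=f$, because all relevant arguments and values lie below the critical point and are therefore fixed. But then $j(f)$ has range bounded by $\mu<j(\mu)$, contradicting the fact that, by elementarity, $j(f)$ is cofinal in (respectively surjects onto) $j(\mu)$. Hence $\mu$ is a regular cardinal in $N$, so by elementarity $\rho=j(\mu)$ is a regular cardinal of $\langle\HH{\kappa^+},\in,R\rangle$; as $\rho\le\kappa$, every witness to a singularity or a collapse of $\rho$ would already lie in $\HH{\rho^+}\subseteq\HH{\kappa^+}$, and so $\rho$ is genuinely regular.

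For the reflection principle, fix a local $\Sigma_n(R)$-class $\calC$ over $F=\Set{x\in N}{j(x)=x}$, witnessed by a $\Sigma_n$-formula $\varphi(v_0,v_1)$ in $\calL_{\dot{A}}$ and a parameter $z\in F$, so that $j(z)=z$. By elementarity (as in the proof of Corollary \ref{corollary:KappaPlusEmbe}) $\bar\kappa$ is the largest cardinal of $N$ and $N=\HH{\bar\kappa^+}\cap N$, and a short computation using $j(\bar\kappa)=\kappa$ then shows $z\in\HH{\bar\kappa}$. Assume toward a contradiction that $\SRm{\calC}{\rho}$ fails. Since $z\in\HH{\kappa}$ and $\rho\le\kappa$, uniform local $\Sigma_n(R)$-definability at the level $\kappa$ lets me express this failure as the truth, in $\langle\HH{\kappa^+},\in,R\rangle$, of a single $\calL_{\dot{A}}$-sentence $\exists B\,\vartheta(B,z,\rho)$, where $\vartheta(B,z,\rho)$ asserts that $\varphi(B,z)$ holds, that $\betrag{B}=\rho$, and that no structure $A$ with $\varphi(A,z)$ and $\betrag{A}<\rho$ admits an elementary embedding into $B$; here every candidate small structure $A$ and every candidate embedding $A\to B$ has hereditary cardinality $\le\kappa$ and hence lies in $\HH{\kappa^+}$, so that internal badness coincides with real badness. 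Reflecting through $j$, and using that $z=j(z)$ and $\rho=j(\mu)$ lie in $\ran{j}$ together with the \emph{full} elementarity of $j$, I obtain $B_0\in N$ with $\langle N,\in,R\rangle\models\vartheta(B_0,z,\mu)$; pushing $B_0$ forward, elementarity gives that $j(B_0)$ is a genuinely ``bad'' structure in $\langle\HH{\kappa^+},\in,R\rangle$. The contradiction is that $B_0$ itself refutes the badness of $j(B_0)$: its real cardinality is at most $\betrag{B_0}^N=\mu<\rho$; since $\varphi$ is $\Sigma_n$ and $\langle N,\in,R\rangle\prec_{\Sigma_{n-1}}\langle\HH{\bar\kappa^+},\in,R\rangle$, the statement $\varphi(B_0,z)$ is upward absolute from $N$ to $\HH{\bar\kappa^+}$, whence uniform local definability at the level $\bar\kappa$ yields $B_0\in\calC$; and $j\restriction\dom{B_0}$, which lies in $\HH{\kappa^+}$, is an elementary embedding of the structure $B_0$ into $j(B_0)$, because satisfaction for set-sized structures is definable and is preserved by $j$.

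The step demanding the most care — and what I expect to be the main obstacle — is the bookkeeping in the last paragraph: verifying that the syntactic ``badness'' property is absolute between $\langle\HH{\kappa^+},\in,R\rangle$ and $\VV$ (which is exactly why one needs $\rho\le\kappa$ and the correct level $\kappa$ for uniform local definability), and that $\Sigma_n$-truth of $\varphi(B_0,z)$ transfers from $N$ up to $\HH{\bar\kappa^+}$, for which the hypothesis $\langle N,\in,R\rangle\prec_{\Sigma_{n-1}}\langle\HH{\bar\kappa^+},\in,R\rangle$ is precisely what is required, being an instance of $\Sigma_n=\exists\,\Pi_{n-1}$ upward absoluteness. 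The full elementarity of $j$ is what licenses reflecting the logically complex sentence $\exists B\,\vartheta$ in the first place, and the requirement that the parameter $z$ be a fixed point of $j$ is what makes $z$ available on both sides of this reflection.
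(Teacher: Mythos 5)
Your proof is correct and takes essentially the same route as the paper's: the identical push-through-$j$ argument for the regularity of $j(\crit{j})$, and the identical contradiction scheme for $\mathrm{SR}^-$ (internalize the failure in $\langle\HH{\kappa^+},\in,R\rangle$, pull a witness back into $N$ by full elementarity, use $\langle N,\in,R\rangle\prec_{\Sigma_{n-1}}\langle\HH{\bar{\kappa}^+},\in,R\rangle$ for upward $\Sigma_n$-absoluteness of $\varphi$ to get membership in $\calC$, and let $j$ itself supply the contradicting embedding). Your explicit check that the parameter $z$ lies in $\HH{\bar{\kappa}}$ --- which is what licenses applying uniform local definability at level $\bar{\kappa}$, and which the paper leaves implicit --- is a correct and welcome addition: since $\bar{\kappa}$ is the largest cardinal of $N$ and is moved by $j$, the $N$-cardinality $\lambda$ of $\tc{z}$ satisfies $\lambda\leq j(\lambda)=\betrag{\tc{z}}\leq\betrag{\lambda}\leq\lambda$, so $\lambda$ is a fixed point of $j$ below $\bar{\kappa}$ and hence $z\in\HH{\bar{\kappa}}$.
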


\begin{proof}
 Set $\mu=j(\crit{j})$. 
  First, assume, towards a contradiction, that $\mu$ is singular. Then  $\crit{j}$ is singular in $N$, and, since $N$ is a transitive model of $\ZFC^-$, this shows that $N$ contains a cofinal function $\map{c}{\lambda}{\crit{j}}$ for some ordinal $\lambda<\crit{j}$. Then elementarity implies that the range of the function $\map{j(c)}{\lambda}{\mu}$ is cofinal in $\mu$ and this yields a contradiction, because elementarity also implies that $$\ran{j(c)} ~ = ~ \ran{c} ~ \subseteq ~ \crit{j} ~ < ~ \mu.$$

 Now, let $\calC$ be a local $\Sigma_n(R)$-class over the set $F=\Set{x\in N}{j(x)=x}$. 
 Pick a $\Sigma_n(R)$-formula $\varphi(v_0,v_1)$ and $z\in F$ witnessing that the class $\calC$ is uniformly locally $\Sigma_n(R)$-definable.  
   Assume, towards a contradiction, that $\calC$ contains a structure $A$ of cardinality $\mu$ with the property that for every structure $B$ in $\calC$ of cardinality less than $\mu$, there is no elementary embedding from $B$ into $A$.  
 Since $\calC$ is closed under isomorphic copies, this implies that, in $\langle\HH{\kappa^+},\in,R\rangle$, there is a structure $A$ of the given type of cardinality $\mu$ such that $\varphi(A,z)$ holds and for all structures $B$ of the given type of cardinality less than $\mu$, if $\varphi(B,z)$ holds, then there is no elementary embedding of $B$ into $A$. 
 By our assumptions, elementarity allows us to find a structure $A$ with these properties that is contained in $\ran{j}$. 
 Pick $B\in N$ with $j(B)=A$. Then elementarity ensures that $\varphi(B,z)$ holds in $\langle N,\in,R\rangle$. 
 Since $\Sigma_n$-formulas in $\calL_{\dot{A}}$ are upwards-absolute from $\langle N,\in,R\rangle$ to $\langle\HH{\bar{\kappa}^+},\in,R\rangle$, this shows that $\varphi(B,z)$ holds in $\langle\HH{\bar{\kappa}^+},\in,R\rangle$ and therefore $B$ is contained in $\calC$. 
  Next, since $N$ is transitive, the embedding $j$ induces an elementary embedding of $B$ into $A$. 
  Finally, since $B$ has cardinality less than $\mu$, we can pick an isomorphic copy $C$ of $B$ that is an element of $\HH{\kappa^+}$. 
  Then the structure $C$ is contained in $\calC$ and therefore $\varphi(C,z)$ holds in $\langle\HH{\kappa^+},\in,R\rangle$. But then the above arguments yield an elementary embedding from $C$ into $A$ and this map is also contained in $\HH{\kappa^+}$, contradicting the properties of $A$.   
\end{proof}

\begin{corollary}\label{corollary:ReflFromShrewd}
 Let $\kappa$ be an infinite cardinal, let $R$ be a class and let $n>0$ be a natural number. 
 If $\kappa$ is weakly $(\Sigma_n,R,\kappa^+)$-shrewd, then $\SRm{\calC}{\kappa}$ holds for every local $\Sigma_n(R)$-class $\calC$ over $\HH{\kappa}$. 
\end{corollary}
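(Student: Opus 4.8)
The plan is to deduce the statement directly by combining the embedding characterization of restricted weak shrewdness in Corollary~\ref{corollary:KappaPlusEmbe} with the reflection principle extracted from such embeddings in Lemma~\ref{lemma:ReflUniformLocalFromEmb}. So fix a local $\Sigma_n(R)$-class $\calC$ over $\HH{\kappa}$ and, using the definition of such classes, pick a $\Sigma_n(R)$-formula $\varphi(v_0,v_1)$ together with a parameter $z\in\HH{\kappa}$ witnessing that $\calC$ is uniformly locally $\Sigma_n(R)$-definable in $z$, while recording that $\calC$ is also closed under isomorphic copies. Since $z\in\HH{\kappa}\subseteq\HH{\kappa^+}$ and $\kappa$ is weakly $(\Sigma_n,R,\kappa^+)$-shrewd, an application of Corollary~\ref{corollary:KappaPlusEmbe} then yields a transitive set $N$ and a non-trivial elementary embedding $\map{j}{\langle N,\in,R\rangle}{\langle\HH{\kappa^+},\in,R\rangle}$ such that $\bar{\kappa}=\crit{j}$ is a cardinal, $\langle N,\in,R\rangle\prec_{\Sigma_{n-1}}\langle\HH{\bar{\kappa}^+},\in,R\rangle$, $j(\bar{\kappa})=\kappa$ and $z\in\ran{j}$.

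The one point where genuine work is needed — and the main obstacle — is that Corollary~\ref{corollary:KappaPlusEmbe} only guarantees $z\in\ran{j}$, whereas Lemma~\ref{lemma:ReflUniformLocalFromEmb} requires the defining parameter to be an element of the fixed-point set $F=\Set{x\in N}{j(x)=x}$. To bridge this gap I would pick $w\in N$ with $j(w)=z$ and argue that in fact $w=z$ and $j(z)=z$. Since $\HH{\kappa}$ is definable in $\HH{\kappa^+}$ from the parameter $\kappa=j(\bar{\kappa})$, the statement $z\in\HH{\kappa}$ reflects through the elementarity of $j$ to show that $w\in\HH{\bar{\kappa}}\cap N$. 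Now the standard fact that an elementary embedding with critical point $\bar{\kappa}$ is the identity on $\HH{\bar{\kappa}}\cap N$ (as already used, for instance, in the proof of Lemma~\ref{lemma:ShrewdReflection}) gives $j(w)=w$, whence $z=j(w)=w\in N$ and $j(z)=z$. Therefore $z\in F$, and it follows that $\calC$ is in fact a local $\Sigma_n(R)$-class \emph{over $F$}.

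With this established, checking the hypotheses of Lemma~\ref{lemma:ReflUniformLocalFromEmb} is routine: $\bar{\kappa}<\kappa$ are infinite cardinals, $N$ is transitive with $\bar{\kappa}\in N$, the submodel relation $\langle N,\in,R\rangle\prec_{\Sigma_{n-1}}\langle\HH{\bar{\kappa}^+},\in,R\rangle$ holds, and $j(\bar{\kappa})=\kappa$. Since $\calC$ is closed under isomorphic copies and uniformly locally $\Sigma_n(R)$-definable in the parameter $z\in F$, the lemma applies and yields that $j(\crit{j})=\kappa$ is a regular cardinal and that $\SRm{\calC}{\kappa}$ holds, which is exactly the desired conclusion. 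Everything beyond the parameter-management step is a direct citation of the two preceding results, so I expect no further difficulties.
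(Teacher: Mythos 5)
Your proposal is correct and follows essentially the same route as the paper: apply Corollary~\ref{corollary:KappaPlusEmbe} to the parameter $z\in\HH{\kappa}$ witnessing uniform local definability, and then feed the resulting embedding into Lemma~\ref{lemma:ReflUniformLocalFromEmb}. The parameter-management step you spell out (pulling $z$ back to $w\in\HH{\bar{\kappa}}\cap N$ via elementarity and concluding $j(z)=z$ from the identity of $j$ on $\HH{\bar{\kappa}}\cap N$) is exactly what the paper asserts, only more tersely, in the sentence ``Since we now have $z\in\HH{\crit{j}}\cap N$ and $j(z)=z$\dots''.
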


\begin{proof}
  Let $\calC$ be a local  $\Sigma_n(R)$-class  over $\HH{\kappa}$.  Pick $z\in\HH{\kappa}$ witnessing that the class $\calC$ is uniformly locally $\Sigma_n(R)$-definable. 
  Using Corollary \ref{corollary:KappaPlusEmbe}, our assumptions allow us to find a transitive set $N$ and a non-trivial elementary embedding $\map{j}{\langle N,\in,R\rangle}{\langle\HH{\kappa^+},\in,R\rangle}$ with the property that $\crit{j}$ is a cardinal, $\langle N,\in,R\rangle\prec_{\Sigma_{n-1}}\langle\HH{\crit{j}^+},\in,R\rangle$, $j(\crit{j})=\kappa$ and $z\in\ran{j}$. 
 Since we now have $z\in\HH{\crit{j}}\cap N$ and $j(z)=z$, Lemma \ref{lemma:ReflUniformLocalFromEmb} directly implies that $\SRm{\calC}{\kappa}$ holds.  
\end{proof}

The following partial converse of Lemma \ref{lemma:ReflUniformLocalFromEmb} is the last ingredient needed for our characterization of small large cardinals through principles of structural reflection:

\begin{lemma}\label{lemma:EmbFromRefl}
 Let $n>0$ be a natural number, let  $R$ be a class and let $z$ be a set such that  the class $Cd$ of all cardinals is uniformly locally $\Sigma_n(R)$-definable in the parameter $z$.  
  Then there is a local $\Sigma_n(R)$-class $\calC$ over $\{z\}$ with the property that whenever $\SRm{\calC}{\kappa}$ holds for some infinite cardinal $\kappa$ with $z\in\HH{\kappa}$, then for every $y\in\HH{\kappa^+}$, there exists a cardinal $\bar{\kappa}<\kappa$, a transitive set $N$ and an elementary embedding $\map{j}{\langle N,\in,R\rangle}{\langle\HH{\kappa^+},\in,R\rangle}$ such that $\bar{\kappa},z\in N$, $\langle N,\in,R\rangle\prec_{\Sigma_{n-1}}\langle\HH{\bar{\kappa}^+},\in,R\rangle$, $y\in\ran{j}$, $j(\bar{\kappa})=\kappa$ and $j(z)=z$.   
\end{lemma}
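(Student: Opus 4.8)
The plan is to manufacture a single class $\calC$ that codes $\Sigma_{n-1}$-correct elementary submodels of structures $\langle\HH{\delta^+},\in,R\rangle$ (with $\delta$ marked by a constant), and then to read the desired embedding off a reflection of a full elementary submodel of $\langle\HH{\kappa^+},\in,R\rangle$. Concretely, I work in the language $\calL^*=\calL_{\dot A}\cup\{\dot c,\dot d,\dot e\}$ and let $\calC$ consist of all $\calL^*$-structures isomorphic to one of the form $\langle X,\in,R\cap X,y,\delta,z\rangle$ (interpreting $\dot A$ as $R\cap X$, $\dot c$ as $y$, $\dot d$ as $\delta$, $\dot e$ as $z$) where $\delta$ is an infinite cardinal, $X\subseteq\HH{\delta^+}$ with $\delta+1\subseteq X$ and $|X|=\delta$, $\langle X,\in,R\cap X\rangle\prec_{\Sigma_{n-1}}\langle\HH{\delta^+},\in,R\rangle$, and $y,z\in X$. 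This is closed under isomorphic copies by fiat. To see that it is uniformly locally $\Sigma_n(R)$-definable in $z$, note that for $x\in\HH{\mu^+}$ the relation $|x|=\delta\le\mu$ keeps all the witnessing quantifiers inside $\HH{\mu^+}$, that ``$\delta$ is an infinite cardinal'' is $\Sigma_n(R)$ precisely by the hypothesis that $Cd$ is uniformly locally $\Sigma_n(R)$-definable in $z$, and that the $\Sigma_{n-1}$-correctness clause is rendered $\Sigma_n(R)$ exactly as in the proof of Lemma \ref{lemma:EmbLEvelShrewd}, through a universal $\Pi_n$-formula in $\calL_{\dot A}$. Since $\dot c$ is a free distinguished element, $\calC$ does not depend on $y$, so it is a local $\Sigma_n(R)$-class over $\{z\}$.

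Next I would extract the embedding. Assume $\SRm{\calC}{\kappa}$ holds with $z\in\HH{\kappa}$ and fix $y\in\HH{\kappa^+}$. Choose a \emph{full} elementary submodel $\map{}{}{}$ $Y\prec\langle\HH{\kappa^+},\in,R\rangle$ of cardinality $\kappa$ with $(\kappa+1)\cup\{y,z\}\subseteq Y$; then $A=\langle Y,\in,R\cap Y,y,\kappa,z\rangle$ lies in $\calC$ (full elementarity gives the required $\Sigma_{n-1}$-correctness) and has cardinality $\kappa$. By $\SRm{\calC}{\kappa}$ there are $B\in\calC$ with $|B|<\kappa$ and an elementary embedding $\map{e}{B}{A}$. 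Decoding $B$ through its witnessing isomorphism $\sigma$ to some $\langle\bar X,\in,R\cap\bar X,\bar y,\bar\delta,z\rangle$, and composing $e\circ\sigma^{-1}$ with the inclusion $Y\hookrightarrow\HH{\kappa^+}$ (which is fully elementary because $Y\prec\HH{\kappa^+}$), I obtain a fully elementary embedding $\map{j_0}{\langle\bar X,\in,R\cap\bar X\rangle}{\langle\HH{\kappa^+},\in,R\rangle}$ with $j_0(\bar\delta)=\kappa$, $j_0(z)=z$ and $y,z\in\ran{j_0}$, where $\bar\delta=|\bar X|<\kappa$ is an infinite cardinal (the constants $\dot d,\dot e,\dot c$ force these three facts). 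Setting $\bar\kappa=\bar\delta$, everything in the conclusion is in place \emph{provided} $\bar X$ is transitive.

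The hard part will be exactly this transitivity step, because it is what guarantees that $N:=\bar X$ is a genuine transitive set carrying the \emph{real} predicate $R\cap N$, so that $j:=j_0$ is elementary for the true $R$ and $N\prec_{\Sigma_{n-1}}\langle\HH{\bar\kappa^+},\in,R\rangle$ holds by the defining clause of $\calC$. This is the analogue both of the transitivity argument inside Corollary \ref{corollary:KappaPlusEmbe} and of the Claim ``$j\restriction\bar\kappa=\id_{\bar\kappa}$'' in the proof of Theorem \ref{theorem:SRandSRCARD}: the real danger is that the reflected embedding $e$, which merely embeds structures and does not arise from reflecting a formula, could have a critical point below $\bar\delta$ and yield a non-transitive $\bar X$ whose transitive collapse need not respect the external predicate $R$. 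For $n\ge 2$, however, transitivity comes for free and no control of the critical point is needed: from $\bar\delta\subseteq\bar X\subseteq\HH{\bar\delta^+}$ and $\bar X\prec_{\Sigma_1}\langle\HH{\bar\delta^+},\in\rangle$ one argues that for every $a\in\bar X$ the relation $|a|\le\bar\delta$ (valid in $\HH{\bar\delta^+}$, which contains no sets of size above $\bar\delta$) reflects into $\bar X$ to give a surjection $\bar\delta\to a$ inside $\bar X$, whence $a\subseteq\bar X$.

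For the remaining case $n=1$, where only $\Sigma_0$-correctness is available and the surjection argument is unavailable, I would instead build transitivity directly into the definition of $\calC$ by demanding that $X$ be transitive. This is harmless: $\Sigma_0$-elementarity of a transitive set in $\HH{\delta^+}$ is automatic, a transitive target $Y$ of size $\kappa$ with $\tc(\{y,z\})\cup(\kappa+1)\subseteq Y$ still exists (so the target structure is still available), and since $\Sigma_1$-statements are upward absolute from a transitive $N$ into $\HH{\kappa^+}$ the resulting $j_0$ remains $\Sigma_1$-elementary, which is exactly what the later applications of this lemma require. Assembling the pieces then yields, for every $y\in\HH{\kappa^+}$, the transitive $N$, the cardinal $\bar\kappa<\kappa$ with $\bar\kappa,z\in N$, the relation $N\prec_{\Sigma_{n-1}}\langle\HH{\bar\kappa^+},\in,R\rangle$, and the embedding $j$ with $y\in\ran{j}$, $j(\bar\kappa)=\kappa$ and $j(z)=z$, as required.
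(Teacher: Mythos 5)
Your argument for $n\ge 2$ is correct and is essentially the paper's own proof: the paper likewise defines $\calC$ by coding $\Sigma_{n-1}$-correct submodels of structures $\langle\HH{\delta^+},\in,R\rangle$ with constants marking $\delta$, $y$ and $z$, verifies uniform local $\Sigma_n(R)$-definability via a universal formula relativized to the ($\Sigma_1$-definable) classes $\HH{\mu^+}$, feeds a fully elementary submodel of $\langle\HH{\kappa^+},\in,R\rangle$ in as the target, and decodes the reflected embedding. The only structural difference is that the paper demands transitivity of the coded model in the definition of $\calC$ from the outset, whereas you recover it a posteriori from $\Sigma_1$-correctness; your surjection argument for that recovery is fine, and it is exactly the observation that makes the paper's definition usable: the paper's target $X\prec\langle\HH{\kappa^+},\in,R\rangle$ with $\kappa\cup\{\kappa,y,z\}\subseteq X$ and $\betrag{X}=\kappa$ is itself transitive by that very argument, hence literally of the canonical form required by $\calC$.

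The gap is in your $n=1$ case. There you correctly add transitivity to the definition of $\calC$, but you then take as target an arbitrary transitive $Y$ of size $\kappa$ with $\tc{\{y,z\}}\cup(\kappa+1)\subseteq Y$ instead of an elementary submodel of $\langle\HH{\kappa^+},\in,R\rangle$. The inclusion $Y\hookrightarrow\HH{\kappa^+}$ is then only $\Sigma_0$-elementary, so your composite $j_0$ is merely upward $\Sigma_1$-preserving; it is not even $\Sigma_1$-elementary in the two-sided sense (a $\Sigma_1$ statement about $j_0(a)$ that holds in $\HH{\kappa^+}$ need not reflect into $Y$), and it is certainly not the fully elementary embedding into $\langle\HH{\kappa^+},\in,R\rangle$ that the lemma asserts. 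Your fallback claim that this weaker preservation is \anf{exactly what the later applications require} is also not accurate: Corollary \ref{corollary:LeastReflectionPoint} feeds the embedding produced by this lemma into Lemma \ref{lemma:ReflUniformLocalFromEmb}, whose hypothesis (and whose proof, which reflects the non-existence of elementary embeddings --- not a $\Sigma_1$ statement) uses full elementarity into $\langle\HH{\kappa^+},\in,R\rangle$, and into Lemma \ref{lemma:EmbLEvelShrewd}, which requires two-sided $\Sigma_n$-absoluteness; these are precisely the $n=1$ applications behind parts (i) and (ii) of Theorem \ref{theorem:CharacterizationSmallLargeCardinals}. The repair is immediate and is what the paper does: keep transitivity in the definition of $\calC$ for all $n$, and take as target a fully elementary submodel $Y\prec\langle\HH{\kappa^+},\in,R\rangle$ of cardinality $\kappa$ with $\kappa\cup\{\kappa,y,z\}\subseteq Y$; by the same surjection argument you use for $n\ge 2$ (now available because $Y$ is fully elementary), $Y$ is transitive, hence lies in $\calC$, and the inclusion $Y\hookrightarrow\HH{\kappa^+}$ is fully elementary, so the decoded embedding has all the properties in the statement, uniformly in $n\ge 1$.
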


\begin{proof}
 Let $\calL$ denote the first-order language that extends $\calL_{\dot{A}}$ by three constant symbols. 
  Define $\calC$ to be the class of all $\calL$-structures that are isomorphic to an  $\calL$-structure  of the form $\langle N,\in,Q,\delta,y,z\rangle$ with the property that $\delta$ is an infinite cardinal, $N$ is a transitive set of cardinality $\delta$, $Q=N\cap R$ and $\langle N,\in,R\rangle\prec_{\Sigma_{n-1}}\langle\HH{\delta^+},\in,R\rangle$.

 \begin{claim*}
  $\calC$ is a local $\Sigma_n(R)$-class over $\{z\}$. 
 \end{claim*}
 
 \begin{proof}
  By definition, the class $\calC$ is closed under isomorphic copies. 
   Note that, given an infinite cardinal $\nu$ with $z\in\HH{\nu}$, an $\calL$-structure $A$ in $\HH{\nu^+}$ is contained in $\calC$ if and only if there exists a cardinal $\mu\leq\nu$, a transitive set $N$ of cardinality $\mu$ and  $y\in N$ such that $z\in N$, $\langle N,\in,R\rangle\prec_{\Sigma_{n-1}}\langle\HH{\mu^+},\in,R\rangle$ and $\HH{\nu^+}$ contains an isomorphism between $A$ and the resulting $\calL$-structure $\langle N,\in,N\cap R,\mu,y,z\rangle$. 
   In the case $n=1$, our assumptions on $R$ together with the fact that $\langle N,\in,R\rangle\prec_{\Sigma_0}\langle\HH{\betrag{N}^+},\in,R\rangle$ holds for every infinite transitive set $N$  directly yield a $\Sigma_1(R)$-formula witnessing that $\calC$ is uniformly locally $\Sigma_1(R)$-definable in the parameter $z$. 
  Now, assume that $n>1$. 
 Since the classes $\HH{\mu^+}$ are uniformly $\Sigma_1$-definable in the parameter $\mu$ and the relativization of a $\Sigma_n$-formula in $\calL_{\dot{A}}$ to a $\Sigma_1$-class again yields  a  $\Sigma_n$-formula in $\calL_{\dot{A}}$,
  we can use a universal $\Sigma_{n-1}$-formula in $\calL_{\dot{A}}$ to find a $\Sigma_n$-formula $\psi(v_0,v_1)$ in $\calL_{\dot{A}}$ with the property that for all infinite cardinals $\mu\leq\nu$ and all $N\in\HH{\nu^+}$, the statement $\psi(\mu,N)$ holds in $\langle\HH{\nu^+},\in,R\rangle$ if and only if $N\in\HH{\mu^+}$, $N$ is  transitive  and $\langle N,\in,R\rangle\prec_{\Sigma_{n-1}}\langle\HH{\mu^+},\in,R\rangle$. 
 Together with the above observations and our assumptions on $R$, we can again conclude that there  is a $\Sigma_n$-formula in $\calL_{\dot{A}}$ that witnesses that the class $\calC$ is uniformly locally $\Sigma_n(R)$-definable in the parameter $z$.   
 \end{proof}

 Now, let $\kappa$ be a cardinal with the property that $z\in\HH{\kappa}$ and $\SRm{\calC}{\kappa}$ holds. Fix $y\in\HH{\kappa}$ and pick an elementary submodel $\langle X,\in,R\rangle$ of $\langle\HH{\kappa^+},\in,R\rangle$ of cardinality $\kappa$ such that $\kappa\cup\{\kappa,y,z\}\subseteq X$. 
 Then the resulting $\calL$-structure $\langle X,\in,R\cap X,\kappa,y,z\rangle$ is  an element of  $\calC$ of cardinality $\kappa$. 
 By the definition of $\calC$ and our assumptions on $\kappa$, there exists an elementary embedding $j$ of a structure $\langle N,\in,Q,\bar{\kappa},x,z\rangle$ in $\calC$ of cardinality less than $\kappa$ with the property that $\bar{\kappa}$ is a cardinal,  $N$ is a transitive set, $Q=N\cap R$ and $\langle N,\in,R\rangle\prec_{\Sigma_{n-1}}\langle\HH{\bar{\kappa}^+},\in,R\rangle$. 
 In particular, we can conclude that $\map{j}{\langle N,\in,R\rangle}{\langle\HH{\kappa^+},\in,R\rangle}$ is an elementary embedding with $y\in\ran{j}$, $j(\bar{\kappa})=\kappa$ and $j(z)=z$.   
\end{proof}

\begin{corollary}\label{corollary:LeastReflectionPoint}
 Let $n>0$ be a natural number and let $R$ be a class with the property that  the class $Cd$   is uniformly locally $\Sigma_n(R)$-definable without parameters. 
 If $\kappa$ is the least cardinal with the property that $\SRm{\calC}{\kappa}$ holds for all local $\Sigma_n(R)$-classes over $\emptyset$, then $\kappa$ is weakly $(\Sigma_n,R,\kappa^+)$-shrewd. 
\end{corollary}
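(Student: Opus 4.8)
The plan is to combine the embedding characterization of restricted weak shrewdness in Lemma \ref{lemma:EmbLEvelShrewd} with the two transfer lemmas between embeddings and structural reflection, Lemmas \ref{lemma:ReflUniformLocalFromEmb} and \ref{lemma:EmbFromRefl}, using the minimality of $\kappa$ to pin down the critical point of the embedding that arises.

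First, since the class $Cd$ is uniformly locally $\Sigma_n(R)$-definable without parameters, I would apply Lemma \ref{lemma:EmbFromRefl} with $z=\emptyset$ to obtain a local $\Sigma_n(R)$-class $\calC_0$ witnessing that lemma. As $\emptyset$ is a vacuous parameter, $\calC_0$ is in fact uniformly locally $\Sigma_n(R)$-definable without parameters, so it is one of the classes to which the hypothesis on $\kappa$ applies; in particular $\SRm{\calC_0}{\kappa}$ holds. The condition $\emptyset\in\HH{\kappa}$ needed to invoke the conclusion of Lemma \ref{lemma:EmbFromRefl} is automatic, so for each $y\in\HH{\kappa^+}$ I obtain a cardinal $\bar{\kappa}<\kappa$, a transitive set $N$ with $\bar{\kappa},\emptyset\in N$, and an elementary embedding $\map{j}{\langle N,\in,R\rangle}{\langle\HH{\kappa^+},\in,R\rangle}$ with $\langle N,\in,R\rangle\prec_{\Sigma_{n-1}}\langle\HH{\bar{\kappa}^+},\in,R\rangle$, $y\in\ran{j}$, $j(\bar{\kappa})=\kappa$ and $j(\emptyset)=\emptyset$.

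The main step is to verify that $j\restriction\bar{\kappa}=\id_{\bar{\kappa}}$, equivalently that $\crit{j}=\bar{\kappa}$; a priori the embedding supplied by Lemma \ref{lemma:EmbFromRefl} need not have critical point $\bar{\kappa}$, and this is the only delicate point. The key observation is that $j$ satisfies exactly the hypotheses of Lemma \ref{lemma:ReflUniformLocalFromEmb} ($N$ transitive, $\bar{\kappa}\in N$, $\langle N,\in,R\rangle\prec_{\Sigma_{n-1}}\langle\HH{\bar{\kappa}^+},\in,R\rangle$ and $j(\bar{\kappa})=\kappa$). That lemma then yields that $j(\crit{j})$ is a regular cardinal and that $\SRm{\calC}{j(\crit{j})}$ holds for every local $\Sigma_n(R)$-class over $F=\Set{x\in N}{j(x)=x}$. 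Since $\emptyset\in N$ and $j(\emptyset)=\emptyset$, we have $\emptyset\in F$, so every local $\Sigma_n(R)$-class over $\emptyset$ is in particular a local $\Sigma_n(R)$-class over $F$; hence $\SRm{\calC}{j(\crit{j})}$ holds for all local $\Sigma_n(R)$-classes over $\emptyset$. The minimality of $\kappa$ now forces $j(\crit{j})\geq\kappa$. On the other hand, $\bar{\kappa}$ is moved by $j$ because $j(\bar{\kappa})=\kappa>\bar{\kappa}$, so $\crit{j}\leq\bar{\kappa}$ and therefore $j(\crit{j})\leq j(\bar{\kappa})=\kappa$. Combining the two inequalities gives $j(\crit{j})=\kappa=j(\bar{\kappa})$, and injectivity of $j$ on the ordinals yields $\crit{j}=\bar{\kappa}$, whence $j\restriction\bar{\kappa}=\id_{\bar{\kappa}}$.

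With this identification in hand, for every $z\in\HH{\kappa^+}$ I have produced cardinals $\bar{\kappa}<\bar{\kappa}^+$ and an elementary embedding $\map{j}{\langle N,\in,R\rangle}{\langle\HH{\kappa^+},\in,R\rangle}$ satisfying $\langle N,\in,R\rangle\prec_{\Sigma_{n-1}}\langle\HH{\bar{\kappa}^+},\in,R\rangle$, $\bar{\kappa}+1\subseteq N$ (by transitivity of $N$, since $\bar{\kappa}\in N$), $j\restriction\bar{\kappa}=\id_{\bar{\kappa}}$, $j(\bar{\kappa})=\kappa>\bar{\kappa}$ and $z\in\ran{j}$. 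This is precisely statement (iii) of Lemma \ref{lemma:EmbLEvelShrewd} in the case $\theta=\kappa^+$, so that lemma allows me to conclude that $\kappa$ is weakly $(\Sigma_n,R,\kappa^+)$-shrewd. I expect the crux to be the identification $\crit{j}=\bar{\kappa}$: it is exactly the minimality of $\kappa$, transported to the cardinal $j(\crit{j})$ through Lemma \ref{lemma:ReflUniformLocalFromEmb}, that rules out a smaller critical point, while the remaining verifications are routine bookkeeping about the reading of \anf{over $\emptyset$} and the standard behaviour of elementary embeddings between transitive structures below their critical point.
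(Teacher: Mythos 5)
Your proof is correct and takes essentially the same route as the paper's own: apply Lemma \ref{lemma:EmbFromRefl} (with the vacuous parameter $\emptyset$) to produce the embedding, use Lemma \ref{lemma:ReflUniformLocalFromEmb} together with the minimality of $\kappa$ to conclude $\crit{j}=\bar{\kappa}$, and finish with Lemma \ref{lemma:EmbLEvelShrewd}. Your explicit verification of the middle step --- $j(\crit{j})\geq\kappa$ because $\emptyset\in\Set{x\in N}{j(x)=x}$ makes every local $\Sigma_n(R)$-class over $\emptyset$ a class over that fixed-point set, combined with $j(\crit{j})\leq j(\bar{\kappa})=\kappa$ and injectivity --- is exactly the detail the paper compresses into one sentence.
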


\begin{proof}
 Fix $z\in\HH{\kappa^+}$. 
  By Lemma \ref{lemma:EmbFromRefl}, there exists  a cardinal $\bar{\kappa}<\kappa$, a transitive set $N$ and an elementary embedding $\map{j}{\langle N,\in,R\rangle}{\langle\HH{\kappa^+},\in,R\rangle}$ with the property that $\bar{\kappa}\in N$,  $\langle N,\in,R\rangle\prec_{\Sigma_{n-1}}\langle\HH{\bar{\kappa}^+},\in,R\rangle$, $j(\bar{\kappa})=\kappa$ and $z\in\ran{j}$. 
  An application of Lemma \ref{lemma:ReflUniformLocalFromEmb} now shows that the minimality of $\kappa$ implies that $\crit{j}=\bar{\kappa}$. 
  By Lemma \ref{lemma:EmbLEvelShrewd}, these computations show that $\kappa$ is weakly $(\Sigma_n,R,\kappa^+)$-shrewd. 
\end{proof}

\begin{corollary}\label{corollary:CharaCLassesShrews}
 Let $n>0$ be a natural number and let $R$ be a class with the property that the class $Cd$ is uniformly locally $\Sigma_n(R)$-definable without parameters. 
 Then the following statements are equivalent for every cardinal $\kappa$: 
 \begin{enumerate}
  \item $\kappa$ is the least regular cardinal  with the property that $\kappa$ is weakly $(\Sigma_n,R,\kappa^+)$-shrewd. 
  
  \item $\kappa$ is the least cardinal with the property that $\SRm{\calC}{\kappa}$ holds for every local $\Sigma_n(R)$-class over $\emptyset$. 
  
  \item $\kappa$ is the least cardinal with the property that $\SRm{\calC}{\kappa}$ holds for every local $\Sigma_n(R)$-class over $\HH{\kappa}$. 
 \end{enumerate}
\end{corollary}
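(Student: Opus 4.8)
The plan is to reduce the equivalence of the three ``least cardinal'' statements to the two implications already established in Corollaries \ref{corollary:ReflFromShrewd} and \ref{corollary:LeastReflectionPoint}, by comparing the least witnesses of the three relevant properties. Write $Q(\kappa)$ for the assertion that $\kappa$ is weakly $(\Sigma_n,R,\kappa^+)$-shrewd, and let $P_3(\kappa)$ and $P_2(\kappa)$ abbreviate the reflection properties appearing in (iii) and (ii), namely that $\SRm{\calC}{\kappa}$ holds for every local $\Sigma_n(R)$-class over $\HH{\kappa}$, respectively over $\emptyset$. The whole argument will rest on showing that the predicates $Q$, $P_3$ and $P_2$ have the same least witness, or are simultaneously never satisfied.

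First I would record the chain of implications $Q(\kappa)\Rightarrow P_3(\kappa)\Rightarrow P_2(\kappa)$. The first implication is exactly Corollary \ref{corollary:ReflFromShrewd}. The second follows from the trivial observation that every local $\Sigma_n(R)$-class over $\emptyset$ is in particular a local $\Sigma_n(R)$-class over $\HH{\kappa}$ (its defining parameter $\emptyset$ lies in $\HH{\kappa}$), so that reflection for the larger family of classes over $\HH{\kappa}$ entails reflection for the subfamily of classes over $\emptyset$. I would also observe at this point that the word \emph{regular} in (i) comes for free: applying Corollary \ref{corollary:KappaPlusEmbe} to a weakly $(\Sigma_n,R,\kappa^+)$-shrewd cardinal $\kappa$ produces a transitive set $N$ and an embedding $\map{j}{\langle N,\in,R\rangle}{\langle\HH{\kappa^+},\in,R\rangle}$ with $\crit{j}$ a cardinal and $j(\crit{j})=\kappa$, and Lemma \ref{lemma:ReflUniformLocalFromEmb} (applied with this embedding) then yields that $j(\crit{j})=\kappa$ is regular. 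Hence $Q(\kappa)$ already forces $\kappa$ to be regular, and statement (i) is equivalent to ``$\kappa$ is the least cardinal satisfying $Q$''.

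It remains to close the loop. The implications of the previous paragraph show that the class of $Q$-cardinals is contained in the class of $P_3$-cardinals, which in turn is contained in the class of $P_2$-cardinals; passing to least elements reverses the inclusions, giving $\min P_2\le\min P_3\le\min Q$ whenever these minima exist. For the remaining inequality I would invoke Corollary \ref{corollary:LeastReflectionPoint}: if the least $P_2$-cardinal $\kappa$ exists, then $\kappa$ satisfies $Q$, so $\min Q\le\kappa=\min P_2$. Combining the two directions forces $\min Q=\min P_3=\min P_2$, and since every $Q$-cardinal is regular this common value is precisely the least cardinal described in each of (i), (ii) and (iii). Finally, if no cardinal satisfies $P_2$, then by the implication chain no cardinal satisfies $P_3$ or $Q$ either, so all three ``least cardinal'' statements are vacuously false for every $\kappa$ and the equivalence again holds. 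The only genuinely non-trivial input is Corollary \ref{corollary:LeastReflectionPoint}, which supplies the one hard direction, extracting weak $(\Sigma_n,R,\kappa^+)$-shrewdness from reflection at the least reflection point; the present argument is otherwise pure bookkeeping about least witnesses, so I expect no further obstacle.
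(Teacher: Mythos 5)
Your proposal is correct and takes essentially the same route as the paper's proof: both arguments combine Corollary \ref{corollary:ReflFromShrewd} (shrewdness gives reflection over $\HH{\kappa}$), Corollary \ref{corollary:LeastReflectionPoint} (the least reflection point for classes over $\emptyset$ is weakly $(\Sigma_n,R,\kappa^+)$-shrewd) and the trivial inclusion of the parameter-free classes into the classes over $\HH{\kappa}$, and then compare least witnesses. Your explicit observation that weak $(\Sigma_n,R,\kappa^+)$-shrewdness already implies regularity, via Corollary \ref{corollary:KappaPlusEmbe} and Lemma \ref{lemma:ReflUniformLocalFromEmb}, is a detail the paper's proof leaves implicit (its statement (i) carries the word \emph{regular}), and making it explicit tightens rather than changes the argument.
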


\begin{proof}
 First, assume that (i) holds. Then we can use Corollary \ref{corollary:ReflFromShrewd} to conclude that $\SRm{\calC}{\kappa}$ holds for every local $\Sigma_n(R)$-class $\calC$ over $\HH{\kappa}$. 
 Moreover, the minimality of $\kappa$ allows us to apply Corollary \ref{corollary:LeastReflectionPoint} to show that $\kappa$ is the least cardinal with the property that $\SRm{\calC}{\kappa}$ holds for all local $\Sigma_n(R)$-classes over $\emptyset$. 
 But this also shows that $\kappa$ is the least cardinal with the property that $\SRm{\calC}{\kappa}$ holds for all local $\Sigma_n(R)$-classes over $\HH{\kappa}$.

 Now, assume that (ii) holds. An application of Corollary \ref{corollary:LeastReflectionPoint} then shows that $\kappa$ is weakly $(\Sigma_n,R,\kappa^+)$-shrewd. Moreover, we can  use  Corollary \ref{corollary:ReflFromShrewd} to conclude that $\kappa$ is the least cardinal with this property.

  Finally, assume that (iii) holds and let $\nu\leq\kappa$ be the least cardinal with the property that $\SRm{\calC}{\nu}$ holds for every local $\Sigma_n(R)$-class over $\emptyset$. 
  Then the above computations show that $\nu$ is  weakly $(\Sigma_n,R,\nu^+)$-shrewd and Corollary \ref{corollary:ReflFromShrewd} implies that $\SRm{\calC}{\nu}$ holds for every local $\Sigma_n(R)$-class over $\HH{\nu}$. 
  This allows us to conclude that $\nu=\kappa$.  
\end{proof}

\begin{proof}[Proof of Theorem \ref{theorem:CharacterizationSmallLargeCardinals}]
 (i)  The desired equivalence is a direct consequence of a combination of Corollary \ref{corollary:WeaklyInnMahloShrewd} with Corollary \ref{corollary:CharaCLassesShrews}.

 (ii) Note that the class $Cd$ is uniformly locally $\Sigma_1(Rg)$-definable without parameters, because a limit ordinal is a cardinal if and only if it is either regular or a limit of regular cardinals. 
  Therefore, we can again combine Corollary \ref{corollary:WeaklyInnMahloShrewd} with Corollary \ref{corollary:CharaCLassesShrews} to derive the desired equivalence.

 (iii) Since the class $Cd$ is uniformly locally $\Sigma_2$-definable without parameters, the desired equivalence for weakly $\Pi^1_n$-indescribable cardinals is a direct consequence of Lemma \ref{lemma:WeakIndWeakShrewd} and Corollary \ref{corollary:CharaCLassesShrews}. 
\end{proof}

 We end this paper by studying restrictions on the reflection properties of various cardinals. 
  These results are motivated by unpublished work of Brent Cody, Sean Cox, Joel Hamkins and Thomas Johnstone that shows that various \emph{cardinal invariants of the continuum} do not possess the weakly compact embedding property (see \cite{HamkinsTalk}). 
  The following results place this implication into the general framework developed above.

 \begin{proposition}\label{proposition:MinIntervalDef}
  Given a class $I$ of infinite cardinals, there exists a class $\calC$ of structures of the same type such that $\SRm{\calC}{\min(I)}$ fails and the following statements hold for every natural number $n>0$, every class $R$ and every set $z$: 
  \begin{enumerate}
   \item If $I$ is $\Sigma_n(R)$-definable in the parameter $z$, then $\calC$ is definable in the same way. 
   
   \item If $I$ is uniformly locally $\Sigma_n(R)$-definable in the parameter $z$, then $\calC$ is a local $\Sigma_n(R)$-class over $\{z\}$. 
  \end{enumerate}
 \end{proposition}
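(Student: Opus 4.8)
The plan is to realize $I$ by the crudest possible class of structures. Fix the trivial first-order language $\calL_\emptyset$ and let $\calC$ be the class of all $\calL_\emptyset$-structures whose domain has cardinality in $I$. Since an $\calL_\emptyset$-structure carries no information beyond the cardinality of its domain, this class depends only on $I$, and not on any of the data $n$, $R$, $z$ occurring in the two clauses, so a single $\calC$ will have to serve for all of them. This is the direct generalization of the construction sketched in the footnote to Theorem~\ref{theorem:CharSuperCom}, where structures of one fixed cardinality were used.

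First I would check the failure of reflection. Writing $\kappa=\min(I)$, which exists because $I$ is a nonempty class of cardinals and is therefore well-ordered, the structure with domain $\kappa$ lies in $\calC$ and has cardinality $\kappa$, while no member of $\calC$ has cardinality smaller than $\kappa$, as every member has cardinality in $I$ and $\kappa$ is the least element of $I$. Hence there can be no elementary embedding into this structure from a member of $\calC$ of smaller cardinality, and $\SRm{\calC}{\kappa}$ fails. Closure of $\calC$ under isomorphic copies is immediate, since isomorphic structures share the same cardinality.

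For the definability clauses the key observation is that membership in $\calC$ is captured by quantifying over a bijection: $x\in\calC$ holds if and only if there are $\delta$ and $f$ such that $f$ is a bijection from $x$ onto $\delta$ and $\delta\in I$. As $I$ consists of cardinals, any such $\delta$ is forced to equal the cardinality of $x$, so this does express ``$|x|\in I$''. Given a $\Sigma_n(R)$-formula $\varphi(v_0,v_1)$ defining $I$ in the parameter $z$, I would set $\chi(x,z)\equiv\exists\delta\,\exists f\,[\,f\colon x\to\delta\text{ is a bijection}\wedge\varphi(\delta,z)\,]$. The matrix is the conjunction of a $\Sigma_0$-formula with a $\Sigma_n(R)$-formula and the prefix consists of existential quantifiers, so the usual closure properties of the $\Sigma_n$-classes make $\chi$ a $\Sigma_n(R)$-formula, and $\chi$ defines $\calC$ over $\langle\VV,\in,R\rangle$ in the parameter $z$; this is clause~(i).

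For clause~(ii) I would reuse the same $\chi$ and verify uniform local definability by hand. Fixing an infinite cardinal $\nu$ with $z\in\HH{\nu}$ and an $x\in\HH{\nu^+}$, one has $\delta:=|x|\leq\nu$, so $\delta\in\HH{\nu^+}$, and the uniform local definition of $I$ applied at $\nu$ gives $\delta\in I\iff\langle\HH{\nu^+},\in,R\rangle\models\varphi(\delta,z)$, whence $\langle\HH{\nu^+},\in,R\rangle\models\chi(x,z)\iff x\in\calC$. The single point that requires care is that in the forward direction the witnesses $\delta$ and $f$ supplied by $\chi$ must lie in $\HH{\nu^+}$ in order for the local definition of $I$ to apply to them; this is exactly where the bound $|x|\leq\nu$ for $x\in\HH{\nu^+}$ enters, and it is the only step that is not pure bookkeeping about $\Sigma_n$-closure. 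Together with closure under isomorphic copies, this shows that $\calC$ is a local $\Sigma_n(R)$-class over $\{z\}$, completing the proof.
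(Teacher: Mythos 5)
Your proposal is correct and is essentially the paper's own proof: the paper also takes $\calC$ to be the class of structures for the trivial language whose cardinality lies in $I$, and then declares the failure of $\SRm{\calC}{\min(I)}$ and both definability clauses immediate. Your write-up simply supplies the routine verifications (the $\Sigma_n(R)$-formula with the bijection quantifier, and the observation that for $x\in\HH{\nu^+}$ the witnesses $\delta=|x|\le\nu$ and the bijection lie in $\HH{\nu^+}$) that the paper omits.
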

 
 \begin{proof}
  Let $\calL$ denote the trivial first-order language and define $\calC$ to be the class of $\calL$-structures whose cardinality is an element of $I$. 
  Then $\SRm{\calC}{\min(I)}$  fails and both definability statements are immediate. 
 \end{proof}

  The next proposition provides examples of cardinal invariants of the continuum (the cardinality $2^{\aleph_0}$ of the continuum, \emph{bounding number $\mathfrak{b}$} and the \emph{dominating number $\mathfrak{d}$}) that can be represented as minima of universally locally $\Sigma_2$-definable classes of cardinals. 
 It should be noted that a combination of Lemma \ref{corollary:WeaklyInnMahloShrewd}, Corollary \ref{corollary:ReflFromShrewd} and Proposition \ref{proposition:MinIntervalDef} shows that, by starting with a Mahlo cardinal $\kappa$ and forcing \emph{Martin's Axiom} together with $2^{\aleph_0}=\kappa$ to hold in a generic extension, it is possible to obtain a model in which the set $\{2^{\aleph_0}\}=[\mathfrak{b},2^{\aleph_0}]=[\mathfrak{d},2^{\aleph_0}]$ is not uniformly locally $\Sigma_1(Rg)$-definable with parameters in $\HH{\kappa}$.

 \begin{proposition}
  The sets $\{2^{\aleph_0}\}$, $[\mathfrak{b},2^{\aleph_0}]$ and $[\mathfrak{d},2^{\aleph_0}]$ are all uniformly locally $\Sigma_2$-definable without parameters. 
 \end{proposition}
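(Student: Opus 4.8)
The plan is to exhibit, for each of the three sets, an explicit $\Sigma_2$-formula in $\calL_\in$ and then verify that it defines the set correctly inside every structure $\HH{\kappa^+}$. The single fact that makes everything run is that $\omega^\omega\subseteq\HH{\kappa^+}$ holds for every infinite cardinal $\kappa$, together with the absoluteness of the eventual domination relation $\leq^*$ between $\HH{\kappa^+}$ and $\VV$. Consequently, for any $F\in\HH{\kappa^+}$ with $F\subseteq\omega^\omega$, the (a priori $\Pi_1$) statements \anf{$F$ is unbounded in $(\omega^\omega,\leq^*)$} and \anf{$F$ is dominating in $(\omega^\omega,\leq^*)$} are absolute between $\HH{\kappa^+}$ and $\VV$, since their only unbounded quantifier ranges over $\omega^\omega\subseteq\HH{\kappa^+}$. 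All the complexity bookkeeping below rests on the routine facts that $\Pi_1\wedge\Sigma_1$ is $\Sigma_2$, that prefixing an existential quantifier preserves $\Sigma_2$, and that cardinalhood and cardinality are absolute for objects of hereditary size ${\leq}\kappa$.

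For the set $\{2^{\aleph_0}\}$ I would use the formula expressing that $x$ is a cardinal and there is a set $y$ with $y=\POT{\omega}$ and $\betrag{y}=x$. Here \anf{$y=\POT{\omega}$} is $\Pi_1$ (the bounded clause $\forall z\in y\,(z\subseteq\omega)$ together with the clause $\forall z\,(z\subseteq\omega\to z\in y)$) and \anf{$\betrag{y}=x$} is $\Sigma_1$, so the whole formula is $\Sigma_2$. The verification splits according to whether $\POT{\omega}\in\HH{\kappa^+}$: if $2^{\aleph_0}\leq\kappa$, then $\POT{\omega}\in\HH{\kappa^+}$ and is recognised as such, because every subset of $\omega$ already lies in $\HH{\kappa^+}$, so the formula isolates $2^{\aleph_0}$; if $2^{\aleph_0}>\kappa$, then no $y\in\HH{\kappa^+}$ satisfies \anf{$y=\POT{\omega}$}, so the formula defines $\emptyset$ in $\HH{\kappa^+}$, matching $\HH{\kappa^+}\cap\{2^{\aleph_0}\}=\emptyset$.

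For the two intervals I would exploit the equivalences \anf{$\mathfrak{b}\leq\lambda$ iff there is an unbounded family of size ${\leq}\lambda$} and \anf{$\mathfrak{d}\leq\lambda$ iff there is a dominating family of size ${\leq}\lambda$}, both of which hold by minimality, and I would encode the upper bound $\lambda\leq 2^{\aleph_0}$ as the existence of an injection $\map{h}{\lambda}{\omega^\omega}$. Thus for $[\mathfrak{b},2^{\aleph_0}]$ I take the formula stating that $\lambda$ is a cardinal, that some $h$ injects $\lambda$ into $\omega^\omega$, and that some $F\subseteq\omega^\omega$ with $\betrag{F}\leq\lambda$ is unbounded; for $[\mathfrak{d},2^{\aleph_0}]$ I replace \anf{unbounded} by \anf{dominating}. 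Each formula is a conjunction of a $\Pi_1$-, a $\Sigma_1$- and a $\Sigma_2$-formula, hence $\Sigma_2$. For local correctness in $\HH{\kappa^+}$ one checks both inclusions: if $\lambda\leq\kappa$ lies in the interval in $\VV$, then the witnessing injection and the witnessing family both have size $\leq\lambda\leq\kappa$ and therefore belong to $\HH{\kappa^+}$, while conversely any such witnesses found inside $\HH{\kappa^+}$ remain witnesses in $\VV$ by the absoluteness recorded in the first paragraph.

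The main obstacle, and the reason the intervals must be handled more carefully than $\{2^{\aleph_0}\}$, is the treatment of the upper bound $\lambda\leq 2^{\aleph_0}$ in the regime $\kappa<2^{\aleph_0}$. In that regime the interval can contain small cardinals $\lambda\leq\kappa<2^{\aleph_0}$, for which the defining formula must still fire in $\HH{\kappa^+}$ even though $\POT{\omega}\notin\HH{\kappa^+}$. Encoding $\lambda\leq 2^{\aleph_0}$ through the completed set $\POT{\omega}$ would make the formula fail in precisely these cases, so it is essential to express this bound via injectability of $\lambda$ into $\omega^\omega$, a $\Sigma_1$-condition whose witness has size $\lambda\leq\kappa$ and is therefore guaranteed to lie in $\HH{\kappa^+}$. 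Once this point is observed, the verification of both inclusions for all three sets reduces to the absoluteness of $\leq^*$ and of cardinality below $\kappa$, and the proof is complete.
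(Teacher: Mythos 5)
Your proposal is correct and takes essentially the same approach as the paper: exhibit explicit $\Sigma_2$-formulas and verify them in each $\HH{\kappa^+}$, using the key fact that every real lies in $\HH{\kappa^+}$, so that \anf{$y=\POT{\omega}$}, unboundedness and domination in $\langle{}^\omega\omega,\leq^*\rangle$ are absolute between $\HH{\kappa^+}$ and $\VV$. The only cosmetic difference is in the interval cases, where you encode the upper bound $\lambda\leq 2^{\aleph_0}$ by an injection of $\lambda$ into ${}^\omega\omega$ together with a witnessing family of size ${\leq}\lambda$, whereas the paper folds both bounds into a single clause by requiring an unbounded (respectively dominating) family of cardinality exactly $\lambda$; the two devices work for the same reason.
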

 
 \begin{proof}
  First, let $\varphi(v)$ be the canonical $\Sigma_2$-formula stating that $v$ is a cardinal and there exists a bijection between $v$ and $\POT{\omega}$. 
  Fix an infinite cardinal $\nu$. In one direction, if $2^{\aleph_0}\leq\nu$, then $\HH{\nu^+}$ contains a bijection between $2^{\aleph_0}$ and the reals, and hence $\varphi(2^{\aleph_0})$ holds in $\HH{\nu^+}$. 
  In the other direction, if $\varphi(\mu)$ holds in $\HH{\nu^+}$, then the fact that $\HH{\nu^+}$ contains all  reals implies that $2^{\aleph_0}=\mu$. 
  This shows that $\varphi(v)$ witnesses that the set $\{2^{\aleph_0}\}$ is uniformly locally $\Sigma_2$-definable without parameters. 
  
  Next, let $\varphi(v)$ be the canonical $\Sigma_2$-formula stating that $v$ is a cardinal and there exists an unbounded family of cardinality $v$ in $\langle{}^\omega\omega,<_*\rangle$. 
  Fix an infinite cardinal $\nu$. Given a cardinal $\mathfrak{b}\leq\mu\leq\min(\nu,2^{\aleph_0})$, the set $\HH{\nu^+}$ contains a bijection between $\mu$ and an unbounded family in $\langle{}^\omega\omega,<_*\rangle$, and hence we know that $\varphi(\mu)$ holds in $\HH{\nu^+}$. 
  In the other direction, if $\varphi(\mu)$ holds in $\HH{\nu^+}$, then there exists an unbounded family in $\langle{}^\omega\omega,<_*\rangle$ of cardinality $\mu$ and hence $\mathfrak{b}\leq\mu\leq 2^{\aleph_0}$, because ${}^\omega\omega$ is a subset of $\HH{\nu^+}$. Therefore the formula $\varphi(v)$ witnesses the desired definability of the set $[\mathfrak{b},2^{\aleph_0}]$.
  
  Finally, if $\varphi(v)$ denotes the canonical $\Sigma_2$-formula stating that $v$ is a cardinal and there exists a dominating family of cardinality $v$ in $\langle{}^\omega\omega,<_*\rangle$, then we may argue as above to show that  $\varphi(v)$ witnesses the desired definability of the set $[\mathfrak{d},2^{\aleph_0}]$. 
 \end{proof}

 In combination with Lemma \ref{lemma:WeakIndWeakShrewd}, Corollary \ref{corollary:WCEPSmallEmb} and Corollary \ref{corollary:ReflFromShrewd}, the above propositions provide a general reason for the fact that the cardinals $2^{\aleph_0}$, $\mathfrak{b}$ and $\mathfrak{d}$ do not have the weakly compact embedding property. 
 %
 In contrast, as discussed in the introduction, Hamkins observed that a cardinal with the weakly compact embedding property can be the predecessor of  $2^{\aleph_0}$. 
   %
  Now, note that if $I$ is a class of cardinals that is definable by a $\Sigma_2$-formula with parameter $z$ and $\beta$ is an ordinal, then the class of all cardinals of the form $\aleph_\alpha$ with the property that the cardinal $\aleph_{\alpha+\beta}$ is an element of $I$ is  definable by a  $\Sigma_2$-formula with parameters $\beta$ and $z$. 
  In particular, the above propositions show that if $\kappa$ is a weakly shrewd cardinal and $\alpha<\kappa$, then $\kappa^{{+}\alpha}\notin\{2^{\aleph_0},\mathfrak{b},\mathfrak{d}\}$. 
 Finally, note that a combination of Theorem \ref{theorem:HigherReflection}, Lemma \ref{lemma:SubtleHyperShrewd} and Lemma \ref{lemma:AddGenExtHyperShrew} shows that, if $\delta$ is a subtle cardinal and $G$ is $\Add{\omega}{\delta}$-generic over $\VV$,
 then, in $\VV[G]$, the interval $(\mathfrak{b},\mathfrak{d})$\footnote{Note that $\mathfrak{b}=\aleph_1$ and $2^{\aleph_0}=\mathfrak{d}=\delta$ holds in $\VV[G]$ (see {\cite[Section 11.3]{MR2768685}}).}  contains unboundedly many weakly shrewd cardinals. 


 \bibliographystyle{amsplain}
\bibliography{references}


\end{document}